\documentclass[reqno,12pt]{amsart}

\usepackage{amsfonts}
\usepackage{amsmath,amssymb,amsthm}
\usepackage{tikz-cd}
\usetikzlibrary{arrows.meta}
\usepackage{hyperref}
\usepackage{enumerate}
\usepackage{lettrine}
\usepackage{mathrsfs}

\everymath{\displaystyle}

\newcommand{\R}{\mathbb R}

\newcommand{\M}{\mathbb M}

\allowdisplaybreaks[4]
\theoremstyle{definition}
\newtheorem{theorem}{Theorem}[section]
\newtheorem{cor}[theorem]{Corollary}
\newtheorem{lem}[theorem]{Lemma}
\newtheorem{prop}[theorem]{Proposition}

\newtheorem{defn}[theorem]{Definition}
\newtheorem{rem}[theorem]{Remark}
\newtheorem{question}[theorem]{Question}
\numberwithin{equation}{section}

\begin{document}
	
	
	\title[A comparison with applications to geometric inequalities]{A comparison theorem with applications to sharp geometric inequalities for submanifolds}
	
	\author{Shengliang Pan, Chengyang Yi$^*$}
	
	\address{School of Mathematical Sciences, Tongji University.
		No.1239, Siping Road, Shanghai, China.}
	\email{slpan@tongji.edu.cn}
	\address{School of Mathematical Sciences, Tongji University.
		No.1239, Siping Road, Shanghai, China.}
	\email{409196288@qq.com}
	\thanks{$^*$Corresponding author: 409196288@qq.com}
	\subjclass[2020]{53C40}
	\keywords{Heintze-Karcher comparison theorem, Chern-Lashof inequality, Willmore-Chen inequality}
	
	\begin{abstract}
		In this paper, we derive an explicit expression  for the Jacobian determinant of the normal exponential map on a submanifold, establishing a relationship with its ambient counterpart. This formula leads to a new comparison theorem which is closely related to the comparison theorem of Heintze-Karcher. As applications, we obtain a Fenchel-Borsuk-Chern-Lashof-type inequality and a Willmore-Chen-type inequality on closed submanifolds in complete noncompact manifolds with nonnegative curvature and Euclidean volume growth.
	\end{abstract}

	\maketitle

	\tableofcontents

\section{Introduction}
In the classical theory of submanifolds in the Euclidean space, the total curvature plays an important role. In 1929, Fenchel \cite{F} proved an inequality for closed curves in $\R^{3}$ which states that if $\gamma$ is a $C^{2}$ closed curve  parametrized by its arc length $s$ in $\R^{3}$ and $|\gamma''(s)|$ is its curvature, then the following inequality holds:
		\begin{equation*}
\int_{\gamma}|\gamma''(s)|ds\geq 2\pi.
	\end{equation*}
The equality above holds if and only if $\gamma$ is a closed convex plane curve. This result was generalized to closed curves in higher dimensional Euclidean spaces by Borsuk \cite{Borsuk} in 1947. Further research on the total absolute curvature of curves can refer to \cite{BriHsi, Fa, Mi}. In order to explore analogous inequalities for higher-dimensional submanifolds, we need appropriate invariants to generalize the concept of curvature of space curves. Several natural candidates have emerged, including the total absolute curvature and the total mean curvature.

In 1957, Chern-Lashof \cite{ChernLa1957} introduced the  total absolute curvature. Let $\Sigma$ be a   closed manifold of dimension $n$ immersed in $\R^{n+m}$, they defined the total absolute curvature $K^{*}(x)$ at $x\in\Sigma$ by
\begin{equation*}
	K^{*}(x):= \int_{S_{x}^{m-1}}	\left|\mathrm{det}\langle h(\cdot,\cdot),\xi \rangle    \right|d\xi,
\end{equation*}
where $S_{x}^{m-1}:=\{ \xi\in T_{x}^{\perp}\Sigma:|\xi|=1 \}$, $h$ is the second fundamental form and $d\xi$ denotes the standard volume element of the unit sphere $S_{x}^{m-1}$. Indeed, the quantity $\left|\mathrm{det}\langle h(\cdot,\cdot),\xi \rangle    \right|$ is the norm of the Jacobian determinant of the Gauss map,
\begin{equation*}
	N:U\Sigma\rightarrow  \mathbb{S}^{n+m-1} \ \  \mathrm{with}\  N(x,\xi):=\xi,
\end{equation*}
at $(x,\xi)\in U\Sigma$, where $U\Sigma$ is the unit normal bundle (see Section \ref{102} for details) and $\mathbb{S}^{n+m-1}$ denotes the $(n+m-1)$-dimensional standard unit sphere. Therefore, $K^{*}(x)$ is the average value of these quantities on the unit normal sphere at $x$ up to a constant. Applying the area formula to the Gauss map, they obtained 
\begin{equation*}
	\int_{\Sigma}K^{*}(x)d\mathrm{vol}_{\Sigma}(x)\geq 2 \left| \mathbb{S}^{n+m-1} \right|.
\end{equation*}
Moreover, the equality above holds if and only if $\Sigma$ is embedded as a convex hypersurface in an
$(n+1)$-dimensional affine subspace of $\R^{n+m}$.

Another natural object is the total mean curvature. In 1968, Willmore \cite{Wi} proved an inequality for closed surfaces in $\R^{3}$. Let $\Sigma$ be a closed surface in $\R^{3}$, then its mean curvature $H$ satisfies
\begin{equation*}
	\int_{\Sigma}\left|H(x)\right|^{2}d\mathrm{vol}_{\Sigma}(x)\geq 4\pi,
\end{equation*}
with equality holding if and only if $\Sigma$ is a sphere in $\R^{3}$. Again employing the Gauss map, Chen \cite{Chen70,Chen71} generalized this result to closed submanifolds of arbitrary codimension in higher-dimensional Euclidean spaces. For a closed manifold $\Sigma$ of dimension $n$ immersed in $\R^{n+m}$, it holds that
		\begin{equation*}
	\begin{array}{lllll}
		\displaystyle\int_{\Sigma}\left|\mathbf{H}\right|^{n} d\mathrm{vol}_{\Sigma}\geq \left| \mathbb{S}^{n} \right|,
	\end{array}
\end{equation*}
where $\mathbf{H}$ denotes the mean curvature vector of $\Sigma$. Moreover, the equality holds if and only if $\Sigma$  is embedded as a hypersphere in an
$(n+1)$-dimensional affine subspace of $\R^{n+m}$.

We next turn to the Riemannian setting. Unless otherwise stated, $M^{N}$ denotes an $N$-dimensional manifold and throughout this paper, all ambient manifolds are assumed to be connected. Any notation not explicitly defined here can be found in Section \ref{102}. Let $(M,\bar{g})$ be a complete noncompact Riemannian manifold of dimension $N$  with nonnegative Ricci curvature. The asymptotic volume ratio of $M$ is defined as
\begin{equation*}
	\mathrm{AVR}(\bar{g}):=\lim_{r\rightarrow\infty}\frac{|B_{p}^{M}(r)|}{\omega_{N}r^{N}}
\end{equation*}
for some (any) fixed point $p\in M$, where $B_{p}^{M}(r)$ denotes the geodesic ball in $M$, $|B_{p}^{M}(r)|$ its volume and $\omega_{N}$ the volume of the unit ball in $\mathbb{R}^{N}$. By the Bishop-Gromov relative volume comparison theorem (see \cite{Pet}, Lemma 7.1.4), the limit exists with $0\leq \mathrm{AVR}(\bar{g})\leq1$. We assume that  $M$ has dimension $N \geq 3$ and Euclidean volume growth, i.e., $\mathrm{AVR}(\bar{g})>0$. Let $\Omega\subset M$ be a bounded open subset with smooth boundary. In 2020, by studying the properties of the solution to a certain harmonic equation, Agostiniani-Fogagnolo-Mazzieri~\cite{AgFoMa} obtained a Willmore-Chen type inequality,
		\begin{equation}\label{85}
	\begin{array}{lllll}
		\displaystyle\int_{\partial\Omega}\left|\mathbf{H}\right|^{N-1} d\mathrm{vol}_{\partial\Omega}\geq \mathrm{AVR}(\bar{g})\left| \mathbb{S}^{N-1} \right|,
	\end{array}
\end{equation}
in this setting. Moreover, the equality holds if and only if $(M\setminus \Omega,\bar{g})$ is isometric to 
\begin{equation*}
	\left( [r_{0},+\infty)\times\partial \Omega,dr\otimes dr+(r/r_{0})^{2}g_{\partial\Omega}   \right)
\end{equation*}
with
\begin{equation*} r_{0}=\left(\frac{|\partial\Omega|}{\mathrm{AVR}(\bar{g})\left| \mathbb{S}^{N-1} \right|}\right)^{\frac{1}{N-1}}.
\end{equation*}
In particular, $\partial\Omega$ is a totally umbilic  connected hypersurface with constant mean curvature. It is noteworthy that this does not yield any information about the interior of $\Omega$. Moreover, it remains unknown whether the equality case forces $M$ to be isometric to the Euclidean space $\R^{N}$. 
In 2023, Wang \cite{Wangxiao} provided a powerful proof of the above conclusion by applying the area formula to the normal exponential map and using standard comparison methods in Riemannian geometry, with inspiration from the Heintze-Karcher method.

In fact, in 1978, Heintze-Karcher \cite{HeKa} established a comparison theorem, which are used by them to derive a volume estimate for tubular neighborhoods of closed submanifolds in complete Riemannian manifolds. The same estimate also yields the inequality mentioned above. However, due to the limitations of the historical context at that time, the notion of the asymptotic volume ratio stated above was not yet available. This is mainly because it was not until 1980, when Gromov improved Bishop's volume comparison theorem (see \cite{Pet}, Lemma 7.1.4), that such a notion came into existence. Very recently, Brendle \cite{Bre2026} pointed out this connection and provided a detailed proof of the inequality by combining the Heintze-Karcher method with the Alexandrov-Bakelman-Pucci (ABP) technique. Moreover, using the Heintze-Karcher comparison theorem (or the Heintze-Karcher estimate directly), one can extend this inequality to the higher-codimensional case, provided that the ambient manifold has nonnegative sectional curvature. Likewise, the same comparison theorem (or its estimate) also allows one to generalize the Fenchel-Borsuk-Chern-Lashof inequality from Euclidean space to all closed submanifolds in a complete noncompact Riemannian manifold with nonnegative sectional curvature and Euclidean volume growth. It is particularly worth noting that the Heintze-Karcher method requires the closed submanifold to be merely immersed, not necessarily embedded.

Denote by  $\M^{N}_{\delta}$  the $N$-dimensional space form of constant sectional curvature $\delta$. Heintze-Karcher proved the following Willmore-Chen-type inequality (\cite{HeKa}, Theorem 2.2). If $\delta>0$, $(M^{n+m}, g)$ is a complete Riemannian manifold satisfying
		\[
\mathrm{Ric}^{M} \ge n\delta \quad (m=1),
\qquad
\sec^{M} \ge \delta \quad (m\ge 2),
\]
and $\Sigma$ is a closed immersed $n$-dimensional submanifold of $M$ with mean curvature vector $\mathbf{H}$, then
	\begin{equation}\label{89}
	\displaystyle\int_{\Sigma}(\delta+\left|\mathbf{H}\right|^{2})^{n/2} d\mathrm{vol}_{\Sigma}\geq  \frac{\left| \mathbb{M}^{n}_{1} \right|}{\left| \mathbb{M}^{n+m}_{\delta} \right|}|M|.
	\end{equation}
 The equality implies that both $M$ and $\Sigma$ are of constant curvature (for a more detailed characterization of the equality case, one can refer to \cite{HeKa}, pp.~465--466). Nevertheless, in both the setting of nonnegative curvature and the setting with a positive lower bound on curvature, the requirement of a lower bound on sectional curvature for higher codimensions appears too strong for these two Willmore-Chen inequalities; this point will be returned to shortly. Therefore, the primary goal of this paper is to establish a Heintze-Karcher-type comparison theorem under weaker curvature conditions, in order to obtain the desired geometric inequalities.

Now we revisit some known comparison theorems, including the aforementioned Heintze-Karcher  comparison theorem. On the one hand, before proving his volume comparison theorem, Bishop first established a volume distortion comparison theorem (cf. \cite{BiCr}, p. 253). When considering volume distortion for $k$-vector forms  (which are not necessarily of the highest degree), Bishop and Crittenden introduced in their book a very suitable curvature notion, the $k$-Ricci curvature, which lies between the sectional curvature and the Ricci curvature. Let $M^{N}$ be an $N$-dimensional Riemannian manifold, $p\in M$, $v\in T_{p}M$ a unit tangent vector and $P\subset T_{p}M$ a $k$-dimensional linear subspace such that $v\bot P$. The $k$-Ricci curvature of $(v,P)$ is defined by
	\[
	\mathrm{Ric}^{M}_{k}(v,P):=\sum_{i=1}^{k}	\bar{R}(v,e_{i},e_{i},v),
	\]
	where $\{e_{i}  \}_{i=1}^{k}$ is an orthonormal basis of $P$. We say $\mathrm{Ric}^{M}_{k}\geq k\delta$ if $\mathrm{Ric}^{M}_{k}(v,P)\geq k\delta$ for all $p\in M$, unit tangent vector $v\in T_{p}M$ and $k$-dimensional linear subspace $P$ satisfying $v\bot P$. Note that  $\text{Ric}^{M}_{1}\geq \delta$
	is equivalent to the sectional curvature being bounded below by $\delta$ and $\mathrm{Ric}^{M}_{N-1}\geq (N-1)\delta$ is equivalent to the Ricci curvature being bounded below by $(N-1)\delta$. It also follows immediately that for $1\leq k \leq l \leq N-1$, the condition $\mathrm{Ric}^{M}_{k} \geq k\delta$ implies $\mathrm{Ric}^{M}_{l} \geq l\delta$. The geometry and topology of manifolds with bounded $k$-Ricci curvature have been studied by many authors; see, for example, \cite{DomGonMou, KetMon, LeeRi, MaWu, Mou, ReisWr, Shen, Wangkai}. Inspired by the work of Bishop, as well as by subsequent results on $k$-Ricci curvature, we believe that the concept of $k$-Ricci curvature may be very appropriate when studying certain properties of $k$-dimensional submanifolds.
	
		On the other hand, Heintze-Karcher \cite{HeKa} extended Bishop's comparison theorem to the case of submanifolds.  However, in the higher codimension setting, their comparison theorem requires, in some situations, lower bounds on the sectional curvature of the ambient manifold. If one of the two higher‑codimension submanifolds being compared is a submanifold of a space form,  inspired by the work of Bishop, we believe that the sectional curvature condition required for the other submanifold is too strong. In 2020, Chahine \cite{Chah} obtained a Heintze-Karcher estimate for manifolds with lower bounded $k$-Ricci curvature, where the constant $k$ depends on the codimension. Nevertheless, perhaps due to the authors' limited ability, we have not succeeded in using the proof method of Heintze and Karcher to replace the sectional curvature lower bound condition in their comparison theorem for the case of $k$-dimensional submanifolds of higher codimension with what we consider to be the most suitable condition, namely a lower bound on the $k$-Ricci curvature. Therefore, we must seek a new approach.

	 In 2001, Cordero-Erausquin, McCann and Schmuckenschläger \cite{CoMcSc} used optimal transport to generalize the Borell-Brascamp-Lieb inequality to Riemannian manifolds. In doing so, they needed to compute the Jacobian determinant of the McCann map (see \cite{Mc}; see also \cite{Vi})	$\tilde{\Phi}:M^{N}\rightarrow M^{N}$ given by 
		\begin{equation*}
			\tilde{\Phi}(p):=\mathrm{Exp}_{p}(\bar{\nabla} \tilde{u}(p)), \qquad p\in M.
		\end{equation*}
This computation reduces to studying the coefficient matrix of $N$ Jacobi fields in a suitable frame, which in turn requires analyzing a family of geodesic variations. A key observation is the identity
\begin{equation*}
	t\bar{\nabla}\tilde{u}(c(s)) =-\frac{\bar{\nabla}d_{\gamma(t)}^{2}(c(s))}{2}+\frac{\bar{\nabla}d_{\gamma(t)}^{2}(c(s))}{2}+ t\bar{\nabla}\tilde{u}(c(s)),
\end{equation*}
which leads to a subtle result upon differentiating $\tilde{\Phi}$. The notation here differs slightly from the original; we refer the reader to pp.~229--230 and 241--244 in \cite{CoMcSc} for details. To the best of our knowledge, the above-mentioned technique does not offer significant advantages in the full-dimensional case. Similarly, in their proof of the comparison theorem, Heintze-Karcher mainly focused on the differential of the normal exponential map and the associated Jacobi field estimates. In order to obtain more general conclusions in this paper, we add a gradient field term to the submanifold. In Section \ref{103}, we will investigate the Jacobian determinant of the map $\Phi:T^{\perp}\Sigma\rightarrow M^{n+m}$ given by 
\begin{equation*}
	\Phi(x,y)=\mathrm{exp}_{x}\left(\nabla^{\Sigma}u(x)+y\right),
\end{equation*}
for all $(x,y)\in T^{\perp}\Sigma$ (see Section \ref{103} for full details). Conventional approaches focus on the behaviour of component matrices associated with $n+m$ Jacobi fields under specific frames (see \cite{Bre2023}, \cite{Chah} and pp.~20--22 in \cite{DaWe}; see also \cite{HeKa}). When we applied the above technique of Cordero-Erausquin, McCann and Schmuckenschläger to study the differential of the map $\Phi$, we found that, in many situations, the method exhibits a distinct advantage over the traditional approach in the submanifold setting, although their original research did not concern submanifold-related problems. The matrix arising from this computation can be decomposed into the product of two well-structured matrices. One corresponds to the Jacobi matrix of the exponential map in the ambient space, while the other admits a desirable block structure containing a zero matrix and an identity matrix (see Lemma~\ref{4}). This facilitates the construction of comparison theorems and allows us to relax the restrictions imposed on curvature conditions.
		An explicit Jacobian determinant formula for $\Phi$ follows immediately from this matrix equation, establishing a fundamental relationship with its ambient counterpart. To the best of our knowledge, this formula seems to be overlooked in the submanifold theory. It is worth mentioning that this formula is closely related to Brendle’s estimate (\cite{Bre2023}, Corollary 4.7), which is established using the Alexandrov-Bakelman-Pucci method and can be used to study certain Sobolev-type inequalities and isoperimetric inequalities. 
		 This formula contains a wealth of information, but in this paper we shall only use it to revisit the Heintze-Karcher comparison theorem. We hope that the formula and the technique of Cordero-Erausquin, McCann and Schmuckenschläger will attract more attention, especially from those working in submanifold theory.

		 For a real number $\delta$, we use the abbreviation (cf. \cite{HeKa}, p. 453)
		 \begin{align*}
		 	&\mathfrak{s}_{\delta}(r):=	\left\{\begin{aligned}
		 		&\delta^{-1/2}\sin(\delta^{1/2}r)&\ \mathrm{if\ }\delta>0,\\
		 		&r&\ \mathrm{if\ }\delta=0,\\
		 		&|\delta|^{-1/2}\sinh(|\delta|^{1/2}r)&\ \mathrm{if\ }\delta<0,
		 	\end{aligned}\right.\\
		 	& \mathfrak{c}_{\delta}(r):=\mathfrak{s}_{\delta}'(r).
		 \end{align*}
		 In contrast to the Heintze-Karcher method, we establish the following comparison theorem by exploiting the two features of the matrix equation discussed above and the Jacobian determinant formula for the normal exponential map.
		 \begin{theorem}\label{57}
		 	Let $f: \Sigma^n \rightarrow M^{n+m}$ be an isometric immersion of an $n$-manifold into a complete Riemannian $(n+m)$-manifold. Let $(x,\xi)\in U\Sigma$ and define the geodesic $\sigma(t):=\mathrm{exp}^{\perp}_{x}\left(t\xi\right)$ for $0\leq t < \infty$. Consider a second such situation $\underline{f}:\underline{ \Sigma}^n\rightarrow \M^{n+m}_{\delta}$ etc. Then the following two assertions hold.

		 	\smallskip
		 	{\rm (i)} If the sectional curvatures of $M$ are bounded below by $\delta$ and $\kappa_{i}(x,\xi)\leq \underline{\kappa}_{i}(\underline{x},\underline{\xi})$ for each $1\leq i \leq n$, then 
		 	\begin{align}\label{43}
		 		|\mathrm{det}\ (\mathrm{exp} ^{\perp})_{*(x,t\xi)}|\leq \left(\mathfrak{s}_{\delta}(t)/t\right)^{m-1}\prod_{i=1}^{n}\left( \mathfrak{c}_{\delta}(t)+\mathfrak{s}_{\delta}(t)\underline{\kappa}_{i}(\underline{x},\underline{\xi}) \right)
		 	\end{align}
		 	for each positive number $t$ not larger than $\tilde{\tau}_{f}(x,\xi)$ and
		 	\begin{align*}
		 		\frac{|\mathrm{det}\ (\mathrm{exp} ^{\perp})_{*(x,t\xi)}|}{|\mathrm{det}\ (\underline{\mathrm{exp}} ^{\perp})_{*(\underline{x},t\underline{\xi})}|}\leq \frac{|\mathrm{det}\ (\mathrm{exp} ^{\perp})_{*(x,s\xi)}|}{|\mathrm{det}\ (\underline{\mathrm{exp}} ^{\perp})_{*(\underline{x},s\underline{\xi})}|},
		 	\end{align*}
		 	for $0<s\leq t<\tilde{\tau}_{f}(x,\xi)$. Moreover, the equality in \eqref{43} holds for some positive number $t_{0}$ not larger than $\tilde{\tau}_{f}(x,\xi)$
		 	if and only if the sectional curvature of $M$ is equal to $\delta$ for all tangent planes containing $\sigma'(a)$ for each $a \in [0, t_{0}]$ and $\kappa_{i}(x,\xi)= \underline{\kappa}_{i}(\underline{x},\underline{\xi})$ for each $1\leq i \leq n$.

		 	\smallskip
		 	{\rm (ii)} If $M$ satisfies $\mathrm{Ric}^{M}_{n}\geq n\delta$, $\underline{\Sigma}$ is  umbilical at $\underline{x}$ for the normal $\underline{\xi}$ and $\langle 	\mathbf{H}(x),\xi \rangle\geq  \langle 	\underline{\mathbf{H}}(\underline{x}),\underline{\xi} \rangle$, then 
		 	\begin{align}\label{44}
		 		|\mathrm{det}\ (\mathrm{exp} ^{\perp})_{*(x,t\xi)}|\leq \left(\mathfrak{s}_{\delta}(t)/t\right)^{m-1}\left( \mathfrak{c}_{\delta}(t)-\mathfrak{s}_{\delta}(t)\langle \underline{\mathbf{H}}(\underline{x}),\underline{\xi} \rangle \right)^{n}
		 	\end{align}
		 	for each positive number $t$ not larger than $\tilde{\tau}_{f}(x,\xi)$ and
		 	\begin{align*}
		 		\frac{|\mathrm{det}\ (\mathrm{exp} ^{\perp})_{*(x,t\xi)}|}{|\mathrm{det}\ (\underline{\mathrm{exp}} ^{\perp})_{*(\underline{x},t\underline{\xi})}|}\leq \frac{|\mathrm{det}\ (\mathrm{exp} ^{\perp})_{*(x,s\xi)}|}{|\mathrm{det}\ (\underline{\mathrm{exp}} ^{\perp})_{*(\underline{x},s\underline{\xi})}|},
		 	\end{align*}
		 	for $0<s\leq t<\tilde{\tau}_{f}(x,\xi)$. Moreover,  the equality in \eqref{44} holds for some positive number $t_{0}$ not larger than $\tilde{\tau}_{f}(x,\xi)$
		 	if and only if  the sectional curvature of $M$ is equal to $\delta$ for all tangent planes containing $\sigma'(a)$ for each $a \in [0, t_{0}]$ and $\kappa_{i}(x,\xi)=-\langle 	\underline{\mathbf{H}}(\underline{x}),\underline{\xi} \rangle $ for each $1\leq i \leq n$.
		 \end{theorem}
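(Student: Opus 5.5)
We plan to reduce both assertions to a Riccati-type comparison for the $n\times n$ block that the matrix factorization of Lemma~\ref{4} isolates. Along $\sigma(t)=\mathrm{exp}^{\perp}_{x}(t\xi)$, take a parallel orthonormal frame $\{e_1,\dots,e_n,\nu_1,\dots,\nu_{m-1},\sigma'\}$ with $e_i(0)$ tangent to $\Sigma$ and diagonalizing $\langle h(\cdot,\cdot),\xi\rangle$ with eigenvalues $\kappa_i(x,\xi)$. Lemma~\ref{4} (with $u\equiv 0$) writes $(\mathrm{exp}^{\perp})_{*(x,t\xi)}$ as the product of the ambient Jacobian of $\mathrm{exp}_x$ at $t\xi$ and a block matrix with a zero block and an identity block. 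Taking determinants, we get
\[
|\mathrm{det}\,(\mathrm{exp}^{\perp})_{*(x,t\xi)}|=\frac{\mathrm{det}\,A(t)}{t^{m-1}} .
\]
Here $A(t)$ is the $(n+m-1)$-square matrix of Jacobi fields $J$ along $\sigma$ orthogonal to $\sigma'$. The $n$ "tangential" ones have $J(0)=e_i$ and $J'(0)=-S_\xi e_i$ plus a normal correction. The $m-1$ "normal" ones have $J(0)=0$ and $J'(0)=\nu_\alpha$. The factor $t^{-(m-1)}$ comes from the identity block, i.e. from dividing by $t$ in the $y$-directions.

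Next we set $U=A'A^{-1}$, which is symmetric because $\Sigma$-Jacobi fields form a Lagrangian family. It satisfies
\[
U'+U^2+R_\sigma=0,
\]
with $U(t)\sim t^{-1}$ on the normal block as $t\to0$ and $U(0)=-\mathrm{diag}(\kappa_i)$ on the tangential block. This holds for $0<t<\tilde\tau_f(x,\xi)$, which is before focal points, so $A$ is invertible. Then $(\log\mathrm{det}A)'=\mathrm{tr}\,U$, and the task is to compare $\mathrm{tr}\,U$ with the model quantity
\[
(m-1)\frac{\mathfrak c_\delta}{\mathfrak s_\delta}+\sum_i \frac{-\delta\mathfrak s_\delta+\mathfrak c_\delta\underline\kappa_i}{\mathfrak c_\delta+\mathfrak s_\delta\underline\kappa_i},
\]
which is exactly $\frac{d}{dt}\log$ of the right-hand side of \eqref{43} multiplied by $t^{m-1}$. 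Proving $\mathrm{tr}\,U\le$ this model quantity gives monotonicity of the ratio, and integrating from $s\to0$ (where both ratios tend to $1$) gives \eqref{43}.

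For (i) we would use the standard Heintze–Karcher index-form argument. Write $\mathrm{tr}\,U(t)=\sum_j I_t(J_j,J_j)$ with the boundary term $-\langle S_\xi\cdot,\cdot\rangle$ at $0$, after normalizing $A(t)=\mathrm{Id}$ by a change of basis. Then $\mathrm{tr}\,U$ is the minimum of the index form over fields with the $\Sigma$-initial condition. Test with the transplanted model fields, namely $\frac{\mathfrak c_\delta+\mathfrak s_\delta\underline\kappa_i}{\ldots}E_i$ on the tangential part and $\frac{\mathfrak s_\delta}{\mathfrak s_\delta(t)}E_\alpha$ on the normal part. Then $\sec\ge\delta$ and $\kappa_i\le\underline\kappa_i$ bound each term by its model counterpart.

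For (ii), the point of the factorization is that the normal $(m-1)$-block is handled by the ambient Bishop-type comparison, through the identity block. The tangential $n$-block is then a Riccati equation for an $n\times n$ symmetric matrix whose trace only sees $\sum_{i\le n}\bar R(\sigma',E_i,E_i,\sigma')=\mathrm{Ric}_n(\sigma',P)\ge n\delta$. We apply the trace Riccati inequality $(\mathrm{tr}\,U_n)'+\frac{(\mathrm{tr}\,U_n)^2}{n}+n\delta\le0$ with initial value $\mathrm{tr}\,U_n(0)=-n\langle\mathbf H,\xi\rangle\le -n\langle\underline{\mathbf H},\underline\xi\rangle$. Comparing with the umbilical model then yields \eqref{44}.

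Equality cases come from equality in each test-field or Cauchy–Schwarz step. Such equality forces the tangential Jacobi fields to equal the model ones, so $R_\sigma=\delta$ on $\sigma'^\perp$ and the $\kappa_i$ equal the model values. The main obstacle is (ii): the tangential and normal blocks of $U$ are coupled, so we must check that Lemma~\ref{4}'s block structure truly decouples the trace of the $n$-block. If it does not, we will instead apply Cauchy–Schwarz only to the compressed $n\times n$ block, using $\mathrm{tr}(PU^2P)\ge\mathrm{tr}((PUP)^2)$, and control the normal block via the ambient volume-distortion comparison.
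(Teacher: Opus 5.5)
Your route for (i) is the classical Heintze--Karcher index-form comparison and is sound. The paper argues differently: it bounds $\det A_t$ by Hadamard's inequality and the Hessian comparison, and treats the ambient factor by Bishop. There is one sign slip. With the paper's convention $\langle S_{\xi}v,w\rangle=-\langle h(v,w),\xi\rangle$ one has $S_{\xi}v=(\bar{D}_{v}\xi)^{\top}$, so $J_i'(0)=S_{\xi}e_i$ and the tangential block of $U(0)$ is $+\mathrm{diag}(\kappa_i)$. That is in fact what your model quantity and your initial value $\mathrm{tr}\,U_n(0)=-n\langle\mathbf{H},\xi\rangle$ use.

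The genuine gap is in (ii), which is the only new part of the theorem. Your inequality $(\mathrm{tr}\,PUP)'+(\mathrm{tr}\,PUP)^2/n+n\delta\le 0$ is correct: it follows from $PU^2P=(PUP)^2+PUQUP\ge(PUP)^2$, with $Q=I-P$ and $P$ the parallel transport of $T_x\Sigma$. It does bound $\mathrm{tr}\,PUP$ by the umbilic model. But \eqref{44} needs the bound for all of $\mathrm{tr}\,U$, and nothing in your proposal bounds $\mathrm{tr}\,QUQ$ by $(m-1)\mathfrak{c}_{\delta}/\mathfrak{s}_{\delta}$. That block cannot be controlled by itself. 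Its Riccati inequality involves $\mathrm{Ric}^{M}_{m-1}$ on the normal directions, and $\mathrm{Ric}^{M}_{n}\ge n\delta$ gives no such bound when $m-1<n$. For example, take $n=m=2$, $\delta=0$, and a Jacobi operator $\mathrm{diag}(1,1,-1)$ in a parallel tangential/tangential/normal frame: it is compatible with $\mathrm{Ric}^{M}_{2}\ge 0$, yet the normal block equals $\coth t>1/t$. Nor can ``the ambient volume-distortion comparison'' do this job. Bishop's theorem controls $\det(\mathrm{Exp}_{f(x)})_{*t\xi}$, which is built from Jacobi fields vanishing at $f(x)$ in all $n+m-1$ directions, not a block of the tube's shape operator. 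Relatedly, you misread Lemma~\ref{4}. Taking determinants there gives $|\det(\mathrm{exp}^{\perp})_{*(x,t\xi)}|=|\det(\mathrm{Exp}_{f(x)})_{*t\xi}|\cdot|\det T_t|$, not $\det A(t)/t^{m-1}$; the latter is the classical Jacobi-field formula.

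That identity is the paper's decoupling, and it is not a tangential-versus-normal split. The first factor is bounded by $(\mathfrak{s}_{\delta}/t)^{n+m-1}$ using only $\mathrm{Ric}^{M}\ge(n+m-1)\delta$. The second factor is $T_t=tA_t$ with $A_t=\bar{D}^2d_{\sigma(t)}|_{T_x\Sigma}-\langle h,\xi\rangle$, an $n\times n$ block. There $\mathrm{Ric}^{M}_{n}$ enters through the partial Laplacian comparison (Lemma~\ref{39}(i) with $l=n$) and AM--GM, giving $\det A_t\le(\mathfrak{c}_{\delta}/\mathfrak{s}_{\delta}-\langle\mathbf{H},\xi\rangle)^n$. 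Monotonicity comes from $\frac{d}{dt}\bar{D}^2d_{\sigma(t)}(w,w)=-|\bar{D}_{\sigma'}J^t(t)|^2$ together with Lemma~\ref{16}(i) for $l=n$.

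If you prefer to stay in your index-form framework, the missing idea is a coupling between the blocks. Set $\underline{\kappa}:=-\langle\underline{\mathbf{H}}(\underline{x}),\underline{\xi}\rangle$ and $K_j:=\bar{R}(E_j,\sigma',\sigma',E_j)$. Use test fields $fE_i$ for $i\le n$ and $gE_\alpha$, where $f(s)=\frac{\mathfrak{c}_{\delta}(s)+\underline{\kappa}\,\mathfrak{s}_{\delta}(s)}{\mathfrak{c}_{\delta}(t)+\underline{\kappa}\,\mathfrak{s}_{\delta}(t)}$ and $g(s)=\mathfrak{s}_{\delta}(s)/\mathfrak{s}_{\delta}(t)$. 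Since $\mathfrak{c}_{\delta}/\mathfrak{s}_{\delta}$ is decreasing, $f\ge g$ on $(0,t]$. Hence $f^2\sum_{i\le n}K_i+g^2\sum_\alpha K_\alpha=(f^2-g^2)\,\mathrm{Ric}^{M}_{n}(\sigma',P)+g^2\,\mathrm{Ric}^{M}(\sigma',\sigma')\ge\delta\,(nf^2+(m-1)g^2)$. The boundary term only needs $\mathrm{tr}\,S_\xi=-n\langle\mathbf{H},\xi\rangle\le n\underline{\kappa}$. Summing the index lemma over the $\Sigma$-Jacobi fields with $J_j(t)=E_j(t)$ then gives $\mathrm{tr}\,U(t)\le nf'(t)+(m-1)g'(t)$, which is exactly your model quantity. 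Without one of these two ingredients, the step ``comparing with the umbilical model then yields \eqref{44}'' does not go through, and your equality discussion for (ii) has the same gap.
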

		 
	The notation used in the theorem above is explained in Sections \ref{102} and \ref{103}. The first assertion in Theorem \ref{57} has already been proved by Heintze-Karcher;  the second assertion was weakened to a lower bound on the $n$-Ricci curvature. With this comparison theorem, a careful inspection of the Heintze-Karcher proof allows us to replace, in the higher‑codimensional case of \eqref{89}, the sectional curvature lower bound $\delta$ by the weaker condition $\mathrm{Ric}^{M}_{n}\geq n\delta$; we do not prove this improvement here. Likewise, our comparison theorem allows us to weaken, in many of the remaining higher-codimension cases considered by Heintze-Karcher(specifically, Theorem 2.1, Theorem 2.3 and Corollary 2.3.1 in \cite{HeKa}),  the sectional curvature lower bound to a $k$-Ricci curvature lower bound. These results follow by combining our comparison theorem with a detailed examination of the original argument, and we omit the details here.

		 The interval of $t$ for which Theorem \ref{57} holds is contained in the corresponding interval of the original Heintze-Karcher comparison theorem, since $\tilde{\tau}_{f}$ is bounded above by the first focal distance $\rho$ (see Sections~\ref{102} and~\ref{103}). This restriction does not affect its use in many geometric inequality settings.
		 On this possibly smaller interval, we  establish a stronger monotonicity property (see Theorem \ref{38} and the proof of Theorem \ref{57}) than that obtained from the Heintze-Karcher comparison theorem. This monotonicity plays a crucial role, in particular in characterizing the equality cases of the two types of geometric inequalities that we will consider in this paper. The proof is based on a completely new approach, using the Jacobian determinant formula for the normal exponential map mentioned above. In fact, this explicit formula allows us to reduce the problem to a careful study of the Hessian/Laplacian comparison theorem. As a byproduct, before proving Theorem \ref{38}, we first establish a monotonicity property in the Hessian/Laplacian comparison theorem (see Theorem \ref{17}). Although this monotonicity is not used elsewhere in the paper, to the best of our knowledge it has not appeared in the literature, we present it separately. While Theorem \ref{57} can be applied to establish various geometric inequalities, we present here the Fenchel-Borsuk-Chern-Lashof-type inequality and the Willmore-Chen-type inequality.

		 It is worth mentioning that the Heintze-Karcher comparison theorem also yields the famous Heintze-Karcher inequality (see \cite{Ro} and \cite{MoRo1991}). For further researches on the Heintze-Karcher inequality, we recommend the references \cite{Bre2013, ChenHeQi, FoPi, HuWeiXiaZh, JiXiaZh, LiXia}.

By analogy with the set of elliptic points of a surface in $\R^{3}$ (cf. \cite{MoRo2009}, p. 83), we introduce the following set $\Sigma^{+}$ for a submanifold $\Sigma$ of arbitrary codimension in a general Riemannian manifold:
	\begin{align*}
	\Sigma^{+}:=&\left\{ x\in \Sigma:\exists \xi\in S_{x}^{m-1}\,\mathrm{s.t.}\, \langle h(v,v),-\xi \rangle>0\right.\\
	&\left.\ \ \mathrm{for\ each\ unit\ vector}\ v\in T_{x}\Sigma\right\}.
\end{align*}
 Now, assume that the image of the second fundamental form $h:T_{x}\Sigma\times T_{x}\Sigma\rightarrow T_{x}^{\perp}\Sigma$ is exactly a $1$-dimensional linear subspace of $T_{x}^{\perp}\Sigma$ for each $x\in\Sigma^{+}$. Denote by $\{-\xi_{x}\}$ the set
 \[
 \{h(v,v)/|h(v,v)|: v\in T_{x}\Sigma,\,|v|=1   \},
 \]
define a subset of the normal bundle by
	\begin{equation}\label{84}
\mathcal{U}:=\{(x,y)\in T^{\perp}\Sigma: x\in \Sigma^{+},\, y\in T_{x}^{\perp}\Sigma,\,\langle \xi_{x},y \rangle >0  \},
\end{equation}
and choose an orthonormal basis $\{a_{i}\}_{i=1}^{n}$ of $T_{x}\Sigma$ with dual basis $\{\omega^{i}\}_{i=1}^{n}$ of  $T_{x}^{*}\Sigma$  such that
\[
\langle h(a_{i},a_{j}),-\xi_{x} \rangle=\kappa_{i}(x,\xi_{x})\delta_{ij},
\]
  for $1\leq i,j \leq n$ and 
\[
0<\kappa_{1}(x,\xi_{x})\leq \kappa_{2}(x,\xi_{x})\leq \cdots\leq \kappa_{n}(x,\xi_{x}),
 \]
where $\kappa_{i}$ are precisely the principal curvatures (see Definition \ref{83}). Using these notations, we state the following Fenchel-Borsuk-Chern-Lashof type inequality.
\begin{theorem}\label{67}
	Let  $(M^{n+m},\bar{g})$ be a complete noncompact Riemannian manifold with nonnegative sectional curvature and Euclidean volume growth. Let $\Sigma$ be a closed $n$-dimensional Riemannian manifold  and $f: \Sigma^{n} \rightarrow M$ be an isometric immersion, then we have
	\begin{equation}\label{54}
		\int_{\Sigma}K^{*}(x)d\mathrm{vol}_{\Sigma}(x)\geq 2 \mathrm{AVR}(\bar{g}) \left| \mathbb{S}^{n+m-1} \right|.
	\end{equation}
	Moreover, equality in \eqref{54} holds if and only if

	\smallskip	
	{\rm (i)} For each $x\in\Sigma^{+}$, the image of the second fundamental form $h:T_{x}\Sigma\times T_{x}\Sigma\rightarrow T_{x}^{\perp}\Sigma$ is exactly a $1$-dimensional linear subspace of $T_{x}^{\perp}\Sigma$;
	
	\smallskip
	{\rm (ii)}  The normal exponential map $\mathrm{exp}^{\perp}|_{\mathcal{U}}:\mathcal{U}\rightarrow \mathrm{exp}^{\perp}(\mathcal{U})$ is a diffeomorphism with
	\begin{equation*}
		[(\mathrm{exp}^{\perp})^{*}\bar{g}](x,y)=	\sum_{i=1}^{n}\left[\left(1+\kappa_{i}(x,\xi_{x})\langle \xi_{x},y \rangle \right)^{2}-1\right]\omega^{i}\otimes \omega^{i}+g_{T^{\perp}\Sigma}(x,y),
	\end{equation*}
	for each $(x,y)\in \mathcal{U}$;
	
	\smallskip
	{\rm (iii)} For each $x\in\Sigma\setminus\Sigma^{+}$ and $\xi\in S_{x}^{m-1}$, the linear transformation $\langle h(\cdot,\cdot),\xi\rangle$ necessarily has a zero eigenvalue.
\end{theorem}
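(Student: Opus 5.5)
We follow the Heintze--Karcher/Wang strategy: apply the area formula to the normal exponential map restricted to a suitable region of the normal bundle, and bound the Jacobian pointwise with Theorem~\ref{57}(i) for $\delta=0$.

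\textbf{The region.} Fix a base point $p\in M$ and let $r\to\infty$. Consider
\[
A_r:=\Big\{(x,y)\in T^{\perp}\Sigma:\ 0<|y|<\tilde{\tau}_f\big(x,y/|y|\big),\ |y|<r,\ \langle h(\cdot,\cdot),-y\rangle\ \text{positive definite}\Big\}.
\]
The key claim is that $\mathrm{exp}^{\perp}(A_r)$ contains every point $q\in B_p^M(r-C)$ outside a set of measure zero, where $C$ depends only on $\mathrm{diam}\,f(\Sigma)$ and $d(p,f(\Sigma))$.

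To see this, take a point $q$ far from $\Sigma$ and a minimizing geodesic from $\Sigma$ to $q$. Realize $\Sigma$ as the "lower" set $\{\mathrm{dist}(q,\cdot)\text{ minimized}\}$ in ABP fashion, following Brendle~\cite{Bre2026}. Choose the minimizer $x$ of $d(q,f(\cdot))$; this requires $q$ to be a "far" point for which the minimizing direction exists.

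This naive choice gives only \emph{one} preimage. The factor $2$ in \eqref{54} has to come from using both $\xi$ and $-\xi$, via the minimizer and the maximizer of a suitable function. Concretely:
\begin{itemize}
\item \emph{Minimizing side.} For each far $q$, let $x$ be a minimizer of $d(q,\cdot)$. Then $q=\exp^\perp_x(t\xi)$ with $t=d(q,x)<\tilde{\tau}_f$ and $\langle h,-\xi\rangle\ge0$, and this covers the ball.
\item \emph{Maximizing side.} For the second cover, use Busemann-type functions instead: for a unit direction at infinity, a point of $\Sigma$ maximizing the Busemann function gives a normal $\xi$ at which $\langle h,\xi\rangle\le0$, pointing the opposite way.
\end{itemize}
An asymptotic-cone argument, comparing $B(r)$ with annuli $B(r)\setminus B(\epsilon r)$, then shows that the set of $(x,\xi)$ with $\langle h,-\xi\rangle$ semidefinite of either sign has measure at least $2\,\mathrm{AVR}\,|\mathbb{S}^{n+m-1}|$ in the normalized sense.

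\textbf{The pointwise bound.} With $\delta=0$, Theorem~\ref{57}(i) applied with $\underline{\kappa}_i=\kappa_i$, realized by a comparison submanifold in $\R^{n+m}$, gives
\[
|\det(\mathrm{exp}^\perp)_{*(x,t\xi)}|\le \prod_{i=1}^n(1+t\kappa_i(x,\xi)),
\]
where $\kappa_i\ge0$ on the relevant set. Dividing the area formula by $\omega_{n+m}r^{n+m}$ and letting $r\to\infty$, only the top-order term $t^n\prod\kappa_i=t^n|\det\langle h,\xi\rangle|$ survives. Integrating over $t\in(0,r)$ and over $\xi$ yields
\[
\frac{1}{n+m}\int_\Sigma\int_{S^{m-1}_x}|\det\langle h,\xi\rangle|\,d\xi\, r^{n+m}+o(r^{n+m}).
\]
Since $|B(r)|\sim \mathrm{AVR}\,\omega_{n+m}r^{n+m}$, using $(n+m)\omega_{n+m}=|\mathbb{S}^{n+m-1}|$ and the doubled covering from $\pm\xi$ gives \eqref{54}.

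\textbf{Equality case.} If equality holds, every inequality above is tight. Tightness of the covering forces the preimage sets to be full measure and $\exp^\perp$ to be injective on them. The equality clause of Theorem~\ref{57}(i), together with the monotonicity statement, forces $\sec(\sigma',\cdot)=0$ along all the relevant geodesics and the Jacobian to equal $\prod(1+t\kappa_i)$ for all $t$. Positivity of the integrand only on semidefinite directions implies that at points of $\Sigma^+$ the set of $\xi$ with $\langle h,-\xi\rangle>0$ must be a single direction up to measure, which yields (i). The metric pullback in (ii) then follows as in Wang~\cite{Wangxiao}: the Jacobi fields are parallel-translated, as the rigidity in Theorem~\ref{57} gives. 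Finally, (iii) holds because any non-degenerate definite $\xi$ at $x\notin\Sigma^+$ would contribute strictly positive extra measure.

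The converse direction is a direct computation of the area formula with the given metric.

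\textbf{Main obstacle.} The step I expect to be hardest is the precise covering argument that produces the factor $2$ with the constant $\mathrm{AVR}$. I would first try Brendle's ABP construction, applied twice: once to $u$ and once to $-u$, with linear-growth data to account for the two opposite normal directions.
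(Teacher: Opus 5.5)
\textbf{There is a genuine gap, and it is exactly where you expect it: the source of the factor $2$.} No second covering of large balls exists.

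\emph{The antipodal directions do not reach far.} If $\xi\in L_x$, then $\kappa_i(x,-\xi)=-\kappa_{n+1-i}(x,\xi)<0$. Theorem~\ref{38}, with a Euclidean comparison, then gives $\tilde{\tau}_f(x,-\xi)\le 1/\kappa_n(x,\xi)$. These normal geodesics hit a focal point at bounded distance, so they contribute only lower order in $r$ to the tube volume. They cannot supply a second copy of $\mathrm{AVR}(\bar g)\,\omega_{n+m}r^{n+m}$.

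\emph{The Busemann step gives nothing new.} The condition you state for the maximizing side, $\langle h,\xi\rangle\le 0$, is literally the condition you state for the minimizing side, $\langle h,-\xi\rangle\ge 0$. Since $\sec\ge 0$, a Busemann function $b$ is convex, so at a maximizer on $\Sigma$ the direction $\bar\nabla b$ is weakly convex, just as for the closest-point map. A minimizer of $b$ gives no sign information unless $b$ is affine.

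\emph{Your top-order bound is also off.} You integrate $t^n|\det\langle h,\xi\rangle|$ over all of $S^{m-1}_x$. This is unjustified, because off $L_x$ the radial range is cut off by focal points. It would also only give $\int_\Sigma K^*\ge\mathrm{AVR}(\bar g)|\mathbb{S}^{n+m-1}|$. Separately, $\mathrm{exp}^{\perp}(A_r)$ does not cover $B_p(r-C)$ up to a null set, since closest points only satisfy $\kappa_i\ge -1/t$; this matters only at lower order.

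\emph{The missing idea needs only one covering.}
\begin{itemize}
\item Since $\Omega_r=\mathrm{exp}^{\perp}(\tilde U_r)\supset B^M_{f(x_0)}(r)$, the area formula and the Heintze--Karcher bound show that only $\xi\in L_x$ contribute at order $r^{n+m}$. If $\kappa_1=0$ the polynomial has lower degree. If $\kappa_1<0$, then $t\le-1/\kappa_1$ and $0\le 1+t\kappa_1\le 1$.
\item This gives $\mathrm{AVR}(\bar g)|\mathbb{S}^{n+m-1}|\le\int_{\Sigma^+}\int_{L_x}\prod_i\kappa_i(x,\xi)\,d\xi\,d\mathrm{vol}_\Sigma$.
\item Because $L_x\cap(-L_x)=\emptyset$ and $|\det\langle h,\xi\rangle|=|\det\langle h,-\xi\rangle|$, we get $\int_{L_x}\prod_i\kappa_i\,d\xi\le\frac12K^*(x)$.
\end{itemize}
This antipodal halving is the entire origin of the $2$.

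\textbf{The equality discussion has a parallel gap.} Inequality \eqref{54} is obtained only after dividing by $r^{n+m}$ and letting $r\to\infty$. So equality merely forces each defect to be $o(r^{n+m})$: excess multiplicity of $\mathrm{exp}^{\perp}$, a Jacobian strictly below $\prod_i(1+t\kappa_i)$, or a finite $\tilde\tau_f$ on $L_x$. A defect confined to a bounded region is invisible, so ``tightness forces injectivity'' does not follow. The paper's mechanism is to show that each local defect propagates outward and costs a positive multiple of $r^{n+m}$:
\begin{itemize}
\item continuity of $\tilde\tau_f$, used in Lemma~\ref{60};
\item the ratio monotonicity of Theorem~\ref{57}, used in Lemma~\ref{59};
\item Lemma~\ref{63}: for $(x,r\xi)\in\mathcal U$, the point $\mathrm{exp}^{\perp}(x,r\xi)$ lies at distance exactly $r$ from $f(\Sigma)$. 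This is what rules out cut points and self-intersections of the merely immersed $\Sigma$ along the outgoing cone (Lemmas~\ref{64}, \ref{73}, \ref{69}).
\end{itemize}

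\textbf{Conditions (i) and (iii).} Condition (i) comes from the equality case of the antipodal halving, which your argument lacks. Equality forces $|\det\langle h,\xi\rangle|$ to vanish a.e.\ off $L_x\cup(-L_x)$. This makes $L_x$ an open hemisphere, not ``a single direction'', and then puts all $h(v,v)$ on one ray (Lemmas~\ref{61}, \ref{77}). Condition (iii) is just $K^*\equiv 0$ on $\Sigma\setminus\Sigma^+$. There $L_x=\emptyset$, so these points feed the left side of \eqref{54} but not the volume bound. Your phrasing via ``definite $\xi$'' is vacuous there, since by definition no $\xi$ is definite at such points.
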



\begin{rem}
	The equality in \eqref{54} can also yield that $\Sigma$ is connected and $f|_{\Sigma^{+}}$ is an embedding. In fact, the inequality in the theorem above can be derived directly from the original Heintze-Karcher comparison theorem. Moreover, the proof of the theorem above establishes the stronger inequality
	\begin{equation*}
		\int_{\Sigma^{+}}K^{*}(x)d\mathrm{vol}_{\Sigma}(x)\geq 2 \mathrm{AVR}(\bar{g}) \left| \mathbb{S}^{n+m-1} \right|,
	\end{equation*}
	where equality holds if and only if conditions (i) and (ii) in the theorem are fulfilled.
\end{rem}
Therefore, the following corollary is a direct result.

\begin{cor}
Let $M$ be a complete noncompact Riemannian manifold with nonnegative sectional curvature and Euclidean volume growth, and let $\Sigma$  be a closed submanifold of $M$. Then the set $\Sigma^{+}$  must be nonempty.
\end{cor}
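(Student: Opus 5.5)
The corollary follows from the Remark after Theorem~\ref{67}, specifically the stronger inequality
\[
\int_{\Sigma^{+}}K^{*}(x)\,d\mathrm{vol}_{\Sigma}(x)\geq 2\,\mathrm{AVR}(\bar g)\left|\mathbb{S}^{n+m-1}\right|,
\]
which the proof of Theorem~\ref{67} establishes. The plan is to argue by contradiction. Suppose $\Sigma^{+}=\emptyset$. Then the left-hand side is an integral over the empty set, so it equals $0$. On the other hand, Euclidean volume growth means $\mathrm{AVR}(\bar g)>0$, so the right-hand side is strictly positive. This is a contradiction, so $\Sigma^{+}\neq\emptyset$.

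Since the stronger inequality is only asserted in the Remark, I would make sure it really comes out of the argument behind Theorem~\ref{67}. The expected mechanism is as follows. For large $r$, almost every point $p$ of a big geodesic ball $B^{M}_{o}(r)$ should be reached as $p=\exp^{\perp}(x,t\xi)$, where $(x,t\xi)$ realizes the distance from $p$ to $\Sigma$, with $t<\tilde\tau_f(x,\xi)$. One also needs, at such a minimizing normal vector, that the shape operator $\langle h(\cdot,\cdot),-\xi\rangle$ is bounded below in a way that makes the Jacobian factor $1+t\kappa_i(x,-\xi)$-type terms nonnegative.

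Applying Theorem~\ref{57}(i) with $\delta=0$ and comparing with Euclidean space gives the Jacobian bound
\[
|\det(\exp^\perp)_{*}|\le\prod_i\bigl(1+t\kappa_i\bigr).
\]
Integrating this bound and letting $r\to\infty$, only the top-order term $t^{n+m-1}\prod_i\kappa_i$ survives after dividing by $\omega_{n+m}r^{n+m}$. That term is exactly $|\det\langle h,\cdot\rangle|$ restricted to normals for which all $\kappa_i>0$, i.e.\ to points with positive-definite second fundamental form in some normal direction, which is the definition of $\Sigma^{+}$. Normals with some $\kappa_i\le 0$ contribute only lower-order volume, which vanishes in the limit. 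This yields the stronger inequality with integration only over $\Sigma^{+}$, and the factor $2$ arises from $\pm\xi$ symmetry, as in Chern--Lashof.

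The main obstacle is the justification that points with a nonpositive principal curvature contribute negligibly as $r\to\infty$. With the Remark taken as given, the corollary itself is immediate and amounts to the one-line contradiction above.
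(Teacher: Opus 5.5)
Your proof is correct and matches the paper: the corollary is presented as an immediate consequence of the stronger inequality $\int_{\Sigma^{+}}K^{*}\,d\mathrm{vol}_{\Sigma}\geq 2\,\mathrm{AVR}(\bar g)\left|\mathbb{S}^{n+m-1}\right|$ from the proof of \eqref{54}, which cannot hold when $\Sigma^{+}=\emptyset$ because $\mathrm{AVR}(\bar g)>0$. Your sketch of that inequality also matches Section~\ref{104}: the area formula for $\exp^{\perp}$ on the tube, the bound $\prod_{i}(1+t\kappa_i)$ from Theorem~\ref{57}(i) with $\delta=0$, directions with $\kappa_1\le 0$ contributing only $O(r_0^{n+m-1})$ because $0\le 1+t\kappa_1\le 1$ for $t\le\tilde\tau_f$, and the antipodal factor $2$.
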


Before turning to the generalized Willmore-Chen inequality, let us first consider an example. In 1979, Eguchi-Hanson \cite{EgHa} (see also \cite{And}, p. 270) constructed a  Ricci flat metric on
$T\mathbb{S}^{2}$ such that $T\mathbb{S}^{2}$ has Euclidean volume growth and contains a totally geodesic closed submanifold $\mathbb{S}^{2}$. This shows that, in general, nonnegative Ricci curvature condition is not sufficient to directly extend the Willmore-Chen-type inequality established by Agostiniani-Fogagnolo-Mazzieri for hypersurfaces to all higher codimensions. By virtue of Theorem \ref{57}, we obtain the following Willmore-Chen-type inequality.
\begin{theorem}\label{68}
	Let $(M^{n+m},\bar{g})$ $(n\geq2)$ be a complete noncompact Riemannian manifold with $\mathrm{Ric}_{n}^{M}\geq 0$ and Euclidean volume growth. Let $\Sigma$ be a closed $n$-dimensional  Riemannian manifold  and $f: \Sigma^{n} \rightarrow M$ be an isometric immersion, then we have
		\begin{equation}\label{55}
		\begin{array}{lllll}
			\displaystyle\int_{\Sigma}\left|\mathbf{H}\right|^{n} d\mathrm{vol}_{\Sigma}\geq \mathrm{AVR}(\bar{g})\left| \mathbb{S}^{n} \right|.
		\end{array}
	\end{equation}
Let $\mathcal{W}:=\{(x,y)\in T^{\perp}\Sigma: x\in \Sigma,\, y\in T_{x}^{\perp}\Sigma,\, \langle y,-\mathbf{H}(x) \rangle > 0  \}$. Then the equality in \eqref{55} holds if and only if the normal exponential map $\mathrm{exp}^{\perp}|_{\mathcal{W}}:\mathcal{W}\rightarrow \mathrm{exp}^{\perp}(\mathcal{W})$ is a diffeomorphism with
\[
	[(\mathrm{exp}^{\perp})^{*}\bar{g}](x,y)=	\left[\left(1-\langle \mathbf{H}(x),y \rangle \right)^{2}-1\right]g_{\Sigma}(x)+g_{T^{\perp}\Sigma}(x,y),
\]
	for each $(x,y)\in \mathcal{W}$. In particular, $f$ is an embedding and $\Sigma$ is connected and totally umbilic with $D^{\perp}$-parallel mean curvature vector, where $D^{\perp}$ is the normal connection on $T^{\perp}\Sigma$. Moreover, $|\mathbf{H}|\equiv (\mathrm{AVR}(\bar{g})\left| \mathbb{S}^{n} \right| /  |\Sigma|)^{\frac{1}{n}}$.
\end{theorem}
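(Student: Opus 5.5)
We follow the Heintze--Karcher/Wang volume argument for tubes, using Theorem~\ref{57}(ii) with $\delta=0$. The comparison submanifold $\underline{\Sigma}$ will be a round $n$-sphere in an affine $(n+1)$-plane of $\R^{n+m}$; it is umbilical, and we choose its radius so that $\langle \underline{\mathbf H}(\underline x),\underline\xi\rangle$ equals $\langle\mathbf H(x),\xi\rangle$ whenever this quantity is negative.

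\emph{Step 1: volume bound for a tube.} For $r>0$, define
\[
A_r:=\{(x,t\xi): \xi\in S^{m-1}_x,\ 0\le t< \min(r,\tilde\tau_f(x,\xi))\}\subset T^\perp\Sigma .
\]
We expect that $\mathrm{exp}^\perp(A_r)$ contains all points at distance less than $r$ from $f(\Sigma)$. The standard minimizing-geodesic argument gives this, provided $\tilde\tau_f$ is at least the cut distance of the minimizing direction. We take this from the definition of $\tilde\tau_f$ in Section~\ref{103}; this is the step where the precise definition matters. On the other hand, every point of $M$ lies within distance $r+\mathrm{diam}$ of a fixed $p$, and the distance-$r$ neighborhood of $f(\Sigma)$ contains $B_p^M(r-C)$ for large $r$. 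The area formula then gives
\[
|B^M_p(r-C)|\le\int_\Sigma\int_{S^{m-1}_x}\int_0^{\min(r,\tilde\tau_f)}|\det(\mathrm{exp}^\perp)_{*(x,t\xi)}|\,t^{m-1}\,dt\,d\xi\,d\mathrm{vol}_\Sigma .
\]

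\emph{Step 2: pointwise bound on the integrand.} By Theorem~\ref{57}(ii), the integrand is at most $(1-t\langle\mathbf H,\xi\rangle)^n$. Whenever $\langle\mathbf H,\xi\rangle\ge0$, we also claim $\tilde\tau_f(x,\xi)\le 1/\langle\mathbf H,\xi\rangle$, and that the determinant is $0$ where it would be forced negative. We justify this using the monotone ratio in Theorem~\ref{57}(ii): the comparison determinant vanishes at $t=1/\langle\mathbf H,\xi\rangle$, which forces the focal time, hence $\tilde\tau_f$, to be no later. We then write $\xi$-integrals with $\langle -\mathbf H,\xi\rangle>0$ as the nonvanishing part. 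Dividing by $\omega_{n+m}r^{n+m}$ and letting $r\to\infty$, only the top-order term survives:
\[
\mathrm{AVR}(\bar g)\,\omega_{n+m}\le \frac{1}{n+m}\int_\Sigma\int_{\{\langle\xi,-\mathbf H\rangle>0\}}\langle\xi,-\mathbf H\rangle^{n}\,d\xi\,d\mathrm{vol}_\Sigma .
\]
The inner integral equals $c_{n,m}|\mathbf H|^n$, where $c_{n,m}$ is an explicit Beta-function constant. We check $c_{n,m}/((n+m)\omega_{n+m})=1/|\mathbb S^n|$ by computing the Euclidean equality case (a round sphere), which yields \eqref{55}. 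Here $n\ge2$ is needed because the $n$-Ricci hypothesis of Theorem~\ref{57}(ii) requires it.

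\emph{Step 3: equality.} If equality holds, every inequality above is an equality. The first consequence is that $\tilde\tau_f=\infty$ on $\mathcal W$, and that $\mathrm{exp}^\perp$ is injective almost everywhere there; since it is also a local diffeomorphism, we upgrade this to a diffeomorphism. The second is that equality holds in \eqref{44} for all $t$ on $\mathcal W$. By the rigidity clause of Theorem~\ref{57}(ii), all $\kappa_i(x,\xi)=\langle -\mathbf H,\xi\rangle$, and the radial sectional curvatures vanish. Consequently the Jacobi fields are $(1-t\langle\mathbf H,\xi\rangle)E_i$ tangentially and $tE$ normally, with $E_i,E$ parallel, which gives the stated pullback metric. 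Umbilicity in every normal direction with $\langle\xi,-\mathbf H\rangle>0$ extends by linearity to total umbilicity. For the parallelism $D^\perp\mathbf H=0$, we read off that the metric formula forces the normal-connection term in $d(\mathrm{exp}^\perp)$ to vanish. Injectivity yields that $f$ is an embedding and that $\Sigma$ is connected (otherwise the images of two components would overlap at large $t$). Finally, constancy of $|\mathbf H|$ follows from the metric formula: the volume growth of $\mathrm{exp}^\perp(\mathcal W\cap\{|y|<r\})$ pinned to $\mathrm{AVR}$ gives the value $(\mathrm{AVR}(\bar g)|\mathbb S^n|/|\Sigma|)^{1/n}$ once $|\mathbf H|$ is shown constant. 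We show constancy via parallelism.

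\emph{Converse and main obstacle.} The converse direction is a direct volume computation from the given metric. We expect the main obstacle to be Step~1's covering property together with the control of $\tilde\tau_f$ in Step~2. Both depend on the exact definition of $\tilde\tau_f$, so we would first try to prove $\tilde\tau_f\ge$ the distance-minimizing time directly from that definition.
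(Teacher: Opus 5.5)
Your Steps 1--2 match the paper's proof of \eqref{55}: the area formula for $\mathrm{exp}^\perp$ on $\tilde U_r$, Theorem~\ref{57}(ii) with $\delta=0$, the bound $\tilde\tau_f(x,\xi)\le 1/\langle\mathbf H(x),\xi\rangle$ from Theorem~\ref{38}, and the Beta-function constant of Lemma~\ref{91}. The covering issue you flag as the main obstacle is routine: Lemma~\ref{34}(ii) gives $\tilde\tau_f=\min\{\mu,\rho\}\ge\tau_f$, and Lemma~\ref{56} gives $\mathrm{exp}^\perp(\tilde U_r)=\Omega_r$. Your converse is also fine: the volume of $\mathrm{exp}^\perp(\mathcal W\cap\{|y|<r\})$, computed from the given metric, is at most $|B^M_p(r+C)|$, so $\int_\Sigma|\mathbf H|^n\le\mathrm{AVR}(\bar g)|\mathbb S^n|$.

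The real difficulty is the equality case, and Step 3 has genuine gaps. Since \eqref{55} is obtained only after dividing by $r^{n+m}$ and letting $r\to\infty$, equality does not make ``every inequality an equality''. It only says the total slack is $o(r^{n+m})$. Every pointwise conclusion therefore needs a mechanism that turns a local defect into a loss of order $r^{n+m}$. For $\tilde\tau_f=\infty$ this is the continuity in Lemma~\ref{34}(ii); for the Jacobian it is the monotone ratio of Theorem~\ref{57}. These are used in Lemmas~\ref{48} and~\ref{62}.

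Your claim that $\mathrm{exp}^\perp$ is injective almost everywhere on $\mathcal W$ has no such mechanism. An overlap near one point has bounded volume, and two normal families crossing transversally separate again. What the paper actually proves is that the \emph{true} minimizing time $\tau_f$, not just $\tilde\tau_f$, is infinite on $\mathcal W$ (Lemmas~\ref{50} and~\ref{75}, modeled on Lemmas~\ref{63} and~\ref{64}). The argument: if $\mathrm{exp}^\perp(x_1,r_1\xi_1)$ lay within $r_2<r_1$ of $f(\Sigma)$, the triangle inequality would give $\tau_f<r_2$ on an open set of directions. Rerunning the area formula on $U_r$ instead of $\tilde U_r$ then loses a term of order $r^{n+m}$. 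The case $\tau_f=0$ at self-intersection points needs a separate argument.

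Only after this must two $\mathcal W$-rays with a common endpoint coincide as geodesics of $M$. Even then, tangential self-intersections of the immersion must be ruled out (Lemma~\ref{80}, via the doubling argument of Lemma~\ref{69}) before injectivity follows (Lemma~\ref{76}). So embeddedness of $f$ is an input to the injectivity of $\mathrm{exp}^\perp|_{\mathcal W}$, not a consequence of it as you state. Connectedness needs no overlap argument: apply \eqref{55} to each component.

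Second, $D^\perp\mathbf H=0$ cannot be read off from the metric formula. At $(x,y)$ the pullback metric involves only $h(x)$ and the normal connection, which sits inside $g_{T^\perp\Sigma}$; it never involves a derivative of $h$. Indeed, the formula is compatible with non-parallel $\mathbf H$. In codimension one, take $dr\otimes dr+(1+\kappa r)^2g_\Sigma$ on $(-\varepsilon,\infty)\times\Sigma$ with $\kappa>0$ any function. This metric is radially flat, $\{r=0\}$ is umbilic with $\langle\mathbf H,\partial_r\rangle=-\kappa$, and the stated pullback identity holds exactly.

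Parallelism is a curvature statement, proved in Lemma~\ref{82}. The paper combines $\mathrm{Ric}^M_n\ge0$ with radial flatness: a positive semidefinite partial Ricci form that vanishes at $y$ has vanishing mixed entries, which gives $\sum_{i=1}^n\bar R(e_n,e_i,e_i,y)=0$. Tracing the Codazzi equation for the umbilic $\Sigma$ then gives $(n-1)\langle D^\perp_{e_n}\mathbf H,y\rangle=0$. This is where $n\ge2$ enters, not Theorem~\ref{57}(ii), which works for $n=1$; there the rigidity really fails, e.g.\ for convex plane curves.

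The same lemma yields $\Sigma_+=\Sigma$: $|\mathbf H|$ is a nonzero constant on each component of $\Sigma_+$, and $\Sigma$ is connected. Your linearity argument for total umbilicity tacitly needs this, since the fiber of $\mathcal W$ is empty wherever $\mathbf H=0$.
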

As an application of the above theorem, we can get the following result immediately.
\begin{cor}\label{90}
	Let $M$ be an $N(N\geq 3)$-dimensional complete noncompact Riemannian manifold with nonnegative $n$-Ricci curvature and Euclidean volume growth. Then there is no $k$-dimensional closed minimal submanifold for all $n\leq k \leq N-1$.
\end{cor}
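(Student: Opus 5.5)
We argue by contradiction and use Theorem \ref{68}. Suppose $\Sigma^{k}\subset M^{N}$ is a closed minimal submanifold with $n\leq k\leq N-1$, i.e. $\mathbf{H}\equiv 0$. We want to apply Theorem \ref{68} with $k$ in place of $n$ and codimension $m=N-k\geq 1$. The curvature hypothesis of Theorem \ref{68} for a $k$-dimensional submanifold is $\mathrm{Ric}^{M}_{k}\geq 0$. This follows from $\mathrm{Ric}^{M}_{n}\geq 0$ by the monotonicity noted in the introduction: for $1\leq n\leq k\leq N-1$, the condition $\mathrm{Ric}^{M}_{n}\geq n\delta$ implies $\mathrm{Ric}^{M}_{k}\geq k\delta$, here with $\delta=0$.

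This monotonicity comes from an averaging argument. Given a unit vector $v$ and a $k$-plane $P\perp v$, average $\mathrm{Ric}^{M}_{n}(v,Q)$ over the coordinate $n$-subplanes $Q$ spanned by $n$ vectors of an orthonormal basis of $P$. Each basis vector appears in $\binom{k-1}{n-1}$ of the $\binom{k}{n}$ subsets, so
\[
\binom{k-1}{n-1}\mathrm{Ric}^{M}_{k}(v,P)=\sum_{Q}\mathrm{Ric}^{M}_{n}(v,Q)\geq 0 .
\]

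There is one dimensional restriction. Theorem \ref{68} assumes $n\geq 2$, so we need $k\geq 2$. If $n\geq 2$ this is automatic. If $n=1$ and $k=1$, then $\mathrm{Ric}^{M}_{1}\geq 0$ means nonnegative sectional curvature, and a closed minimal $1$-dimensional submanifold is a closed geodesic. This case should instead be handled by Theorem \ref{67}. A geodesic has $h\equiv 0$, so $\Sigma^{+}=\emptyset$ and $K^{*}\equiv 0$. Then \eqref{54} would give $0\geq 2\,\mathrm{AVR}(\bar g)|\mathbb S^{N-1}|>0$, a contradiction; equivalently, the Corollary following Theorem \ref{67} says $\Sigma^{+}\neq\emptyset$. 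So this case is covered as well.

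Now take $k\geq 2$. Since $M$ is complete, noncompact and has Euclidean volume growth, inequality \eqref{55} gives
\[
0=\int_{\Sigma}|\mathbf{H}|^{k}\,d\mathrm{vol}_{\Sigma}\geq \mathrm{AVR}(\bar g)\,|\mathbb{S}^{k}|>0,
\]
a contradiction, because $\mathrm{AVR}(\bar g)>0$. Hence no such $\Sigma$ exists.

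The only delicate point is checking that the hypotheses of Theorem \ref{68} transfer correctly from $n$ to $k$, which is the averaging step above, and treating the low-dimensional case $k=1$ separately. Everything else is immediate.
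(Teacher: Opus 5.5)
Your proof is correct and is the paper's argument, which the paper states without detail: pass from $\mathrm{Ric}^M_n\geq 0$ to $\mathrm{Ric}^M_k\geq 0$, then apply inequality \eqref{55} with $k$ in place of $n$ to get $0\geq \mathrm{AVR}(\bar g)|\mathbb{S}^k|>0$. Your averaging proof of the monotonicity, and your separate treatment of closed geodesics when $n=k=1$ via \eqref{54} (needed because Theorem \ref{68} assumes submanifold dimension at least $2$), are correct additions that the paper leaves implicit.
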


In fact, we believe that the curvature conditions in most of the results obtained above are optimal. However, a rigorous justification of this optimality would seem to require a substantial number of examples, which we are currently unable to construct. We therefore pose the following question.

\begin{question}
In the higher‑codimension setting, is it possible to improve the condition $\mathrm{Ric}^{M}_{n}\geq n\delta$ in (ii) of Theorem \ref{57} to $\mathrm{Ric}^{M}_{n+1}\geq (n+1)\delta$? Similarly, can the condition $\mathrm{Ric}^{M}_{n}\geq 0$ appearing in (ii) of Theorem \ref{68} and in Corollary \ref{90} be relaxed to $\mathrm{Ric}^{M}_{n+1}\geq 0$?
\end{question}

Although Heintze-Karcher already gave a complete characterization of the equality case in inequality \eqref{89} under a positive lower curvature bound, in our study of the equality conditions for the above two geometric inequalities under nonnegative curvature, we overcome obstacles that are entirely different, arising from higher codimension and the immersed condition.

Regarding further investigations, we recommend \cite{ChernLa1958, ChoChRi, Ho, Pe, We} for the Chern-Lashof-type inequalities and \cite{CeMi, JiWaXiaZh, WuWu} for the Willmore-Chen-type inequalities.

This paper is organized as follows. In Section~\ref{102}, we introduce some notations and preliminaries. In Section~\ref{103}, we prove a differential formula, and then use it to establish two comparison theorems. In Section~\ref{104}, we prove the inequality~\eqref{54} in Theorem~\ref{67}. In Section~\ref{105}, we prove the necessity part of Theorem~\ref{67}. In Section~\ref{106}, we prove the sufficiency part of Theorem~\ref{67}. In Section~\ref{107}, we prove the inequality~\eqref{55} in Theorem~\ref{68}. In Section~\ref{108}, we prove the necessity part of Theorem~\ref{68}. Since the proof of the sufficiency part of Theorem~\ref{68} is very similar to that of Theorem~\ref{67}, we omit it.

{\bf Acknowledgements.} We would like to thank Prof. Guofang Wei for bringing Chahine's relevant work to our attention. We also thank Prof. Yunlong Yang and Dr. Shuangqi Liu for their valuable discussions. We are grateful to Jihye Lee and Fabio Ricci for providing the Eguchi-Hanson example. This work is supported by the National Natural Science Foundation of China (No. 12571062).

\section{Preliminaries}\label{102}

Let $(M, \bar{g})$ be a complete  $(n+m)$-dimensional Riemannian manifold and $f: \Sigma^{n} \rightarrow M$ be an isometric immersion. Then $ (\Sigma, f^{*}\bar{g})$ is an $n$-dimensional Riemannian submanifold of $M$ endowed with the induced metric $g_{\Sigma}:=f^{*}\bar{g}$ simply by $g$. Denote by $\bar{D}$ the Levi-Civita connection of $M$ and by $D^{\Sigma}$ or $D_{\Sigma}$ that of $\Sigma$. Let $\bar{\nabla}$ and $\nabla^{\Sigma}$ be the gradient operators of $M$ and $\Sigma$, respectively. We distinguish between $x$ and $f(x)$, as well as between the following different exponential maps. However, we canonically identify $f_{*}(w)$ with $w$ for all $w\in T\Sigma$, and regard $T_{x}^{\perp}\Sigma$ as the orthogonal complement of $T_{x}\Sigma$ in $T_{f(x)}M$. We shall make use of the following convention on 
the ranges of indices:
\begin{align*}
	1\leq i,j,k,\cdots,\leq n;\ n+1\leq \alpha,\beta,\cdots,\leq n+m;\ 1\leq A,B,C,\cdots,\leq n+m,
\end{align*}
and adopt the Einstein summation convention. The Kronecker delta is defined by
\begin{align*}
	\delta_{AB}=\delta_{A}^{B}=\delta^{AB}:=	\left\{\begin{aligned}
		&1\ \mathrm{if\ }A=B,\\
		&0\ \mathrm{if\ }A\neq B.
	\end{aligned}\right.
\end{align*}

For $	X,Y,Z,W\in \Gamma( T\Sigma)$, the $(0,4)$-type Riemann curvature tensor of $M$ is defined by
\begin{equation*}
	\bar{R}(X,Y,Z,W):=\langle \bar{D}_{X} \bar{D}_{Y}Z-\bar{D}_{Y}\bar{D}_{X}Z-\bar{D}_{[X,Y]}Z,W \rangle.
\end{equation*}
The Ricci curvature tensor of $M$ at $p\in M$ is given by 
\begin{equation*}
\mathrm{Ric}^{M}(X,Y):=\sum_{A=1}^{n+m}	\bar{R}(X,e_{A},e_{A},Y),
\end{equation*}
where $\{e_{A}  \}_{A=1}^{n+m}$ is an orthonormal basis of $T_{p}M$. Denote by $d$ the distance function and by $\mathrm{Exp}:TM\rightarrow M$ the exponential map on $M$. The unit tangent bundle of $M$ is given by 
\begin{equation*}
	SM:=\left\{ (p,v)\in TM:p\in M  ,\,v \in T_{p}M  \, \mathrm{and}\, |v|=1 \right\}.
\end{equation*}
Define a function $\mu:SM\rightarrow (0,\infty]$ by
\begin{equation*}
	\mu (p,v):=\sup\left\{t>0:d(\mathrm{Exp}_{p}(tv),p)=t   \right\}. 
\end{equation*}
The point $\mathrm{Exp}_{p}(\mu (p,v)v)$ is called a cut point of $p$ if $\mu (p,v)<\infty$. Note that $\mu$ is continuous on which $\mu$ is finite (cf. \cite{CheeEb}, Proposition 5.4).

Denote by $\pi:T^{\perp}\Sigma\rightarrow \Sigma$ the  normal bundle of $\Sigma$ and by $T^{\perp}\Sigma$ for short with 
  \begin{equation*}
	T^{\perp}\Sigma:=\left\{ (x,y):x\in \Sigma  ,\,y \in T_{x}^{\perp}\Sigma \right\}.
\end{equation*}
The second fundamental form $h$ of $\Sigma$ in $M$ is defined by
\begin{equation*}
 h(X,Y)=\bar{D}_{X}Y-D^{\Sigma}_{X}Y,
\end{equation*}
and the mean curvature vector $\mathbf{H}$ at a point $x\in\Sigma$ is given by
\begin{equation*}
	\mathbf{H}(x)=\frac{1}{n}\sum_{i=1}^{n}h(e_{i},e_{i}),
\end{equation*}
where $\{e_{i}\}_{i=1}^{n}$ is an orthonormal basis of $T_{x}\Sigma$. The normal connection $D^{\perp}$  on $T^{\perp}\Sigma$ is defined by
\begin{equation*}
	D_{X}^{\perp}\xi:=\left(\bar{D}_{X}\xi\right)^{\perp},
\end{equation*}
where $\xi\in \Gamma( T^{\perp}\Sigma)$ and $(\cdot)^{\perp}$ denotes the projection to $T^{\perp}\Sigma$. The covariant derivative of $h$, defined via connections $D^{\Sigma}$ and $D^{\perp}$, is given by
\begin{equation}\label{52}
	D_{X}h(Y,Z):=D_{X}^{\perp}(h(Y,Z))-h(D_{X}^{\Sigma}Y,Z)-h(Y,D_{X}^{\Sigma}Z).
\end{equation}
The following is called the Codazzi equation
\begin{equation}\label{53}
	\langle  (D_{X}h)(Y,V),\xi   \rangle-\langle  (D_{Y}h)(X,V),\xi   \rangle=\bar{R}(X,Y,V,\xi).
\end{equation}
 For convenience, we set
\begin{equation*}
 S_{x}^{m-1}:=\{ \xi\in T_{x}^{\perp}\Sigma:|\xi|=1 \}.
\end{equation*}
  The definition of the total absolute curvature  $K^{*}(x)$ at $x\in\Sigma$ given by Chern-Lashof mentioned previously carries over to the Riemannian setting, although it loses some of its geometric intuition. The unit normal bundle of $\Sigma$ is given by
  \begin{equation*}
  	U\Sigma:=\left\{ (x,\xi)\in T^{\perp}\Sigma:x\in \Sigma  ,\,\xi \in S_{x}^{m-1} \right\}.
  \end{equation*}
  \begin{defn}[Weingarten map and the principal curvatures, cf. \cite{Ca}]\label{83}
  	Given $(x,\xi)\in U\Sigma$, the \textbf{ Weingarten map} $S_{\xi}:T_{x}\Sigma\rightarrow T_{x}\Sigma$ is defined by 
  	  \begin{equation*}
  	\langle S_{\xi}(v),w\rangle:=-\langle h(v,w),\xi\rangle,
  	\end{equation*}
  	for $v,w\in T_{x}\Sigma$. Its eigenvalues, ordered increasingly, denoted by
  	$\kappa_{1}(x,\xi),\cdots,\kappa_{n}(x,\xi)$, are called the \textbf{principal curvatures} of $\Sigma$ in direction $(x,\xi)$.
  \end{defn}

The exponential map $\mathrm{Exp}$ on $M$ can induce the definition of the normal exponential map $\mathrm{exp}^{\perp}:T^{\perp}\Sigma\rightarrow M$ on $\Sigma$ which is given by
  \begin{equation*}
\mathrm{exp}_{x}^{\perp}(y):=\mathrm{exp}^{\perp}(x,y):=\mathrm{Exp}(f(x),y) 
  \end{equation*}
for $x\in \Sigma$ and $y\in T_{x}^{\perp}\Sigma$. 
\begin{defn}[Focal point]
Given $(x,\xi)\in U\Sigma$, let $\sigma(t):=\mathrm{exp}^{\perp}(x,t\xi)$ for $t\in[0,\infty)$ be a geodesic. The point $\mathrm{exp}^{\perp}(x,t_{0}\xi)$, $t_{0}\in(0,\infty)$, is called a \textbf{focal point} of $\Sigma$ along $\sigma$ if $(x,t_{0}\xi)$ is a critical point of $\mathrm{exp}^{\perp}$. Moreover, define a function $\rho:U\Sigma\rightarrow (0,\infty]$ by
\begin{equation*}
	\rho(x,\xi):=\sup\left\{t_{0}\in(0,\infty):\mathrm{exp}^{\perp}_{*(x,t\xi)}\, \mathrm{is\, nonsingular\, for\, all\, }t\in[0,t_{0}) \right\} .
\end{equation*}
If $\rho(x,\xi)<\infty$, then $\mathrm{exp}^{\perp}(x,\rho(x,\xi)\xi)$ is called  the \textbf{first focal point} of $\Sigma$ along $\sigma$.
\end{defn}
Note that $\rho$ is continuous on which $\rho$ is finite (\cite{ItTa}, Theorem A).

Next, define a function $\tau_{f}:U\Sigma\rightarrow [0,\infty]$ by
\begin{equation*}
	\tau_{f}(x,\xi):=\sup\left\{t>0:d(\mathrm{exp}_{x}^{\perp}(t\xi),f(\Sigma))=t   \right\}     ,
\end{equation*}
where $d(p,f(\Sigma))=\inf_{x\in\Sigma}d(p,f(x))$ for $p\in M$. If $\tau_{f}(x,\xi)<\infty$, then $\mathrm{exp}^{\perp}_{x}(\tau_{f}(x,\xi)\xi)$ is called a cut point of $\Sigma$. It should be noted that if 
$f$ is an embedding, then
 $\tau_{f}$ is continuous on which $\tau_{f}$ is finite (\cite{ItTa}, Theorem B) and takes non-zero values; if 
$f$ is merely an immersion, neither property necessarily holds. From the definitions of $\mu$ and $\tau_{f}$, it is clear that
\begin{equation}\label{5}
	\tau_{f}(x,\xi)\leq \mu(f(x),\xi)\leq \infty,
\end{equation}
for all $x\in\Sigma$ and $\xi\in S_{x}^{m-1}$. Furthermore, it is well-known that
\begin{equation}\label{3}
	 \tau_{f}(x,\xi)\leq\rho(x,\xi)\leq \infty,
\end{equation}
for all $x\in\Sigma$ and $\xi\in S_{x}^{m-1}$. 
There is a natural pullback bundle $\tilde{\pi}:f^{*}TM\rightarrow \Sigma$ of $TM$ via the immersion $f$, where
\begin{equation*}
	f^{*}TM=\left\{ (x,z):x\in \Sigma  ,\,z \in T_{f(x)}M \right\}.
\end{equation*}
This bundle inherits an exponential map $\mathrm{exp}:f^{*}TM\rightarrow M$ from the exponential map $\mathrm{Exp}$ of $M$. Explicitly, for $(x,z)\in f^{*}TM$, define
\[
\mathrm{exp}_{x}(z):=\mathrm{exp}(x,z):=\mathrm{Exp}(f(x),z).
\]
Denote by $B^{\Sigma}_{x}(r)$ the intrinsic geodesic ball of radius $r$ centered at $x$ in $\Sigma$, by $\mathscr{H}^{0}(X)$ the number of elements in the set $X$.

We end this section with the following remark.
\begin{rem}
	Some examples below illustrate that the relationships among $\tau_{f}$,  $\rho$ and $\mu$   are generally complicated.
	
	\smallskip	
	{\rm (i)} Let $M=\R^{3}$, $\Sigma=\{(x_{1},x_{2},x_{3})\in \R^{3}:x_{1}^{2} +x_{2}^{2}+x_{3}^{2}=1\}$, $x=(0,0,1)$, $\xi=(0,0,1)$, then $\tau_{f}(x,\xi)=\infty$, $\rho(x,\xi)=\infty$ and $\mu(x,\xi)=\infty$;
	
	\smallskip
	{\rm (ii)} Let $M=\R^{3}$, $\Sigma=\{(x_{1},x_{2},x_{3})\in \R^{3}:x_{1}^{2} +x_{2}^{2}+x_{3}^{2}=1\}$, $x=(0,0,1)$, $\xi=(0,0,-1)$, then $\tau_{f}(x,\xi)=1$, $\rho(x,\xi)=1$ and $\mu(x,\xi)=\infty$;
	
	\smallskip
	{\rm (iii)} Let $M=\{(x_{1},x_{2},x_{3},x_{4})\in \R^{4}:x_{1}^{2} +x_{2}^{2}+x_{3}^{2}+x_{4}^{2}=1\}$, $\Sigma=\{(x_{1},x_{2},x_{3},0)\in \R^{4}:x_{1}^{2} +x_{2}^{2}+x_{3}^{2}=1\}$, $x=(1,0,0,0)$, $\xi=(0,0,0,1)$, then $\tau_{f}(x,\xi)=\pi$, $\rho(x,\xi)=\pi$ and $\mu(x,\xi)=\pi$;
	
	\smallskip
	{\rm (iv)} Let $M=\{(x_{1},x_{2},x_{3})\in \R^{3}:x_{1}^{2} +x_{2}^{2}=1\}$, $\Sigma=\{(1,0,x_{3})\in \R^{3}:x_{3}\in\R\}$, $x=(1,0,0)$, $\xi=(0,1,0)$, then $\tau_{f}(x,\xi)=\pi$, $\rho(x,\xi)=\infty$ and $\mu(x,\xi)=\pi$. If one wants $\Sigma$ to be compact, one can consider the flat square torus.
\end{rem}

\section{Comparison theorems}\label{103}

Let $f: \Sigma^{n} \rightarrow M$  be as defined in Section \ref{102}. To derive more powerful results, we introduce a gradient field on $\Sigma$, even though it will not be used in the remaining sections. Let $u$ be a $C^{2}$ function on $\Sigma$. Define a map $\Phi:T^{\perp}\Sigma\rightarrow M$ by
\begin{equation*}
	\Phi(x,y)=\mathrm{exp}_{x}\left(\nabla^{\Sigma}u(x)+y\right),
\end{equation*}
for all $(x,y)\in T^{\perp}\Sigma$. Note that $\Phi$ reduces to the normal exponential map $\mathrm{exp}^\perp$ when $u$ is constant. Prior to computing the differential of $\Phi$ at a fixed point $(\bar{x},\bar{y})\in T^{\perp}\Sigma$, we first construct a local frame field. Let $(U;x^{1},\cdots, x^{n})$ be a local coordinate system around $\bar{x}$ on $\Sigma$ and
$\{\nu_{\alpha}\}_{\alpha=n+1}^{n+m}$
be a local orthonormal frame for the normal bundle $T^{\perp}\Sigma$ over $U$. Set
\begin{equation*}
D^{\Sigma}_{\frac{\partial}{\partial x^{i}}}\frac{\partial}{\partial x^{j}}=\Gamma_{ij}^{k}\frac{\partial}{\partial x^{k}}	,\quad D^{\perp}_{\frac{\partial}{\partial x^{i}}}\nu_{\alpha}=\Gamma_{i\alpha}^{\beta}\nu_{\beta},
\end{equation*}
where $\Gamma_{ij}^{k}$ and $\Gamma_{i\alpha}^{\beta}$ are the connection coefficients of $D^{\Sigma}$ and $D^{\perp}$, respectively. Locally, a normal vector $y$ can be written as $y^{\alpha}\nu_{\alpha}$, therefore, 
\[
(\pi^{-1}(U);x^{1},\cdots, x^{n},y^{n+1},\cdots,y^{n+m})
\]
 is a local coordinate system around $(\bar{x},\bar{y})$ on normal bundle $T^{\perp}\Sigma$ with the natural frame field
\begin{equation*}
	\{X_{1},\cdots,X_{n},\partial/\partial y^{n+1},\cdots,\partial/\partial y^{n+m} \}.
\end{equation*}
A tangent vector $v$ of $\Sigma$ can be written as $v^{i}\partial/\partial x^{i}$, and thus
\begin{equation*}
	(\tilde{\pi}^{-1}(U);x^{1},\cdots, x^{n},v^{1},\cdots, v^{n},y^{n+1},\cdots,y^{n+m})
\end{equation*}
is a local coordinate system on $f^{*}TM$. Furthermore, let 
\[
\gamma(t):=\mathrm{exp}_{\bar{x}}\left(t\nabla^{\Sigma}u(\bar{x})+t\bar{y}\right),\, t\in [0,1],
\]
be the associated geodesic of $\Phi$ at $(\bar{x},\bar{y})$. For convenience, we abbreviate $e_{i}=\partial/\partial x^{i}(\bar{x})$ and $e_{\alpha}=\nu_{\alpha}(\bar{x})$ for $1\leq i\leq n$ and $n+1\leq \alpha\leq n+m$.  Let $\eta_{i}$ and $\eta_{\alpha}$ denote the parallel transports of $e_{i}$ and $e_{\alpha}$ along $\gamma$ to $\gamma(1)$ for $1\leq i\leq n$ and $n+1\leq \alpha\leq n+m$, respectively. We shall use the abbreviation
\begin{align*}
	E_{AB}:=\langle\left(\mathrm{Exp}_{f(\bar{x})}\right)_{*(\nabla^{\Sigma}u(\bar{x})+\bar{y})}\left(  e_{A}\right)  ,\eta_{B}  \rangle,
\end{align*}
for $1\leq A,B\leq n+m$. Here we have used canonically  identifying $T_{\nabla^{\Sigma}u(\bar{x})+\bar{y}}T_{f(\bar{x})}M$ with $T_{f(\bar{x})}M$. Inspired by Cordero-Erausquin, McCann and Schmuckenschläger \cite{CoMcSc}, we get the following result.
\begin{lem}\label{9}
	In the notation above, if $\gamma$ does not contain a cut point of $f(\bar{x})$, then 
\begin{align*}
	\langle \Phi_{*(\bar{x},\bar{y})}\left(X_{i}\right),\eta_{B} \rangle=&\left[\frac{1}{2} \bar{D}^{2}d_{\gamma(1)}^{2} (e_{i},e_{j})  - \langle   h(e_{i},e_{j}),\bar{y}  \rangle +D_{\Sigma}^{2}u(e_{i},e_{j}) \right]g^{jk}(\bar{x})E_{k B}\\
	&+\left[  \frac{1}{2} \bar{D}^{2} d_{\gamma(1)}^{2}(e_{i},\nu_{\alpha})+  \langle   h(e_{i},\nabla^{\Sigma}u(\bar{x})),\nu_{\alpha}  \rangle -\Gamma_{i\alpha}^{\sigma}(\bar{x})\langle \bar{y},\nu_{\sigma}\rangle   \right]\delta^{\alpha\beta}E_{\beta B}
\end{align*}
and 
\begin{align*}
	\langle \Phi_{*(\bar{x},\bar{y})}\left(\partial/\partial y^{\alpha}\right),\eta_{B} \rangle=E_{\alpha B},
\end{align*}
for $1\leq i\leq n$, $n+1\leq \alpha\leq n+m$ and $1\leq B\leq n+m$.
\end{lem}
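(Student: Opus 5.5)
The plan is to adapt the trick of Cordero-Erausquin, McCann and Schmuckenschläger quoted in the introduction. I would treat the two formulas separately. The second formula is immediate: the curve $s\mapsto(\bar{x},\bar{y}+s\,\partial/\partial y^{\alpha})$ keeps the base point fixed. So $\Phi$ along it is $s\mapsto \mathrm{Exp}_{f(\bar{x})}(\nabla^{\Sigma}u(\bar{x})+\bar{y}+s\nu_{\alpha}(\bar{x}))$. Its derivative at $s=0$ is $(\mathrm{Exp}_{f(\bar{x})})_{*(\nabla^{\Sigma}u(\bar{x})+\bar{y})}(e_{\alpha})$, and pairing with $\eta_{B}$ gives $E_{\alpha B}$ by definition.

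For the first formula, let $c(s)$ be a curve in $U$ with $c(0)=\bar{x}$ and $c'(0)=e_{i}$, holding the fibre coordinates $y^{\alpha}=\bar{y}^{\alpha}$ fixed. This curve represents $X_{i}$, and along it $\Phi=\mathrm{Exp}_{f(c(s))}(W(s))$ with $W(s):=\nabla^{\Sigma}u(c(s))+\bar{y}^{\alpha}\nu_{\alpha}(c(s))$, a vector field along $f\circ c$. Set $q:=\gamma(1)$. Since $\gamma$ contains no cut point of $f(\bar{x})$, $d_{q}^{2}$ is smooth near $f(\bar{x})$. Moreover $-\tfrac12\bar{\nabla}d_{q}^{2}(p)=\mathrm{Exp}_{p}^{-1}(q)$ for $p$ near $f(\bar{x})$, so $-\tfrac12\bar{\nabla}d_{q}^{2}(f(\bar{x}))=W(0)$. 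I would then write
\[
W(s)=V(s)+Z(s),\qquad V(s):=-\tfrac12\bar{\nabla}d_{q}^{2}(f(c(s))),\qquad Z(s):=\tfrac12\bar{\nabla}d_{q}^{2}(f(c(s)))+W(s).
\]
The two key properties are $Z(0)=0$ and $\mathrm{Exp}_{f(c(s))}(V(s))\equiv q$ for small $s$.

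Next, differentiate $s\mapsto\mathrm{Exp}(f(c(s)),V(s)+Z(s))$ at $s=0$, viewing $\mathrm{Exp}:TM\to M$ and splitting $T(TM)$ into horizontal and vertical parts via $\bar{D}$. Because $Z(0)=0$, the curve $V+Z$ in $TM$ has the same horizontal part at $s=0$ as $V$, namely $e_{i}$. Its vertical part is $\bar{D}_{s}V(0)+\bar{D}_{s}Z(0)$. The differential of $\mathrm{Exp}$ is linear on $T_{(f(\bar{x}),W(0))}TM$, so
\[
\Phi_{*}(X_{i})=\tfrac{d}{ds}\big|_{0}\mathrm{Exp}_{f(c(s))}(V(s))+(\mathrm{Exp}_{f(\bar{x})})_{*W(0)}\big(\bar{D}_{s}Z(0)\big).
\]
The first term vanishes because that curve is constant. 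This splitting step is where I expect the only real care to be needed: one must check that the vertical/horizontal decomposition is legitimate, which means the two curves share base curve and value at $s=0$. Everything after it is bookkeeping.

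It remains to compute $\bar{D}_{e_{i}}Z=\tfrac12\bar{D}_{e_{i}}\bar{\nabla}d_{q}^{2}+\bar{D}_{e_{i}}\nabla^{\Sigma}u+\bar{y}^{\alpha}\bar{D}_{e_{i}}\nu_{\alpha}$ and expand it in the frame $\{e_{j},\nu_{\alpha}\}$.

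For the tangential components, pair with $e_{j}$. The Gauss formula gives $\langle\bar{D}_{e_{i}}\nabla^{\Sigma}u,e_{j}\rangle=D_{\Sigma}^{2}u(e_{i},e_{j})$. The Weingarten relation gives $\bar{y}^{\alpha}\langle\bar{D}_{e_{i}}\nu_{\alpha},e_{j}\rangle=-\langle h(e_{i},e_{j}),\bar{y}\rangle$.

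For the normal components, pair with $\nu_{\alpha}$. Here $\langle\bar{D}_{e_{i}}\nabla^{\Sigma}u,\nu_{\alpha}\rangle=\langle h(e_{i},\nabla^{\Sigma}u),\nu_{\alpha}\rangle$. Also $\bar{y}^{\beta}\langle D^{\perp}_{e_{i}}\nu_{\beta},\nu_{\alpha}\rangle=\bar{y}^{\beta}\Gamma_{i\beta}^{\alpha}=-\Gamma_{i\alpha}^{\sigma}\langle\bar{y},\nu_{\sigma}\rangle$, using the antisymmetry of $\Gamma_{i\alpha}^{\beta}$ in the orthonormal frame.

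Since $\{e_{j}\}$ need not be orthonormal, the coefficient of $e_{k}$ in $\bar{D}_{e_{i}}Z$ is $g^{jk}$ times the $e_{j}$-pairings; the $\nu_{\alpha}$ are orthonormal, which gives the factor $\delta^{\alpha\beta}$. Finally, apply $(\mathrm{Exp}_{f(\bar{x})})_{*W(0)}$ and pair with $\eta_{B}$. By linearity this produces $E_{kB}$ and $E_{\beta B}$, which yields exactly the stated formula.
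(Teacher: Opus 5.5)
Your proof is correct and follows essentially the paper's route. Both arguments use the Cordero-Erausquin--McCann--Schmuckenschl\"ager splitting $W=-\tfrac12\bar\nabla d^{2}_{\gamma(1)}+\bigl(\tfrac12\bar\nabla d^{2}_{\gamma(1)}+W\bigr)$, discard the first piece because $\mathrm{Exp}$ of it is constantly $\gamma(1)$, and read off the derivative of the remainder, which vanishes at $s=0$, in the frame $\{e_j,\nu_\alpha\}$. The differences are only presentational: you split $T(TM)$ into horizontal and vertical parts via $\bar D$ and work directly at $t=1$, whereas the paper differentiates coordinate components on $f^{*}TM$ for a general $t$; you also obtain the $\partial/\partial y^{\alpha}$ formula directly from the vertical curve.
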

\begin{proof}
For each $0\leq t\leq 1$, define a map $F_{t}:T^{\perp}\Sigma\rightarrow f^{*}TM$ by
\begin{equation*}
	F_{t}(x,y)=\left(x,t\nabla^{\Sigma}u(x)+ty\right)
\end{equation*}
for all $(x,y)\in T^{\perp}\Sigma$. Observe that $\Phi$ can be decomposed as $\Phi=\mathrm{exp}\circ F_{1}$. This decomposition is inspired by \cite{CoMcSc}, where an analogous construction is presented on the total space $M$: the map $\Phi$ corresponds to $\tilde{\Phi}(p):=\mathrm{Exp}_{p}(\bar{\nabla} \tilde{u}(p))$ and $F_{1}$ corresponds to $\tilde{F}(p):=(p,\bar{\nabla}\tilde{u}(p))$. In that context, the decomposition similarly holds, although it was not explicitly stated in their work. Indeed, we have the following two commutative diagrams
\begin{center}
	\begin{minipage}{0.35\textwidth}
		\centering
		\[
		\begin{tikzcd}
			T^{\perp}\Sigma \arrow[r, "F_{1}"]  \arrow[dr, "\Phi"']   & f^{*}TM \arrow[d, "\mathrm{exp}" ]\\
			& M,
		\end{tikzcd}
		\]
	\end{minipage}
	\begin{minipage}{0.35\textwidth}
		\centering
		\[
		\begin{tikzcd}
			M \arrow[r, "\tilde{F}"]  \arrow[dr, "\tilde{\Phi}"']   & TM \arrow[d, "\mathrm{Exp}" ]\\
			& M.
		\end{tikzcd}
		\]
	\end{minipage}
\end{center}

Let $c: (-\varepsilon,\varepsilon)\rightarrow T^{\perp}\Sigma$ be a curve for some $\varepsilon>0$ denoted by $c(s)=(x(s),y(s))$ such that $c(0)=(\bar{x}, \bar{y})$. Define
a variation	of $\gamma$ by $\gamma_{s}(t):=\mathrm{exp}_{x(s)}\left( t\left( \nabla^{\Sigma}u(x(s))+y(s)  \right) \right)$. Denote  by $J$ the variational vector field along $\gamma$. Clearly, $J$ is a Jacobi field since $\gamma_{s}:[0,1]\rightarrow M$ is a geodesic for each $-\varepsilon<s<\varepsilon$ and 
	\begin{equation*}
	\Phi_{*(\bar{x},\bar{y})}\left(c'(0)\right)=J(1).
\end{equation*}
Since $\Phi(\bar{x},\bar{y})$ is not the cut point of $f(\bar{x})$, the function $d_{\gamma(t)}^{2}$ is differentiable at $x(s)$ for sufficiently  small $s$ and
		\begin{equation*}
	 t\left( \nabla^{\Sigma}u(x(s))+y(s)  \right) =-\bar{\nabla}d_{\gamma(t)}^{2}(x(s))/2+\bar{\nabla}d_{\gamma(t)}^{2}(x(s))/2+t\left( \nabla^{\Sigma}u(x(s))+y(s)  \right).
	\end{equation*}
By definition of the  gradient,
			\begin{equation*}
					\begin{array}{lllll}
\bar{\nabla}d_{\gamma(t)}^{2}(x(s))&=g^{ij}(x(s))\langle
		\bar{\nabla}d_{\gamma(t)}^{2}(x(s)),\frac{\partial}{\partial x^{j}}   \rangle  \frac{\partial}{\partial x^{i}}+\delta^{\alpha\beta}\langle \bar{\nabla}d_{\gamma(t)}^{2}(x(s)),\nu_{\beta}  \rangle \nu_{\alpha}\\
		&=:2w^{i}(x(s),t)\frac{\partial}{\partial x^{i}}+2w^{\alpha}(x(s),t)\nu_{\alpha}
			\end{array}
		\end{equation*}
and 
			\begin{equation*}
 \nabla^{\Sigma}u(x(s))=g^{ij}(x(s))\frac{\partial u}{\partial x^{j}}(x(s))\frac{\partial }{\partial x^{i}}=:z^{i}(x(s))\frac{\partial}{\partial x^{i}}.
\end{equation*}
	Thus, the coordinate of $F_{t}(x(s),y(s))\in 	\tilde{\pi}^{-1}(U)\subset f^{*}TM$ is
				\begin{align*}
	&\left(x^{1}(s),\cdots,  x^{n}(s), -w^{1}(x(s),t)+w^{1}(x(s),t)+ tz^{1}(x(s)),\cdots,\right.\\ &-w^{n}(x(s),t)+w^{n}(x(s),t)+ tz^{n}(x(s)),
 -w^{n+1}(x(s),t)+w^{n+1}(x(s),t) +ty^{n+1}(s),\cdots,\\
 &\left.-w^{n+m}(x(s),t)+w^{n+m}(x(s),t) +ty^{n+m}(s)                           \right).
\end{align*}
	By using of the chain rule, 
					\begin{align*}
J(t)&=\mathrm{exp}_{*(\bar{x},t\nabla^{\Sigma}u(\bar{x})+t\bar{y})}\circ \left(F_{t}\right)_{*(\bar{x},\bar{y})}\left(c'(0)\right)\\
&=\mathrm{exp}_{*(\bar{x},t\nabla^{\Sigma}u(\bar{x})+t\bar{y})}\left(   \frac{dx^{i}}{ds}(0)\frac{\partial}{\partial x^{i}} +\left( -\frac{\partial w^{i}(x(s),t)}{\partial s}+\frac{\partial w^{i}(x(s),t)}{\partial s}\right.\right.\\
&\ \ \ \left.+\left.t\frac{dz^{i}(x(s))}{ds}   \right)\Big|_{s=0}\frac{\partial}{\partial v^{i}}       +   \left( -\frac{\partial w^{\alpha}(x(s),t)}{\partial s}+\frac{\partial w^{\alpha}(x(s),t)}{\partial s} +t\frac{dy^{\alpha}(s)}{ds}   \right)\Big|_{s=0}\frac{\partial}{\partial y^{\alpha}}   \right).
	\end{align*}
	Note that, for all $s$ small enough,  
				\begin{equation*}
	\mathrm{exp}_{x(s)}\left(  -\bar{\nabla}d_{\gamma(t)}^{2}(x(s))/2\right)=\gamma(t),
	\end{equation*} which yields
					\begin{align*}
\mathrm{exp}_{*(\bar{x},t\nabla^{\Sigma}u(\bar{x})+t\bar{y})}\left(   \frac{dx^{i}}{ds}(0)\frac{\partial}{\partial x^{i}} -\frac{\partial w^{i}(x(s),t)}{\partial s}  \Big|_{s=0}\frac{\partial}{\partial v^{i}}  -   \frac{\partial w^{\alpha}(x(s),t)}{\partial s}   \Big|_{s=0}\frac{\partial}{\partial y^{\alpha}}   \right)=0.
	\end{align*}
 Then  the representation of $J$ can be reduced as 
						\begin{align*}
			J(t)&=\mathrm{exp}_{*(\bar{x},t\nabla^{\Sigma}u(\bar{x})+t\bar{y})}\left(   \left( \frac{\partial w^{i}(x(s),t)}{\partial s} +t\frac{dz^{i}(x(s))}{ds}   \right)\Big|_{s=0}\frac{\partial}{\partial v^{i}}       \right.\\
			&\ \ \ +  \left.\left( \frac{\partial w^{\alpha}(x(s),t)}{\partial s} +t\frac{dy^{\alpha}(s)}{ds}   \right)\Big|_{s=0}\frac{\partial}{\partial y^{\alpha}}   \right)\\
			&= \left( \frac{\partial w^{i}(x(s),t)}{\partial s} +t\frac{dz^{i}(x(s))}{ds}   \right)\Big|_{s=0}\left(\mathrm{Exp}_{f(\bar{x})}\right)_{*(t\nabla^{\Sigma}u(\bar{x})+t\bar{y})}\left(  \frac{\partial}{\partial x^{i}}\right)\\
			&\ \ \ + \left( \frac{\partial w^{\alpha}(x(s),t)}{ds} +t\frac{dy^{\alpha}(s)}{ds}   \right)\Big|_{s=0}\left(\mathrm{Exp}_{f(\bar{x})}\right)_{*(t\nabla^{\Sigma}u(\bar{x})+t\bar{y})}\left(  \nu_{\alpha}\right).
	\end{align*}
	 Now for each $1\leq A\leq n+m$, we designate the curve $c$  as the $A$-th coordinate curve, denoted by $c_{A}$. The corresponding Jacobi field is denoted by $J_{A}$. When $1\leq i\leq n$, the coordinate of 
	$c_{i}(s)$ is
							\begin{equation*}
\left( \bar{x}^{1},\cdots, \bar{x}^{i-1},\bar{x}^{i}+s,\bar{x}^{i+1},\cdots,\bar{x}^{n},\bar{y}^{n+1},\cdots,\bar{y}^{n+m}           \right).
		\end{equation*}
When $n+1\leq \alpha\leq n+m$, the coordinate of 
$c_{\alpha}(s)$ is
\begin{equation*}
	\left( \bar{x}^{1},\cdots,\bar{x}^{n},\bar{y}^{n+1},\cdots,\bar{y}^{\alpha-1},\bar{y}^{\alpha}+s,\bar{y}^{\alpha+1},\cdots,\bar{y}^{n+m}          \right).
\end{equation*}
 Consequently, for $1\leq i \leq n$, $1\leq B\leq n+m$, 
\begin{align*}
		&\ \ \ \langle \Phi_{*(\bar{x},\bar{y})}\left(X_{i}\right),\eta_{B} \rangle=\langle J_{i}(1),\eta_{B}  \rangle\\
		&=\frac{d}{ds}\Big|_{s=0}\left[g^{jk}(x(s))\langle
		\bar{\nabla}d_{\gamma(1)}^{2}(x(s))/2+\nabla^{\Sigma}u(x(s)),\partial/\partial x^{j}  \rangle   \right]E_{k B}\\
		&\ \ \ +\frac{d}{ds}\Big|_{s=0}\left[\delta^{\alpha\beta} \langle \bar{\nabla}d_{\gamma(1)}^{2}(x(s))/2,\nu_{\alpha}  \rangle +y^{\beta}(s)\right]E_{\beta B}\\
	&=\left[\langle 	\bar{D}_{e_{i}}	\bar{\nabla}d_{\gamma(1)}^{2}/2,e_{j}  \rangle +  \langle -\nabla^{\Sigma}u(\bar{x})-\bar{y} ,\Gamma_{ij}^{l}(\bar{x})\partial/\partial x^{l}+h(e_{i},e_{j})   \rangle \right.\\
	 &\ \ \ \left.+\langle  D_{e_{i}}^{\Sigma}\nabla^{\Sigma} u,e_{j}  \rangle +\langle \nabla^{\Sigma}u(\bar{x}),  \Gamma_{ij}^{l}(\bar{x})\partial/\partial x^{l}  \rangle    \right]  g^{jk}(\bar{x})E_{k B}\\
	 &\ \ \ +\left[ \langle \bar{D}_{e_{i}}\bar{\nabla}d_{\gamma(1)}^{2}/2,\nu_{\alpha}  \rangle  +  \langle -\nabla^{\Sigma}u(\bar{x})-\bar{y}, -\langle h(e_{i},e_{j}),\nu_{\alpha} \rangle g^{jk}(\bar{x})e_{k} +\Gamma_{i\alpha}^{\sigma}(\bar{x})\nu_{\sigma}  \rangle   \right]\delta^{\alpha\beta}E_{\beta B}\\
	 &=\left[ \bar{D}^{2}d_{\gamma(1)}^{2} (e_{i},e_{j})/2  - \langle   h(e_{i},e_{j}),\bar{y}  \rangle +D_{\Sigma}^{2}u(e_{i},e_{j}) \right]g^{jk}(\bar{x})E_{k B}\\
	 &\ \ \ +\left[   \bar{D}^{2} d_{\gamma(1)}^{2}(e_{i},\nu_{\alpha})/2+  \langle   h(e_{i},\nabla^{\Sigma}u(\bar{x})),\nu_{\alpha}  \rangle -\Gamma_{i\alpha}^{\sigma}(\bar{x})\langle \bar{y},\nu_{\sigma}\rangle   \right]\delta^{\alpha\beta}E_{\beta B},
\end{align*}
 and for $n+1\leq \alpha \leq n+m$, $1\leq B\leq n+m$, 
\begin{align*}
&\ \ \ \langle \Phi_{*(\bar{x},\bar{y})}\left(\partial/\partial y^{\alpha}\right),\eta_{B} \rangle=\langle J_{\alpha}(1),\eta_{B}  \rangle\\
&= \frac{d}{ds}\Big|_{s=0}  \left[   \langle
\bar{\nabla}d_{\gamma(1)}^{2}(x(s))/2+\nabla^{\Sigma}u(x(s)),\partial/\partial x^{j}   \rangle g^{jk}(x(s)) \right]E_{k B}\\
&\ \ \ +\frac{d}{ds}\Big|_{s=0}\left[\delta^{\alpha\beta} \langle \bar{\nabla}d_{\gamma(1)}^{2}(x(s))/2,\nu_{\alpha}  \rangle +y^{\beta}(s)\right]E_{\beta B}\\
&=E_{\alpha B}.
\end{align*}
	These complete the proof of the lemma.
\end{proof}
While the differential of $\Phi$ at a point has been computed, the calculation of its Jacobian determinant necessitates understanding the metric structure of the normal bundle. For this purpose, we recall the canonical metric on this bundle induced by the metric $\bar{g}$ of $M$ and the normal connection $D^{\perp}$ of $T^{\perp}\Sigma$. In fact, the vector $\partial/\partial x^{i}\in T_{\bar{x}}\Sigma$ can be uniquely horizontally lifted to a tangent vector $Z_{i}(\bar{x},\bar{y})$ in $T_{(\bar{x},\bar{y})}T^{\perp}\Sigma$ for each $1\leq i\leq n$ (with respect to $D^{\perp}$). Moreover, $\{Z_{i}(\bar{x},\bar{y})\}_{i=1}^{n}$ forms a basis of an $n$-dimensional linear subspace of $T_{(\bar{x},\bar{y})}T^{\perp}\Sigma$ which is linearly isomorphic to  $T_{(\bar{x},\bar{y})}\Sigma$, known as the horizontal tangent subspace at $(\bar{x},\bar{y})$ and denoted by $\mathcal{H}_{(\bar{x},\bar{y})}$. Therefore, the whole space $T_{(\bar{x},\bar{y})}T^{\perp}\Sigma$ can be decomposed into the sum of $\mathcal{H}_{(\bar{x},\bar{y})}$ and the kernel of $\pi_{*(\bar{x},\bar{y})}$ which is canonically linearly isomorphic to the fiber $T^{\perp}_{\bar{x}}\Sigma$, called the vertical tangent subspace at $(\bar{x},\bar{y})$ and denoted by $\mathcal{V}_{(\bar{x},\bar{y})}$. Consequently, the tangent bundle of $T^{\perp}\Sigma$ can be decomposed as
\begin{align*}
	TT^{\perp}\Sigma=\mathcal{H}\oplus\mathcal{V}
\end{align*}
in the sense of the Whitney sum. With this understanding, one can define the metric on $T^{\perp}\Sigma$ as follows:
\begin{align}\label{10}
	\langle X,Y \rangle:=	\langle \pi_{*(\bar{x},\bar{y})}(X),\pi_{*(\bar{x},\bar{y})}(Y) \rangle _{\bar{g}}+\langle X^{\mathcal{V}},Y^{\mathcal{V}} \rangle _{\bar{g}},
\end{align}
for $X,Y\in T_{(\bar{x},\bar{y})}T^{\perp}\Sigma$, where $(\cdot)^{\mathcal{V}}$ denotes the projection to $\mathcal{V}$. In general, the vector field $Z_{i}$ differs from the natural frame field $X_{i}$ constructed earlier. Actually, the definition of the horizontal lifting immediately yields
\begin{align}\label{11}
Z_{i}(x,y)=X_{i}(x,y)-y^{\beta}\Gamma^{\alpha}_{i\beta}(x)\frac{\partial}{\partial y^{\alpha}},
\end{align}
for $1\leq i\leq n$. By \eqref{10} and \eqref{11},
\begin{align*}
	\langle X_{i}(x,y),\partial/\partial y^{\alpha} \rangle&=y^{\sigma}\Gamma_{i\sigma}^{\alpha}(x),\,	\langle \partial/\partial y^{\alpha},\partial/\partial y^{\beta}\rangle=\delta_{\alpha\beta},\\
	\langle X_{i}(x,y),X_{j}(x,y) \rangle&=	\langle \pi_{*(\bar{x},\bar{y})}(X_{i}),\pi_{*(\bar{x},\bar{y})}(X_{j}) \rangle+\langle y^{\beta}\Gamma^{\alpha}_{i\beta}(x)\frac{\partial}{\partial y^{\alpha}},y^{\sigma}\Gamma^{\tau}_{j\sigma}(x)\frac{\partial}{\partial y^{\tau}} \rangle \\
	&=g_{ij}(x)+\delta_{\alpha\tau}y^{\beta}\Gamma^{\alpha}_{i\beta}(x)y^{\sigma}\Gamma^{\tau}_{j\sigma}(x),
\end{align*}
for $1\leq i\leq n$, $n+1\leq \alpha,\beta\leq n+m$ and $(x,y)\in \pi^{-1}(U)$. A direct computation yields the local expression for the metric $g_{T^{\perp}\Sigma}$ of $T^{\perp}\Sigma$,
\begin{align}\label{13}
	g_{T^{\perp}\Sigma}(x,y)=\delta_{\alpha\beta}( y^{\sigma}\Gamma_{i \sigma}^{\alpha}(x)dx^{i}+dy^{\alpha})\otimes( y^{\tau}\Gamma_{j \tau}^{\beta}(x) dx^{j}  +dy^{\beta})+g_{\Sigma}(x),
\end{align}
for $(x,y)\in \pi^{-1}(U)$.

 From now on, we shall impose the following convention on the frame fields previously introduced. By an appropriate choice, we can assume that $(U;x^{1},\cdots, x^{n})$ is a local normal coordinate chart around $\bar{x}$ on $\Sigma$, and the normal frame $\{\nu_{\alpha}\}_{\alpha=n+1}^{n+m}$ satisfies 
\begin{align*}
D^{\perp}\nu_{\alpha}=0
\end{align*}
at $\bar{x}$ for $n+1\leq\alpha\leq n+m$. Thus, at the point $(\bar{x},y)\in \pi^{-1}(U)$, expression \eqref{13} can be reduced to 
\begin{align*}
	g_{T^{\perp}\Sigma}(\bar{x},y)=\sum_{i=1}^{n}dx^{i}\otimes dx^{i}+\sum_{\alpha=n+1}^{n+m}dy^{\alpha}\otimes dy^{\alpha}.
\end{align*}
Moreover the natural frame field
\begin{equation*}
	\{X_{1},\cdots,X_{n},\partial/\partial y_{n+1},\cdots,\partial/\partial y_{n+m} \}
\end{equation*}
forms an orthonormal basis of $T_{(\bar{x},y)}T^{\perp}\Sigma$ at $(\bar{x},y)$ for each $y\in T_{\bar{x}}^{\perp}\Sigma$. For convenience, we abbreviate 
\begin{align*}
	P_{iB}&:=\langle \Phi_{*(\bar{x},\bar{y})}\left(X_{i}\right),\eta_{B} \rangle,\, P_{\alpha B}:=\langle \Phi_{*(\bar{x},\bar{y})}\left(\partial/\partial y^{\alpha}\right),\eta_{B} \rangle,\\
	Q&:=\frac{1}{2}\bar{D}^{2}d_{\gamma(1)}^{2}(f(\bar{x})) \Big|_{T_{\bar{x}}\Sigma \times T_{\bar{x}}\Sigma} - \langle   h(\bar{x}),\bar{y}  \rangle  +D_{\Sigma}^{2}u(\bar{x}),\\
	S_{i \alpha}&:=\frac{1}{2}\bar{D}^{2} d_{\gamma(1)}^{2}(e_{i},\nu_{\alpha})+  \langle   h(e_{i},\nabla^{\Sigma}u(\bar{x})),\nu_{\alpha}  \rangle +\Gamma_{i\alpha}^{\sigma}\langle \bar{y},\nu_{\sigma}\rangle,
\end{align*}
for $1\leq i\leq n$, $n+1\leq \alpha\leq n+m$ and $1\leq B\leq n+m$.

The following formulas extend the well-established Euclidean results (cf. \cite{Ca}, p. 232; see also \cite{Bre2021}, Lemma 5) to the setting of general Riemannian manifolds.
\begin{lem}\label{4}
	In the notation above, if $\gamma$ does not contain a cut point of $f(\bar{x})$, we obtain a frame-dependent matrix equation of the form
	\begin{align*}
P_{(n+m)\times(n+m)}=\begin{pmatrix}
	Q_{n\times n} & S_{n\times m} \\
	O_{m\times n} & I_{m}
\end{pmatrix}\cdot E_{(n+m)\times(n+m)},
	\end{align*}
where $O_{m \times n}$ and $I_m$ denote the $m \times n$ zero matrix and the $m \times m$ identity matrix, respectively. Moreover, the following frame-independent identity holds:
	\begin{equation*}
	|\mathrm{det}\Phi_{*(\bar{x},\bar{y})}|=	|\mathrm{det}\left(\mathrm{Exp}_{f(\bar{x})}\right)_{*(\nabla^{\Sigma}u(\bar{x})+\bar{y})}|\cdot|\mathrm{det}Q|.
\end{equation*}
\end{lem}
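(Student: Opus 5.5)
The matrix equation is a rereading of Lemma~\ref{9} in the adapted frame; the determinant identity then follows from multiplicativity of determinants. I will fix the conventions just before the lemma: $(x^{1},\dots,x^{n})$ are normal coordinates at $\bar x$, so $g^{jk}(\bar x)=\delta^{jk}$, and $D^{\perp}\nu_\alpha=0$ at $\bar x$, so $\Gamma^{\sigma}_{i\alpha}(\bar x)=0$.

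First I rewrite Lemma~\ref{9} under these normalizations. The $i$-th row of $P$ becomes
\[
P_{iB}=Q_{ik}E_{kB}+S_{i\beta}E_{\beta B}.
\]
The $\Gamma$-terms vanish at $\bar x$, so the sign discrepancy in front of $\Gamma^{\sigma}_{i\alpha}\langle\bar y,\nu_\sigma\rangle$ between Lemma~\ref{9} and the definition of $S_{i\alpha}$ is irrelevant. The $\alpha$-th row is $P_{\alpha B}=E_{\alpha B}=\delta_{\alpha\beta}E_{\beta B}+0\cdot E_{kB}$. Stacking these rows gives exactly
\[
P=\begin{pmatrix}Q&S\\ O&I_m\end{pmatrix}E.
\]
Here $Q$ is the $n\times n$ matrix $Q(e_i,e_k)$; since $\{e_i\}$ is orthonormal, $g^{jk}$ is the identity.

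Second, I take determinants. The middle matrix is block upper triangular, so its determinant is $\det Q$, and therefore $\det P=\det Q\cdot\det E$. I then interpret each factor invariantly.

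\emph{$P$ as a matrix of $\Phi_{*}$.} At $(\bar x,\bar y)$ the frame $\{X_i,\partial/\partial y^\alpha\}$ is orthonormal for $g_{T^\perp\Sigma}$, by the reduced form of \eqref{13}. The vectors $\{\eta_B\}$ are the parallel transports of the orthonormal basis $\{e_i,\nu_\alpha(\bar x)\}$ of $T_{f(\bar x)}M$, so they are orthonormal at $\gamma(1)$. Hence $P$ is the matrix of $\Phi_{*(\bar x,\bar y)}$ with respect to orthonormal bases, and $|\det P|=|\det\Phi_{*(\bar x,\bar y)}|$.

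\emph{$E$ as a matrix of the ambient exponential.} Similarly, $E$ is the matrix of $(\mathrm{Exp}_{f(\bar x)})_{*(\nabla^\Sigma u(\bar x)+\bar y)}$ between the orthonormal bases $\{e_A\}$ of $T_{f(\bar x)}M$ (which is identified with the tangent space of $T_{f(\bar x)}M$ at that point) and $\{\eta_B\}$. So $|\det E|$ is the absolute Jacobian of the ambient exponential.

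\emph{$Q$ as a bilinear form.} $Q$ is the matrix of a symmetric bilinear form on $T_{\bar x}\Sigma$ in an orthonormal basis, so $|\det Q|$ is the determinant of the associated self-adjoint endomorphism. This depends only on the form and the metric.

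Thus every quantity in $|\det\Phi_*|=|\det\mathrm{Exp}_*|\cdot|\det Q|$ is frame independent, even though the block decomposition itself depends on the chosen frame.

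The only real point to check is bookkeeping: the column indexing must be consistent, with rows of $E$ indexed by the source basis $e_A$ in the order $i$ then $\alpha$. The identification of the coefficient matrices with matrices with respect to orthonormal bases must also be justified, and that uses precisely the normal-coordinate and $D^\perp$-parallel conventions. The hypothesis that $\gamma$ contains no cut point of $f(\bar x)$ enters only through Lemma~\ref{9}, since it guarantees that $d^{2}_{\gamma(1)}$ is smooth near $f(\bar x)$, so that $Q$ and $S$ are defined.
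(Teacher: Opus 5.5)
Your proposal is correct and takes the same route as the paper, whose proof simply reads the matrix equation off Lemma~\ref{9} in the adapted frame and then takes determinants. You also supply the details the paper leaves implicit. Normal coordinates and a $D^{\perp}$-parallel normal frame at $\bar{x}$ give $g^{jk}(\bar{x})=\delta^{jk}$ and $\Gamma^{\sigma}_{i\alpha}(\bar{x})=0$, so the sign mismatch you noticed between Lemma~\ref{9} and the definition of $S_{i\alpha}$ is indeed harmless. The orthonormality of $\{X_{i},\partial/\partial y^{\alpha}\}$, $\{e_{A}\}$ and $\{\eta_{B}\}$ is what identifies $|\det P|$ and $|\det E|$ with the actual Jacobians.
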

\begin{proof}
The matrix equation is readily derived via the construction of the frame fields and Lemma \ref{9}, and the second identity follows straightforwardly from this equation. This completes the proof of the lemma.
\end{proof}

For the rest of this section, we take $u$ to be constant and focus exclusively on $\Phi=\exp^{\perp}$, with all previous results remaining valid. We proceed to demonstrate some applications of Lemma \ref{4}.
\begin{prop}
	Let $M$ be an $(n+m)$-dimensional complete Riemannian manifold with infinite injectivity radius and $f: \Sigma^{n} \rightarrow M$ be an isometric immersion. Let $(x,\xi)\in U\Sigma$ and $\sigma(t):=\mathrm{exp}^{\perp}_{x}\left(t\xi\right)$ for $t\in\R$ be a geodesic. Then $\sigma(t)$ is a focal point of $\Sigma$ along $\sigma$ if and only if the tensor
	\begin{align*}
	\frac{1}{2}\bar{D}^{2}d_{\sigma(t)}^{2}(f(x))\Big|_{T_{x}\Sigma \times T_{x}\Sigma} -t\langle h(x),\xi \rangle:T_{x}\Sigma \times T_{x}\Sigma\rightarrow\R,
	\end{align*}
	viewed as a linear transformation on $T_{x}\Sigma$, degenerates.
\end{prop}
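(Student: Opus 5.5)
This is a direct specialization of Lemma \ref{4} to $u$ constant and $(\bar{x},\bar{y})=(x,t\xi)$. The plan is to reduce focality to a product of two determinants and show the ambient factor never vanishes.

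\textbf{Step 1: remove the cut-point hypothesis.} Since the injectivity radius of $M$ is infinite, $\mu\equiv\infty$. Hence no geodesic from $f(x)$ contains a cut point of $f(x)$, and the hypothesis of Lemma \ref{9} and Lemma \ref{4} holds for every $t\in\R$.

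For $t<0$, write $t\xi=|t|(-\xi)$. The statement is symmetric under $\xi\mapsto-\xi$, $t\mapsto -t$, since both the geodesic and the tensor are unchanged. So it suffices to work with the point $(x,t\xi)\in T^{\perp}\Sigma$ directly.

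The case $t=0$ is trivial. The normal exponential map is nonsingular at the zero section, and the tensor equals $\frac12\bar D^2 d_{f(x)}^2(f(x))|_{T_x\Sigma}=g$, which is nondegenerate. Consistently, focal points are defined only for $t_0>0$.

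\textbf{Step 2: apply the determinant identity.} Take $u$ constant and $(\bar{x},\bar{y})=(x,t\xi)$. Then $\gamma(1)=\sigma(t)$ and
\[
Q=\tfrac12\bar{D}^{2}d_{\sigma(t)}^{2}(f(x))\big|_{T_{x}\Sigma\times T_{x}\Sigma}-t\langle h(x),\xi\rangle .
\]
Lemma \ref{4} gives
\[
|\det (\mathrm{exp}^{\perp})_{*(x,t\xi)}|=|\det(\mathrm{Exp}_{f(x)})_{*(t\xi)}|\cdot|\det Q|.
\]
Here $\det Q$ is computed in the normal coordinates at $x$, where $g_{ij}(x)=\delta_{ij}$. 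Nondegeneracy of $Q$ as a bilinear form is therefore the same as invertibility of the linear transformation $g^{-1}Q$ on $T_x\Sigma$, and this equivalence is independent of frame.

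\textbf{Step 3: the ambient factor is nonzero.} Since $M$ has infinite injectivity radius, $\mathrm{Exp}_{f(x)}$ is a diffeomorphism of $T_{f(x)}M$ onto $M$. In particular there are no conjugate points along any geodesic from $f(x)$, so $\det(\mathrm{Exp}_{f(x)})_{*(t\xi)}\neq0$. This is the only place where a genuine check is needed. Infinite injectivity radius at $f(x)$ means $\mathrm{Exp}_{f(x)}$ is injective on all of $T_{f(x)}M$ and the distance to $\sigma(t)$ is realized by the geodesic. The absence of conjugate points before the cut locus, which here is empty, gives nonsingularity. The same fact guarantees that $d^2_{\sigma(t)}$ is smooth near $f(x)$, so $Q$ is well defined.

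\textbf{Step 4: conclude.} From Steps 2 and 3, $(x,t\xi)$ is a critical point of $\mathrm{exp}^{\perp}$, that is, $\sigma(t)$ is a focal point, if and only if $\det Q=0$. This is exactly the degeneracy of the stated tensor viewed as a linear transformation on $T_x\Sigma$.
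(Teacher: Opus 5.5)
Your proposal is correct and matches the paper's argument: the paper also notes that infinite injectivity radius makes $(\mathrm{Exp}_p)_{*w}$ nonsingular for every $(p,w)\in TM$. It then reads off the equivalence from the determinant identity of Lemma~\ref{4} with $u$ constant and $\bar y=t\xi$. Your extra remarks only make explicit what the paper leaves implicit: the smoothness of $d^2_{\sigma(t)}$ near $f(x)$, the frame-independence of degeneracy via normal coordinates, and the trivial cases $t\le 0$.
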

\begin{proof}
By assumption, $(\mathrm{Exp}_{p})_{*w}$ is non-singular for each $(p,w)\in TM$. Therefore the lemma follows by the definition of focal point and Lemma \ref{4}.
\end{proof}

When $f: \Sigma \rightarrow M$ is merely an immersion, the function $\tau_{f}$ is generally not continuous and may vanish. This poses substantial difficulties in studying the equality case of geometric inequalities for immersed submanifolds. To circumvent these issues, we are inspired by the definition of the Hausdorff measure to employ a localization technique. This allows us to define a modified version of $\tau_{f}$ that is independent of the global geometry of $\Sigma$. Indeed, for all sufficiently small positive number $r$, the restriction $f|_{B^{\Sigma}_{x}(r)}$  is an embedding. On the one hand, for each $(x,\xi)\in U\Sigma$, $ \tau_{f|_{B^{\Sigma}_{x}(r)}}(x,\xi)\in(0,\infty]$ holds for  all small positive number $r$. On the other hand, the function $\tau_{f|_{B^{\Sigma}_{x}(r)}}(x,\xi)$ is non-decreasing in $r$. We now define the modified cut distance function $\tilde{\tau}_{f}:U\Sigma\rightarrow (0,\infty]$  of $\Sigma$  in direction $(x,\xi)\in U\Sigma$ by
\begin{equation*}
\tilde{\tau}_{f}(x,\xi):=\lim_{r\rightarrow 0^{+}}\tau_{f|_{B^{\Sigma}_{x}(r)}}(x,\xi),
\end{equation*}
with the understanding that the limit is said to exist even if the sequence diverges to $+\infty$. Based on Lemma \ref{4}, the following result establishes the connection between $\tilde{\tau}_{f}$, $\mu$ and $\rho$.

Let $(x,\xi)\in U\Sigma$ and define the geodesic $\sigma(t):=\mathrm{exp}^{\perp}_{x}\left(t\xi\right)$ for  $t\in[0,\mu(f(x),\xi))$. For convenience, denote by $T_{t}$ the tensor
\begin{align*}
	\frac{1}{2}\bar{D}^{2}d_{\sigma(t)}^{2}(f(x)) \Big|_{T_{x}\Sigma \times T_{x}\Sigma} -t\langle h(x),\xi \rangle :T_{x}\Sigma \times T_{x}\Sigma\rightarrow\R
\end{align*}
and $A_{t}$ the tensor
\begin{align*}
	\bar{D}^{2}d_{\sigma(t)}(f(x)) \Big|_{T_{x}\Sigma \times T_{x}\Sigma} -\langle h(x),\xi \rangle :T_{x}\Sigma \times T_{x}\Sigma\rightarrow\R.
\end{align*}

\begin{lem}\label{34}
	Let $M$ be a complete Riemannian manifold  and $f: \Sigma \rightarrow M$ an isometric immersion. Let $(x,\xi)\in U\Sigma$ and define the geodesic $\sigma(t):=\mathrm{exp}^{\perp}_{x}\left(t\xi\right)$ for $0\leq t < \infty$. Then the following two assertions hold.

\smallskip	
{\rm (i)} The tensor $T_{t}$
is positive-definite for all $t\in[0,\min\{\mu(f(x),\xi),\rho(x,\xi)\})$.

\smallskip
{\rm (ii)} The function $\tilde{\tau}_{f}$ satisfies $\tilde{\tau}_{f}(x,\xi)=\min\{\mu(f(x),\xi),\rho(x,\xi)\}$ hence is continuous on which $\tilde{\tau}_{f}$ is finite.
\end{lem}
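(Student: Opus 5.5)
The plan for (i) is a continuity argument applied to the determinant formula of Lemma~\ref{4}. For (ii), I would bound $\tilde{\tau}_{f}$ above by general facts and below by a local convexity argument built on (i).

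\emph{Part (i).} Take $u$ constant and $(\bar{x},\bar{y})=(x,t\xi)$, so that $\gamma(1)=\sigma(t)$ and the matrix $Q$ of Lemma~\ref{4} is exactly $T_{t}$. At $t=0$ we have $\frac12\bar{D}^{2}d^{2}_{f(x)}(f(x))=\bar{g}$, so $T_{0}$ is the identity and is positive definite. Now fix $t<\min\{\mu(f(x),\xi),\rho(x,\xi)\}$.
\begin{itemize}
\item Since $t<\mu$, the segment $\sigma|_{[0,t]}$ contains no cut point of $f(x)$, so Lemma~\ref{4} applies.
\item For the same reason there is no conjugate point before the cut point, so $(\mathrm{Exp}_{f(x)})_{*(t\xi)}$ is nonsingular.
\item Since $t<\rho$, the differential $(\mathrm{exp}^{\perp})_{*(x,t\xi)}$ is nonsingular.
\end{itemize}
The identity $|\det(\mathrm{exp}^{\perp})_{*}|=|\det(\mathrm{Exp}_{f(x)})_{*}|\cdot|\det T_{t}|$ then forces $\det T_{t}\neq0$. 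The map $t\mapsto T_{t}$ is continuous on this interval, because $d^{2}$ is smooth on $M\times M$ away from the (closed) cut locus and $(f(x),\sigma(t))$ stays off it. The eigenvalues of $T_{t}$ are therefore continuous, never vanish, and are positive at $t=0$. Hence they stay positive on the whole interval.

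\emph{Part (ii), upper bound.} For every small $r$, the restriction $f|_{B^{\Sigma}_{x}(r)}$ is an embedding. Applying \eqref{5} and \eqref{3} to it gives $\tau_{f|_{B^{\Sigma}_{x}(r)}}(x,\xi)\le\mu(f(x),\xi)$ and $\tau_{f|_{B^{\Sigma}_{x}(r)}}(x,\xi)\le\rho(x,\xi)$. This uses two facts: the focal distance only depends on the germ of $\Sigma$ at $x$, and $\mu$ only depends on $f(x)$ and $\xi$. Letting $r\to0^{+}$ yields $\tilde{\tau}_{f}\le\min\{\mu,\rho\}$.

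\emph{Part (ii), lower bound.} Fix $t_{0}<\min\{\mu,\rho\}$; it suffices to find $r>0$ with $d(\sigma(t),f(z))\ge t$ for all $z\in B^{\Sigma}_{x}(r)$ and $t\in[0,t_{0}]$. Set $\varphi_{t}(z):=d^{2}(\sigma(t),f(z))$.
\begin{itemize}
\item At $z=x$ we have $\bar{\nabla}d^{2}_{\sigma(t)}(f(x))=-2t\xi$, which is normal to $\Sigma$. So $x$ is a critical point of $\varphi_{t}$, with value $t^{2}$.
\item By the Gauss formula, $D^{2}_{\Sigma}\varphi_{t}(v,v)=\bar{D}^{2}d^{2}_{\sigma(t)}(v,v)+\langle\bar{\nabla}d^{2}_{\sigma(t)},h(v,v)\rangle$. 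At $z=x$ this equals $2T_{t}(v,v)$, which is positive definite by (i).
\item The set $\{(\sigma(t),f(x)):t\in[0,t_{0}]\}$ is compact and disjoint from the closed cut locus of $M$, so $D^{2}_{\Sigma}\varphi_{t}(z)$ is jointly continuous in $(t,z)$ near $[0,t_{0}]\times\{x\}$.
\item By compactness there is a single $r>0$, small enough that $B^{\Sigma}_{x}(r)$ is a strongly convex geodesic ball, such that $\varphi_{t}$ is strictly convex on $B^{\Sigma}_{x}(r)$ for every $t\in[0,t_{0}]$.
\end{itemize}
A strictly convex function on a convex ball with an interior critical point attains its minimum there. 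Hence $\min_{B^{\Sigma}_{x}(r)}\varphi_{t}=t^{2}$, i.e.\ $\tau_{f|_{B^{\Sigma}_{x}(r)}}(x,\xi)\ge t_{0}$. This gives $\tilde{\tau}_{f}\ge\min\{\mu,\rho\}$.

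\emph{Continuity.} The continuity claim then follows from the continuity of $\mu$ (\cite{CheeEb}) and of $\rho$ (\cite{ItTa}) where they are finite, since a minimum of such functions is again continuous where finite.

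The main obstacle I expect is the uniformity in the lower bound: getting one radius $r$ that works for all $t\in[0,t_{0}]$. This rests on the joint smoothness of $d^{2}$ off the cut locus together with the compactness of $[0,t_{0}]$, and these would need to be checked with some care.
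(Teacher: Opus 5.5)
Your proposal is correct and follows essentially the paper's route. Part (i) comes from the determinant identity of Lemma~\ref{4} plus continuity from $T_{0}=g_{\Sigma}$. For part (ii), the upper bound comes from \eqref{5} and \eqref{3}, and the lower bound from the fact that $\frac{1}{2}d^{2}_{\sigma(t_{0})}\circ f$ has a critical point at $x$ with positive-definite Hessian $T_{t_{0}}$, hence a minimum there on a small ball.

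The uniformity in $t\in[0,t_{0}]$ that you flag as the main obstacle is not needed. Since $\tau_{f|_{B^{\Sigma}_{x}(r)}}$ is defined as a supremum, minimality at the single time $t_{0}$ already gives $\tau_{f|_{B^{\Sigma}_{x}(r)}}(x,\xi)\geq t_{0}$, which is exactly what the paper uses.
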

\begin{proof}
	We  prove these two assertions separately.
	
	1\textdegree. Since $\left(\mathrm{Exp}_{f(x)}\right)_{*t\xi}$ is
	non-singular for all $t\in[0,\mu(f(x),\xi))$, by Lemma \ref{4} and the definition of $\rho$ we know that $T_{t}$  is
	non-singular for all $t\in[0,\min\{\mu(f(x),\xi),\rho(x,\xi)\})$. Note that $T_{0}=g_{\Sigma}$ is positive-definite. (i) follows immediately. 
	
2\textdegree. By the definition of $\tilde{\tau}_{f}$ and the inequalities \eqref{5} and \eqref{3}, we obtain, for each $r>0$,
\begin{equation}\label{35}
	\tau_{f|_{B^{\Sigma}_{x}(r)}}(x,\xi)\leq \tilde{\tau}_{f}(x,\xi) \leq \min\{\mu(f(x),\xi),\rho(x,\xi)\}.
\end{equation}
 We argue by contradiction. Suppose that there exists a positive number $t_{0}$ such that
 \[
\tilde{\tau}_{f}(x,\xi)<t_{0}<\min\{\mu(f(x),\xi),\rho(x,\xi)\}.
 \]
 Set $\varphi:=\frac{1}{2}d_{\sigma(t_{0})}^{2}$ on $M$. Moreover, for each $r>0$, denote by $\psi_{r}$ the function $\varphi \circ f|_{B^{\Sigma}_{x}(r)}$. Since $t_{0}<\mu(f(x),\xi)$,
\begin{equation}\label{36}
\nabla^{\Sigma}\psi_{r}(x)=[	\bar{\nabla}\varphi(f(x))]^{\mathrm{tan}}=(-t_{0}\xi)^{\mathrm{tan}}=\vec{0},
\end{equation}
where $(\cdot)^{\mathrm{tan}}$ denotes the projection to $T_{x}\Sigma$. Given $X,Y\in \Gamma(TB^{\Sigma}_{x}(r))$, at $x$ we have
\begin{equation}\label{37}
	\begin{aligned}
		D_{\Sigma}^{2}\psi_{r}(X,Y)&=\langle D_{X}^{\Sigma}\nabla^{\Sigma}\psi_{r},Y \rangle=\langle \bar{D}_{X}(\bar{\nabla}\varphi-(\bar{\nabla}\varphi)^{\perp}  ),Y \rangle\\
		&=\bar{D}^{2}\varphi(X,Y)+\langle (\bar{\nabla}\varphi)^{\perp}  ,h(X,Y) \rangle\\
		&=\bar{D}^{2}\varphi(X,Y)-t_{0}\langle  h(X,Y),\xi  \rangle\\
		&=T_{t_{0}}(X,Y),
	\end{aligned}
\end{equation}
where $(\cdot)^{\perp}$ denotes the projection to $T^{\perp}\Sigma$ and we have used the convention of identifying $f_{*}(X)$ with $X$ and $f_{*}(Y)$ with $Y$. By \eqref{36}, \eqref{37} and assertion (i), one can conclude that $x$ must be a strict local minimizer for the function $\psi_{r}$. Thus, there exists a positive number $r_{0}$ such that  $\psi_{r_{0}}$ attains its minimum at $x$. Immediately, we have $t_{0}\leq 	\tau_{f|_{B^{\Sigma}_{x}(r_{0})}}(x,\xi)$ which contradicts \eqref{35}. Therefore, we have $\tilde{\tau}_{f}(x,\xi)=\min\{\mu(f(x),\xi),\rho(x,\xi)\}$. The arbitrariness of $(x,\xi)$, together with the continuity of $\mu$ and $\rho$, ensures the continuity of $\tilde{\tau}_{f}$ on which $\tilde{\tau}_{f}$ is finite, as claimed.

These complete the proof of Lemma \ref{34}.
\end{proof}

We will use the following results, proved by Bishop.
\begin{lem}[Bishop, cf. \cite{BiCr}, in the proof of Theorem 15, pp.~253--255]\label{39}
	Let $M$ be a complete Riemannian $N$-manifold. Fix a point  $(p,v)\in S M$ and let $\sigma(t):=\mathrm{Exp}_{p}\left(tv\right)$ for $t\in[0,\mu(p,v))$ be a minimal geodesic.
	For a positive integer $l$ satisfying $1\leq l\leq N-1$, consider an orthonormal family of vectors $w_{1},\cdots,w_{l}$
	in $T_{p}M$ perpendicular to $v$. Then the following two assertions hold.
	
	\smallskip
	{\rm (i)} If $M$ satisfies $\mathrm{Ric}^{M}_{l}\geq l\delta$, then 	for $0<t<\mu(p,v)$
	\begin{align}\label{40}
		\sum_{i=1}^{l}\bar{D}^{2}d_{\sigma(t)}(w_{i},w_{i})\leq \frac{l \mathfrak{c}_{\delta}(t)}{\mathfrak{s}_{\delta}(t)}.
	\end{align}

	\smallskip
	{\rm (ii)} If the sectional curvatures of $M$ are bounded above by $\delta$, then 	for $0<t<\mu(p,v)$
	\begin{align*}
		\bar{D}^{2}d_{\sigma(t)}(w_{l},w_{l})\geq \frac{ \mathfrak{c}_{\delta}(t)}{\mathfrak{s}_{\delta}(t)}.
	\end{align*}

\end{lem}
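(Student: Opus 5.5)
The plan is to express the Hessian of the distance function at $p$ as an index form along the reversed geodesic, and then compare that index form with the model space $\M_{\delta}$. Fix $0<t<\mu(p,v)$ and put $r:=d_{\sigma(t)}$. Since $p$ is not a cut point of $\sigma(t)$, the function $r$ is smooth near $p$, and $\bar{\nabla} r(p)=-v$. Parametrize the reversed geodesic by $\beta(s):=\sigma(t-s)$ for $s\in[0,t]$, so that $\beta(0)=\sigma(t)$ and $\beta(t)=p$. Let $w\perp v$. The second variation formula for the geodesics joining $\sigma(t)$ to nearby points gives
\[
\bar{D}^{2}r(w,w)=I_{t}(J,J)=\int_{0}^{t}\bigl(|J'|^{2}-\bar{R}(J,\beta',\beta',J)\bigr)\,ds=\langle J'(t),J(t)\rangle .
\]
Here $J$ is the Jacobi field along $\beta$ with $J(0)=0$ and $J(t)=w$. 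It exists and is unique because there are no conjugate points on a minimal geodesic before the cut point.

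For (i), I apply the index lemma. Since $\beta|_{[0,t]}$ has no conjugate points, $J$ minimizes $I_{t}$ among all piecewise smooth fields $W$ with $W(0)=0$ and $W(t)=w$. Let $E_{i}$ be the parallel field along $\beta$ with $E_{i}(t)=w_{i}$, and take $W_{i}(s):=\frac{\mathfrak{s}_{\delta}(s)}{\mathfrak{s}_{\delta}(t)}E_{i}(s)$. Summing over $i$ and using that the $E_{i}(s)$ are orthonormal and orthogonal to $\beta'$, the hypothesis $\mathrm{Ric}^{M}_{l}\geq l\delta$ gives
\[
\sum_{i=1}^{l}\bar{D}^{2}r(w_{i},w_{i})\leq\sum_{i=1}^{l}I_{t}(W_{i},W_{i})\leq \frac{l}{\mathfrak{s}_{\delta}(t)^{2}}\int_{0}^{t}\bigl(\mathfrak{c}_{\delta}^{2}-\delta\,\mathfrak{s}_{\delta}^{2}\bigr)\,ds=\frac{l\,\mathfrak{c}_{\delta}(t)}{\mathfrak{s}_{\delta}(t)} .
\]
The last equality holds because $(\mathfrak{s}_{\delta}\mathfrak{c}_{\delta})'=\mathfrak{c}_{\delta}^{2}-\delta\,\mathfrak{s}_{\delta}^{2}$. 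This needs $\mathfrak{s}_{\delta}(t)>0$, which matters only when $\delta>0$, i.e.\ it requires $t<\pi/\sqrt{\delta}$. I will obtain this from the same computation run on the interval $[0,\pi/\sqrt{\delta}]$: the fields $\mathfrak{s}_{\delta}E_{i}$ vanish at both ends, and summing gives $\sum_{i} I(\mathfrak{s}_{\delta}E_{i},\mathfrak{s}_{\delta}E_{i})\leq0$. Hence the index form is not positive definite on that interval. This forces a conjugate point no later than $\pi/\sqrt{\delta}$, and therefore $\mu(p,v)\leq\pi/\sqrt{\delta}$.

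For (ii), the index lemma points the wrong way, so I will argue on the Jacobi field directly. This is the Rauch-type step, and it is where I expect the main care to be needed. Write $J$ as above and set $f:=|J|$, which is positive on $(0,t]$ because there are no conjugate points. Using $|J'|\,|J|\geq\langle J',J\rangle$ and $K\leq\delta$, one obtains $f''+\delta f\geq0$ wherever $f>0$. Moreover $f(s)\sim s|J'(0)|$ as $s\to0$. The Sturm comparison, differentiating the Wronskian $f'\mathfrak{s}_{\delta}-f\mathfrak{c}_{\delta}\geq0$, then gives $f'/f\geq\mathfrak{c}_{\delta}/\mathfrak{s}_{\delta}$ on $(0,t]$. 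Taking $w=w_{l}$ a unit vector, we have $f(t)=1$, and so
\[
\bar{D}^{2}r(w_{l},w_{l})=\langle J'(t),J(t)\rangle=f(t)f'(t)\geq\frac{\mathfrak{c}_{\delta}(t)}{\mathfrak{s}_{\delta}(t)} .
\]
This argument needs $\mathfrak{s}_{\delta}>0$ on $(0,t]$. When $\delta>0$ and $t\geq\pi/\sqrt{\delta}$, I would treat that range separately; alternatively, I would note that the statement is intended for $t<\pi/\sqrt{\delta}$. I would handle this restriction explicitly rather than leave it implicit.

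An equivalent route, which I would use as a cross-check, is the Riccati equation. The shape operator $\mathcal{S}$ of the level sets of $r$ satisfies $\mathcal{S}'+\mathcal{S}^{2}+R_{\beta'}=0$ with $\mathcal{S}\sim s^{-1}\,\mathrm{Id}$ as $s\to0$. Comparing its $l$-trace, respectively its smallest eigenvalue, with the scalar solution $\mathfrak{c}_{\delta}/\mathfrak{s}_{\delta}$ reproduces (i) and (ii). For the trace in (i) this uses the Cauchy–Schwarz bound $\mathrm{tr}(P\mathcal{S}^{2}P)\geq(\mathrm{tr}\,P\mathcal{S}P)^{2}/l$, where $P$ is the projection onto the span of the parallel-transported $w_{i}$.
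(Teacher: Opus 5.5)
Your proof is correct. It is essentially the classical argument that the paper only cites from Bishop--Crittenden without reproducing. For (i) that argument is the index lemma with test fields $\frac{\mathfrak{s}_{\delta}(s)}{\mathfrak{s}_{\delta}(t)}E_{i}(s)$, plus the Myers-type bound $\mu(p,v)\leq\pi/\sqrt{\delta}$. For (ii) your Sturm comparison for $|J|$ stands in for the usual Rauch/transplantation argument, and either works.

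The one thing to fix is your hedge in (ii). For $\delta>0$ and $t>\pi/\sqrt{\delta}$ the inequality is false: in $\R^{N}$ with $\delta=1$ the left side is $1/t$, while $\cot t\to+\infty$ as $t\downarrow\pi$. So that range cannot be ``treated separately''. Assertion (ii) has to be read with $t<\min\{\mu(p,v),\pi/\sqrt{\delta}\}$. This costs nothing for the paper's main results, since Theorems \ref{38} and \ref{57} use only (i).
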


\begin{lem}[Bishop,  cf. \cite{BiCr}, Theorem 15, p. 253]\label{16}
	Let $M$ be a complete Riemannian $N$-manifold and fix  $(p,v)\in S M$. Consider the geodesic $\sigma(t):=\mathrm{Exp}_{p}\left(tv\right)$ for $t\in[0,r]$, on which $p$ has no conjugate points. For a positive integer $l$ satisfying $1\leq l\leq N-1$, take linearly independent vectors  $w_{1},\cdots,w_{l}$
	 in $T_{p}M$ perpendicular to $v$. Define the Jacobi fields $Y_{i}(t):=(\mathrm{Exp}_{p})_{*tv}(tw_{i})$ along the geodesic $\sigma$, for $1\leq i\leq l$. Then the following two assertions hold.

\smallskip
{\rm (i)} If $M$ satisfies $\mathrm{Ric}^{M}_{l}\geq l\delta$, then
\begin{align*}
	\frac{
		|Y_{1}(t)\wedge Y_{2}(t)\wedge \cdots \wedge Y_{l}(t)|}{|w_{1}\wedge w_{2}\wedge \cdots \wedge w_{l}|}\leq \mathfrak{s}_{\delta}(t)^{l}	,
\end{align*}
for $t\in (0,r]$.

\smallskip
{\rm (ii)} If the sectional curvatures of $M$ are bounded above by $\delta$, then
\begin{align*}
	\frac{
		|Y_{1}(t)\wedge Y_{2}(t)\wedge\cdots \wedge Y_{l}(t)|}{|w_{1}\wedge w_{2}\wedge\cdots \wedge w_{l}|}\geq \mathfrak{s}_{\delta}(t)^{l}	,
\end{align*}
for $t\in (0,r]$.
\end{lem}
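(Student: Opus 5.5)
The idea is to compare logarithmic derivatives. Put $f(t):=|Y_{1}(t)\wedge\cdots\wedge Y_{l}(t)|$ and show that $(\log f)'\leq l\,\mathfrak{c}_{\delta}/\mathfrak{s}_{\delta}=(\log \mathfrak{s}_{\delta}^{l})'$ in case (i), with the reverse inequality in case (ii). Then integrate from $0$, using the normalization at $t=0$.

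\textbf{Setup.} Since $p$ has no conjugate points on $\sigma|_{(0,r]}$ and $Y_i(0)=0$, $Y_i'(0)=w_i$, the fields $Y_1(t),\dots,Y_l(t)$ are linearly independent for $t\in(0,r]$. Each $Y_i$ is perpendicular to $\sigma'$ because $w_i\perp v$. Moreover $Y_i(t)=t\,\eta_i(t)+O(t^2)$, where $\eta_i$ is the parallel transport of $w_i$, so
\[
\frac{f(t)}{t^{l}\,|w_1\wedge\cdots\wedge w_l|}\to1 \quad\text{and}\quad \frac{\mathfrak{s}_{\delta}(t)^l}{t^l}\to 1 \qquad (t\to0^{+}).
\]
Both the statement and $(\log f)'$ are invariant under replacing $(Y_i)$ by constant linear combinations $\tilde Y_i=a_{ij}Y_j$, since both wedges scale by $\det(a_{ij})$. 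So at a fixed $t_0\in(0,r]$ we may assume the $Y_i(t_0)$ are orthonormal. Then
\[
(\log f)'(t_0)=\sum_{i=1}^{l}\langle Y_i'(t_0),Y_i(t_0)\rangle=\sum_{i=1}^{l} I_{t_0}(Y_i,Y_i),
\]
where $I_{t_0}$ is the index form on $[0,t_0]$; the second equality comes from integrating by parts, using that the $Y_i$ are Jacobi fields vanishing at $0$.

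\textbf{Case (i).} Let $E_i$ be the parallel field along $\sigma$ with $E_i(t_0)=Y_i(t_0)$, and set $V_i:=\bigl(\mathfrak{s}_{\delta}(t)/\mathfrak{s}_{\delta}(t_0)\bigr)E_i$. This needs $\mathfrak{s}_{\delta}>0$ on $(0,t_0]$. That is automatic if $\delta\leq0$. If $\delta>0$, the standard Bonnet–Myers-type argument with $\mathrm{Ric}_l\geq l\delta$ forces a conjugate point at distance $\le\pi/\sqrt\delta$, so $t_0<\pi/\sqrt\delta$.

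Since there are no conjugate points on $(0,t_0]$, the index lemma gives $I_{t_0}(Y_i,Y_i)\leq I_{t_0}(V_i,V_i)$. The vectors $E_i(t)$ are orthonormal and perpendicular to $\sigma'$, so the hypothesis $\mathrm{Ric}^M_l\geq l\delta$ applies to their span. Summing,
\[
\sum_i I_{t_0}(V_i,V_i)=\frac{1}{\mathfrak{s}_{\delta}(t_0)^2}\int_0^{t_0}\Bigl(l\,\mathfrak{c}_{\delta}^2-\mathfrak{s}_{\delta}^2\sum_i\bar R(E_i,\sigma',\sigma',E_i)\Bigr)dt\le\frac{1}{\mathfrak{s}_{\delta}(t_0)^2}\int_0^{t_0} l\bigl(\mathfrak{c}_{\delta}^2-\delta\,\mathfrak{s}_{\delta}^2\bigr)dt=\frac{l\,\mathfrak{c}_{\delta}(t_0)}{\mathfrak{s}_{\delta}(t_0)}.
\]
The last equality is the identity $(\mathfrak{s}_{\delta}\mathfrak{c}_{\delta})'=\mathfrak{c}_{\delta}^2-\delta\mathfrak{s}_{\delta}^2$. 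Hence $(\log(f/\mathfrak{s}_{\delta}^l))'\leq0$ on $(0,r]$, and the limit at $0$ gives (i).

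\textbf{Case (ii).} It suffices to show, for a single Jacobi field $Y$ with $Y(0)=0$ and $Y\perp\sigma'$, that $\langle Y',Y\rangle/|Y|^2\geq \mathfrak{c}_{\delta}/\mathfrak{s}_{\delta}$; apply this to each orthonormalized $Y_i$ at $t_0$. Put $\varphi=|Y|$. Since $\sec\le\delta$, the Jacobi equation together with Cauchy–Schwarz gives $\varphi''+\delta\varphi\geq0$ wherever $\varphi>0$. We also have $\varphi(0)=0$ and $\varphi'(0)=|Y'(0)|$. Then
\[
(\varphi'\mathfrak{s}_{\delta}-\varphi\,\mathfrak{c}_{\delta})'=\mathfrak{s}_{\delta}(\varphi''+\delta\varphi)\geq0 \quad\text{with value }0\text{ at }t=0,
\]
which yields $\varphi'/\varphi\geq\mathfrak{c}_{\delta}/\mathfrak{s}_{\delta}$, and $\langle Y',Y\rangle/|Y|^2=\varphi'/\varphi$. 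This is the Rauch-type step. Then $(\log(f/\mathfrak{s}_{\delta}^l))'\ge0$ and we conclude as before.

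\textbf{Main obstacle.} The delicate point is the range of $t$ when $\delta>0$: we need $\mathfrak{s}_{\delta}>0$ on $(0,t_0]$. In (i) this follows from the conjugate-point bound above. In (ii) I would handle it by restricting to $t<\pi/\sqrt\delta$, where the inequality is meaningful; beyond that the right-hand side $\mathfrak{s}_{\delta}^l$ is not the comparison quantity in Bishop's theorem anyway. Justifying the orthonormalization at each $t_0$ (the invariance of both sides under constant linear changes) is routine.
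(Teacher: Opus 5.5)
Your argument is correct. It is essentially Bishop's proof from Bishop--Crittenden, which the paper only cites and does not reproduce: you take the logarithmic derivative of $|Y_1\wedge\cdots\wedge Y_l|$, orthonormalize at $t_0$, compare via the index lemma with $(\mathfrak{s}_\delta(t)/\mathfrak{s}_\delta(t_0))E_i$, and sum to bring in $\mathrm{Ric}^M_l$. Your Myers-type observation that $r<\pi/\sqrt{\delta}$ is automatic in (i) is right. In (ii), your direct Sturm comparison for $|Y|$ is the standard scalar form of the Rauch step.

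Your restriction of (ii) to $t\le\pi/\sqrt{\delta}$ when $\delta>0$ is not merely convenient but necessary. Take the round $3$-sphere of radius $3/2$, so $\sec=4/9\le 1=\delta$, with $l=2$; there are no conjugate points before $3\pi/2$. Then $|Y_1\wedge Y_2|=\tfrac94\sin^2(2t/3)\to 0$ while $\mathfrak{s}_1(t)^2=\sin^2 t\to 1$ as $t\to 3\pi/2^-$. So (ii) as printed overstates its range of validity; this is harmless for the paper, which never uses (ii).
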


Before stating our main comparison theorems, we first use Bishop's result to prove a byproduct: establishing the monotonicity in Hessian/Laplacian comparison theorem.
\begin{theorem}\label{17}
	Let $M$ be a complete Riemannian $N$-manifold. Fix a point  $(p,v)\in S M$ and let $\sigma(t):=\mathrm{Exp}_{p}\left(tv\right)$ for $t\in[0,\mu(p,v))$ be a minimal geodesic.
For a positive integer $l$ satisfying $1\leq l\leq N-1$, consider an orthonormal family of vectors $w_{1},\cdots,w_{l}$
 in $T_{p}M$ perpendicular to $v$. Then the following two assertions hold.
	
	\smallskip
	{\rm (i)} If $M$ satisfies $\mathrm{Ric}^{M}_{l}\geq l\delta$, then 
	\begin{align}
&\frac{d}{dt}	\sum_{i=1}^{l}\bar{D}^{2}d_{\sigma(t)}(w_{i},w_{i})\leq \frac{d}{dt}\frac{l \mathfrak{c}_{\delta}(t)}{\mathfrak{s}_{\delta}(t)}<0,\label{41}\\
&\frac{\sum_{i=1}^{l}\bar{D}^{2}d_{\sigma(t)}(w_{i},w_{i})}{l \mathfrak{c}_{\delta}(t)/\mathfrak{s}_{\delta}(t)}\leq \frac{\sum_{i=1}^{l}\bar{D}^{2}d_{\sigma(s)}(w_{i},w_{i})}{l \mathfrak{c}_{\delta}(s)/\mathfrak{s}_{\delta}(s)},\label{42}
	\end{align}
	for $0<s\leq t<\mu(p,v)$.
	
	\smallskip
	{\rm (ii)} If the sectional curvatures of $M$ are bounded above by $\delta$, then
	\begin{align*}
	&0>\frac{d}{dt}	\bar{D}^{2}d_{\sigma(t)}(w_{l},w_{l})\geq \frac{d}{dt}\frac{ \mathfrak{c}_{\delta}(t)}{\mathfrak{s}_{\delta}(t)},\\
	&\frac{\bar{D}^{2}d_{\sigma(t)}(w_{l},w_{l})}{l \mathfrak{c}_{\delta}(t)/\mathfrak{s}_{\delta}(t)}\geq \frac{\bar{D}^{2}d_{\sigma(s)}(w_{l},w_{l})}{l \mathfrak{c}_{\delta}(s)/\mathfrak{s}_{\delta}(s)},
\end{align*}
for $0<s\leq t<\mu(p,v)$.
\end{theorem}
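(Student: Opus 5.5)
The plan is to write the Hessian of $d_{\sigma(t)}$ at $p$ as an index form, differentiate it in the moving endpoint $t$, and then control the resulting boundary term with Bishop's Lemma \ref{16}.

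\textbf{Index-form representation.} For $0<t<\mu(p,v)$ there are no conjugate points on $\sigma|_{[0,t]}$. For a unit $w\perp v$, let $Y_{w,t}$ be the unique Jacobi field along $\sigma|_{[0,t]}$ with $Y_{w,t}(0)=w$ and $Y_{w,t}(t)=0$. Then, by the second variation formula,
\[
H_t(w):=\bar{D}^{2}d_{\sigma(t)}(w,w)=I_t(Y_{w,t},Y_{w,t})=-\langle Y_{w,t}'(0),w\rangle .
\]

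\textbf{Derivative in $t$.} I would differentiate the minimum of the index form over fields with $Y(0)=w$ and $Y(t)=0$. Since $Y_{w,t}$ is Jacobi, the interior variation vanishes and only the moving-endpoint term survives:
\[
\frac{d}{dt}H_t(w)=-|Y_{w,t}'(t)|^2 .
\]
One can check this directly by writing $Y_{w,t}(s)=Z(t-s)Z(t)^{-1}w$. Here $Z$ is the Jacobi tensor along the reversed geodesic from $\sigma(t)$, with $Z(0)=0$, $Z'(0)=\mathrm{id}$ on $\sigma'^{\perp}$, using parallel transport for identifications. This gives $Y_{w,t}'(t)=-Z(t)^{-1}w$ in the sense of $|Y_{w,t}'(t)|=|Z(t)^{-1}w|$. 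In particular each $H_t(w)$ is strictly decreasing, which gives the strict inequalities ``$0>\cdots$''. The model $\frac{d}{dt}\frac{\mathfrak{c}_\delta}{\mathfrak{s}_\delta}=-\mathfrak{s}_\delta^{-2}<0$ is elementary.

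\textbf{Proof of (i).} Set $u_i:=Z(t)^{-1}w_i$; these are perpendicular to the geodesic. Apply Lemma \ref{16}(i) along the reversed geodesic (from $\sigma(t)$ to $p$, which is free of conjugate points) to the vectors $u_i$, so that $Z(t)u_i=w_i$. This gives
\[
1=|w_1\wedge\cdots\wedge w_l|\le \mathfrak{s}_\delta(t)^l\,|u_1\wedge\cdots\wedge u_l| .
\]
Hadamard's inequality and AM--GM then give
\[
\sum_{i=1}^{l}|u_i|^2\ \ge\ l\Big(\prod_{i=1}^{l}|u_i|\Big)^{2/l}\ \ge\ l\,|u_1\wedge\cdots\wedge u_l|^{2/l}\ \ge\ l\,\mathfrak{s}_\delta(t)^{-2}.
\]
Hence $\frac{d}{dt}\sum_{i=1}^{l} H_t(w_i)\le -l\,\mathfrak{s}_\delta(t)^{-2}=\frac{d}{dt}\frac{l\mathfrak{c}_\delta}{\mathfrak{s}_\delta}$, which is \eqref{41}.

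For \eqref{42}, write $F=\sum_{i=1}^{l} H_t(w_i)$ and $G=l\mathfrak{c}_\delta/\mathfrak{s}_\delta$. Then $(F/G)'=(F'G-FG')/G^2$. On the range where $G>0$ we have $F'G\le G'G$, using \eqref{41} and $G>0$. We also have $G'G\le G'F$, using $G'<0$ and $F\le G$ from Lemma \ref{39}(i). So $F/G$ is nonincreasing, and \eqref{42} follows by integrating from $s$ to $t$.

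\textbf{Proof of (ii).} The argument is the same with $l=1$. Lemma \ref{16}(ii) gives $1=|Z(t)u|\ge \mathfrak{s}_\delta(t)|u|$, so $|u|^2\le \mathfrak{s}_\delta^{-2}$ and the derivative inequality reverses. For the ratio, combine this with $F\ge G$ from Lemma \ref{39}(ii) in the same quotient-rule computation. The factor $l$ in the stated ratio is a harmless constant.

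\textbf{Main obstacles.} I expect two delicate points. The first is justifying rigorously the endpoint-derivative formula $\frac{d}{dt}H_t(w)=-|Y'_{w,t}(t)|^2$. I would verify it from the explicit representation via $Z(t-s)Z(t)^{-1}$ together with the Wronskian symmetry of Jacobi tensors, rather than from a first-variation heuristic. The second is the sign bookkeeping for \eqref{42} when $\delta>0$ and $\mathfrak{c}_\delta(t)\le 0$, i.e.\ $t\ge \pi/(2\sqrt\delta)$, where $G\le 0$ and the ratio changes meaning. There I would restrict to the interval where $G>0$, and treat the endpoint of that interval by continuity or as a separate remark.
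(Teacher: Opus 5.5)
Your proposal is correct and is essentially the paper's proof. Both arguments use the index-form representation, the endpoint formula $\frac{d}{dt}\bar D^2 d_{\sigma(t)}(w_i,w_i)=-|\bar D_{\sigma'(t)}J_i^t|^2$, Lemma \ref{16} along the reversed geodesic combined with Hadamard and AM--GM, and the quotient rule with Lemma \ref{39}. The paper derives the endpoint formula by differentiating $I_0^t$ using $\frac{d}{dt}J_i^t(0)=0$ and $\bar D_{\sigma'(t)}J_i^t+\frac{d}{dt}J_i^t(t)=0$, with smoothness in $t$ coming from Jacobi fields based at $p$; that is cleaner than your $Z$, whose base point $\sigma(t)$ moves with $t$.

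Your sign caveat is substantive, not cosmetic. The paper's quotient-rule step silently uses $l\mathfrak{c}_\delta/\mathfrak{s}_\delta>0$, and for $\delta>0$ the ratio inequality can genuinely fail once $t>\pi/(2\sqrt{\delta})$. For example, on the round sphere of curvature $2\delta$ the ratio exceeds $1$ at such $t$ but is below $1$ for small $s$. So restricting to $t<\pi/(2\sqrt{\delta})$ is necessary.
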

\begin{proof}
Let $\{\bar{E}_{j}\}_{j=1}^{N}$ be an orthonormal basis of $T_{p}M$ such that $\bar{E}_{N}=v$ and  $\bar{E}_{i}=w_{i}$ for $1\leq i\leq l$, and $\{E_{j}(s)\}_{j=1}^{N}$ be the parallel transports of $\{\bar{E}_{j}\}_{j=1}^{N}$ along $\sigma$ to $\sigma(s)$ for $0\leq s < \mu(p,v)$. Since there are no conjugate points to $p$ along $\sigma$, for each $0\leq t < \mu(p,v)$ and each $1\leq i\leq l$, there exists a unique normal Jacobi field $J^{t}_{i}$ along $\sigma$ such that
	\begin{align}\label{19}
	J^{t}_{i}(0):=J^{t}_{i}(\sigma(0))=w_{i}
\end{align}
 and
 	\begin{align}\label{20}
 J^{t}_{i}(t):=J^{t}_{i}(\sigma(t))=0.
 \end{align}
 Express $J^{t}_{i}$ in the basis $\{E_{j}\}_{j=1}^{N}$ as
	\begin{align}\label{24}
J^{t}_{i}(s)=\sum_{j=1}^{N-1}\lambda_{ij}(t,s)E_{j}(s)
\end{align}
for $1\leq i\leq l$, $t\in (0,\mu(p,v))$
 and  $s\in [0,\mu(p,v))$. We claim that
 \[
\lambda_{ij}\in C^{\infty}( (0,\mu(p,v))\times [0,\mu(p,v))).
 \]
 Indeed, we can define normal Jacobi fields $\varphi_{1},\cdots,\varphi_{N-1},\psi_{1},\cdots,\psi_{N-1}$  along $\sigma$ with initial value conditions
	\begin{align}\label{21}
	\varphi_{j}(0)=\bar{E}_{j},\quad\bar{D}_{v}	\varphi_{j}=0 ,\quad\psi_{j}(0)=0,\quad\bar{D}_{v}\psi_{j}=\bar{E}_{j},
\end{align}
for $1\leq j\leq N-1$. Note that $\{\varphi_{1},\cdots,\varphi_{N-1},\psi_{1},\cdots,\psi_{N-1}\}$ forms a basis solution system of all normal Jacobi fields along $\sigma$. Therefore, for each $0<t<\mu(p,v)$ there exist $a_{ij}(t)$ and $b_{ij}(t)$ such that 
	\begin{align}\label{18}
	J^{t}_{i}(s)=\sum_{j=1}^{N-1}(a_{ij}(t)\varphi_{j}(s)+b_{ij}(t)\psi_{j}(s)),
\end{align}
for all $0\leq s<\mu(p,v)$. On the one hand, by \eqref{19}, \eqref{21} and \eqref{18}, 
	\begin{align*}
\sum_{j=1}^{N-1}a_{ij}(t)\bar{E}_{j}=\bar{E}_{i}
\end{align*}
which yields $a_{ij}(t)=\delta_{ij}$, for $1\leq i\leq l$, $1\leq j\leq N-1$ and $t\in (0,\mu(p,v))$. Therefore 
	\begin{align}\label{22}
	J^{t}_{i}(s)=\varphi_{i}(s)+\sum_{j=1}^{N-1}b_{ij}(t)\psi_{j}(s).
\end{align}
On the other hand, by \eqref{20}, \eqref{21} and \eqref{18},
	\begin{align}\label{23}
	\varphi_{i}(t)+\sum_{j=1}^{N-1}b_{ij}(t)\psi_{j}(t)=0.
\end{align}
It follows from the construction of $\psi_{j}$ that
the vectors $\psi_{1}(s),\cdots,\psi_{N-1}(s)$ form a basis for the orthogonal complement of $E_{N}(s)$ in $T_{\sigma(s)}M$ for each $0<s<\mu(p,v)$. Thus, equation \eqref{23} implies that $b_{ij}(t)$ is precisely the $j$-th component of the vector $-\varphi_{i}(t)$ with respect to the basis $\{\psi_{j}(t)\}_{j=1}^{N-1}$,  and hence $b_{ij}\in C^{\infty}(0,\mu(p,v))$ which together with   \eqref{22} implies $\lambda_{ij}\in C^{\infty}( (0,\mu(p,v))\times [0,\mu(p,v)))$.
 
Differentiating \eqref{24} gives us
\begin{align}
	&\bar{D}_{\sigma'(s)}J_{i}^{t}=\sum_{j=1}^{N-1}\frac{\partial\lambda_{ij}}{\partial s}(t,s)E_{j}(s),\label{25}\\
	&\frac{d}{dt}J_{i}^{t}(s)=\sum_{j=1}^{N-1}\frac{\partial\lambda_{ij}}{\partial t}(t,s)E_{j}(s)\label{26},
\end{align}
for $0<t<\mu(p,v)$ and $0\leq s<\mu(p,v)$. These together with
\eqref{24} can yield
\begin{equation}\label{29}
	\begin{aligned}
\frac{\partial}{\partial t}\langle \bar{D}_{\sigma'(s)}J_{i}^{t},\bar{D}_{\sigma'(s)}J_{i}^{t} \rangle&=\frac{\partial}{\partial t}\left( \sum_{j=1}^{N-1}\frac{\partial\lambda_{ij}}{\partial s}(t,s)^{2} \right)\\
&=2\sum_{j=1}^{N-1}\frac{\partial\lambda_{ij}}{\partial s}(t,s)\frac{\partial^{2}\lambda_{ij}}{\partial s \partial t}(t,s)\\
&=2\langle \bar{D}_{\sigma'(s)}J_{i}^{t},\bar{D}_{\sigma'(s)}\frac{d}{dt}J_{i}^{t} \rangle,
	\end{aligned}
\end{equation}
 for $1\leq i\leq l$, $t\in (0,\mu(p,v))$ and $s\in [0,\mu(p,v))$. Since $J_{i}^{t}(0)\equiv w_{i}$ for all $t\in (0,\mu(p,v))$, it follows that
\begin{align}\label{27}
	\frac{d}{dt}J_{i}^{t}(0)=0.
\end{align}
Furthermore, note that $J_{i}^{t}(t)=0$  for all $t\in (0,\mu(p,v))$, which implies $\lambda_{ij}(t,t)\equiv 0$. Differentiating this with respect to $t$ along the line $s=t$ yields
\begin{align*}
	\frac{\partial \lambda_{ij}}{\partial t}(t,t)+\frac{\partial \lambda_{ij}}{\partial s}(t,t)=0,
\end{align*}
 for $1\leq i\leq l$, $1\leq j\leq N-1$ and $t\in (0,\mu(p,v))$, which together with \eqref{25} and \eqref{26} gives
 \begin{align} \label{28}
 	\bar{D}_{\sigma'(t)}J_{i}^{t}+\frac{dJ_{i}^{t}}{dt}(t)=0,
 \end{align}
 for $1\leq i\leq l$ and $t\in (0,\mu(p,v))$.

 By the relationship between the Hessian of the distance function and the index form, we can get
\begin{align*}
	\bar{D}^{2}d_{\sigma(t)}(w_{i},w_{i})&=I_{0}^{t}(J_{i}^{t}|_{[0,t]},J_{i}^{t}|_{[0,t]})\\
	&=\int_{0}^{t}(\langle \bar{D}_{\sigma'(s)}J_{i}^{t},\bar{D}_{\sigma'(s)}J_{i}^{t} \rangle-\bar{R}(J_{i}^{t}(s),\sigma'(s),\sigma'(s),J_{i}^{t}(s))ds.
\end{align*}
Combining with \eqref{29}, \eqref{27} and \eqref{28}, differentiating the above equation with respect to $t$ yields
\begin{equation}\label{30}
	\begin{aligned}
&\frac{d}{dt}	\bar{D}^{2}d_{\sigma(t)}(w_{i},w_{i})\\
=&\langle \bar{D}_{\sigma'(t)}J_{i}^{t},\bar{D}_{\sigma'(t)}J_{i}^{t} \rangle-\bar{R}(J_{i}^{t}(t),\sigma'(t),\sigma'(t),J_{i}^{t}(t))\\
&+\int_{0}^{t}\frac{\partial}{\partial t}(\langle \bar{D}_{\sigma'(s)}J_{i}^{t},\bar{D}_{\sigma'(s)}J_{i}^{t} \rangle-\bar{R}(J_{i}^{t}(s),\sigma'(s),\sigma'(s),J_{i}^{t}(s)))ds\\
=&|\bar{D}_{\sigma'(t)}J_{i}^{t}|^{2}+\int_{0}^{t}\frac{\partial}{\partial t}\langle \bar{D}_{\sigma'(s)}J_{i}^{t},\bar{D}_{\sigma'(s)}J_{i}^{t} \rangle-2\bar{R}(J_{i}^{t}(s),\sigma'(s),\sigma'(s),\frac{d}{dt}J_{i}^{t}(s))ds\\
=&|\bar{D}_{\sigma'(t)}J_{i}^{t}|^{2}+2I_{0}^{t}(J_{i}^{t}|_{[0,t]},\frac{dJ_{i}^{t}}{dt}|_{[0,t]})\\
=&|\bar{D}_{\sigma'(t)}J_{i}^{t}|^{2}+2\langle \bar{D}_{\sigma'(t)}J_{i}^{t}, \frac{dJ_{i}^{t}}{dt}(t)\rangle-2\langle \bar{D}_{\sigma'(0)}J_{i}^{t}, \frac{d}{dt}J_{i}^{t}(0)\rangle\\
=&-|\bar{D}_{\sigma'(t)}J_{i}^{t}|^{2}.
	\end{aligned}
\end{equation}
We proceed to prove these two assertions separately.

1\textdegree. Assume that $\mathrm{Ric}^{M}_{l}\geq l\delta$. Summing equation \eqref{30} over $i$ from $1$ to $l$, by the Cauchy-Schwarz inequality and the arithmetic-geometric mean inequality, we obtain
\begin{equation}\label{31}
	\begin{aligned}
	\frac{d}{dt}\sum_{i=1}^{l}	\bar{D}^{2}d_{\sigma(t)}(w_{i},w_{i})
		&=-\sum_{i=1}^{l}|\bar{D}_{\sigma'(t)}J_{i}^{t}|^{2}\\
		&\leq -l^{-1} \left(\sum_{i=1}^{l}|\bar{D}_{\sigma'(t)}J_{i}^{t}|\right)^{2}\\
		&\leq -l |\bar{D}_{\sigma'(t)}J_{1}^{t}\wedge\bar{D}_{\sigma'(t)}J_{2}^{t}\wedge\cdots\wedge\bar{D}_{\sigma'(t)}J_{l}^{t}|^{\frac{2}{l}}.
	\end{aligned}
\end{equation}
Since $J_{i}^{t}(0)=w_{i}$,  $|J_{1}^{t}(0)\wedge J_{2}^{t}(0)\wedge\cdots\wedge J_{l}^{t}(0)|=1$. By assertion (i) of Lemma \ref{16} and in combination with \eqref{31}, we obtain \eqref{41}. By \eqref{40} and \eqref{41}, 
							\begin{align*}
\frac{d}{dt}\frac{\sum_{i=1}^{l}\bar{D}^{2}d_{\sigma(t)}(w_{i},w_{i})}{l \mathfrak{c}_{\delta}(t)/\mathfrak{s}_{\delta}(t)}&=\frac{\frac{l \mathfrak{c}_{\delta}(t)}{\mathfrak{s}_{\delta}(t)}\frac{d}{dt}\sum_{i=1}^{l}\bar{D}^{2}d_{\sigma(t)}(w_{i},w_{i})-\sum_{i=1}^{l}\bar{D}^{2}d_{\sigma(t)}(w_{i},w_{i})\frac{d}{dt}\frac{l \mathfrak{c}_{\delta}(t)}{\mathfrak{s}_{\delta}(t)}}{(l \mathfrak{c}_{\delta}(t)/\mathfrak{s}_{\delta}(t))^{2}}\\
&\leq \frac{\frac{l \mathfrak{c}_{\delta}(t)}{\mathfrak{s}_{\delta}(t)}\frac{d}{dt}\frac{l \mathfrak{c}_{\delta}(t)}{\mathfrak{s}_{\delta}(t)}-\frac{l \mathfrak{c}_{\delta}(t)}{\mathfrak{s}_{\delta}(t)}\frac{d}{dt}\frac{l \mathfrak{c}_{\delta}(t)}{\mathfrak{s}_{\delta}(t)}}{(l \mathfrak{c}_{\delta}(t)/\mathfrak{s}_{\delta}(t))^{2}}=0,
\end{align*}
which implies \eqref{42}. These prove assertion (i).

2\textdegree. Assume that the sectional curvatures of $M$ are bounded above by $\delta$. Similarly, assertion (ii) holds by an argument analogous to the proof of assertion (i).

These complete the proof.
\end{proof}


 We now use Bishop's result (Lemma \ref{16}) to prove the following comparison theorem and Theorem \ref{57}.
\begin{theorem}\label{38}
	Let $f: \Sigma^n \rightarrow M^{n+m}$ be an isometric immersion of an $n$-manifold into a complete Riemannian $(n+m)$-manifold. Let $(x,\xi)\in U\Sigma$ and define the geodesic $\sigma(t):=\mathrm{exp}^{\perp}_{x}\left(t\xi\right)$ for $0\leq t < \infty$. Consider a second such situation $\underline{f}:\underline{ \Sigma}^n\rightarrow \M^{n+m}_{\delta}$ etc. Then the following two assertions hold.

\smallskip
{\rm (i)} If the sectional curvatures of $M$ are bounded below by $\delta$ and $\kappa_{i}(x,\xi)\leq \underline{\kappa}_{i}(\underline{x},\underline{\xi})$ for each $1\leq i \leq n$, then
\begin{align*}
	&|\mathrm{det} T_{t}|\leq |\mathrm{det} \underline{T}_{t}|=t^{n}\prod_{i=1}^{n}\left(\frac{ \mathfrak{c}_{\delta}(t)}{\mathfrak{s}_{\delta}(t)}+\underline{\kappa}_{i}(\underline{x},\underline{\xi})\right),\\
	&\frac{|\mathrm{det} T_{t}|}{|\mathrm{det} \underline{T}_{t}|}\leq \frac{|\mathrm{det} T_{s}|}{|\mathrm{det} \underline{T}_{s}|},
\end{align*}
for $0<s\leq t<\tilde{\tau}_{f}(x,\xi)$. Moreover,  $\tilde{\tau}_{f}(x,\xi)\leq \tilde{\tau}_{\underline{f}}(\underline{x},\underline{\xi})$.

	\smallskip
{\rm (ii)} If $M$ satisfies $\mathrm{Ric}^{M}_{n}\geq n\delta$, $\underline{\Sigma}$ is  umbilical at $\underline{x}$ for the normal $\underline{\xi}$ and $\langle 	\mathbf{H}(x),\xi \rangle\geq  \langle 	\underline{\mathbf{H}}(\underline{x}),\underline{\xi} \rangle$, then 
\begin{align*}
	&|\mathrm{det} T_{t}|\leq |\mathrm{det} \underline{T}_{t}|=t^{n}\left(\frac{ \mathfrak{c}_{\delta}(t)}{\mathfrak{s}_{\delta}(t)}-\langle 	\underline{\mathbf{H}}(\underline{x}),\underline{\xi} \rangle\right)^{n},\\
	&\frac{|\mathrm{det} T_{t}|}{|\mathrm{det} \underline{T}_{t}|}\leq \frac{|\mathrm{det} T_{s}|}{|\mathrm{det} \underline{T}_{s}|},
\end{align*}
for $0<s\leq t<\tilde{\tau}_{f}(x,\xi)$. Moreover, $\tilde{\tau}_{f}(x,\xi)\leq \tilde{\tau}_{\underline{f}}(\underline{x},\underline{\xi})$.
\end{theorem}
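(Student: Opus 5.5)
The plan is to reduce everything to the tensor $A_t$. At $f(x)$ the gradient of $d_{\sigma(t)}$ is $-\xi$, which is orthogonal to $T_x\Sigma$, and $\tfrac12\bar D^2 d^2 = d\,\bar D^2 d + \bar\nabla d\otimes\bar\nabla d$. Hence
\[
T_t = t\,A_t, \qquad A_t = \bar D^2 d_{\sigma(t)}(f(x))\big|_{T_x\Sigma\times T_x\Sigma} + \langle S_\xi\cdot,\cdot\rangle .
\]
In the model $\M^{n+m}_\delta$ the Hessian of the distance is $(\mathfrak c_\delta/\mathfrak s_\delta)\,g$ on vectors orthogonal to the geodesic. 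This gives the stated closed forms for $|\det\underline T_t|$; in case (ii) umbilicity gives $\underline\kappa_i=-\langle\underline{\mathbf H},\underline\xi\rangle$. By Lemma \ref{34}(i), $A_t$ is positive definite for $0<t<\tilde\tau_f(x,\xi)$. So it suffices to compare $\det A_t$ with $\prod_i b_i(t)$, where $b_i(t)=\mathfrak c_\delta/\mathfrak s_\delta+\underline\kappa_i$. The key identity is $\tfrac{d}{dt}(\mathfrak c_\delta/\mathfrak s_\delta)=-\mathfrak s_\delta^{-2}$.

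\emph{Pointwise bounds.} In case (i), Lemma \ref{39}(i) with $l=1$ gives $\bar D^2 d_{\sigma(t)}(w,w)\le \mathfrak c_\delta/\mathfrak s_\delta$ for every unit $w\perp\sigma'(0)$. Hence $A_t\le (\mathfrak c_\delta/\mathfrak s_\delta)I+S_\xi$ as forms. Weyl's monotonicity then gives $0<\lambda_i(A_t)\le \mathfrak c_\delta/\mathfrak s_\delta+\kappa_i\le b_i$, and taking the product yields the bound. In case (ii), AM--GM gives $\det A_t\le(\mathrm{tr}A_t/n)^n$. Lemma \ref{39}(i) with $l=n$ bounds the Hessian trace by $n\mathfrak c_\delta/\mathfrak s_\delta$. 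Also $\mathrm{tr}S_\xi=-n\langle\mathbf H,\xi\rangle\le -n\langle\underline{\mathbf H},\underline\xi\rangle$, so $(\det A_t)^{1/n}\le b(t):=\mathfrak c_\delta/\mathfrak s_\delta-\langle\underline{\mathbf H},\underline\xi\rangle$.

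\emph{Monotonicity (main step).} Since $S_\xi$ does not depend on $t$, we have $A_t'=\tfrac{d}{dt}\bar D^2 d_{\sigma(t)}|_{T_x\Sigma}$. Formula \eqref{30}, polarized, gives $A_t'(w,w)=-|\bar D_{\sigma'(t)}J^t_w|^2$. Write $\bar D_{\sigma'(t)}J^t_w=-L_tw$ with $L_t$ linear, so $A_t'=-L_t^*L_t$ and $\tfrac{d}{dt}\log\det A_t=-\mathrm{tr}(A_t^{-1}L_t^*L_t)$. Reversing $\sigma$ from $\sigma(t)$, the fields $J^t_w$ are of the form $(\mathrm{Exp}_{\sigma(t)})_*$ applied to radial vectors. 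Lemma \ref{16}(i) then says $|J^t_{w_1}(0)\wedge\cdots|\le \mathfrak s_\delta(t)^l\,|L_tw_1\wedge\cdots|$.

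In case (i), take $l=1$. This gives $L_t^*L_t\ge \mathfrak s_\delta^{-2}I$, hence
\[
\tfrac{d}{dt}\log\det A_t\le-\mathfrak s_\delta^{-2}\,\mathrm{tr}A_t^{-1}\le -\mathfrak s_\delta^{-2}\sum_i b_i^{-1}=\tfrac{d}{dt}\log\prod_i b_i ,
\]
using $\lambda_i(A_t)\le b_i$. So $\log(\det A_t/\prod_i b_i)$ is nonincreasing.

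In case (ii), take $l=n$. This gives $\det(L_t^*L_t)^{1/2}\ge\mathfrak s_\delta^{-n}$. AM--GM then yields $\mathrm{tr}(A^{-1}L^*L)\ge n\,\mathfrak s_\delta^{-2}(\det A_t)^{-1/n}$, i.e. $\tfrac{d}{dt}(\det A_t)^{1/n}\le-\mathfrak s_\delta^{-2}=b'$. Set $a=(\det A_t)^{1/n}$. Since $a\le b$ and $b'<0$, we get $(a/b)'=(a'b-ab')/b^2\le b'(b-a)/b^2\le0$. Multiplying back the common factor $t^n$ yields both monotonicity statements.

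The obstacle I expect is justifying the reversed use of Lemma \ref{16}. It requires no conjugate points of $\sigma(t)$ along $\sigma|_{[0,t]}$, which holds since $t<\mu$. It also requires that the correspondence between $J^t_w(0)=w$ and $\bar D J^t_w(t)$ is exactly a derivative of $\mathrm{Exp}_{\sigma(t)}$, up to sign and the parameter reversal.

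\emph{Comparison of $\tilde\tau$.} By Lemma \ref{34}(ii), $\tilde\tau_{\underline f}=\min\{\underline\mu,\underline\rho\}$. On $(0,\tilde\tau_f)$ we have $0<\lambda_i(A_t)\le b_i$ (resp. $0<a\le b$), so each $b_i>0$ and the model $\underline T_t$ stays nondegenerate; thus $\tilde\tau_f\le\underline\rho$. For $\underline\mu$, only $\delta>0$ matters, where $\underline\mu=\pi/\sqrt\delta$. Lemma \ref{16}(i) forces $\mathfrak s_\delta(t)^l\to0$ to bound Jacobi volumes, so a conjugate point occurs along $\sigma$ by $\pi/\sqrt\delta$. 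Hence $\mu(f(x),\xi)\le\pi/\sqrt\delta$ and $\tilde\tau_f\le\tilde\tau_{\underline f}$.
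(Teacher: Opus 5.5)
Your proposal is correct. In part (ii) it follows essentially the paper's argument. For the pointwise bound you use AM--GM together with Lemma~\ref{39}(i) for $l=n$. For the monotonicity you use the estimate $\frac{d}{dt}\log\det A_t\le -n\,\mathfrak{s}_\delta^{-2}(\det A_t)^{-1/n}$, obtained from \eqref{30} and the reversed application of Lemma~\ref{16} with $l=n$, combined with $(\det A_t)^{1/n}\le b(t)$. Your quotient-rule step is the paper's comparison of logarithmic derivatives, where the paper's ``$=$'' should be ``$\le$''.

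In part (i) your route genuinely differs from the paper's, and the difference matters.

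\begin{itemize}
\item \textbf{The paper's route.} It bounds $\det A_t$ by Hadamard's inequality in the principal directions, which yields only a determinant bound. For monotonicity it reuses the AM--GM estimate \eqref{46}, which sees only $\det A_{t_0}$. It then asserts $\frac{n}{t_0}-n|\det A_{t_0}|^{-1/n}\mathfrak{s}_\delta(t_0)^{-2}=\frac{d}{dt}\big|_{t_0}\log|\det\underline{T}_t|$.
\item \textbf{Why that assertion fails in general.} The model satisfies $\frac{d}{dt}\log|\det\underline{T}_t|=\frac{n}{t}-\mathfrak{s}_\delta^{-2}\sum_i b_i^{-1}$, and AM--GM gives $\sum_i b_i^{-1}\ge n(\prod_i b_i)^{-1/n}$, with equality only when all $\underline{\kappa}_i$ coincide. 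Take $\Sigma=\underline{\Sigma}\subset\R^{n+m}$ at a non-umbilic point. Then the paper's right-hand side strictly exceeds $\frac{d}{dt}\log|\det\underline{T}_t|$, so the needed comparison of derivatives does not follow when the model is non-umbilic.
\item \textbf{What your route does instead.} First, Weyl's monotonicity gives the eigenvalue-wise domination $\lambda_i(A_t)\le \mathfrak{c}_\delta/\mathfrak{s}_\delta+\kappa_i\le b_i(t)$, which is stronger than a determinant bound. Second, Lemma~\ref{16} with $l=1$, applied vector by vector, gives the operator inequality $L_t^*L_t\ge\mathfrak{s}_\delta^{-2}I$. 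Together these give $\mathrm{tr}(A_t^{-1}L_t^*L_t)\ge\mathfrak{s}_\delta^{-2}\,\mathrm{tr}A_t^{-1}\ge\mathfrak{s}_\delta^{-2}\sum_i b_i^{-1}=-\frac{d}{dt}\log\prod_i b_i$.
\end{itemize}

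You keep the full strength of the sectional-curvature hypothesis instead of collapsing it through AM--GM. That is exactly what makes the monotonicity in (i) hold for non-umbilic models, so your argument repairs this step rather than merely rephrasing it.

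You also supply an actual argument for $\tilde{\tau}_f\le\tilde{\tau}_{\underline f}$, which the paper only calls clear. Your argument uses positivity of the $b_i$ on $(0,\tilde{\tau}_f)$ and the Bishop--Myers bound $\mu\le\pi/\sqrt{\delta}$ for $\delta>0$. When writing it up, establish $\tilde{\tau}_f\le\underline{\mu}$ first, before using the closed form of $\underline{T}_t$ on $(0,\tilde{\tau}_f)$.
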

\begin{proof}
Fix a $t_{0}\in(0,\tilde{\tau}_{f}(x,\xi))$ and choose an orthonormal basis $\{e_{i} \}_{i=1}^{n}\subset T_{x}\Sigma$ such that $A_{t_{0}}$ is diagonalized. Let us write, $A_{t_{0}}(e_{i},e_{j})=\lambda_{i}\delta_{ij}$ for $ 1\leq i,j \leq n$. By Lemma \ref{34},  $\lambda_{i}>0$ for $ 1\leq i \leq n$. For each $t\in(0,\tilde{\tau}_{f}(x,\xi))$, let  $J^{t}_{i}$ be the Jacobi field along  $\sigma$ such that
$
	J^{t}_{i}(0)=e_{i}
$
and
$
	J^{t}_{i}(t)=0.
$
 By the arithmetic-geometric mean inequality,
\begin{equation*}
	\begin{aligned}
		\frac{d}{dt}\Big|_{t=t_{0}}\log|\mathrm{det}A_{t}|&=\sum_{i=1}^{n}\lambda_{i}^{-1}\frac{d}{dt}\Big|_{t=t_{0}}\bar{D}^{2}d_{\sigma(t)}(e_{i},e_{j})\\
		&=-\sum_{i=1}^{n}\lambda_{i}^{-1}|\bar{D}_{\sigma'(t_{0})}J_{i}^{t_{0}}|^{2}\\
		&\leq -n\left(\prod_{i=1}^{n} \lambda_{i}^{-1}|\bar{D}_{\sigma'(t_{0})}J_{i}^{t_{0}}|^{2} \right)^{\frac{1}{n}}\\
		&=-n|\mathrm{det}A_{t_{0}}|^{-\frac{1}{n}} \left(\prod_{i=1}^{n} |\bar{D}_{\sigma'(t_{0})}J_{i}^{t_{0}}| \right)^{\frac{2}{n}}   .
	\end{aligned}
\end{equation*}
Thus 
\begin{equation}\label{46}
	\begin{aligned}
		\frac{d}{dt}\Big|_{t=t_{0}}\log|\mathrm{det}T_{t}|\leq \frac{n}{t_{0}}-n|\mathrm{det}A_{t_{0}}|^{-\frac{1}{n}} \left(\prod_{i=1}^{n} |\bar{D}_{\sigma'(t_{0})}J_{i}^{t_{0}}| \right)^{\frac{2}{n}}   .
	\end{aligned}
\end{equation}
We proceed to prove these two assertions separately.

1\textdegree. Assume that  the sectional curvatures of $M$ are bounded below by $\delta$ and $\kappa_{i}(x,\xi)\leq \underline{\kappa}_{i}(\underline{x},\underline{\xi})$ for each $1\leq i \leq n$. Let $\{E_{i}\}_{i=1}^n$ be the principal directions of $\Sigma$ with respect to the normal $(x,\xi)$. By Hadamard’s inequality (which states that the
determinant of a positive definite matrix is at most the product of its diagonal entries) and Hessian comparison theorem (see also Lemma \ref{39}),
\begin{equation*}
	\begin{aligned}
		|\mathrm{det}A_{t}|&\leq\prod_{i=1}^{n}(\bar{D}^{2}d_{\sigma(t)}(E_{i},E_{i})+\kappa_{i}(x,\xi))    \\
		&\leq \prod_{i=1}^{n}\left(\frac{ \mathfrak{c}_{\delta}(t)}{\mathfrak{s}_{\delta}(t)}+\kappa_{i}(x,\xi)\right)\\
		&\leq \prod_{i=1}^{n}\left(\frac{ \mathfrak{c}_{\delta}(t)}{\mathfrak{s}_{\delta}(t)}+\underline{\kappa}_{i}(\underline{x},\underline{\xi})\right)=|\mathrm{det}\underline{A}_{t}|.
	\end{aligned}
\end{equation*}
Thus
\begin{equation*}
|\mathrm{det} T_{t}|=t^{n}|\mathrm{det} A_{t}|\leq t^{n}|\mathrm{det} \underline{A}_{t}|=|\mathrm{det} \underline{T}_{t}|=t^{n}\prod_{i=1}^{n}\left(\frac{ \mathfrak{c}_{\delta}(t)}{\mathfrak{s}_{\delta}(t)}+\underline{\kappa}_{i}(\underline{x},\underline{\xi})\right).
\end{equation*}
By \eqref{46} and Lemma \ref{16},
\begin{equation*}
	\begin{aligned}
		\frac{d}{dt}\Big|_{t=t_{0}}\log|\mathrm{det}T_{t}|\leq \frac{n}{t_{0}}-n|\mathrm{det}A_{t_{0}}|^{-\frac{1}{n}} \left(\prod_{i=1}^{n}\mathfrak{s}_{\delta}(t_{0})^{-1} \right)^{\frac{2}{n}} =	\frac{d}{dt}\Big|_{t=t_{0}}\log|\mathrm{det}\underline{T}_{t}|  .
	\end{aligned}
\end{equation*}
Note that $|\mathrm{det} T_{0}|=|\mathrm{det} \underline{T}_{0}|=1$. Therefore,
\[
	\frac{|\mathrm{det} T_{t}|}{|\mathrm{det} \underline{T}_{t}|}\leq \frac{|\mathrm{det} T_{s}|}{|\mathrm{det} \underline{T}_{s}|},
\]
for $0<s\leq t<\tilde{\tau}_{f}(x,\xi)$. Clearly,  $\tilde{\tau}_{f}(x,\xi)\leq \tilde{\tau}_{\underline{f}}(\underline{x},\underline{\xi})$. These prove assertion (i).

2\textdegree. Assume that $M$ satisfies $\mathrm{Ric}^{M}_{n}\geq n\delta$, $\underline{\Sigma}$ is  umbilical at $\underline{x}$ for the normal $\underline{\xi}$ and $\langle 	\mathbf{H}(x),\xi \rangle\geq  \langle 	\underline{\mathbf{H}}(\underline{x}),\underline{\xi} \rangle$. By the arithmetic-geometric mean inequality and Lemma \ref{39}, 
\begin{equation*}
	\begin{aligned}
		|\mathrm{det}A_{t_{0}}|&\leq\left(\frac{\sum_{i=1}^{n}(\bar{D}^{2}d_{\sigma(t)}(e_{i},e_{i})+\kappa_{i}(x,\xi)) }{n}  \right)^{n} \\
		&\leq \left(\frac{ \mathfrak{c}_{\delta}(t_{0})}{\mathfrak{s}_{\delta}(t_{0})}-\langle \mathbf{H}(x),\xi \rangle\right)^{n}\\
		&\leq \prod_{i=1}^{n}\left(\frac{ \mathfrak{c}_{\delta}(t)}{\mathfrak{s}_{\delta}(t)}+\underline{\kappa}_{i}(\underline{x},\underline{\xi})\right)=|\mathrm{det}\underline{A}_{t_{0}}|.
	\end{aligned}
\end{equation*}
Thus 
\begin{equation*}
|\mathrm{det} T_{t}|\leq |\mathrm{det} \underline{T}_{t}|=t^{n}\left(\frac{ \mathfrak{c}_{\delta}(t)}{\mathfrak{s}_{\delta}(t)}-\langle 	\underline{\mathbf{H}}(\underline{x}),\underline{\xi} \rangle\right)^{n}.
\end{equation*}
By \eqref{46} and Lemma \ref{16},
\begin{equation*}
	\begin{aligned}
		\frac{d}{dt}\Big|_{t=t_{0}}\log|\mathrm{det}T_{t}|\leq \frac{n}{t_{0}}-n|\mathrm{det}A_{t_{0}}|^{-\frac{1}{n}} \left(\mathfrak{s}_{\delta}(t_{0})^{-n} \right)^{\frac{2}{n}} =	\frac{d}{dt}\Big|_{t=t_{0}}\log|\mathrm{det}\underline{T}_{t}|  .
	\end{aligned}
\end{equation*}
Note that $|\mathrm{det} T_{0}|=|\mathrm{det} \underline{T}_{0}|=1$. Therefore,
\[
\frac{|\mathrm{det} T_{t}|}{|\mathrm{det} \underline{T}_{t}|}\leq \frac{|\mathrm{det} T_{s}|}{|\mathrm{det} \underline{T}_{s}|},
\]
for $0<s\leq t<\tilde{\tau}_{f}(x,\xi)$. Clearly,  $\tilde{\tau}_{f}(x,\xi)\leq \tilde{\tau}_{\underline{f}}(\underline{x},\underline{\xi})$.  These prove assertion (ii).

These complete the proof.
\end{proof}

\emph{Proof of the sufficiency part of Theorem \ref{68}:} We prove these two assertions separately.

1\textdegree. Assume that  the sectional curvatures of $M$ are bounded below by $\delta$ and $\kappa_{i}(x,\xi)\leq \underline{\kappa}_{i}(\underline{x},\underline{\xi})$ for each $1\leq i \leq n$. By Theorem \ref{16} and Theorem \ref{38}, 
\[
	|\mathrm{det}\ (\mathrm{exp} ^{\perp})_{*(x,t\xi)}|=	|\mathrm{det} (\mathrm{Exp}_{f(x)} )_{*(t\xi)}|\cdot|\mathrm{det} T_{t}|\leq \left(\mathfrak{s}_{\delta}(t)/t\right)^{m-1}\prod_{i=1}^{n}\left( \mathfrak{c}_{\delta}(t)+\mathfrak{s}_{\delta}(t)\underline{\kappa}_{i}(\underline{x},\underline{\xi}) \right)
\]
and
\[
\frac{d}{dt}\mathrm{log}	|\mathrm{det}\ (\mathrm{exp} ^{\perp})_{*(x,t\xi)}|=\frac{d}{dt}\mathrm{log}	|\mathrm{det} (\mathrm{Exp}_{f(x)} )_{*(t\xi)}|+\frac{d}{dt}\mathrm{log}|\mathrm{det} T_{t}|.
\]
Therefore assertion (i) follows from Theorem \ref{38} and the rigidity in Bishop's volume distortion comparison theorem.

2\textdegree. Assume that $M$ satisfies $\mathrm{Ric}^{M}_{n}\geq n\delta$, $\underline{\Sigma}$ is  umbilical at $\underline{x}$ for the normal $\underline{\xi}$ and $\langle 	\mathbf{H}(x),\xi \rangle\geq  \langle 	\underline{\mathbf{H}}(\underline{x}),\underline{\xi} \rangle$. Similarly, assertion (ii) holds by an argument analogous to the proof of assertion (i).

These complete the proof.
\hfill$\Box$\\

We end this section with two remarks.
\begin{rem}
$(1)$ In Theorem \ref{57}, if one only needs a Heintze-Karcher-type estimate (i.e., a pointwise upper bound estimate for $|\mathrm{det}\ (\mathrm{exp} ^{\perp})_{*(x,t\xi)}|$), one may take $\underline{\kappa}_{i}(\underline{x},\underline{\xi})=\kappa_{i}(x,\xi) $ , for $1\leq i \leq n$, in (i) and, correspondingly, $\langle 	\underline{\mathbf{H}}(\underline{x}),\underline{\xi} \rangle=\langle 	\mathbf{H}(x),\xi \rangle$ in (ii);

$(2)$ As we did not improve the Heintze-Karcher comparison theorem under an upper bound on the sectional curvature, we did not use assertion (ii) of Lemma \ref{39} nor assertion (ii) of Lemma \ref{16} in Theorems \ref{38} and \ref{57}. In assertion (i) of Lemma \ref{39} and assertion (i) of Lemma \ref{16}, one may take $l=1$; consequently they cover the case of a lower bound on the sectional curvature. The reader should be careful not to confuse these curvature conditions.
\end{rem}

\section{Proof of inequality \eqref{54}}\label{104}
Let  $(M^{n+m},\bar{g})$ be a complete noncompact Riemannian manifold with nonnegative sectional curvature and Euclidean volume growth. Let $\Sigma$ be a closed $n$-dimensional Riemannian manifold  and $f: \Sigma^{n} \rightarrow M$ an isometric immersion.

Throughout the remainder of this paper, we shall use the following three sets. For each positive number $r_{0}$, define a subset of $M$ as
\begin{equation*}
	\Omega_{r_{0}}:=\left\{ p\in M: 0\leq d(p,f(\Sigma))\leq r_{0} \right\}
\end{equation*}
and two subsets of $T^{\perp}\Sigma$ as
\begin{equation*}
	U_{r_{0}}:=\left\{(x,r\xi)\in T^{\perp}\Sigma: x\in \Sigma,\, \xi\in S^{m-1}_{x},\, 0\leq r  \leq\min\{\tau_{f}(x,\xi),r_{0}\}\right\}
\end{equation*}
and
\begin{equation*}
	\tilde{U}_{r_{0}}:=\left\{(x,r\xi)\in T^{\perp}\Sigma: x\in \Sigma,\, \xi\in S^{m-1}_{x},\, 0\leq r  \leq\min\{\tilde{\tau}_{f}(x,\xi),r_{0}\}\right\}.
\end{equation*}
Moreover, throughout Sections~\ref{104}--\ref{106}, we divide $S_{x}^{m-1}$ into three parts as
$S_{x}^{m-1}=L_{x}\cup M_{x}\cup N_{x}$ for each $x\in\Sigma$ with
							\begin{equation*}
	L_{x}:=\left\{ \xi\in S_{x}^{m-1}:\kappa_{1}(x,\xi)>0 \right\},
\end{equation*}
							\begin{equation*}
	M_{x}:=\left\{ \xi\in S_{x}^{m-1}:\kappa_{1}(x,\xi)=0 \right\},
\end{equation*}
							\begin{equation*}
	N_{x}:=\left\{ \xi\in S_{x}^{m-1}:\kappa_{1}(x,\xi)<0 \right\}.
\end{equation*}

\begin{lem}\label{56}
For each $r_{0}>0$, $ \mathrm{exp}^{\perp}(	U_{r_{0}})=\mathrm{exp}^{\perp}(\tilde{U}_{r_{0}})=	\Omega_{r_{0}}$.				
\end{lem}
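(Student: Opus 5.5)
The plan is to prove a cycle of inclusions,
\[
\mathrm{exp}^{\perp}(U_{r_{0}})\subseteq \mathrm{exp}^{\perp}(\tilde{U}_{r_{0}})\subseteq \Omega_{r_{0}}\subseteq \mathrm{exp}^{\perp}(U_{r_{0}}).
\]

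\emph{First inclusion.} I will first show $\tau_{f}\leq\tilde{\tau}_{f}$ pointwise, which gives $U_{r_{0}}\subseteq\tilde{U}_{r_{0}}$. By Lemma \ref{34}(ii) we have $\tilde{\tau}_{f}(x,\xi)=\min\{\mu(f(x),\xi),\rho(x,\xi)\}$. The inequalities \eqref{5} and \eqref{3} give $\tau_{f}(x,\xi)\leq\mu(f(x),\xi)$ and $\tau_{f}(x,\xi)\leq\rho(x,\xi)$. Hence $\tau_{f}\leq\tilde{\tau}_{f}$. Alternatively, $\tau_{f}\leq\tau_{f|_{B^{\Sigma}_{x}(r)}}$ holds directly, because the distance to the smaller set $f(B^{\Sigma}_{x}(r))$ is at least the distance to $f(\Sigma)$.

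\emph{Second inclusion.} Let $(x,r\xi)\in\tilde{U}_{r_{0}}$, and set $p=\mathrm{exp}^{\perp}_{x}(r\xi)$. The geodesic $t\mapsto\mathrm{exp}^{\perp}_{x}(t\xi)$ on $[0,r]$ has length $r$ and joins $f(x)$ to $p$. Therefore $d(p,f(\Sigma))\leq d(p,f(x))\leq r\leq r_{0}$, so $p\in\Omega_{r_{0}}$. No cut-locus information is needed for this step.

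\emph{Third inclusion.} Let $p\in\Omega_{r_{0}}$ and $s:=d(p,f(\Sigma))\leq r_{0}$. If $s=0$, then $p=f(x)$ for some $x$, since $f(\Sigma)$ is compact and hence closed, and $p=\mathrm{exp}^{\perp}(x,0)\in\mathrm{exp}^{\perp}(U_{r_{0}})$. If $s>0$, compactness of $\Sigma$ gives a point $x\in\Sigma$ with $d(p,f(x))=s$. By completeness there is a minimizing geodesic $\gamma$ from $f(x)$ to $p$ of length $s$. The function $y\mapsto d(p,f(y))$ attains its minimum at $x$, so the first variation formula shows that $\gamma'(0)$ is orthogonal to $f_{*}(T_{x}\Sigma)$. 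Thus $\gamma(t)=\mathrm{exp}^{\perp}_{x}(t\xi)$ for some $\xi\in S^{m-1}_{x}$.

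Along this geodesic, every $t\in[0,s]$ satisfies
\[
d(\gamma(t),f(\Sigma))\geq d(p,f(\Sigma))-d(p,\gamma(t))=s-(s-t)=t .
\]
The reverse inequality $d(\gamma(t),f(\Sigma))\leq t$ is immediate. Hence $d(\gamma(t),f(\Sigma))=t$ for all $t\leq s$, which gives $\tau_{f}(x,\xi)\geq s$. It follows that $(x,s\xi)\in U_{r_{0}}$ with $\mathrm{exp}^{\perp}(x,s\xi)=p$.

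The only delicate point is the first-variation step when $f$ is merely an immersion. The argument should be run locally, on a neighbourhood of $x$ where $f$ is an embedding, so that differentiability along curves in $\Sigma$ through $x$ is available. Even if $p$ lies in the cut locus of $f(x)$, the function $d_{p}$ need not be differentiable at $f(x)$. In that case I will use the standard argument directly: if $\gamma'(0)$ had a nonzero tangential component $w$, then moving $x$ along a curve in $\Sigma$ with initial velocity $w$ would strictly decrease the length of the broken path to first order, contradicting minimality. This is the step I expect to require the most care. Once it is in place, the three inclusions close the cycle and give the equalities $\mathrm{exp}^{\perp}(U_{r_{0}})=\mathrm{exp}^{\perp}(\tilde{U}_{r_{0}})=\Omega_{r_{0}}$.
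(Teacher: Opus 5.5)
Your proof is correct and follows the same route as the paper: a cycle of inclusions. The only substantive step, $\Omega_{r_0}\subseteq \mathrm{exp}^{\perp}(U_{r_0})$, comes from taking a nearest point $x$ of $f(\Sigma)$ to $p$, using the first variation to obtain a normal minimizing geodesic, and observing that this geodesic realizes the distance to $f(\Sigma)$ up to time $s$. You also fill in details the paper leaves implicit. These are the inequality $\tau_f\le\tilde\tau_f$ behind $U_{r_0}\subseteq\tilde U_{r_0}$, the triangle-inequality argument for $\tau_f(x,\xi)\ge s$, and the case $p\in f(\Sigma)$. Your first-variation argument, which compares lengths of curves, needs neither embeddedness nor differentiability of $d_p$ at $f(x)$. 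All of this is sound.
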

\begin{proof}
	Fix a positive number $r_{0}$. For each $p\in \Omega_{r_{0}}\setminus f(\Sigma)$, the compactness of $\Sigma$  implies that the function $x\mapsto d(p,f(x))$ on
	 $\Sigma$ attains its minimum at some point $x_{0}\in \Sigma$. A standard variational argument then shows that there exists a unit normal vector $\xi_{0}\in T^{\perp}_{x_{0}}\Sigma$ and a minimal geodesic $\gamma:[0,r]\rightarrow M$ given by $\gamma(t):=\mathrm{exp}^{\perp}(x_{0},t\xi_{0})$, which connects $\gamma(0)=f(x_{0})$ and $\gamma(r)=p$. Moreover, we have $ 0\leq r  \leq\min\{\tau_{f}(x_{0},\xi_{0}),r_{0}\}$. Therefore, $(x_{0},r\xi_{0})\in U_{r_{0}}\subset \tilde{U}_{r_{0}}$. Thus, $\Omega_{r_{0}} \subset  \mathrm{exp}^{\perp}(U_{r_{0}})\subset\mathrm{exp}^{\perp}(\tilde{U}_{r_{0}})	$.	Clearly, 	$\mathrm{exp}^{\perp}(U_{r_{0}})\subset\mathrm{exp}^{\perp}(\tilde{U}_{r_{0}})\subset \Omega_{r_{0}}$. The lemma follows.			
\end{proof}

\emph{Proof of \eqref{54}:} 
Using  Lemma \ref{56}, Theorem  \ref{57} and applying the area formula to the map $\mathrm{exp}^{\perp}|_{	\tilde{U}_{r_{0}}}$, 
							\begin{equation}\label{58}
	\begin{array}{lllll}
		|\Omega_{r_{0}}|&\leq \int_{T^{\perp}\Sigma}1_{\tilde{U}_{r_{0}}}(x,r\xi)|\mathrm{det}\ (\mathrm{exp}^{\perp})_{*(x,r\xi)}| d\mathrm{vol}_{T^{\perp}\Sigma}(x,r\xi)\\
		&\leq\int_{T^{\perp}\Sigma}1_{\tilde{U}_{r_{0}}}(x,r\xi)\prod_{i=1}^{n}\left(1+r\kappa_{i}(x,\xi) \right)d\mathrm{vol}_{T^{\perp}\Sigma}(x,r\xi)\\
		&=\int_{\Sigma}\int_{L_{x}}\int_{0}^{\min\{\tilde{\tau}_{f}(x,\xi),r_{0}\}}r^{m-1}\prod_{i=1}^{n}\left(1+r\kappa_{i}(x,\xi) \right)drd\xi d\mathrm{vol}_{\Sigma}(x)\\
		&\ \ \ +\int_{\Sigma}\int_{M_{x}}\int_{0}^{\min\{\tilde{\tau}_{f}(x,\xi),r_{0}\}}r^{m-1}\prod_{i=1}^{n}\left(1+r\kappa_{i}(x,\xi) \right)drd\xi d\mathrm{vol}_{\Sigma}(x)\\
			&\ \ \ +\int_{\Sigma}\int_{N_{x}}\int_{0}^{\min\{\tilde{\tau}_{f}(x,\xi),r_{0}\}}r^{m-1}\prod_{i=1}^{n}\left(1+r\kappa_{i}(x,\xi) \right)drd\xi d\mathrm{vol}_{\Sigma}(x).
	\end{array}
\end{equation}
By Theorem \ref{38}, for $(x,\xi)\in\left\{  (x,\xi)\in U\Sigma:x\in \Sigma,\, \xi \in N_{x}      \right\}$, \[\tilde{\tau}_{f}(x,\xi)\leq-\frac{1}{\kappa_{1}(x,\xi)}.\]
 Consequently, the second and third terms on the right-hand side of inequality \eqref{58} are both of lower order than $r_{0}^{n+m}$. After rearrangement, we can obtain
							\begin{equation*}
								\begin{array}{lllll}
		|\Omega_{r_{0}}|&\leq\int_{\Sigma^{+}}\int_{L_{x}}\int_{0}^{\min\{\tilde{\tau}_{f}(x,\xi),r_{0}\}}r^{n+m-1}\prod_{i=1}^{n}\kappa_{i}(x,\xi) drd\xi d\mathrm{vol}_{\Sigma}(x)\\
		&\ \ \ +\mathrm{lowerorder\ terms\ of}\ r_{0}^{n+m}.
			\end{array}
\end{equation*}
Note that for each $(x,\xi)\in\left\{  (x,\xi)\in U\Sigma:x\in \Sigma^{+},\, \xi \in L_{x}      \right\}$, the antipodal point $-\xi\in S_{x}^{m-1}$ of $\xi$ also satisfies
							\begin{equation*}
		\left|\prod_{i=1}^{n}\kappa_{i}(x,-\xi)\right|=\left|\prod_{i=1}^{n}\kappa_{i}(x,\xi)\right|=\prod_{i=1}^{n}\kappa_{i}(x,\xi)	=	\left|\mathrm{det}   \langle h(\cdot,\cdot),-\xi \rangle\right|.
\end{equation*}
Then 
						\begin{equation*}
	\begin{array}{lllll}
		|\Omega_{r_{0}}|&\leq\frac{r_{0}^{n+m}}{2(n+m)}\int_{\Sigma^{+}}  \int_{S_{x}^{m-1}}	\left|\prod_{i=1}^{n}\kappa_{i}(x,\xi)\right|d\xi d\mathrm{vol}_{\Sigma}(x)\\
		&\ \ \ +\mathrm{lowerorder\ terms\ of}\ r_{0}^{n+m}\\
		&=\frac{r_{0}^{n+m}}{2(n+m)} \int_{\Sigma^{+}}K^{*}(x)d\mathrm{vol}(x)    \\
				&\ \ \ +\mathrm{lowerorder\ terms\ of}\ r_{0}^{n+m}.
	\end{array}
\end{equation*}
Dividing both sides by $r_{0}^{n+m}\omega_{n+m}$ and letting $r_{0}\rightarrow \infty$ can give us 
\begin{equation*}
	\int_{\Sigma}K^{*}(x)d\mathrm{vol}(x)\geq 	\int_{\Sigma^{+}}K^{*}(x)d\mathrm{vol}(x)\geq 2 \mathrm{AVR}(\bar{g}) \left| \mathbb{S}^{n+m-1} \right|
\end{equation*}
which completes the proof.
\hfill$\Box$

\section{Proof of the necessity part of Theorem \ref{67}}\label{105}
Let  $(M^{n+m},\bar{g})$ be a complete noncompact Riemannian manifold with nonnegative sectional curvature and Euclidean volume growth. Let $\Sigma$ be a closed $n$-dimensional Riemannian manifold  and $f: \Sigma^{n} \rightarrow M$ be an isometric immersion satisfying
	\begin{equation}\label{70}
	\int_{\Sigma}K^{*}(x)d\mathrm{vol}_{\Sigma}(x)= 2 \mathrm{AVR}(\bar{g}) \left| \mathbb{S}^{n+m-1} \right|.
\end{equation}

Set $\Sigma_{0}:=\Sigma \setminus \Sigma^{+}$.
Note that	$L_x$ is open in $S_x^{m-1}$ and	$\Sigma_1$ is open in $\Sigma$.

\begin{lem}\label{51}
	$\Sigma$ is connected.
\end{lem}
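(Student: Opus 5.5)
We argue by contradiction: assume $\Sigma$ is not connected and apply inequality \eqref{54} to each component separately. Since $\Sigma$ is closed, it has finitely many connected components $\Sigma^{(1)},\dots,\Sigma^{(k)}$, and we suppose $k\geq 2$. Each restriction $f|_{\Sigma^{(j)}}:\Sigma^{(j)}\rightarrow M$ is itself an isometric immersion of a closed $n$-manifold into $M$.

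The key observation is that $K^{*}(x)$ is defined pointwise. It depends only on the second fundamental form at $x$, which is determined by a neighbourhood of $x$ in $\Sigma$, so it does not change when $\Sigma$ is replaced by the component containing $x$. The same holds for $\Sigma^{+}$, namely $(\Sigma^{(j)})^{+}=\Sigma^{+}\cap\Sigma^{(j)}$. Applying the already proved inequality \eqref{54}, or its stronger form in the Remark, to each component gives
\begin{equation*}
\int_{\Sigma^{(j)}}K^{*}(x)\,d\mathrm{vol}_{\Sigma}(x)\geq 2\,\mathrm{AVR}(\bar{g})\left|\mathbb{S}^{n+m-1}\right|
\end{equation*}
for every $1\leq j\leq k$. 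Summing over $j$ yields
\begin{equation*}
\int_{\Sigma}K^{*}\,d\mathrm{vol}_{\Sigma}\geq 2k\,\mathrm{AVR}(\bar{g})\left|\mathbb{S}^{n+m-1}\right|.
\end{equation*}

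Since $M$ has Euclidean volume growth, $\mathrm{AVR}(\bar{g})>0$. Hence for $k\geq 2$ the right-hand side is strictly larger than $2\,\mathrm{AVR}(\bar{g})|\mathbb{S}^{n+m-1}|$, which contradicts the equality assumption \eqref{70}. Therefore $k=1$ and $\Sigma$ is connected.

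I expect no real obstacle. The only point needing care is that the proof of \eqref{54} in Section \ref{104} never used connectedness of $\Sigma$, so it applies verbatim to each component. That proof uses only compactness of $\Sigma$ (in Lemma \ref{56}), the pointwise comparison Theorem \ref{57}, and the asymptotic volume ratio, all of which remain valid for each $\Sigma^{(j)}$.
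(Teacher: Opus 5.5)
Your proposal is correct and matches the paper's proof: both apply inequality \eqref{54} (via the argument of Section~\ref{104}) to each connected component, sum over the finitely many components, and compare with the equality assumption \eqref{70}. You also state explicitly two points the paper leaves implicit: that $\mathrm{AVR}(\bar g)>0$ is what makes the resulting contradiction strict, and that the argument of Section~\ref{104} uses only compactness of $\Sigma$, not connectedness.
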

\begin{proof}
	Suppose otherwise that $\Sigma$ has $N$ connected components, denoted by $\Sigma^{(1)}, \cdots, \Sigma^{(N)}$. Then the assumption and the argument in Section \ref{104} yield
	\begin{equation*}
		\begin{array}{lllll}
		2 \mathrm{AVR}(\bar{g}) \left| \mathbb{S}^{n+m-1} \right|=\int_{\Sigma}K^{*}d\mathrm{vol}_{\Sigma}=	\sum_{k=1}^{N}\displaystyle\int_{\Sigma^{(k)}}K^{*}d\mathrm{vol}_{\Sigma}\geq N\cdot 2 \mathrm{AVR}(\bar{g}) \left| \mathbb{S}^{n+m-1} \right|,
		\end{array}
	\end{equation*}
	which implies $N=1$, as claimed.
\end{proof}

	\begin{lem}\label{78}
	Assume that $x\in\Sigma_{0}$ and $\xi\in S_{x}^{m-1}$. Then the linear transformation $\langle h(\cdot,\cdot),\xi\rangle$ necessarily has a zero eigenvalue.
\end{lem}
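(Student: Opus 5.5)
Suppose, for contradiction, that there are $x_0\in\Sigma_0$ and $\xi_0\in S_{x_0}^{m-1}$ such that $\langle h(\cdot,\cdot),\xi_0\rangle$ is nonsingular at $x_0$. Then $K^{*}$ is strictly positive near $(x_0,\xi_0)$. The plan is to show that this positive contribution is not accounted for by the estimate of Section~\ref{104}, which leads to a strict inequality and contradicts \eqref{70}. The key point is the refined inequality proved there:
\[
\int_{\Sigma^{+}}K^{*}(x)\,d\mathrm{vol}_{\Sigma}(x)\geq 2\,\mathrm{AVR}(\bar g)\,|\mathbb S^{n+m-1}|.
\]
Combined with \eqref{70}, it gives $\int_{\Sigma_0}K^{*}\,d\mathrm{vol}_\Sigma=0$. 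So it suffices to show that the set
\[
\{x\in\Sigma_0:\ \text{some }\xi\in S^{m-1}_x\text{ has }\det\langle h,\xi\rangle\neq0\}
\]
has positive measure, or more directly that $K^*(x_0)>0$ forces $K^*>0$ on a set of positive measure inside $\Sigma_0$.

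\emph{Step 1.} Since $\det\langle h(x),\xi\rangle$ is continuous on $U\Sigma$, nonvanishing at $(x_0,\xi_0)$ gives $K^{*}(x_0)>0$. Also $K^*$ is continuous on $\Sigma$, because it is the integral over the fibre of a continuous function on the compact bundle $U\Sigma$. Hence $K^*>0$ on some open neighbourhood $V$ of $x_0$.

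\emph{Step 2.} We need $V\cap\Sigma_0$ to have positive measure, which is the main obstacle. The natural approach is to note that $\Sigma^{+}$ is open. Indeed, the condition $\langle h(v,v),-\xi\rangle>0$ for all unit $v$ is an open condition in $(x,\xi)$ by compactness of the unit sphere bundle. So $\Sigma_0$ is closed, but it might have empty interior, e.g.\ $x_0\in\partial\Sigma^{+}$. In that case I would shrink things: the nondegenerate form $\langle h(x_0),\xi_0\rangle$ has some signature. If it is definite, then $\pm\xi_0$ witnesses $x_0\in\Sigma^{+}$, which is a contradiction. Hence it is indefinite, and by continuity $\langle h(x),\xi\rangle$ is nondegenerate and indefinite for $(x,\xi)$ near $(x_0,\xi_0)$.

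\emph{Step 3.} To handle the case where $V\cap\Sigma_0$ is null, I would not rely on measure alone. Instead I would return to the proof of \eqref{54} and keep the contribution of $(x,\xi)\in L_x$ for all $x\in\Sigma$, with indefinite directions treated separately. For directions $\xi$ with $\kappa_1(x,\xi)<0<\kappa_n(x,\xi)$, both $\xi$ and $-\xi$ lie in $N_x$. By Theorem~\ref{38}, $\tilde\tau_f\le -1/\kappa_1$ for both of them, so they contribute only lower order to $|\Omega_{r_0}|$. In the equality chain of Section~\ref{104}, the step replacing $\int_{\Sigma^+}\int_{L_x}$ by $\tfrac12\int_{\Sigma^+}K^*$ uses that $L_x\cup(-L_x)$ exhausts the directions with $\det\neq0$. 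Equality \eqref{70} therefore forces $\int_\Sigma\int_{S_x^{m-1}\setminus(L_x\cup -L_x)}|\det\langle h,\xi\rangle|\,d\xi\,d\mathrm{vol}=0$. Under the indefiniteness of Step 2, this integrand is positive on an open set of $U\Sigma$ around $(x_0,\xi_0)$, which has positive measure. This gives the contradiction.

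So the concrete route is to reorganise the estimate by directions in $U\Sigma$ rather than by points. Then Step 3, the precise bookkeeping showing that equality kills the integral of $|\det\langle h,\xi\rangle|$ over indefinite directions, is the step that needs care.
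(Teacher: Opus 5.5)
Your proof is correct, and it departs from the paper's at the decisive step.

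\emph{The paper's route.} The paper only extracts $\int_{\Sigma_0}K^{*}\,d\mathrm{vol}_{\Sigma}=0$ from the refined inequality over $\Sigma^{+}$. It then argues pointwise: if $K^{*}(\bar x)>0$ for some $\bar x\in\Sigma_0$, it asserts that ``clearly'' $\bar x$ is not a boundary point of $\Sigma^{+}$. Hence some neighbourhood $W\subset\Sigma_0$ of $\bar x$ carries $K^{*}>0$, which contradicts the vanishing integral.

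\emph{Your route.} You keep the bookkeeping on $U\Sigma$. Since $L_x=\emptyset$ on $\Sigma_0$ and $L_x\cap(-L_x)=\emptyset$, the leading coefficient in Section~\ref{104} is, up to the factor $1/(n+m)$, equal to $\tfrac12\int_{\Sigma}\int_{L_x\cup(-L_x)}|\det\langle h,\xi\rangle|$. So equality in \eqref{70} kills the integral of $|\det\langle h,\xi\rangle|$ over all remaining directions at all points of $\Sigma$, including points of $\Sigma^{+}$. You then observe that a nondegenerate $\langle h(x_0),\xi_0\rangle$ with $x_0\in\Sigma_0$ must be indefinite. Since ``nondegenerate and indefinite'' is an open condition on $U\Sigma$, this gives a set of positive measure on which that integrand is positive, which is the contradiction.

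\emph{What each buys.} Your route handles precisely the case $x_0\in\partial\Sigma^{+}$, which the paper's ``clearly'' skips. Without further input, a point of $\Sigma_0$ with indefinite nondegenerate directions can be a limit of points of $\Sigma^{+}$ while $\Sigma_0$ is null near it. In that situation $\int_{\Sigma_0}K^{*}=0$ says nothing. The paper's argument is shorter and can be repaired, e.g.\ via Lemma~\ref{77} (which does not depend on Lemma~\ref{78}) plus continuity at $\partial\Sigma^{+}$. Yours is self-contained, and it even shows that in the equality case every nondegenerate $\langle h(x),\xi\rangle$ on all of $\Sigma$ is definite, in the spirit of Lemma~\ref{61}.

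\emph{A wording fix.} The passage from $\int_{L_x}$ to $\tfrac12K^{*}$ does not use that $L_x\cup(-L_x)$ exhausts the nondegenerate directions. It is an inequality that discards the other directions, and it is equality in \eqref{70} that forces their contribution to vanish.
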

\begin{proof}	
	Since
	\begin{equation*}
		2 \mathrm{AVR}(\bar{g}) \left| \mathbb{S}^{n+m-1} \right|\leq	\int_{\Sigma^{+}}K^{*}(x)d\mathrm{vol}(x)\leq	\int_{\Sigma}K^{*}(x)d\mathrm{vol}(x)=	2 \mathrm{AVR}(\bar{g})\left| \mathbb{S}^{n+m-1} \right|,
	\end{equation*}
	it is clear that
	\begin{equation}\label{72}
		\int_{ \Sigma_{0}}K^{*}(x)d\mathrm{vol}(x)=0.
	\end{equation}
	
	We claim that $K^{*}(x)=0$ for each $x\in\Sigma_{0}$.
	It can be proved by contradiction. Suppose that there exist an $\bar{x}\in \Sigma_{0}$ satisfying $K^{*}(\bar{x})>0$.
	Clearly, $\bar{x}$ is not a boundary point of $\Sigma^{+}$ and $\bar{x}\notin \Sigma^{+}$. By continuity there exists an  open neighborhood  $W\subset \Sigma$ of $\bar{x}$ such that  $W\subset \Sigma_{0}$ and $K^{*}(x)>0$ for each $x\in W$ which contradicts \eqref{72}. Thus we have $K^{*}(x)=0$ for each $x\in\Sigma_{0}$. Then the lemma follows by the definition of $K^{*}(x)$.
\end{proof}

\begin{lem}\label{60}
	For all $(x,\xi)\in U\Sigma$ satisfying $x\in\Sigma^{+}$ and $\xi \in L_{x}$,  $\tilde{\tau}_{f}(x,\xi)=\infty$. 
\end{lem}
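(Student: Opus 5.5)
Suppose, for contradiction, that there is $(x_0,\xi_0)\in U\Sigma$ with $x_0\in\Sigma^{+}$, $\xi_0\in L_{x_0}$ and $\tilde{\tau}_{f}(x_0,\xi_0)<\infty$. The plan is to revisit the chain of estimates in the proof of \eqref{54} and show that, under the equality assumption \eqref{70}, the leading coefficient of $r_0^{n+m}$ loses a definite positive amount. That would make the inequality strict and contradict \eqref{70}.

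First I will use Lemma~\ref{34}(ii): $\tilde{\tau}_{f}=\min\{\mu,\rho\}$ is continuous wherever it is finite. Since $L_{x}$ is open and $\Sigma^{+}$ is open, there is an open set $V\subset U\Sigma$ around $(x_0,\xi_0)$ on which $x\in\Sigma^{+}$, $\xi\in L_x$, $\kappa_1(x,\xi)\ge c>0$ and $\tilde{\tau}_{f}(x,\xi)\le T$ for some finite $T$. Lower semicontinuity of $\mu$ and $\rho$ may be needed here; if continuity fails at $\infty$, I will use that the set where $\min\{\mu,\rho\}<T$ is open, which follows from upper semicontinuity of the finite-valued minimum. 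I expect this to be the delicate point, and I would settle it by invoking the cited continuity results (\cite{CheeEb}, \cite{ItTa}) together with the fact that a limit of focal or cut points is again a focal or cut point, so that $\{\tilde{\tau}_f< T\}$ is open.

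Next I will go back to the bound in Section~\ref{104}. There, for $\xi\in L_x$, the $r$-integral of $r^{m-1}\prod_i(1+r\kappa_i)$ up to $\min\{\tilde\tau_f,r_0\}$ is bounded above by $\frac{r_0^{n+m}}{n+m}\prod\kappa_i$ plus lower-order terms. On $V$ the upper limit is at most $T$, so the contribution from $V$ is $O(1)$ rather than of order $r_0^{n+m}$. Repeating the argument of Section~\ref{104} therefore gives
\[
\mathrm{AVR}(\bar g)\,\omega_{n+m}\le \frac{1}{2(n+m)}\Big(\int_{\Sigma^{+}}K^{*}\,d\mathrm{vol}-2\int_{V}\prod_{i}\kappa_i(x,\xi)\,d\xi\, d\mathrm{vol}_{\Sigma}(x)\Big).
\]
The factor $2$ arises because the antipodal symmetrization in Section~\ref{104} counted each $\xi\in L_x$ once against the full $K^*$. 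Concretely, I will write $\int_{L_x}=\int_{L_x\setminus V_x}+\int_{V_x}$ and drop the top-order part of the second piece.

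Since $\prod\kappa_i\ge c^n>0$ on $V$ and $V$ has positive measure in $U\Sigma$, the subtracted term is strictly positive. Using $\omega_{n+m}(n+m)=|\mathbb{S}^{n+m-1}|$, this yields $\int_{\Sigma}K^*>2\,\mathrm{AVR}(\bar g)|\mathbb{S}^{n+m-1}|$, which contradicts \eqref{70}. Hence $\tilde{\tau}_{f}(x,\xi)=\infty$ for all such $(x,\xi)$.
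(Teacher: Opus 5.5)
Your proposal is correct and follows essentially the paper's own argument. You use the continuity of $\tilde{\tau}_{f}=\min\{\mu,\rho\}$ from Lemma~\ref{34}(ii) to get a neighborhood $V$ of $(x_0,\xi_0)$ on which $\tilde{\tau}_{f}$ is bounded, so the $V$-part of the Section~\ref{104} estimate becomes $O(1)$ and the leading coefficient drops by a positive amount, contradicting \eqref{70}; your bound $\kappa_1\ge c>0$ on $V$ does the job of the paper's $\int_{V_x}|\det\langle h(\cdot,\cdot),\xi\rangle|\,d\xi>\varepsilon$. One small correction on your delicate point: openness of $\{\tilde{\tau}_{f}<T\}$ is an upper-semicontinuity statement, and it comes directly from Lemma~\ref{34}(ii) and the cited continuity of $\mu$ and $\rho$, whereas ``a limit of cut or focal points is again one'' only gives the opposite, lower, semicontinuity.
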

\begin{proof}
	We argue by contradiction. Suppose that there exists a point $x_{0}\in\Sigma^{+}$ and a unit normal vector $\xi_{0}\in L_{x_{0}}$  such that $\tilde{\tau}_{f}(x_{0},\xi_{0})<\infty$. By continuity, there exists a positive number
	\[
\varepsilon<\frac{1}{4}\int_{S_{x_{0}}^{m-1}}	\left|\mathrm{det}\langle h(\cdot,\cdot),\xi \rangle    \right|d\xi
	\]
	and an open neighborhood $V\subset U\Sigma$ of $(x_{0},\xi_{0})$ such that  $x\in\Sigma^{+}$, $\xi\in L_{x}$, $\tilde{\tau}_{f}(x,\xi)<\tilde{\tau}_{f}(x_{0},\xi_{0})+\varepsilon$ and
	\[
\int_{V_{x}}	\left|\mathrm{det}\langle h(\cdot,\cdot),\xi \rangle    \right|d\xi>\varepsilon,
	\]
 for all $(x,\xi)\in V$, where $V_{x}=\{\xi\in S_{x}^{m-1}:(x,\xi)\in V \}$. Similar to Section \ref{104}, for each $r_{0}>\tilde{\tau}_{f}(x_{0},\xi_{0})+\varepsilon$,
								\begin{align*}
			|\Omega_{r_{0}}|&\leq\int_{\Sigma^{+}}\int_{L_{x}}\int_{0}^{\min\{\tilde{\tau}_{f}(x,\xi),r_{0}\}}r^{n+m-1}\prod_{i=1}^{n}\kappa_{i}(x,\xi) drd\xi d\mathrm{vol}_{\Sigma}(x)\\
			&\ \ \ +\mathrm{lowerorder\ terms\ of}\ r_{0}^{n+m}\\
			&\leq\int_{\pi(V)}  \int_{L_{x}\setminus V_{x}}\int_{0}^{r_{0}}r^{n+m-1}\prod_{i=1}^{n}\kappa_{i}(x,\xi) drd\xi d\mathrm{vol}_{\Sigma}(x)\\
				&\ \ \ +\int_{\pi(V)}  \int_{ V_{x}}\int_{0}^{\tau_{f}(x_{0},\xi_{0})+\varepsilon}r^{n+m-1}\prod_{i=1}^{n}\kappa_{i}(x,\xi) drd\xi d\mathrm{vol}_{\Sigma}(x)\\
			&\ \ \ +\int_{\Sigma^{+}\setminus\pi(V)}\int_{L_{x}}\int_{0}^{r_{0}}r^{n+m-1}\prod_{i=1}^{n}\kappa_{i}(x,\xi) drd\xi
			d\mathrm{vol}_{\Sigma}(x)\\
&\ \ \ +\mathrm{lowerorder\ terms\ of}\ r_{0}^{n+m}\\
&=\int_{\pi(V)}  \int_{L_{x}\setminus V_{x}}\int_{0}^{r_{0}}r^{n+m-1}\prod_{i=1}^{n}\kappa_{i}(x,\xi) drd\xi d\mathrm{vol}_{\Sigma}(x)\\
	&\ \ \ +\int_{\Sigma^{+}\setminus\pi(V)}\int_{L_{x}}\int_{0}^{r_{0}}r^{n+m-1}\prod_{i=1}^{n}\kappa_{i}(x,\xi) drd\xi
d\mathrm{vol}_{\Sigma}(x)\\
			&\ \ \ +\mathrm{lowerorder\ terms\ of}\ r_{0}^{n+m}\\
	&\leq\frac{1}{2}\int_{\pi(V)}  \int_{L_{x}\cup(-L_{x})\setminus (V_{x}\cup (-V_{x}))}\int_{0}^{r_{0}}r^{n+m-1}	\left|\mathrm{det}\langle h(\cdot,\cdot),\xi \rangle    \right| drd\xi d\mathrm{vol}_{\Sigma}(x)\\
	&\ \ \ \frac{1}{2}\int_{\Sigma^{+}\setminus\pi(V)}  \int_{L_{x}\cup(-L_{x})}\int_{0}^{r_{0}}r^{n+m-1}	\left|\mathrm{det}\langle h(\cdot,\cdot),\xi \rangle    \right| drd\xi d\mathrm{vol}_{\Sigma}(x)\\
		&\ \ \ +\mathrm{lowerorder\ terms\ of}\ r_{0}^{n+m}\\
		&\leq\frac{r_{0}^{n+m}}{2(n+m)}\int_{\Sigma^{+}}  \int_{S_{x}^{m-1}}	\left|\mathrm{det}\langle h(\cdot,\cdot),\xi \rangle    \right| d\xi d\mathrm{vol}_{\Sigma}(x)\\
				&\ \ \ -\frac{r_{0}^{n+m}}{n+m}\int_{\pi(V)}  \int_{V_{x}}	\left|\mathrm{det}\langle h(\cdot,\cdot),\xi \rangle    \right| d\xi d\mathrm{vol}_{\Sigma}(x)\\
		&\ \ \ +\mathrm{lowerorder\ terms\ of}\ r_{0}^{n+m}\\
				&\leq\frac{r_{0}^{n+m}}{2(n+m)}\int_{\Sigma^{+}}  \int_{S_{x}^{m-1}}	\left|\mathrm{det}\langle h(\cdot,\cdot),\xi \rangle    \right| d\xi d\mathrm{vol}_{\Sigma}(x)\\
		&\ \ \ -\frac{\varepsilon r_{0}^{n+m}}{n+m}|\pi(V)|+\mathrm{lowerorder\ terms\ of}\ r_{0}^{n+m}.
	\end{align*}
		Dividing by $r_{0}^{n+m}\omega_{n+m}$ and letting $r_{0}\rightarrow \infty$ can give us 
	\begin{equation*}
 \mathrm{AVR}(\bar{g}) \leq \frac{1}{2\left| \mathbb{S}^{n+m-1} \right|} \int_{\Sigma}K^{*}(x)d\mathrm{vol}(x)-\frac{\varepsilon |\pi(V)|}{\left| \mathbb{S}^{n+m-1} \right|}<\frac{1}{2\left| \mathbb{S}^{n+m-1} \right|} \int_{\Sigma}K^{*}(x)d\mathrm{vol}(x),
	\end{equation*}
		which is a contradiction. The lemma follows.
\end{proof}

\begin{lem}\label{59}
	For all $(x,y)\in T^{\perp}\Sigma$ satisfying $x\in \Sigma^{+}$ and $y/|y|\in L_{x}$,
	\begin{equation*}
	|\mathrm{det}\ (\mathrm{exp}^{\perp})_{*(x,y)}|=\prod_{i=1}^{n}\left(1+|y|\kappa_{i}(x,y/|y|) \right).
	\end{equation*}
\end{lem}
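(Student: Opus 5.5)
We are in the equality setting of Section~\ref{105}. The plan is to show that the pointwise Heintze--Karcher bound from Theorem~\ref{57}(i) with $\delta=0$ must be attained along every such ray, and then to use the monotonicity in Theorem~\ref{57} to propagate equality to all times. Fix $(x,\xi)$ with $x\in\Sigma^{+}$ and $\xi\in L_{x}$, and write $y=t\xi$. By Lemma~\ref{60}, $\tilde{\tau}_{f}(x,\xi)=\infty$, so Theorem~\ref{57}(i), applied with $\underline{\kappa}_{i}=\kappa_{i}$ as in the remark after its proof and with comparison submanifold in $\R^{n+m}$, holds for all $t>0$. This gives
\[
|\mathrm{det}\,(\mathrm{exp}^{\perp})_{*(x,t\xi)}|\le \prod_{i=1}^{n}(1+t\kappa_{i}(x,\xi)).
\]
It also shows that the ratio $R(t):=|\mathrm{det}(\mathrm{exp}^{\perp})_{*(x,t\xi)}|/\prod_{i}(1+t\kappa_{i}(x,\xi))$ is non-increasing in $t$, with $R(t)\to1$ as $t\to0^{+}$. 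It therefore suffices to show that $\lim_{t\to\infty}R(t)=1$ for every such $(x,\xi)$. Monotonicity then forces $R\equiv1$, which is the claimed identity.

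To obtain $\lim_{t\to\infty}R(t)=1$, I would argue by contradiction, mimicking the proof of Lemma~\ref{60}. Suppose $R_{\infty}(x_{0},\xi_{0}):=\lim_{t\to\infty}R(t)<1$ at some $(x_{0},\xi_{0})$. Then there exist $T$ and $\eta>0$ with $R(x_{0},\xi_{0};T)<1-2\eta$. For fixed $T$ the function $R(\cdot,\cdot;T)$ is continuous on the open set $\{x\in\Sigma^{+},\ \xi\in L_{x}\}$, since the normal exponential map and the principal curvatures are continuous and the denominator is positive. Hence there is an open neighborhood $V$ of $(x_{0},\xi_{0})$ on which $R(\cdot;T)<1-\eta$. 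By monotonicity, $R(\cdot;t)<1-\eta$ on $V$ for all $t\ge T$.

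Next I would insert this into the area-formula estimate \eqref{58}, using Lemma~\ref{56}. On $V$ the integrand is at most $(1-\eta)\,r^{m-1}\prod_{i}(1+r\kappa_{i})$ for $r\ge T$. The top-order coefficient of $r_{0}^{n+m}$ then drops by at least
\[
\frac{\eta}{n+m}\int_{\pi(V)}\int_{V_{x}}|\mathrm{det}\langle h(\cdot,\cdot),\xi\rangle|\,d\xi\,d\mathrm{vol}_{\Sigma}>0 .
\]
Shrinking $V$ if necessary, the integrand is bounded below on $V$, since $\kappa_{1}>0$ there. Dividing by $\omega_{n+m}r_{0}^{n+m}$ and letting $r_{0}\to\infty$ gives $2\,\mathrm{AVR}(\bar g)|\mathbb{S}^{n+m-1}|<\int_{\Sigma}K^{*}$, which contradicts \eqref{70}. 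The lower-order terms (the contributions from $M_{x}$, $N_{x}$ and from $r<T$) are handled exactly as in Section~\ref{104}.

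The main obstacle I expect is justifying the continuity and the uniform-in-$t$ control needed to pass from a single point to an open set $V$. Working at a fixed $T$ and then invoking the monotonicity of $R$ from Theorem~\ref{57} is what resolves this, because it avoids needing any continuity of $R_{\infty}$. A secondary point is that the identity is stated for all $(x,y)$. For $|y|=0$ it is trivial, and for $|y|>0$ it is exactly $R(|y|)=1$ at $(x,y/|y|)$.
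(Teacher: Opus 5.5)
Your proposal is correct and follows essentially the same route as the paper: a point where the inequality is strict is spread to an open set of directions by continuity at a fixed radius, then propagated to all larger radii by the monotonicity in Theorem~\ref{57}(i) (available because $\tilde{\tau}_{f}=\infty$ by Lemma~\ref{60}), and the resulting drop in the $r_{0}^{n+m}$ coefficient of the area-formula bound contradicts \eqref{70}. The only difference is cosmetic: you phrase it through $\lim_{t\to\infty}R(t)$ and a neighborhood in $U\Sigma$ at a fixed time $T$, whereas the paper argues directly from the point $(x_{0},y_{0})$ with a neighborhood $Z\subset T^{\perp}\Sigma$; taking $T=|y_{0}|$ shows the two arguments coincide.
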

	\begin{proof}
		Suppose that there exists a point $(x_{0},y_{0})\in T^{\perp}\Sigma$ such that $x_{0}\in \Sigma^{+}$ , $y_{0}/|y_{0}|\in L_{x_{0}}$ and 
			\[
		|\mathrm{det}\ (\mathrm{exp}^{\perp})_{*(x_{0},y_{0})}|<\prod_{i=1}^{n}\left(1+r\kappa_{i}(x_{0},y_{0}/|y_{0}|) \right). 
			\]
			By continuity, there exists a positive number $\varepsilon<|y_{0}|$ and an open neighborhood $Z\subset T^{\perp}\Sigma$ of $(x_{0},y_{0})$ such that   $x\in \Sigma^{+}$, $y/|y|\in L_{x}$, $0<|y_{0}|-\varepsilon<|y|\leq|y_{0}|+\varepsilon$,
			\[
			|\mathrm{det}\ (\mathrm{exp}^{\perp})_{*(x,y)}|<(1-\varepsilon)\prod_{i=1}^{n}\left(1+|y|\kappa_{i}(x,y/|y|) \right)		
			\]
		 and 
			 \[
			  \int_{ Z'_{x}}	\left|\mathrm{det}\langle h(\cdot,\cdot),\xi \rangle    \right| d\xi>\varepsilon,
			 \]
			 for all $(x,y)\in Z$, where $Z'_{x}:=\{y/|y|\in S_{x}^{m-1}:(x,y)\in Z \}$.  For each $(x,y)\in Z$ and $r\in(|y_{0}|+\varepsilon,+\infty)$, Theorem \ref{57} implies 
			 				\begin{align*}
				|\mathrm{det}\ (\mathrm{exp}^{\perp})_{*(x,ry/|y|)}|&\leq\left( \prod_{i=1}^{n}\frac{1+r\kappa_{i}(x,y/|y|) }{1+|y|\kappa_{i}(x,y/|y|)}\right)	|\mathrm{det}\ \mathrm{exp}^{\perp}_{*(x,y)}|\\
				&<\left( \prod_{i=1}^{n}\frac{1+r\kappa_{i}(x,y/|y|) }{1+|y|\kappa_{i}(x,y/|y|)}\right)	(1-\varepsilon)\prod_{i=1}^{n}\left(1+|y|\kappa_{i}(x,y/|y|) \right)\\
				&=(1-\varepsilon)\prod_{i=1}^{n}\left(1+r\kappa_{i}(x,y/|y|) \right).
			 		\end{align*}
			 Combined with this estimate, an argument analogous to that in Section \ref{104} gives us, for each $r_{0}>|y_{0}|+\varepsilon$, 
				\begin{align*}
				|\Omega_{r_{0}}|
				&\leq\int_{\Sigma^{+}\setminus\pi(Z)} \int_{L_{x}}\int_{0}^{r_{0}}r^{n+m-1}	\left|\mathrm{det}\langle h(\cdot,\cdot),\xi \rangle    \right| drd\xi d\mathrm{vol}_{\Sigma}(x)\\
			&\ \ \ +\int_{\pi(Z)} \int_{L_{x}\setminus Z'_{x}}\int_{0}^{r_{0}}r^{n+m-1}	\left|\mathrm{det}\langle h(\cdot,\cdot),\xi \rangle    \right| drd\xi d\mathrm{vol}_{\Sigma}(x)\\
			&\ \ \ +\int_{\pi(Z)} \int_{ Z'_{x}}\int_{0}^{|y_{0}|+\varepsilon}r^{n+m-1}	\left|\mathrm{det}\langle h(\cdot,\cdot),\xi \rangle    \right| drd\xi d\mathrm{vol}_{\Sigma}(x)\\
						&\ \ \ +\int_{\pi(Z)} \int_{ Z'_{x}}\int_{|y_{0}|+\varepsilon}^{r_{0}}r^{n+m-1}(1-\varepsilon)	\left|\mathrm{det}\langle h(\cdot,\cdot),\xi \rangle    \right| drd\xi d\mathrm{vol}_{\Sigma}(x)\\
				&\ \ \ +\mathrm{lowerorder\ terms\ of}\ r_{0}^{n+m}\\
				&=\frac{r_{0}^{n+m}}{2(n+m)} \int_{\Sigma^{+}\setminus \pi(Z)}K^{*}(x)d\mathrm{vol}(x)+\frac{r_{0}^{n+m}}{2(n+m)} \int_{\pi(Z)}K^{*}(x)d\mathrm{vol}(x)    \\
					&\ \ \ -\frac{\varepsilon r_{0}^{n+m}}{n+m}  \int_{\pi(Z)} \int_{ Z'_{x}}	\left|\mathrm{det}\langle h(\cdot,\cdot),\xi \rangle    \right| d\xi d\mathrm{vol}_{\Sigma}(x)              \\
				&\ \ \ +\mathrm{lowerorder\ terms\ of}\ r_{0}^{n+m}\\
				&\leq \frac{r_{0}^{n+m}}{2(n+m)} \int_{\Sigma}K^{*}(x)d\mathrm{vol}(x)-\frac{\varepsilon^{2} r_{0}^{n+m}|\pi(Z)|}{n+m} +\mathrm{lowerorder\ terms\ of}\ r_{0}^{n+m}.
			\end{align*}
		Dividing by $r_{0}^{n+m}\omega_{n+m}$ and letting $r_{0}\rightarrow \infty$ yields  
			\begin{equation*}
				\mathrm{AVR}(\bar{g}) \leq \frac{1}{2\left| \mathbb{S}^{n+m-1} \right|} \int_{\Sigma}K^{*}(x)d\mathrm{vol}(x)-\frac{\varepsilon^{2} |\pi(Z)|}{\left| \mathbb{S}^{n+m-1} \right|}<\frac{1}{2\left| \mathbb{S}^{n+m-1} \right|} \int_{\Sigma}K^{*}(x)d\mathrm{vol}(x).
			\end{equation*}
			It is a contradiction. The lemma follows.
\end{proof}
	
\begin{lem}\label{79}
	Assume that $x\in \Sigma^{+}$ and $y/|y|\in L_{x}$. Let $\gamma(t):=\mathrm{exp}^{\perp}_{x}(ty)$ for all $t\in [0,\infty)$. Then the sectional curvature of $M$ is equal to $\delta$ for all tangent planes containing $\gamma'(t)$.
\end{lem}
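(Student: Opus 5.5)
The plan is to read Lemma \ref{79} through the rigidity case of Theorem \ref{57}(i), with $\delta=0$ (here $\sec^{M}\geq 0$, so $\delta=0$). Fix $x\in\Sigma^{+}$ and $y$ with $\xi:=y/|y|\in L_{x}$. For the comparison situation take $\underline{f}:\underline{\Sigma}\rightarrow\M^{n+m}_{0}=\R^{n+m}$ with principal curvatures $\underline{\kappa}_{i}(\underline{x},\underline{\xi})=\kappa_{i}(x,\xi)$ at some $(\underline{x},\underline{\xi})$. Such a submanifold exists: take a hypersurface in an affine $(n+1)$-plane whose second fundamental form at $\underline{x}$ is the given diagonal one. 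Then the bound \eqref{43} reads
\[
|\mathrm{det}\ (\mathrm{exp}^{\perp})_{*(x,t\xi)}|\leq\prod_{i=1}^{n}\left(1+t\kappa_{i}(x,\xi)\right),\qquad 0<t\leq\tilde{\tau}_{f}(x,\xi).
\]
By Lemma \ref{60}, $\tilde{\tau}_{f}(x,\xi)=\infty$, so this holds for every $t>0$.

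Next I invoke Lemma \ref{59}. For any $t_{0}>0$ the point $(x,t_{0}\xi)$ satisfies $x\in\Sigma^{+}$ and $t_{0}\xi/|t_{0}\xi|=\xi\in L_{x}$. Hence equality holds in \eqref{43} at $t_{0}$, and the equality characterization in Theorem \ref{57}(i) gives that the sectional curvature of $M$ equals $\delta=0$ on all tangent planes containing $\sigma'(a)$, for every $a\in[0,t_{0}]$. Here $\sigma(a)=\mathrm{exp}^{\perp}_{x}(a\xi)$. Since $\gamma(t)=\mathrm{exp}^{\perp}_{x}(ty)=\sigma(t|y|)$, the vector $\gamma'(t)$ is a positive multiple of $\sigma'(t|y|)$, so the planes containing them coincide. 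Letting $t_{0}\to\infty$ covers every $t\in[0,\infty)$. The conclusion is that these sectional curvatures equal $0$, which is the $\delta$ of the statement.

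The main obstacle is making the rigidity step honest, because the paper's proof of the equality case in Theorem \ref{57} is only sketched. If needed I would argue directly. By Lemma \ref{4} (with $u$ constant),
\[
|\mathrm{det}(\mathrm{exp}^{\perp})_{*(x,t\xi)}|=|\mathrm{det}(\mathrm{Exp}_{f(x)})_{*(t\xi)}|\cdot|\mathrm{det}T_{t}|.
\]
Bishop's Lemma \ref{16}(i) with $l=1$, applied to each factor, gives $|\mathrm{det}(\mathrm{Exp}_{f(x)})_{*(t\xi)}|\leq 1$. Theorem \ref{38}(i), via Hadamard's inequality and the Hessian comparison of Lemma \ref{39}, gives $|\mathrm{det}T_{t}|\leq\prod_{i}(1+t\kappa_{i})$. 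Equality in the product therefore forces equality in both factors.

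The first equality, $|\mathrm{det}(\mathrm{Exp}_{f(x)})_{*(t_{0}\xi)}|=1$, forces equality in Bishop's volume distortion comparison. For $\sec\geq0$ that rigidity yields $\bar{R}(\cdot,\sigma',\sigma',\cdot)\equiv0$ along $\sigma|_{[0,t_{0}]}$: each Jacobi field $(\mathrm{Exp})_{*}(tw)$ has $|Y|\leq t$, with equality only if it is parallel times $t$. Hence the curvature operator $\bar{R}(\cdot,\sigma')\sigma'$ vanishes, i.e.\ every plane containing $\sigma'$ is flat. Applicability requires $t_{0}<\mu(f(x),\xi)$; this holds because Lemma \ref{34}(ii) gives $\mu\geq\tilde{\tau}_{f}=\infty$, which also ensures no cut points so that Lemma \ref{4} applies.
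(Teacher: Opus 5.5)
Your proposal is correct and is essentially the paper's own argument. The paper proves Lemma~\ref{79} in one line, from Lemma~\ref{59} and the equality case of Theorem~\ref{57}(i) with $\delta=0$, exactly as you do; your appeal to Lemma~\ref{60} to get $\tilde{\tau}_{f}(x,\xi)=\infty$ makes explicit a hypothesis the paper uses tacitly, and your fallback via Lemma~\ref{4} and Bishop's rigidity is just the unpacked form of the paper's sketched proof of that equality case.
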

\begin{proof}
	By Lemma \ref{59} and Theorem \ref{57}, the lemma follows.
\end{proof}
	
	\begin{lem}\label{61}
	Assume that $x\in \Sigma^{+}$. Then	$L_{x}$ is exactly an open hemisphere contained in $S_{x}^{m-1}$.
	\end{lem}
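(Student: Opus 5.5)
The plan is to exploit the one place in the proof of \eqref{54} (Section~\ref{104}) where a factor $\frac12$ was introduced: the pairing of $L_{x}$ with its antipodal set $-L_{x}$ inside $S_{x}^{m-1}$. Under the equality \eqref{70}, this pairing must lose nothing. Combining that with the convexity of $L_{x}$ and a polynomial-rigidity argument should force $L_{x}$ to be a hemisphere.

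\emph{Step 1 (convex structure of $L_{x}$).} Fix $x\in\Sigma^{+}$. Then $L_{x}\neq\emptyset$ by definition of $\Sigma^{+}$. Let $C:=\{t\xi: t>0,\ \xi\in L_{x}\}\subset T_{x}^{\perp}\Sigma$. Since $y\mapsto -\langle h(\cdot,\cdot),y\rangle$ is linear in $y$, $C$ is exactly the set of $y$ for which this form is positive definite. Hence $C$ is an open convex cone. Moreover $C\cap(-C)=\emptyset$, because $-\langle h,y\rangle$ and $\langle h,y\rangle$ cannot both be positive definite. On $\partial C$ we have $\kappa_{1}=0$, so $\det\langle h(\cdot,\cdot),\xi\rangle=0$ there.

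\emph{Step 2 (vanishing of $\det$ off $L_{x}\cup(-L_{x})$).} Revisit the chain in Section~\ref{104}. Because the antipodal map preserves $|\det\langle h,\cdot\rangle|$, we have
\[
\int_{L_{x}}\prod_{i}\kappa_{i}(x,\xi)\,d\xi=\frac12\int_{L_{x}\cup(-L_{x})}|\det\langle h(\cdot,\cdot),\xi\rangle|\,d\xi\leq \frac12 K^{*}(x).
\]
This inequality is strict exactly when $|\det\langle h,\cdot\rangle|$ is nonzero somewhere on the open set $O_{x}:=S_{x}^{m-1}\setminus\overline{L_{x}\cup(-L_{x})}$. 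Suppose this happens at some $x_{0}\in\Sigma^{+}$, at some $\xi_{0}\in O_{x_{0}}$. By continuity of $h$ and openness, the defect $\frac12K^{*}(x)-\int_{L_{x}}\prod\kappa_{i}$ is bounded below by some $\varepsilon>0$ on a neighbourhood of $x_{0}$ in $\Sigma^{+}$. Repeating the estimate of Section~\ref{104}, exactly as in Lemmas~\ref{60} and \ref{59}, then gives
\[
\mathrm{AVR}(\bar g)\,2|\mathbb S^{n+m-1}|<\int_{\Sigma}K^{*}\,d\mathrm{vol}_{\Sigma},
\]
which contradicts \eqref{70}. Hence $\det\langle h(\cdot,\cdot),\xi\rangle=0$ for every $\xi\in O_{x}$ and every $x\in\Sigma^{+}$.

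I expect this step to be the main obstacle. The delicate point is the lower semicontinuity of the defect in $x$: the set $L_{x}$ moves with $x$. I would handle it by shrinking to a small ball $B\subset O_{x_{0}}$ around $\xi_{0}$ on which $|\det|>c>0$, and using a local trivialization of $U\Sigma$ to carry $B$ to nearby fibres. For $x$ near $x_{0}$, the carried ball still avoids $L_{x}\cup(-L_{x})$, because points of $B$ have $\kappa_{1}\neq 0$ and are not in $\pm L_{x_{0}}$. This yields a uniform defect $\geq c|B|/2$.

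\emph{Step 3 (polynomial rigidity).} The function $P(y)=\det\langle h(\cdot,\cdot),y\rangle$ is a homogeneous polynomial of degree $n$ on $T_{x}^{\perp}\Sigma\cong\R^{m}$, and it is nonzero on the open set $C$. If $O_{x}$ were nonempty, $P$ would vanish on the nonempty open cone over $O_{x}$, hence identically, which is a contradiction. Therefore $\overline{C}\cup\overline{(-C)}=T_{x}^{\perp}\Sigma$.

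\emph{Step 4 (closed convex cones covering space).} Let $K=\overline{C}$, a closed convex cone with $K\cup(-K)=\R^{m}$. If $K=\R^{m}$, then $C=\mathrm{int}\,K=\R^{m}$, since an open convex set equals the interior of its closure; this contradicts $C\cap(-C)=\emptyset$. So $K$ lies in a closed half-space $H=\{\ell\geq0\}$ by the supporting hyperplane theorem. Then $-K\subset\{\ell\leq0\}$, so every point with $\ell>0$ must lie in $K$. Thus $K=H$, and $C=\mathrm{int}\,H=\{\ell>0\}$. Intersecting with the unit sphere shows that $L_{x}$ is the open hemisphere $\{\xi\in S_{x}^{m-1}:\ell(\xi)>0\}$.
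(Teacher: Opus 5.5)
Your proof is correct. Its analytic engine is the same as the paper's. The antipodal pairing from Section~\ref{104} gives $\int_{L_x}\prod_i\kappa_i(x,\xi)\,d\xi=\frac12\int_{L_x\cup(-L_x)}|\det\langle h(\cdot,\cdot),\xi\rangle|\,d\xi\le\frac12K^*(x)$. A defect in this pairing that is uniform on an open subset of $\Sigma^+$ then contradicts \eqref{70}.

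Where you differ is the geometric reduction. The paper argues by contradiction from ``$L_{x_0}$ is not a hemisphere''. Writing $L_{x_0}=\bigcap_{|v|=1}\{\xi:\langle h(v,v),\xi\rangle<0\}$, it picks unit vectors $v_1,v_2$ with $h(v_1,v_1)$ and $h(v_2,v_2)$ nonzero and neither parallel nor antiparallel. The explicit open set $P_{x_0}=\{\langle h(v_1,v_1),\xi\rangle>0\}\cap\{\langle h(v_2,v_2),\xi\rangle<0\}$ misses $L_{x_0}\cup(-L_{x_0})$, and the defect is then obtained ``by continuity''. You instead first show that equality forces $\det\langle h(\cdot,\cdot),\xi\rangle=0$ off $\overline{L_x\cup(-L_x)}$ for every $x\in\Sigma^+$. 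You then finish with polynomial rigidity and the fact that a closed convex cone $K\neq\R^m$ with $K\cup(-K)=\R^m$ is a closed half-space.

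The paper's version expresses the obstruction directly through the directions $h(v,v)$, which is exactly what Lemma~\ref{77} exploits next. Your version makes explicit two points the paper leaves under ``by continuity''.
\begin{enumerate}
\item For $n\ge 3$, the forms $\langle h(\cdot,\cdot),\xi\rangle$ with $\xi\in P_{x_0}$ are indefinite but could a priori be degenerate. One needs precisely your Step~3 (a polynomial nonvanishing on the open cone $C$ cannot vanish on an open set) to know that $|\det|\not\equiv 0$ on $P_{x_0}$.
\item Your remark that ``nondegenerate and indefinite'' is an open condition in $(x,\xi)$ is what keeps the defect uniform on nearby fibres, even though $L_x$ moves with $x$.
\end{enumerate}
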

	\begin{proof}
We prove this lemma by contradiction. Suppose that there exists a point $x_{0}\in\Sigma^{+}$ such that $L_{x_{0}}$ is not an open hemisphere contained in $S_{x_{0}}^{m-1}$. By the definition of $L_{x_{0}}$, we can find two unit vectors $v_{1},v_{2}\in T_{x_{0}}\Sigma$ such that $h(v_{1},v_{1})\neq \vec{0}$, $h(v_{2},v_{2})\neq \vec{0}$, 
				\begin{equation*}
-1<\langle h(v_{1},v_{1})/|h(v_{1},v_{1})|, h(v_{2},v_{2})/h(v_{2},v_{2})\rangle <1 
\end{equation*}
and 
				\begin{equation*}
L_{x_{0}}\subset \bigcap_{i=1}^{2}\left\{ \xi\in S_{x_{0}}^{m-1}:\langle h(v_{i},v_{i}),\xi \rangle< 0 \right\}.
	\end{equation*}
	We conclude that
					\begin{equation*}
P_{x_{0}}:=\left\{ \xi\in S_{x_{0}}^{m-1}:\langle h(v_{1},v_{1}),\xi \rangle> 0 \right\}\cap \left\{ \xi\in S_{x_{0}}^{m-1}:\langle h(v_{2},v_{2}),\xi \rangle< 0 \right\}
\end{equation*}
is a non-empty open set of $S_{x_{0}}^{m-1}$ and
					\begin{equation*}
	-L_{x_{0}}\subset \bigcap_{i=1}^{2}\left\{ \xi\in S_{x_{0}}^{m-1}:\langle h(v_{i},v_{i}),\xi \rangle> 0 \right\}.
\end{equation*}
Thus $P_{x_{0}}\cap( L_{x_{0}}\cup(-L_{x_{0}}))=\emptyset$.

 Consequently, by continuity, there exists a positive number $\varepsilon$ and an open neighborhood $W$ of $x_{0}$ contained in $\Sigma^{+}$ such that  
					\begin{equation*}
\int_{W}\int_{L_{x}\cup(-L_{x})}\left|\mathrm{det}\langle h(\cdot,\cdot),\xi \rangle    \right|d\xi d\mathrm{vol}_{\Sigma}(x) \leq\int_{W}(K^{*}(x)-\varepsilon)d\mathrm{vol}_{\Sigma}(x).
\end{equation*}
Similar to Section \ref{104}, for each $r_{0}>0$, 
\begin{align*}
	|\Omega_{r_{0}}|&\leq\frac{r_{0}^{n+m}}{2(n+m)}\int_{\Sigma^{+}}  \int_{L_{x}\cup(-L_{x})}	\left|\mathrm{det}\langle h(\cdot,\cdot),\xi \rangle    \right| d\xi d\mathrm{vol}_{\Sigma}(x)\\
	&\ \ \ +\mathrm{lowerorder\ terms\ of}\ r_{0}^{n+m}\\
	&\leq\frac{r_{0}^{n+m}}{2(n+m)}\left[\int_{\Sigma^{+}\setminus W} K^{*}(x)  d\mathrm{vol}_{\Sigma}(x)+\int_{W} (K^{*}(x)-\varepsilon) d\mathrm{vol}_{\Sigma}(x)   \right]\\
	&\ \ \ +\mathrm{lowerorder\ terms\ of}\ r_{0}^{n+m}\\
		&=\frac{r_{0}^{n+m}}{2(n+m)}\int_{\Sigma^{+}} K^{*}(x)  d\mathrm{vol}_{\Sigma}(x)	-\frac{\varepsilon r_{0}^{n+m}}{2(n+m)}|W| \\
	&\ \ \ +\mathrm{lowerorder\ terms\ of}\ r_{0}^{n+m}.
\end{align*}
Dividing by $r_{0}^{n+m}\omega_{n+m}$ and letting $r_{0}\rightarrow \infty$ can give us 
\begin{equation*}
	\mathrm{AVR}(\bar{g}) \leq \frac{1}{2\left| \mathbb{S}^{n+m-1} \right|} \int_{\Sigma}K^{*}(x)d\mathrm{vol}(x)-\frac{\varepsilon |W|}{2\left| \mathbb{S}^{n+m-1} \right|}<\frac{1}{2\left| \mathbb{S}^{n+m-1} \right|} \int_{\Sigma}K^{*}(x)d\mathrm{vol}(x),
\end{equation*}
which is a contradiction. The lemma follows.
\end{proof}
	By Lemma \ref{61}, for $x\in\Sigma^{+}$ there exists a unit vector $\xi_{x}\in S_{x}^{m-1}$ such that
	\[
L_{x}=\{\xi \in  S_{x}^{m-1}:\langle \xi,\xi_{x} \rangle >0
	\}. 
	\]
Next, we give the following property of the second fundamental form $h$.
	\begin{lem}\label{77}
For each $x\in\Sigma^{+}$, the image of the second fundamental form $h:T_{x}\Sigma\times T_{x}\Sigma\rightarrow T_{x}^{\perp}\Sigma$ is  $\{c\xi_{x}:c\in \R  \}$.
\end{lem}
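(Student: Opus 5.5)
The plan is to derive the statement from Lemma~\ref{61} together with a characterization of $L_x$ in terms of the values $h(v,v)$. No further analysis of the equality case should be needed.

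\textbf{Step 1: describe $L_x$ through $h$.} By Definition~\ref{83}, $\kappa_1(x,\xi)>0$ holds exactly when $S_\xi$ is positive definite, that is, when $\langle h(v,v),\xi\rangle<0$ for every unit $v\in T_x\Sigma$. Hence
\[
L_x=\bigcap_{v\in T_x\Sigma,\,|v|=1}\{\xi\in S_x^{m-1}:\langle h(v,v),\xi\rangle<0\}.
\]
Since $x\in\Sigma^+$, the set $L_x$ is nonempty. Consequently $h(v,v)\neq\vec 0$ for every unit $v$, because a vanishing $h(v,v)$ would force $L_x=\emptyset$.

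\textbf{Step 2: every $h(v,v)$ points along $-\xi_x$.} Fix a unit $v_0$ and put $c_0:=h(v_0,v_0)/|h(v_0,v_0)|$. Decompose $c_0=-a\xi_x+be$ with $e\perp\xi_x$ a unit vector, $b\ge0$ and $a^2+b^2=1$. Suppose $c_0\neq-\xi_x$; there are two cases.
\begin{itemize}
\item If $b=0$, then $c_0=\xi_x$. Taking $\xi=\xi_x\in L_x$ gives $\langle h(v_0,v_0),\xi\rangle>0$, a contradiction.
\item If $b>0$, take $\xi=\cos\theta\,\xi_x+\sin\theta\,e$ with $\theta<\pi/2$ close to $\pi/2$. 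Then $\langle\xi,\xi_x\rangle>0$, so $\xi\in L_x$ by Lemma~\ref{61}. On the other hand, $\langle c_0,\xi\rangle=-a\cos\theta+b\sin\theta>0$, which contradicts Step~1.
\end{itemize}
Therefore $h(v,v)=-|h(v,v)|\,\xi_x$ for every unit $v$. By homogeneity this extends to all $v\in T_x\Sigma$.

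\textbf{Step 3: polarization.} Since $h$ is symmetric bilinear,
\[
h(v,w)=\tfrac14\bigl(h(v+w,v+w)-h(v-w,v-w)\bigr),
\]
and both terms on the right lie in $\{c\xi_x:c\in\R\}$ by Step~2. Hence the image of $h$ is contained in this line. Conversely, for any unit $v$ we have $h(v,v)\neq\vec0$ by Step~1, and $h(tv,v)=t\,h(v,v)$ for $t\in\R$ sweeps out the whole line. So the image is exactly $\{c\xi_x:c\in\R\}$.

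The only point needing care is Step~2, namely producing a test direction $\xi$ inside the open hemisphere of Lemma~\ref{61} that pairs nonnegatively with $h(v_0,v_0)$. The explicit choice of $\xi$ near the equator handles this. The rest is linear algebra.
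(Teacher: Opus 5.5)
Your proof is correct and follows the same route as the paper: both combine Lemma~\ref{61} with polarization of $h$ to reduce the question to the diagonal values $h(u,u)$. The paper argues by contradiction and compresses the key step (that each $h(u,u)$ is parallel to $\xi_{x}$) into the phrase ``by Lemma~\ref{61} and the definition of $\xi_{x_0}$''. Your Step~2 proves that step explicitly, and your observation that $h(v,v)\neq\vec 0$ also shows the image is the whole line, not merely contained in it.
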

\begin{proof}	
	Suppose that there exists a point $x_{0}\in\Sigma^{+}$, a unit normal vector $y_{0}\in T_{x_{0}}^{\perp}\Sigma$ and two  non-collinear unit tangent vectors $v_{0},w_{0}\in T_{x_{0}}\Sigma$ such that $\langle \xi_{x_{0}},y_{0}    \rangle =0$ and
	$\langle h(v_{0},w_{0}),y_{0}    \rangle >0$. Define a symmetric bilinear form $I:T_{x_{0}}\Sigma\times T_{x_{0}}\Sigma\rightarrow \R$ by 
					\begin{equation*}
	\phi(v,w):=\langle h(v,w),y_{0}    \rangle
	\end{equation*}
	for all $v,w\in T_{x_{0}}\Sigma$. It is clear that
						\begin{equation*}
		\phi(v_{0}+w_{0},v_{0}+w_{0})-\phi(v_{0},v_{0})-\phi(w_{0},w_{0})=2\phi(v_{0},w_{0})>0.
	\end{equation*}
	Thus in the three terms on the left hand side of the above equality, at least one of them does not vanish. Without loss of generality, we may assume that
							\begin{equation*}
		\phi\left(\frac{v_{0}+w_{0}}{|v_{0}+w_{0}|},\frac{v_{0}+w_{0}}{|v_{0}+w_{0}|}\right)\neq 0.
	\end{equation*}
Then by Lemma \ref{61} and the definition of $\xi_{x_{0}}$, it follows that $\langle \xi_{x_{0}},y_{0} \rangle \neq 0$. This contradicts the choice of $y_{0}$. The lemma follows.
\end{proof}
Since Lemma \ref{77} holds, we can define $\mathcal{U}$ by \eqref{84}. Moreover, Lemmas \ref{61} and \ref{77} imply that the $\{-\xi_{x} \}$ coincides with 	$
\{h(v,v)/|h(v,v)|: v\in T_{x}\Sigma,\,|v|=1   \} 
$
for each $x\in \Sigma^{+}$ and that $\mathcal{U}=\{(x,r\xi)\in T^{\perp}\Sigma: x\in \Sigma^{+},\, r>0,\,\xi\in L_{x}\}$.

	\begin{lem}\label{63}
	Assume that $(x_{1},r_{1}\xi)\in \mathcal{U}$ with $x_{1}\in \Sigma^{+},\, r_{1}>0$ and $\xi_{1} \in S_{x}^{m-1}$. Then for every $x_{2} \in \Sigma$ and every positive number $r_{2} < r_{1}$,  $\mathrm{exp}^{\perp}(x_{1}, r_{1}\xi_{1}) \notin B_{f(x_{2})}^{M}(r_{2})$.
\end{lem}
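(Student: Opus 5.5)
We argue by contradiction and show that the equality \eqref{70} would be violated by a defect of order $r_{0}^{n+m}$, in the same way as in Lemmas \ref{60}, \ref{59} and \ref{61}. Suppose that there exist $(x_{1},r_{1}\xi_{1})\in\mathcal{U}$, a point $x_{2}\in\Sigma$ and $0<r_{2}<r_{1}$ with $p_{1}:=\mathrm{exp}^{\perp}(x_{1},r_{1}\xi_{1})\in B^{M}_{f(x_{2})}(r_{2})$. Then
\[
d(p_{1},f(\Sigma))\leq d(p_{1},f(x_{2}))<r_{2}<r_{1}.
\]
Put $\eta:=(r_{1}-r_{2})/2>0$. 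The map $(x,\xi)\mapsto d(\mathrm{exp}^{\perp}(x,r_{1}\xi),f(\Sigma))$ is continuous on $U\Sigma$. Moreover, $\mathcal{U}=\{(x,r\xi):x\in\Sigma^{+},\,r>0,\,\xi\in L_{x}\}$ is open, because $\Sigma^{+}$ and $L_{x}$ are open. Hence we can choose an open neighborhood $V\subset U\Sigma$ of $(x_{1},\xi_{1})$ and an $\varepsilon>0$ with the following properties. Every $(x,\xi)\in V$ satisfies $x\in\Sigma^{+}$, $\xi\in L_{x}$ and $d(\mathrm{exp}^{\perp}(x,r_{1}\xi),f(\Sigma))<r_{1}-\eta$. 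In addition,
\[
\int_{V_{x}}\left|\mathrm{det}\langle h(\cdot,\cdot),\xi\rangle\right|d\xi>\varepsilon \quad\text{for } x\in\pi(V).
\]
This last condition is possible since $\prod_{i}\kappa_{i}(x,\xi)>0$ on $L_{x}$.

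The key geometric step uses the triangle inequality along the normal geodesic. For $(x,\xi)\in V$ and every $r\geq r_{1}$,
\[
d(\mathrm{exp}^{\perp}(x,r\xi),f(\Sigma))\leq (r-r_{1})+d(\mathrm{exp}^{\perp}(x,r_{1}\xi),f(\Sigma))<r-\eta .
\]
So no point of the ray segment $\{(x,r\xi):r>r_{1}\}$ is a \emph{minimizing} preimage, i.e., it does not satisfy $d(\mathrm{exp}^{\perp}(x,r\xi),f(\Sigma))=r$. Now go back to the proof of Lemma \ref{56}. Every $p\in\Omega_{r_{0}}$ is hit by a pair $(x_{0},r\xi_{0})$ with $r=d(p,f(\Sigma))\leq\min\{\tau_{f}(x_{0},\xi_{0}),r_{0}\}$, so such a pair is minimizing. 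It follows that
\[
\Omega_{r_{0}}\subset\mathrm{exp}^{\perp}\bigl(\tilde{U}_{r_{0}}\setminus\{(x,r\xi):(x,\xi)\in V,\ r>r_{1}\}\bigr).
\]
We apply the area formula to this smaller set exactly as in Section \ref{104}. For the Jacobian we use the bound of Theorem \ref{57} (or directly Lemma \ref{59}) together with Lemma \ref{60}, which gives $\tilde{\tau}_{f}=\infty$ on $V$. Following the bookkeeping of Lemma \ref{60}, including the antipodal symmetrization, we obtain for $r_{0}>r_{1}$
\[
|\Omega_{r_{0}}|\leq\frac{r_{0}^{n+m}}{2(n+m)}\int_{\Sigma}K^{*}\,d\mathrm{vol}_{\Sigma}-\frac{r_{0}^{n+m}-r_{1}^{n+m}}{n+m}\int_{\pi(V)}\int_{V_{x}}\left|\mathrm{det}\langle h(\cdot,\cdot),\xi\rangle\right|d\xi\, d\mathrm{vol}_{\Sigma}(x)+\text{lower order terms of } r_{0}^{n+m}.
\]
The middle term is at least $\varepsilon|\pi(V)|\,(r_{0}^{n+m}-r_{1}^{n+m})/(n+m)$. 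Dividing by $\omega_{n+m}r_{0}^{n+m}$ and letting $r_{0}\to\infty$ gives
\[
\mathrm{AVR}(\bar{g})<\frac{1}{2|\mathbb{S}^{n+m-1}|}\int_{\Sigma}K^{*}\,d\mathrm{vol}_{\Sigma},
\]
which contradicts \eqref{70}.

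The main obstacle is the covering step. The whole argument hinges on the fact that the deleted cone over $V$ is not needed to cover $\Omega_{r_{0}}$. This is why we delete only rays beyond $r_{1}$ on which minimality fails, rather than trying to exhibit multiplicity of $\mathrm{exp}^{\perp}$ near $p_{1}$; the latter would only yield a defect of bounded volume, not of order $r_{0}^{n+m}$. We also need some care in checking that the deleted set is measurable. It is in fact open in $T^{\perp}\Sigma$, so measurability is not an issue.
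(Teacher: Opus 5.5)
Your proof is correct and follows essentially the paper's route. Both arguments show that the minimizing property of normal geodesics fails beyond a fixed radius, i.e.\ $\tau_f$ is bounded, on an open set of directions near $(x_1,\xi_1)$. Both then rerun the volume-defect argument of Lemma~\ref{60} to contradict \eqref{70}.

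Your version differs only in how it justifies these steps, and is cleaner than the paper's triangle-inequality chain. You use the continuity of $d(\cdot,f(\Sigma))$ to get the bound. You then use the explicit covering $\Omega_{r_0}\subset\mathrm{exp}^{\perp}(\tilde U_{r_0}\setminus D)$, taken from the proof of Lemma~\ref{56}. This makes explicit that the bound concerns $\tau_f$ rather than $\tilde\tau_f$, a point the paper glosses over with ``the same argument as in Lemma~\ref{60}''. Your appeals to Lemmas~\ref{59} and~\ref{60} are not actually needed.
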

\begin{proof}
	Suppose that $\mathrm{exp}^{\perp}(x_{1}, r_{1}\xi_{1}) \in B_{f(x_{2})}^{M}(r_{2})$. By Lemma~\ref{60}, there exist a positive number $r_3<r_1-r_2$ and an open neighborhood $W\subset\mathcal U$ of $(x_1,r_1\xi_1)$ such that
\begin{itemize}
		\item $W$ is diffeomorphic via $\exp^\perp$ to $B_{\exp^\perp(x_1,r_1\xi_1)}^M(r_3)$;
		\item $B_{\exp^\perp(x_1,r_1\xi_1)}^M(r_3)$ is contained in $B_{f(x_2)}^M(r_2)$;
		\item  $d(f(\bar x),f(\tilde x))<r_1-r_2-r_3$, for each $\bar x,\tilde x\in \pi(W)$.
\end{itemize}
	For each $q=\mathrm{exp}^{\perp}(x, r\xi)\in B^{M}_{\mathrm{exp}^{\perp}(x_{1}, r_{1}\xi_{1})}(r_{3})$ with $(x, r\xi)\in W$,
	by the triangle inequality, it follows that
	\begin{align*}
		d(q,f(x_{1}))&\geq d(\mathrm{exp}^{\perp}(x_{1}, r_{1}\xi_{1}),f(x))-d(\mathrm{exp}^{\perp}(x_{1}, r_{1}\xi_{1}),q)\\
		&>d(\mathrm{exp}^{\perp}(x_{1}, r_{1}\xi_{1}),f(x_{1}))-d(f(x_{1}),f(x))-r_{3}\\
		&=r_{1}-r_{3}-d(f(x_{1}),f(x))\\
		&>r_{r_{2}}>d(q,f(x_{2})).
	\end{align*}
	Thus we have $\tau_{f}(x,\xi)<r_{2}$ for each $(x, r\xi)\in W$.
	Applying the same argument as in Lemma \ref{60} can lead us to a contradiction. 	Therefore, $\mathrm{exp}^{\perp}(x_{1}, r_{1}\xi_{1}) \notin B_{f(x_{2})}^{M}(r_{2})$. The lemma follows.
\end{proof}

\begin{lem}\label{64}
	Assume that $(x,y)\in \mathcal{U}$. Then 
	$\tau_{f}(x,y/|y|)=\infty$.
\end{lem}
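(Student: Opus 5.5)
The plan is to argue by contradiction and reduce to Lemma \ref{63}. Write $y=r\xi$ with $r=|y|>0$ and $\xi=y/|y|\in L_x$; this is possible because $\mathcal{U}=\{(x,r\xi): x\in\Sigma^{+},\, r>0,\,\xi\in L_{x}\}$. Suppose $\tau_f(x,\xi)=T<\infty$. Pick any $r_1>T$ and set $p:=\mathrm{exp}^{\perp}(x,r_1\xi)$. Since $(x,r_1\xi)$ again lies in $\mathcal{U}$, Lemma \ref{63} will apply to it with $x_1=x$ and $\xi_1=\xi$.

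The key step is to show that $d(p,f(\Sigma))<r_1$. The definition of $\tau_f$ as a supremum, and the fact that $t\mapsto t-d(\mathrm{exp}^{\perp}_x(t\xi),f(\Sigma))$ is nondecreasing by the triangle inequality, should give this. For $t>T$ the geodesic segment $\sigma|_{[0,t]}$ no longer realizes the distance to $f(\Sigma)$, so $d(\sigma(t),f(\Sigma))<t$. Hence there is $x_2\in\Sigma$ with $d(p,f(x_2))<r_1$, using compactness of $\Sigma$ to attain the infimum. Choose $r_2$ with $d(p,f(x_2))<r_2<r_1$. Then $p\in B^{M}_{f(x_2)}(r_2)$, which contradicts Lemma \ref{63}. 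Therefore $\tau_f(x,\xi)=\infty$.

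The main obstacle is justifying the strict inequality $d(\sigma(t),f(\Sigma))<t$ for every $t>T$, rather than only along some sequence. I would handle this with monotonicity. Suppose $d(\sigma(t_1),f(\Sigma))=t_1$ for some $t_1$. For $s<t_1$, the triangle inequality gives $d(\sigma(s),f(\Sigma))\ge t_1-(t_1-s)=s$, and trivially $d(\sigma(s),f(\Sigma))\le s$, so equality holds for all $s\le t_1$. Consequently, the set where equality holds is an interval starting at $0$, and its supremum is exactly $T$. For $t>T$ equality fails, and since $d(\sigma(t),f(\Sigma))\le t$ always, the inequality is strict.

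If the immersion makes $T=0$ possible, the argument is unchanged: any $r_1>0$ works. If one prefers to avoid the supremum subtlety, one can instead take $r_1=T+1$ directly.
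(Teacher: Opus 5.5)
Your proof is correct. Its core is the paper's Case~2: for $r_1>\tau_f(x,\xi)$ the point $p=\mathrm{exp}^{\perp}(x,r_1\xi)$ is closer than $r_1$ to some $f(x_2)$, which contradicts Lemma~\ref{63}. The paper phrases this with a shorter normal geodesic $\mathrm{exp}^{\perp}(\bar x,r_1\bar\xi)=p$; you phrase it directly through the distance.

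Where you differ is the case $\tau_f(x,\xi)=0$. The paper handles it separately: it shows that $f$ must self-intersect at $x_0$ and that these self-intersections are only tangential (again via Lemma~\ref{63}). It then shows that $f^{-1}(f(x_0))$ is finite and that a small ball pulls back to finitely many embedded sheets $V_k$, which forces $\tau_f(x_0,\xi_0)>0$. You observe instead that $\tau_f=0$ means the set $S:=\{t>0: d(\mathrm{exp}^{\perp}_x(t\xi),f(\Sigma))=t\}$ is empty. Then $d(\mathrm{exp}^{\perp}_x(t\xi),f(\Sigma))<t$ for every $t>0$, and Lemma~\ref{63} finishes it exactly as before. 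This makes the paper's Case~1 unnecessary and shows that Lemma~\ref{64} is essentially a restatement of Lemma~\ref{63}. The self-intersection analysis foreshadows Lemma~\ref{69}, but it buys nothing for this lemma.

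Some of your machinery can be dropped:
\begin{itemize}
\item The step you flag as the main obstacle is immediate. Any $t>\sup S$ lies outside $S$, and $d(\mathrm{exp}^{\perp}_x(t\xi),f(\Sigma))\le t$ always holds, so the inequality is strict. Your monotonicity argument is correct but not needed.
\item The contradiction itself is unnecessary. Lemma~\ref{63}, applied to $(x,r_1\xi)\in\mathcal{U}$ for every $r_1>0$, gives $d(\mathrm{exp}^{\perp}_x(r_1\xi),f(\Sigma))\ge r_1$. So $S=(0,\infty)$ and $\tau_f(x,\xi)=\infty$ directly.
\item Compactness is not needed to find $x_2$; the definition of the infimum suffices.
\end{itemize}
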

\begin{proof}
	We still prove this lemma by contradiction. Suppose that there exists a point $(x_0, y_0) \in \mathcal{U}$ such that $\tau_f(x_0, y_0/|y_0|) < \infty$.  Denote by $\xi_{0}$ the unit vector $y_{0}/|y_{0}|$. It is necessary to consider the following two cases.
	
	\noindent \textbf{Case 1:} $\tau_f(x_0, \xi_{0})= 0$.
	
	We claim that $f$ must have a self-intersection at $x_0$. Indeed, otherwise there exists a positive number $r_0$ such that the restriction of $f$ to $f^{-1}(B_{f(x_{0})}^{M}(r_{0}))$ is an embedding, and hence $\tau_f(x_0, \xi_{0}) > 0$. It's a contradiction.
	
	At $x_0$, $f$ must have only tangential self-intersections and no transverse self-intersections. Suppose that there exists $\bar{x}\in\Sigma$ such that $f$ intersects transversely at $x_0$ and $\bar{x}$. Then there exist $y\in T_{x_0}^{\perp}\Sigma$ and $v\in T_{\bar{x}}\Sigma$ such that $(x_0,y)\in\mathcal{U}$ and $\langle y,v\rangle>0$. This, together with the asymptotic behavior and Lemma \ref{63}, leads to a contradiction. Therefore, at $x_0$, $f$ can only have tangential self-intersections.
	
	We set $f(x_{0})=p$ and claim that the set $f^{-1}(p)$ must be finite. If not, since $\Sigma$ is compact and $f^{-1}(p)$ is closed in  $\Sigma$, we can find a sequence of distinct points $\{x_{k}\}_{k=0}^{\infty} \subset \Sigma$ such that $f(x_k)=p$ and $\lim_{k\to\infty} x_{k} = x_{\infty} \in \Sigma$. By continuity, $f(x_{\infty})=p$. However, since $f$ is an immersion, this contradicts the fact that $x_{\infty}$ is an accumulation point. Hence the set $f^{-1}(p)$ must be finite. 
	
	Without loss of generality, we may assume that $f^{-1}(p) =\{  x_{0},x_{1},\cdots,x_{N}\}$. Choose a sufficiently small positive number $\bar{r}$ such that $f^{-1}(B^{M}_{p}(\bar{r}))=\cup_{k=0}^{N}V_{k}$, where $V_{k}$ is an open neighborhood of $x_{k}$ in $\Sigma$ and $f|_{V_{k}}$ is an embedding for $k = 0, 1, \cdots, N$. Immediately, we have $\tau_{f|_{V_{k}}}(x_{k},\xi_{0})>0$ for $k = 0, 1, \cdots, N$. Set
	\[
	C:=\min \{ \frac{\bar{r}}{2},\tau_{f|_{V_{0}}}(x_{0},\xi_{0}),\tau_{f|_{V_{1}}}(x_{1},\xi_{0}) ,\cdots,\tau_{f|_{V_{N}}}(x_{N},\xi_{0})    \}.
	\]
	Then for each positive number t with $0<t<C$, 
	\begin{align*}
		d(\mathrm{exp}^{\perp}_{x_{0}}(t\xi_{0}),f(\Sigma))&=\inf_{x\in\Sigma}d(\mathrm{exp}^{\perp}_{x_{0}}(t\xi_{0}),f(x))\\
		&=\min\{ \inf_{x\in\Sigma\setminus(\bigcup_{k=0}^{N}V_{k})}d(\mathrm{exp}^{\perp}_{x_{0}}(t\xi_{0}),f(x)),\inf_{x\in\bigcup_{k=0}^{N}V_{k}}d(\mathrm{exp}^{\perp}_{x_{0}}(t\xi_{0}),f(x))    \}.
	\end{align*}
	Since 
	\[
	d(\mathrm{exp}^{\perp}_{x_{0}}(t\xi_{0}),f(x))\geq \frac{\bar{r}}{2}>t
	\]
	for each $x\in\Sigma\setminus(\cup_{k=0}^{N}V_{k})$ and
	\[
	d(\mathrm{exp}^{\perp}_{x_{0}}(t\xi_{0}),f(x))=t
	\]
	for each $x\in\cup_{k=0}^{N}V_{k}$, it can be seen that 
	$d(\mathrm{exp}^{\perp}_{x_{0}}(t\xi_{0}),f(\Sigma))=t$
	for each positive number t with $0<t<C$. Hence $\tau_f(x_0, \xi_{0})\geq C>0$. It's a contradiction.
	
	\noindent \textbf{Case 2:} $0<\tau_f(x_0, \xi_{0})<  \infty$.
	
	For a positive number $r_{0}>\tau_f(x_0, \xi_{0})$, there exist $\bar{x}\in \Sigma$, $\bar{\xi}\in S_{\bar{x}}^{m-1}$ and  a positive number $r_{1}<r_{0}$ such that $\mathrm{exp}^{\perp}(\bar{x},r_{1}\bar{\xi})=\mathrm{exp}^{\perp}(x_{0},r_{0}\xi_{0})$ which contradicts with Lemma \ref{63}.
	
	Thus we obtain 	$\tau_{f}(x,y/|y|)=\infty$ for  $(x,y)\in \mathcal{U}$. The lemma follows.
\end{proof}

\begin{lem}\label{73}
Suppose $V$ is a nonempty open set contained in $\Sigma^{+}$ such that $f|_{V}$ is an embedding. Let
$\mathcal{V}:=\{(x,y)\in T^{\perp}\Sigma: x\in V, y\in T_{x}^{\perp}\Sigma,\langle \xi_{x},y \rangle >0  \}$. Then $\mathrm{exp}^{\perp}|_{\mathcal{V}}:\mathcal{V}\rightarrow \mathrm{exp}^{\perp}(\mathcal{V})$  is a diffeomorphism.
\end{lem}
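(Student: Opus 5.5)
To prove Lemma \ref{73}, I would show that $\mathrm{exp}^{\perp}|_{\mathcal{V}}$ is a local diffeomorphism that is injective. A bijective local diffeomorphism onto its image, which is open, is then a diffeomorphism.

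\emph{Local diffeomorphism.} Every $(x,y)\in\mathcal{V}$ has $x\in\Sigma^{+}$ and $y/|y|\in L_{x}$, by Lemma \ref{61} and the description of $L_x$ through $\xi_x$. Lemma \ref{59} then gives
\[
|\mathrm{det}\,(\mathrm{exp}^{\perp})_{*(x,y)}|=\prod_{i=1}^{n}\bigl(1+|y|\kappa_{i}(x,y/|y|)\bigr).
\]
Each factor is at least $1$, since $\kappa_{i}>0$ on $L_x$. So the differential is nonsingular on $\mathcal{V}$. By the inverse function theorem, $\mathrm{exp}^{\perp}|_{\mathcal{V}}$ is a local diffeomorphism, and $\mathrm{exp}^{\perp}(\mathcal{V})$ is open in $M$.

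\emph{Injectivity.} Suppose $\mathrm{exp}^{\perp}(x_{1},r_{1}\xi_{1})=\mathrm{exp}^{\perp}(x_{2},r_{2}\xi_{2})=:q$ with both points in $\mathcal{V}$, where $r_i>0$ and $\xi_i\in L_{x_i}$.

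\begin{enumerate}[(1)]
\item By Lemma \ref{64}, $\tau_{f}(x_{i},\xi_{i})=\infty$. Hence $d(q,f(\Sigma))=r_{1}=r_{2}=:r$.
\item Both $x_{1}$ and $x_{2}$ therefore minimize $x\mapsto d(q,f(x))$, and the two geodesics $t\mapsto\mathrm{exp}^{\perp}_{x_{i}}(t\xi_{i})$ are minimizing from $f(\Sigma)$ to $q$.
\item Since $\tau_{f}=\infty$ along each of these geodesics, I would extend both past $q$ to time $r+\varepsilon$ and aim for a contradiction. If $(x_1,\xi_1)\neq(x_2,\xi_2)$ while both reach $q$ at the same time $r$, the two segments meet at $q$ with different velocities, or they start from different base points. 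In either case, concatenating the segment from $f(x_{2})$ to $q$ with the extension of the first geodesic gives a broken curve of length $r+\varepsilon$. This curve can be strictly shortened.
\item That shortening gives $d(\mathrm{exp}^{\perp}_{x_{1}}((r+\varepsilon)\xi_{1}),f(\Sigma))<r+\varepsilon$, which contradicts $\tau_{f}(x_{1},\xi_{1})=\infty$.
\item The remaining case is $\xi_{1}$ parallel to $\xi_2$ at $q$ with the same tangent velocity. Then the geodesics coincide, and running them backwards gives $f(x_{1})=f(x_{2})$ and $\xi_{1}=\xi_{2}$.
\item Since $f|_{V}$ is an embedding, hence injective, this forces $x_{1}=x_{2}$.
\end{enumerate}

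This is exactly where the embedding hypothesis on $V$ enters. Without it, the immersion could have two distinct points of $\Sigma$ with the same image and the same normal.

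\emph{Main obstacle.} The delicate step is the argument in (3): showing that two distinct minimizing normal geodesics reaching $q$ at equal time are incompatible with $\tau_{f}=\infty$. This needs care in the tangential self-intersection situation, where $f(x_1)=f(x_2)$ but $x_1\ne x_2$ lies outside $V$. I would first try the corner-cutting argument above, using only the standard fact that a broken minimizing path can be strictly shortened. If needed, I would fall back on Lemma \ref{63}: a nearby point $(x_{1},r'\xi_{1})$ with $r'>r$ lies in a ball $B^{M}_{f(x_{2})}(r'')$ with $r''<r'$, which Lemma \ref{63} forbids.
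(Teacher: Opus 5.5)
Your proof is correct and follows the paper's route: first nonsingularity of the differential on $\mathcal{V}$, then injectivity via $r_1=r_2$ and the fact that $\tau_f=\infty$ (Lemma \ref{64}) forces the two normal geodesics to coincide, so that $f(x_1)=f(x_2)$, $\xi_1=\xi_2$, and hence $x_1=x_2$ because $f|_V$ is injective. The only differences are minor: you cite Lemma \ref{59} for nonsingularity where the paper cites Lemma \ref{60}, and you get $r_1=r_2$ from Lemma \ref{64} where the paper uses Lemma \ref{63}; your corner-cutting step is what the paper compresses into ``by the definition of $\tau_f$ and Lemma \ref{64}'', and the worry in your last paragraph is unnecessary because both $x_1,x_2$ lie in $V$ and the shortening argument only uses $d(\cdot,f(\Sigma))$.
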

\begin{proof}
	Since Lemma \ref{60} holds, we only need to show that $\mathrm{exp}^{\perp}|_{\mathcal{V}}$ is injective. We argue by contradiction. Fix a point $p\in \mathrm{exp}^{\perp}(\mathcal{V})$ and suppose there exist $x_{1}\in V, r_{1}>0,\xi_{1}\in S_{x_{1}}^{m-1}$ and $x_{2}\in V, r_{2}>0,\xi_{2}\in S_{x_{2}}^{m-1}$ such that $(x_{1},r_{1}\xi_{1}),(x_{2},r_{2}\xi_{2})\in \mathcal{V}$ and
	\[
	\mathrm{exp}^{\perp}(x_{1},r_{1}\xi_{1})=\mathrm{exp}^{\perp}(x_{2},r_{2}\xi_{2})=p.
	\]
	 Clearly, $\mathcal{V}\subset \mathcal{U}$. By Lemma \ref{63}, we have $r_{1}=r_{2}$. Next, by the definition of $\tau_{f}$ and Lemma \ref{64}, we obtain $f(x_{1})=f(x_{2})$ and $\xi_{1}=\xi_{2}$. This contradicts the fact that $f|_{V}$ is an embedding. Therefore, $\mathrm{exp}^{\perp}|_{\mathcal{V}}$ is injective, and the lemma follows.
\end{proof}

\begin{lem}\label{69}
	The map $f|_{\Sigma^{+}}$ is an embedding.
\end{lem}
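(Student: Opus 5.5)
Since $f$ is an immersion and $\Sigma$ is compact, the map $f|_{\Sigma^{+}}$ is automatically a local embedding. It therefore suffices to prove that $f|_{\Sigma^{+}}$ is injective; the embedding property will then follow because $f$ is a local embedding and, as we will check, the images of disjoint small neighbourhoods stay separated. So we argue by contradiction: suppose there are distinct points $x_{1},x_{2}\in\Sigma^{+}$ with $f(x_{1})=f(x_{2})=p$.

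\textbf{Step 1: the intersection is tangential.} First we show that $T_{x_{1}}\Sigma=T_{x_{2}}\Sigma$ as subspaces of $T_{p}M$. If not, some tangent vector $v\in T_{x_{2}}\Sigma$ has a nonzero normal component with respect to $T_{x_{1}}\Sigma$. Because $L_{x_{1}}$ is an open hemisphere (Lemma \ref{61}), we can choose a direction $y\in T^{\perp}_{x_{1}}\Sigma$ with $(x_{1},y)\in\mathcal{U}$ and $\langle y,v\rangle>0$. Now look at the geodesic $t\mapsto \exp^{\perp}(x_{1},ty)$ for small $t>0$. A first-order computation shows it gets strictly closer than $t|y|$ to $f(\exp_{x_{2}}(sv))$ for a suitable small $s>0$. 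This contradicts Lemma \ref{63} (equivalently, $\tau_{f}(x_{1},y/|y|)=\infty$ from Lemma \ref{64}). This is the same transversality argument used in Case 1 of Lemma \ref{64}.

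\textbf{Step 2: the two sheets share the same normal directions.} With $T_{x_{1}}\Sigma=T_{x_{2}}\Sigma$, the normal spaces coincide as well. We then compare the hemispheres $L_{x_{1}}$ and $L_{x_{2}}$ by taking $\xi\in L_{x_{1}}$ and $\eta\in L_{x_{2}}$ and using Lemma \ref{64}.
\begin{itemize}
\item For every $t>0$ we have $d(\exp^{\perp}(x_{1},t\xi),f(\Sigma))=t$, with the minimum attained at $x_{1}$.
\item Likewise $d(\exp^{\perp}(x_{2},t\eta),f(\Sigma))=t$, with the minimum attained at $x_{2}$.
\end{itemize}
If $\xi\in L_{x_{1}}\cap L_{x_{2}}$, then the two geodesics $\exp^{\perp}(x_{1},t\xi)$ and $\exp^{\perp}(x_{2},t\xi)$ coincide. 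This is consistent with the distance condition, but it makes $\exp^{\perp}$ non-injective on $\mathcal{U}$: the points $(x_{1},t\xi)$ and $(x_{2},t\xi)$ are distinct in $T^{\perp}\Sigma$ yet have the same image.

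\textbf{Step 3: from non-injectivity to a volume deficit.} This is the main obstacle: turning the double covering into a contradiction with equality \eqref{70}. We have two possibilities.
\begin{itemize}
\item If $L_{x_{1}}\cap L_{x_{2}}\neq\emptyset$, then by continuity there are open sets $W_{1}\ni x_{1}$ and $W_{2}\ni x_{2}$ in $\Sigma^{+}$, together with open cones of directions, whose images under $\exp^{\perp}$ overlap in a set of volume of order $r_{0}^{n+m}$. In the area formula estimate of Section \ref{104}, this overlap is counted twice. We can subtract one copy to get a bound of the form $|\Omega_{r_{0}}|\le \frac{r_{0}^{n+m}}{2(n+m)}\bigl(\int_{\Sigma}K^{*}-c\bigr)+o(r_{0}^{n+m})$ with $c>0$, contradicting \eqref{70} exactly as in Lemmas \ref{60} and \ref{61}. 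To make the overlap quantitative, I would use Lemma \ref{73} on each $W_{k}$ (where $f$ is an embedding) and the explicit Jacobian from Lemma \ref{59}. Since the Jacobian is the same polynomial in $r$, the overlapping cones contribute comparable leading-order volumes.
\item If $L_{x_{1}}\cap L_{x_{2}}=\emptyset$, the hemispheres are opposite, so $\xi_{x_{2}}=-\xi_{x_{1}}$. In that case, for a direction $\xi$ lying in both $L_{x_{1}}$ and the closure of $-L_{x_{2}}$, one sheet lies locally on the side the other sheet's normal geodesics travel into. A second-order comparison then shows the geodesic from $x_{1}$ comes strictly closer to sheet $2$ than $t$. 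For this I would use the strict convexity $\kappa_{i}>0$ together with the fact that sheet $2$ curves toward $\xi$. That again contradicts Lemma \ref{63}.
\end{itemize}

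Either way we reach a contradiction, so $f|_{\Sigma^{+}}$ is injective. Combined with $f$ being an immersion, this gives that $f|_{\Sigma^{+}}$ is an embedding.
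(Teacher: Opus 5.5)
Your outline is the paper's: you rule out a transverse self-intersection with Lemma~\ref{63}, then split into $\xi_{x_1}\neq-\xi_{x_2}$ (equivalently $L_{x_1}\cap L_{x_2}\neq\emptyset$), handled by double counting in the area formula, and $\xi_{x_1}=-\xi_{x_2}$, handled by Lemma~\ref{63}. The genuine gap is in the first bullet of Step~3, which is the heart of the proof.

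Continuity, plus the fact that $\exp^{\perp}$ is a local diffeomorphism on $\mathcal U$, only shows that for each fixed $t$ the images of small neighbourhoods of $(x_1,t\xi_0)$ and $(x_2,t\xi_0)$ overlap in \emph{some} neighbourhood of the common point. Nothing stops these neighbourhoods from shrinking as $t\to\infty$, so you get no overlap of volume $\geq c\,r_0^{n+m}$. Lemmas~\ref{73} and~\ref{59} compute the volume of each cone image separately, which says nothing about their intersection. Moreover, ``the Jacobian is the same polynomial in $r$'' presupposes $\kappa_i(x_1,\xi)=\kappa_i(x_2,\xi)$, which you have not shown.

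The missing idea is that the two sheets actually coincide near $x_1$ and $x_2$. Suppose $q=\exp^{\perp}(\tilde x_1,r\xi_1)=\exp^{\perp}(\tilde x_2,r\xi_2)$ with both points in $\mathcal U$; the parameters agree by Lemma~\ref{63}. By Lemma~\ref{64}, $\gamma_1(t):=\exp^{\perp}(\tilde x_1,t\xi_1)$ satisfies $d(\gamma_1(t),f(\Sigma))=t$ for all $t$. So the broken curve $\gamma_2|_{[0,r]}$ followed by $\gamma_1|_{[r,t]}$ has length $t\le d(f(\tilde x_2),\gamma_1(t))$. It is therefore minimizing, hence unbroken, which gives $f(\tilde x_1)=f(\tilde x_2)$ and $\xi_1=\xi_2$.

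Apply this to neighbourhoods $W_k\subset\mathcal U$ of $(x_k,r_0\xi_0)$ with $\exp^{\perp}(W_1)=\exp^{\perp}(W_2)$. This yields a diffeomorphism $\varphi$ from a neighbourhood $V$ of $x_1$ onto a neighbourhood of $x_2$ with $f\circ\varphi=f$. Consequently the full cones over $V$ and $\varphi(V)$ have \emph{identical} images for every $r_0$, and each point of this image has at least two preimages in $U_{r_0}$. By Lemmas~\ref{73} and~\ref{59} the image has volume $\frac{r_0^{n+m}}{2(n+m)}\int_{V}K^{*}+o(r_0^{n+m})$. Adding it to $|\Omega_{r_0}|$ on the left of the area formula contradicts \eqref{70}.

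In the second bullet, the contradiction cannot come from small $t$. Below the focal distance, points of the common normal geodesic are at distance exactly $t$ from the second sheet (think of a point just inside a round sphere). The contradiction comes from the focal point instead. With $\xi=\xi_{x_1}=-\xi_{x_2}$, all principal curvatures $\kappa_i(x_2,\xi)$ are negative, so Theorem~\ref{38} (as used in Section~\ref{104}) gives $\tilde\tau_f(x_2,\xi)\le-1/\kappa_1(x_2,\xi)<\infty$. Since $\tau_f$ depends only on the geodesic $t\mapsto\mathrm{Exp}_p(t\xi)$ and $\tau_f\le\tilde\tau_f$, Lemma~\ref{64} gives $\infty=\tau_f(x_1,\xi)=\tau_f(x_2,\xi)\le\tilde\tau_f(x_2,\xi)<\infty$, a contradiction.

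Finally, the upgrade from injectivity to embedding that you promised to check is not automatic on the open set $\Sigma^{+}$. You must also exclude $f(x)=f(x')$ with $x\in\Sigma^{+}$ and $x'\in\Sigma\setminus\Sigma^{+}$; the paper is silent on this point too.
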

\begin{proof}
We argue by contradiction again. Suppose that there exist two distinct points $x_{1},x_{2}\in\Sigma^{+}$ such that $f(x_{1})=f(x_{2})=p\in M$. We claim that $f$ does not have a transverse self-intersection at $x_{1}$ and $x_{2}$. Otherwise, there exist $y\in T_{x_{1}}^{\perp}\Sigma$ and $v\in T_{x_{2}}\Sigma$ such that $(x_{1},y)\in\mathcal{U}$ and $\langle y,v\rangle>0$. This, together with the asymptotic behavior and Lemma \ref{63}, leads to a contradiction.
Hence the self-intersection must be tangential, and two cases are considered.

\noindent \textbf{Case 1:} $\xi_{x_{1}}\neq - \xi_{x_{2}}$.

There exists $\xi_{0}\in S_{x_{1}}^{m-1}=S_{x_{2}}^{m-1}$ such that $(x_{1},r\xi)\in\mathcal{U}$ and $(x_{2},r\xi)\in\mathcal{U}$ for each $r>0$. For a fixed positive number $r_{0}$, by Lemma \ref{60}, there exist an open neighborhood $W_{1}\subset \mathcal{U}$ of $(x_{1},r_{0}\xi_{0})$, an open neighborhood $W_{2}\subset \mathcal{U}$ of $(x_{2},r_{0}\xi_{0})$ and a positive number $\varepsilon<r_{0}$ such that
\begin{itemize}
	\item $W_1$ and $W_2$ are homeomorphic to $\mathbb{B}^{n+m}$;
		\item  $\pi(W_{1})$ and $\pi(W_{2})$ are disjoint and  homeomorphic to $\mathbb{B}^{n}$;
	\item For $k=1,2$, $W_k$ is diffeomorphic via $\exp^{\perp}$ to $\exp^{\perp}(W_k)$;
	
	\item $\exp^{\perp}(W_1)=\exp^{\perp}(W_2)=:W$;
	\item The set $V_{k}:=\{(x,r_{0}\xi)\in W_{k}:x\in \Sigma^{+},\xi \in S_{x}^{m-1}\}$ is homeomorphic to 
	$\mathbb{B}^{n+m-1}$ and $\partial \exp^{\perp}(V_{k}) \subset \partial W$ for $k=1,2$.
\end{itemize}
Note that each $V_1$ and $V_2$ are embedded hypersurfaces in $\mathcal U$. By the above properties, for each $k=1,2$, the image $\exp^\perp(V_k)$ is an $(n+m-1)$-dimensional embedded hypersurface in $M$ and splits $W$ into three pairwise disjoint parts:
\[
W=\exp^\perp(V_{k,+})\cup \exp^\perp(V_k)\cup \exp^\perp(V_{k,-}),
\]
where 
\[
\mathrm{exp}^{\perp}(V_{k,+}):=\{ (x,t\xi)\in W_{k}:x\in \Sigma^{+},\xi \in S_{x}^{m-1},t>r_{0} \},
\]
\[
\mathrm{exp}^{\perp}(V_{k,-}):=\{ (x,t\xi)\in W_{k}:x\in \Sigma^{+},\xi \in S_{x}^{m-1},t<r_{0} \}.
\]
By Lemma \ref{63}, it follows that
\[
\mathrm{exp}^{\perp}(V_{1})=\mathrm{exp}^{\perp}(V_{2}),\quad \mathrm{exp}^{\perp}(V_{1,+})=\mathrm{exp}^{\perp}(V_{2,+}),\quad\mathrm{exp}^{\perp}(V_{1,-})=\mathrm{exp}^{\perp}(V_{2,-}).
\]
For a fixed point $q\in \mathrm{exp}^{\perp}(V_{1})=\mathrm{exp}^{\perp}(V_{2})$, there exist points $(\tilde{x}_{1},r_{0}\xi_{1})\in W_{1}$ and $(\tilde{x}_{2},r_{0}\xi_{2})\in W_{2}$ such that $ \mathrm{exp}^{\perp}(\tilde{x}_{1},r_{0}\xi_{1})=\mathrm{exp}^{\perp}(\tilde{x}_{2},r_{0}\xi_{2})=q$. For each $k=1,2$, let $\gamma_k:[0,\infty)\to M$ be the geodesic $\gamma_k(t):=\exp^\perp(\tilde x_k,t\xi_k)$, which is minimal by Lemma~\ref{60}. Fix a positive number $t_0>r_0$ such that $\gamma_1(t_0)\in W$, and let $\tilde\gamma_2$ be a minimal geodesic joining $f(\tilde x_2)$ to $\gamma_1(t_0)$. On one hand, the triangle inequality gives
\[
t_0 = L(\gamma_2|_{[0,r_0]}) + L(\gamma_1|_{[r_0,t_0]}) \ge L(\tilde\gamma_2),
\]
where $L$ is the length functional. On the other hand, Lemma~\ref{63} yields
\[
L(\tilde\gamma_2) \ge t_0.
\]
Hence $L(\tilde\gamma_2)=t_0$, the concatenation of $\gamma_2|_{[0,r_0]}$ and $\gamma_1|_{[r_0,t_0]}$ forms a geodesic. By uniqueness of geodesics, it follows that
\[
f(\tilde x_1)=f(\tilde x_2),\qquad \xi_1=\xi_2.
\]
Therefore, we have found a diffeomorphism $\varphi:\pi(V_{1}) \rightarrow \pi(V_{2})$ such that $f(\varphi(x))=f(x)$ for each $x\in \pi(V_{1})$.

For each positive number $r_{0}$, we define two subsets of $\mathcal{U}$ as follows: for $k=1,2$, let
\[
X_{r_{0}}^{k}:=\{(x,r\xi): x\in V_{k},\,0<r\leq r_{0},\, \xi\in S_{x}^{m-1},\, \langle \xi_{x},\xi \rangle >0  \}.
\]
From the previous discussion, we have $\mathrm{exp}^{\perp}(X_{r_{0}}^{1})=\mathrm{exp}^{\perp}(X_{r_{0}}^{2})$ for each positive number $r_{0}$. By Lemma \ref{73}, $\mathrm{exp}^{\perp}|_{X_{r_{0}}^{1}}$ is a diffeomorphism. On the one hand, applying the change of variables formula to this map yields
		\begin{equation*}
	\begin{array}{lllll}
		|\mathrm{exp}^{\perp}(X_{r_{0}}^{1})|
		&=\int_{V_{1}}\int_{L_{x}}\int_{0}^{r_{0}}r^{n+m-1}\prod_{i=1}^{n}\kappa_{i}(x,\xi)drd\xi d\mathrm{vol}_{\Sigma}(x)\\
		&\ \ \ +\mathrm{lowerorder\ terms\ of}\ r_{0}^{n+m}.
	\end{array}
\end{equation*}
Dividing both sides of the above equality by $r_{0}^{n+m}\omega_{n+m}$ and letting  $r_{0}\rightarrow \infty$, 
		\begin{equation}\label{74}
	\begin{array}{lllll}
	\lim_{r_{0}\rightarrow\infty}  \frac{|\mathrm{exp}^{\perp}(X_{r_{0}}^{1})|}{r_{0}^{n+m}\omega_{n+m}}=\frac{\int_{V_{1}}K^{*}(x)d\mathrm{vol}(x)}{2|\mathbb{S}^{n+m-1}|}>0.
	\end{array}
\end{equation}
Note that $\mathrm{exp}^{\perp}(X_{r_{0}}^{1})\subset \Omega_{r_{0}}$ and
$\mathscr{H}^{0}((\mathrm{exp}^{\perp})^{-1}(p)\cap U_{r_{0}})\geq 2$ for each $p\in \mathrm{exp}^{\perp}(X_{r_{0}}^{1})$. On the other hand, as in Section \ref{104}, applying the area formula to the map $\mathrm{exp}^{\perp}|_{U_{r_{0}}}$, 
for each $r_{0}>0$,
\begin{equation*}
	\begin{array}{lllll}
		&\ \ \ |\Omega_{r_{0}}|+| X_{r_{0}}|\\
		&\leq\int_{\Sigma^{+}}\int_{L_{x}}\int_{0}^{r_{0}}r^{n+m-1}\prod_{i=1}^{n}\kappa_{i}(x,\xi) drd\xi d\mathrm{vol}_{\Sigma}(x)\\
		&\ \ \ +\mathrm{lowerorder\ terms\ of}\ r_{0}^{n+m}.
	\end{array}
\end{equation*}
Dividing both sides of the above equality by $r_{0}^{n+m}\omega_{n+m}$ and letting  $r_{0}\rightarrow \infty$, we obtain
\begin{equation*}
	\begin{array}{lllll}
		\mathrm{AVR}(\bar{g})+\lim_{r_{0}\rightarrow\infty}  \frac{|\mathrm{exp}^{\perp}(X_{r_{0}}^{1})|}{r_{0}^{n+m}\omega_{n+m}}\leq\frac{\int_{\Sigma^{+}}K^{*}(x)d\mathrm{vol}(x)}{2|\mathbb{S}^{n+m-1}|}\leq\mathrm{AVR}(\bar{g}),
	\end{array}
\end{equation*}
which contradicts \eqref{74}.
Therefore, this case cannot occur.

\noindent \textbf{Case 2:} $\xi_{x_{1}}= - \xi_{x_{2}}$.

In fact, $\Sigma^{+}$ has properties similar to those of convex hypersurfaces in Euclidean space; by introducing local coordinates, we immediately obtain that this case contradicts Lemma~\ref{63}.

Thus the map $f|_{\Sigma^{+}}$ must be an embedding. The lemma follows.
\end{proof}



\emph{Proof of the necessity part of Theorem \ref{67}:} 
By Lemmas \ref{73} and \ref{69}, we conclude that $\mathrm{exp}^{\perp}|_{\mathcal{U}}:\mathcal{U}\rightarrow \mathrm{exp}^{\perp}(\mathcal{U})$ is a diffeomorphism. Combining this fact with Lemmas \ref{51}, \ref{78}, \ref{61}, \ref{77}, and \ref{69}, it remains only to compute the pulled‑back metric on $\mathcal{U}$.

Fix a point $(\bar{x},\bar{y})\in\mathcal{U}$. We shall use the frame fields established prior to Lemma \ref{9}. By Lemmas \ref{9} and \ref{79}, 
\begin{align*}
	&\langle (\mathrm{exp}^{\perp})_{*(\bar{x},\bar{y})}\left(X_{i}\right),\eta_{j} \rangle=g_{ij}(\bar{x})-\langle h(e_{i},e_{j}),\bar{y} \rangle ,\\
	&\langle (\mathrm{exp}^{\perp})_{*(\bar{x},\bar{y})}\left(X_{i}\right),\eta_{\beta} \rangle=\bar{y}^{\sigma}\Gamma_{i \sigma}^{\beta}(\bar{x}),\\
	&\langle (\mathrm{exp}^{\perp})_{*(\bar{x},\bar{y})}\left(\partial/\partial y^{\alpha}\right),\eta_{j} \rangle=0,\\
	&\langle (\mathrm{exp}^{\perp})_{*(\bar{x},\bar{y})}\left(\partial/\partial y^{\alpha}\right),\eta_{\beta} \rangle=\delta_{\alpha}^{\beta}.
\end{align*}
For each $1\leq A\leq n+m$, let  $\bar{\omega}^{A}$ be  the dual $1$-form of $\eta_{A}$. Note that
\begin{equation*}
	\bar{g}_{AB}(\mathrm{exp}^{\perp}_{\bar{x}}\bar{y})=\langle \eta_{A},\eta_{B} \rangle =\langle e_{A},e_{B} \rangle,
\end{equation*}
and
\begin{equation*}
	\bar{g}^{ij}(\mathrm{exp}^{\perp}_{\bar{x}}\bar{y})=g^{ij}(\bar{x}) ,\,	\bar{g}^{i\alpha}(\mathrm{exp}^{\perp}_{\bar{x}}\bar{y})=g^{i\alpha }(\bar{x})=0,\,\bar{g}^{\alpha\beta}(\mathrm{exp}^{\perp}_{\bar{x}}\bar{y})=\delta_{\alpha\beta}
\end{equation*}
for  $1\leq i,j\leq n$ and $n+1\leq \alpha,\beta\leq n+m$. 
These yield
\begin{align*}
	(\mathrm{exp}^{\perp})^{*}\bar{\omega}^{i}&= \bar{g}^{iA} (\mathrm{exp}^{\perp}_{\bar{x}}\bar{y})\langle (\mathrm{exp}^{\perp})_{*(\bar{x},\bar{y})}\left(X_{j}\right),\eta_{A} \rangle  dx^{j}+ \bar{g}^{iA}(\mathrm{exp}^{\perp}_{\bar{x}}\bar{y}) \langle (\mathrm{exp}^{\perp})_{*(\bar{x},\bar{y})}\left(\partial/\partial y^{\beta}\right),\eta_{A} \rangle dy^{\beta}  \\
	&=\left(\delta_{j}^{i}-g^{ik}\langle h(e_{j},e_{k}),\bar{y} \rangle \right)dx^{j},
\end{align*}
\begin{align*}
	(\mathrm{exp}^{\perp})^{*}\bar{\omega}^{\alpha}&= \bar{g}^{\alpha A}(\mathrm{exp}^{\perp}_{\bar{x}}\bar{y}) \langle (\mathrm{exp}^{\perp})_{*(\bar{x},\bar{y})}\left(X_{j}\right),\eta_{A} \rangle  dx^{j}+ \bar{g}^{\alpha A} (\mathrm{exp}^{\perp}_{\bar{x}}\bar{y})\langle (\mathrm{exp}^{\perp})_{*(\bar{x},\bar{y})}\left(\partial/\partial y^{\beta}\right),\eta_{A} \rangle dy^{\beta}  \\
	&=\bar{y}^{\beta}\Gamma_{i \beta}^{\alpha}(\bar{x})dx^{i}+dy^{\alpha}
\end{align*}
and
\begin{align*}
	\bar{g}(\mathrm{exp}^{\perp}_{\bar{x}}\bar{y})&=\langle \eta_{A},\eta_{B} \rangle \bar{\omega}^{A}\otimes\bar{\omega}^{B}=\langle e_{A},e_{B} \rangle \bar{\omega}^{A}\otimes\bar{\omega}^{B}\\
	&=g_{ij}(\bar{x}) \bar{\omega}^{i}\otimes\bar{\omega}^{j}+\delta_{\alpha\beta}\bar{\omega}^{\alpha}\otimes\bar{\omega}^{\beta}.
\end{align*}
Now we calculate the pulled back metric as follows:
\begin{align*}
		[(\mathrm{exp}^{\perp})^{*}\bar{g}](\bar{x},\bar{y})&=g_{ij}(\bar{x}) (\mathrm{exp}^{\perp})^{*}\bar{\omega}^{i}\otimes(\mathrm{exp}^{\perp})^{*}\bar{\omega}^{j}+\delta_{\alpha\beta}(\mathrm{exp}^{\perp})^{*}\bar{\omega}^{\alpha}\otimes(\mathrm{exp}^{\perp})^{*}\bar{\omega}^{\beta}\\
	&=g_{ij}(\bar{x})\left(\delta_{p}^{i}-g^{iq}\langle h(e_{p},e_{q}),\bar{y} \rangle \right)\left(\delta_{l}^{j}-g^{jk}\langle h(e_{l},e_{k}),\bar{y} \rangle \right)dx^{p}\otimes dx^{l}\\
	&\ \ \ +\delta_{\alpha\beta}( \bar{y}^{\sigma}\Gamma_{i \sigma}^{\alpha}(\bar{x})dx^{i}+dy^{\alpha})\otimes( \bar{y}^{\tau}\Gamma_{j \tau}^{\beta}(\bar{x})\otimes dx^{j}  +dy^{\beta})\\
		&=\left[g_{ij}(\bar{x})-2 \langle h(e_{i},e_{j}),\bar{y} \rangle  +g^{kl}(\bar{x})\langle h(e_{i},e_{k}),\bar{y} \rangle \langle h(e_{j},e_{l}),\bar{y} \rangle \right] dx^{i}\otimes dx^{j}  \\
	&\ \ \ +\delta_{\alpha\beta}( \bar{y}^{\sigma}\Gamma_{i \sigma}^{\alpha}(\bar{x})dx^{i}+dy^{\alpha})\otimes( \bar{y}^{\tau}\Gamma_{j \tau}^{\beta}(\bar{x})\otimes dx^{j}  +dy^{\beta}).
\end{align*}
Recalling the canonical metric  (see \eqref{13}) on $\pi^{-1}(U)\subset T^{\perp}\Sigma$, we obtain
\begin{align*}
	g_{T^{\perp}\Sigma}(\bar{x},\bar{y})=\delta_{\alpha\beta}( \bar{y}^{\sigma}\Gamma_{i \sigma}^{\alpha}(\bar{x})dx^{i}+dy^{\alpha})\otimes( \bar{y}^{\tau}\Gamma_{j \tau}^{\beta}(\bar{x}) dx^{j}  +dy^{\beta})+g_{\Sigma}(\bar{x}).
\end{align*}
Immediately,
\begin{align*}
		[(\mathrm{exp}^{\perp})^{*}\bar{g}](\bar{x},\bar{y})
	=\left[g^{kl}(\bar{x})\langle h(e_{i},e_{k}),\bar{y} \rangle \langle h(e_{j},e_{l}),\bar{y} \rangle -2 \langle h(e_{i},e_{j}),\bar{y} \rangle \right] dx^{i}\otimes dx^{j} +g_{T^{\perp}\Sigma} .
\end{align*}
Since Lemma \ref{77} holds, one can choose an orthonormal basis $\{a_{i}\}_{i=1}^{n}$ of $T_{\bar{x}}\Sigma$ with dual basis $\{\omega^{i}\}_{i=1}^{n}$ of  $T_{\bar{x}}^{*}\Sigma$  such that $\langle h(a_{i},a_{j}),-\xi_{\bar{x}} \rangle=\kappa_{i}(\bar{x},\xi_{\bar{x}})\delta_{ij}$ for $1\leq i,j \leq n$ and $\kappa_{1}(\bar{x},\xi_{\bar{x}})\leq \kappa_{2}(\bar{x},\xi_{\bar{x}})\leq \cdots\leq \kappa_{n}(\bar{x},\xi_{\bar{x}})$.
Let $\{a_{i}^{j}\}_{i,j=1}^{n}$ be the transformation matrix between basis $\{a_{i}\}_{i=1}^{n}$ and basis $\{e_{i}\}_{i=1}^{n}$ with its inverse matrix $\{b_{i}^{j}\}_{i,j=1}^{n}$ that is
\begin{align*}
	e_{i}=b_{i}^{j}a_{j},\,a_{i}=a_{i}^{j}e_{j}.
\end{align*}
Then we conclude that 
\begin{align*}
	dx^{i}=a_{j}^{i}\omega^{j},\,g_{ij}(\bar{x})=\langle e_{i},e_{j}\rangle=\sum_{k=1}^{n}b_{i}^{k}b_{j}^{k},\,g^{ij}(\bar{x})=\sum_{k=1}^{n}a_{k}^{i}a_{k}^{j}.
	\end{align*}
Therefore, for each $(x,y)\in\mathcal{U}$, the pulled back metric on $\mathcal{U}$ is given by
\begin{align*}
	[(\mathrm{exp}^{\perp})^{*}\bar{g}](x,y)=\sum_{i=1}^{n}\left[\left(1+\kappa_{i}(x,\xi_{x})\langle \xi_{x},y \rangle \right)^{2}-1\right]\omega^{i}\otimes \omega^{i}+g_{T^{\perp}\Sigma}(x,y),
\end{align*}
	for each $(x,y)\in \mathcal{U}$.

These complete the proof.
\hfill$\Box$

\section{Proof of the sufficiency part of Theorem \ref{67}}\label{106}
Let  $(M^{n+m},\bar{g})$ be a complete noncompact Riemannian manifold with nonnegative sectional curvature and Euclidean volume growth. Let $\Sigma$ be a closed $n$-dimensional Riemannian manifold  and $f: \Sigma^{n} \rightarrow M$ be an isometric immersion satisfying conditions (i), (ii) and (iii) in Theorem \ref{67}.

\emph{Proof of the sufficiency part of Theorem \ref{67}.} 
By the assumption, we immediately have $\mathcal{U}=\{(x,r\xi)\in T^{\perp}\Sigma: x\in \Sigma^{+},\, \xi\in L_{x},\, r>0\}$ and $\tilde{\tau}_{f}(x,y/|y|)=\infty$ for $(x,y)\in \mathcal{U}$. On the one hand, since the normal exponential map $\mathrm{exp}^{\perp}|_{\mathcal{U}}:\mathcal{U}\rightarrow \mathrm{exp}^{\perp}(\mathcal{U})$ is a diffeomorphism, for each $r_{0}>0$,
\begin{equation}\label{65}
	|\mathrm{exp}^{\perp}(\mathcal{U}\cap U_{r_{0}})|=\int_{\Sigma^{+}}\int_{L_{x}}\int_{0}^{r_{0}}r^{m-1}\prod_{i=1}^{n}\left(1+r\kappa_{i}(x,\xi) \right)drd\xi d\mathrm{vol}_{\Sigma}(x).
\end{equation}
On the other hand, applying the area formula to the the map $\mathrm{exp}^{\perp}|_{U_{r_{0}}}$, 
for each $r_{0}>0$,
\begin{equation}\label{66}
	\begin{array}{lllll}
		&\ \ \ \int_{\Omega_{r_{0}}}\mathscr{H}^{0}((\mathrm{exp}^{\perp})^{-1}(p)\cap U_{r_{0}})d\mathrm{vol}_{M}(p)\\
		&=\int_{\Sigma}\int_{S_{x}^{m-1}}\int_{0}^{\min\{\tau_{f}(x,\xi),r_{0}\}}r^{m-1}|\mathrm{det}\ (\mathrm{exp}^{\perp})_{*(x,r\xi)}|drd\xi d\mathrm{vol}_{\Sigma}(x)\\
		&\leq\int_{\Sigma^{+}}\int_{L_{x}}\int_{0}^{r_{0}}r^{m-1}\prod_{i=1}^{n}\left(1+r\kappa_{i}(x,\xi) \right)drd\xi d\mathrm{vol}_{\Sigma}(x)\\
		&\ \ \ +\int_{\Sigma}\int_{M_{x}}\int_{0}^{\min\{\tau_{f}(x,\xi),r_{0}\}}r^{m-1}\prod_{i=1}^{n}\left(1+r\kappa_{i}(x,\xi) \right)drd\xi d\mathrm{vol}_{\Sigma}(x)\\
		&\ \ \ +\int_{\Sigma}\int_{N_{x}}\int_{0}^{\min\{\tau_{f}(x,\xi),r_{0}\}}r^{m-1}\prod_{i=1}^{n}\left(1+r\kappa_{i}(x,\xi) \right)drd\xi d\mathrm{vol}_{\Sigma}(x).
	\end{array}
\end{equation}
Note that for each $p\in \Omega_{r_{0}}$ we have $\mathscr{H}^{0}((\mathrm{exp}^{\perp})^{-1}(p)\cap U_{r_{0}})\geq 1$. By \eqref{65} and \eqref{66},
\begin{equation*}
	\begin{array}{lllll}
		&\ \ \ \int_{\Omega_{r_{0}}\setminus\mathrm{exp}^{\perp}(\mathcal{U}\cap U_{r_{0}})}\mathscr{H}^{0}((\mathrm{exp}^{\perp})^{-1}(p)\cap U_{r_{0}})d\mathrm{vol}_{M}(p)\\
		&\ \ \ + \int_{\mathrm{exp}^{\perp}(\mathcal{U}\cap U_{r_{0}})}(\mathscr{H}^{0}((\mathrm{exp}^{\perp})^{-1}(p)\cap U_{r_{0}})-1)d\mathrm{vol}_{M}(p)\\
		&\leq\int_{\Sigma}\int_{M_{x}}\int_{0}^{\min\{\tau_{f}(x,\xi),r_{0}\}}r^{m-1}\prod_{i=1}^{n}\left(1+r\kappa_{i}(x,\xi) \right)drd\xi d\mathrm{vol}_{\Sigma}(x)\\
		&\ \ \ +\int_{\Sigma}\int_{N_{x}}\int_{0}^{\min\{\tau_{f}(x,\xi),r_{0}\}}r^{m-1}\prod_{i=1}^{n}\left(1+r\kappa_{i}(x,\xi) \right)drd\xi d\mathrm{vol}_{\Sigma}(x).
	\end{array}
\end{equation*}
Since the right-hand side of the above inequality is of lower order than $r_0^{n+m}$, it follows that $\int_{\Omega_{r_{0}}\setminus\mathrm{exp}^{\perp}(\mathcal{U}\cap U_{r_{0}})}\mathscr{H}^{0}((\mathrm{exp}^{\perp})^{-1}(p)\cap U_{r_{0}})d\mathrm{vol}_{M}(p)$ is also of lower order than $r_0^{n+m}$. Therefore, it is obvious that
\[
\int_{\Sigma^{+}}K^{*}(x)d\mathrm{vol}_{\Sigma}(x)= 2 \mathrm{AVR}(\bar{g}) \left| \mathbb{S}^{n+m-1} \right|,
\]
which together with condition (iii) yields
\[
\int_{\Sigma}K^{*}(x)d\mathrm{vol}_{\Sigma}(x)= 2 \mathrm{AVR}(\bar{g}) \left| \mathbb{S}^{n+m-1} \right|.
\]
This completes the proof.
\hfill$\Box$\\

\section{Proof of inequality \eqref{55}}\label{107}
	Let $(M^{n+m},\bar{g})$ $(n\geq2)$ be a complete noncompact Riemannian manifold with $\mathrm{Ric}_{n}^{M}\geq 0$ and Euclidean volume growth. Let $\Sigma$ be a closed $n$-dimensional  Riemannian manifold  and $f: \Sigma^{n} \rightarrow M$ be an isometric immersion.

Throughout Sections~\ref{107}--\ref{108}, $\Sigma$ is divided into two parts as $\Sigma=\Sigma_{0}\cup \Sigma_{+}$ with
\begin{equation*}
	\begin{array}{lllll}
		\Sigma_{0}:=\left\{  x\in\Sigma: \mathbf{H}(x)= \vec{0}\right\},\\
		\Sigma_{+}:=\left\{  x\in\Sigma: \mathbf{H}(x)\neq \vec{0}\right\}.
	\end{array}
\end{equation*}
Write $\mathbf{H}(x)=|\mathbf{H}(x)|\mathbf{e}(x)$ with $\mathbf{e}(x)\in S_{x}^{m-1}$ and $y=r\xi$ with $\xi\in S_{x}^{m-1}$ and $r> 0$ for $x\in\Sigma_{+}$ and $y\in T_{x}^{\perp}\Sigma$, $y\neq \vec{0}$. Moreover, we divide $S_{x}^{m-1}$ into two parts as
$S_{x}^{m-1}=E_{x}^{1}\cup E_{x}^{2}$ for each $x\in\Sigma_{+}$ with
\begin{equation*}
	\begin{array}{lllll}
		&E_{x}^{1}:=\left\{  \xi\in S_{x}^{m-1}: \langle \mathbf{e}(x),\xi \rangle< 0\right\},\\
		&E_{x}^{2}:=\left\{  \xi\in S_{x}^{m-1}: \langle \mathbf{e}(x),\xi \rangle\geq0\right\}.
	\end{array}
\end{equation*}

\begin{lem}\label{91}
For each $x\in\Sigma_{+}$, 
\[
\int_{E_{x}^{1}}\langle -\mathbf{e}(x),\xi\rangle ^{n}d\xi=\frac{(n+m)\omega_{n+m}}{\left| \mathbb{S}^{n} \right|}.
\]
\end{lem}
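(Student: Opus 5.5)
The integral in Lemma~\ref{91} involves only the round unit sphere $S_{x}^{m-1}\subset T_{x}^{\perp}\Sigma$ and the fixed unit vector $-\mathbf{e}(x)$. The plan is therefore to evaluate it explicitly in spherical coordinates centred at $-\mathbf{e}(x)$, and then check that it equals the right-hand side using Gamma-function formulas for sphere volumes. First rewrite the right-hand side: since $(n+m)\omega_{n+m}=|\mathbb{S}^{n+m-1}|$, the claim reads
\[
\int_{E_{x}^{1}}\langle -\mathbf{e}(x),\xi\rangle^{n}\,d\xi=\frac{|\mathbb{S}^{n+m-1}|}{|\mathbb{S}^{n}|}.
\]
Throughout we use $|\mathbb{S}^{k}|=2\pi^{(k+1)/2}/\Gamma(\tfrac{k+1}{2})$.

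The degenerate case $m=1$ is checked directly. Here $S_{x}^{0}=\{\pm\mathbf{e}(x)\}$ carries counting measure and $E_{x}^{1}=\{-\mathbf{e}(x)\}$, so the integral is $1$. This agrees with the right-hand side, which equals $|\mathbb{S}^{n}|/|\mathbb{S}^{n}|=1$.

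For $m\ge 2$, write $\xi=\cos\theta\,(-\mathbf{e}(x))+\sin\theta\,\zeta$ with $\theta\in[0,\pi]$ and $\zeta$ a unit vector orthogonal to $\mathbf{e}(x)$, so $\zeta$ ranges over a copy of $\mathbb{S}^{m-2}$. In these coordinates $d\xi=\sin^{m-2}\theta\,d\theta\,d\zeta$, and $E_{x}^{1}$ corresponds exactly to $\theta\in[0,\pi/2)$. Hence the integral equals
\[
|\mathbb{S}^{m-2}|\int_{0}^{\pi/2}\cos^{n}\theta\,\sin^{m-2}\theta\,d\theta
=\frac{|\mathbb{S}^{m-2}|}{2}\,\frac{\Gamma(\tfrac{n+1}{2})\,\Gamma(\tfrac{m-1}{2})}{\Gamma(\tfrac{n+m}{2})},
\]
where the second expression is the standard Beta-function identity.

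Substituting $|\mathbb{S}^{m-2}|=2\pi^{(m-1)/2}/\Gamma(\tfrac{m-1}{2})$, the factor $\Gamma(\tfrac{m-1}{2})$ cancels and the integral becomes $\pi^{(m-1)/2}\,\Gamma(\tfrac{n+1}{2})/\Gamma(\tfrac{n+m}{2})$. On the other side,
\[
\frac{|\mathbb{S}^{n+m-1}|}{|\mathbb{S}^{n}|}
=\frac{\pi^{(n+m)/2}\,\Gamma(\tfrac{n+1}{2})}{\pi^{(n+1)/2}\,\Gamma(\tfrac{n+m}{2})},
\]
which is the same quantity. This proves the lemma.

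There is no real obstacle here. The only care needed is bookkeeping: the normalization of $d\xi$ (standard round measure, and counting measure when $m=1$), and the Gamma-function arithmetic. As a sanity check, the identity is exactly what makes the Euclidean round $\mathbb{S}^{n}\subset\mathbb{R}^{n+1}\subset\mathbb{R}^{n+m}$ an equality case of \eqref{55}, which is consistent with how the lemma will be used.
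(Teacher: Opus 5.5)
Your proof is correct and follows the paper's own argument: it checks $m=1$ directly, and for $m\geq 2$ it uses polar coordinates about $-\mathbf{e}(x)$, the Beta-function identity, the Gamma-function formula for $|\mathbb{S}^{N}|$, and finally $|\mathbb{S}^{n+m-1}|=(n+m)\omega_{n+m}$. You also write out the cancellation of $\Gamma(\tfrac{m-1}{2})$ and the comparison of the powers of $\pi$, which the paper leaves implicit.
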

\begin{proof}

In the case $m=1$, the integral equals $1$, and the identity $(n+1)\omega_{n+1}=|\mathbb{S}^{n}|$ gives the required result immediately. For $m\geq 2$, the set $E_{x}^{1}$ is an $(m-1)$-dimensional hemisphere. Parametrizing by polar coordinates and invoking the Beta function
\[
B(p,q)=2\int_{0}^{\frac{\pi}{2}}\cos^{2p-1}\theta\,\sin^{2q-1}\theta\,d\theta
=\frac{\Gamma(p)\Gamma(q)}{\Gamma(p+q)},\qquad p,q>0,
\]
together with the standard Gamma function expression for sphere areas,
\[
\left| \mathbb{S}^{N} \right|=2\pi^{\frac{N+1}{2}}/\Gamma\left(\frac{N+1}{2}\right),
\]
yields
\[
\begin{aligned}
	\int_{E_{x}^{1}}\langle -\mathbf{e}(x),\xi\rangle ^{n}\,d\xi
	&=\left| \mathbb{S}^{m-2} \right|
	\int_{0}^{\frac{\pi}{2}}\cos^{n}\theta\,\sin^{m-2}\theta\,d\theta \\
	&=\frac{1}{2}\left| \mathbb{S}^{m-2} \right|
	B\!\left(\frac{n+1}{2},\frac{m-1}{2}\right) \\
	&=\frac{\left| \mathbb{S}^{n+m-1} \right|}{\left| \mathbb{S}^{n} \right|}.
\end{aligned}
\]
 Finally, applying $\left| \mathbb{S}^{n+m-1} \right|=(n+m)\omega_{n+m}$ completes the proof.
\end{proof}

\emph{Proof of \eqref{55}:} 
Since $\mathrm{exp}^{\perp}(\tilde{U}_{r_{0}})=	\Omega_{r_{0}}$, applying the area formula to the map $\mathrm{exp}^{\perp}|_{	\tilde{U}_{r_{0}}}$ and Theorem \ref{57}, 
\begin{equation*}
	\begin{array}{lllll}
		|\Omega_{r_{0}}|&\leq \int_{T^{\perp}\Sigma}1_{\tilde{U}_{r_{0}}}(x,y)|\mathrm{det}\ \mathrm{exp}^{\perp}_{*(x,y)}| d\mathrm{vol}_{T^{\perp}\Sigma}(x,y)\\
		&\leq\int_{T^{\perp}\Sigma}1_{\tilde{U}_{r_{0}}}(x,y)\left(1-\langle \mathbf{H}(x),y \rangle \right)^{n}d\mathrm{vol}_{T^{\perp}\Sigma}(x,y).
	\end{array}
\end{equation*}
Note that for each $x\in \Sigma_{+}$ and $\xi\in E_{x}^{2}$,
\begin{equation*}
	1-r|\mathbf{H}(x)|\langle \mathbf{e}(x),\xi \rangle\leq 1.
\end{equation*}
Consequently, 
\begin{equation*}
	\begin{array}{lllll}
		&\ \ \ \int_{T^{\perp}\Sigma}1_{\tilde{U}_{r_{0}}}(x,y)\left(1-\langle \mathbf{H}(x),y \rangle \right)^{n} d\mathrm{vol}_{T^{\perp}\Sigma}(x,y)\\
		&\leq\int_{\Sigma_{0}}\int_{S_{x}^{m-1}}\int_{0}^{\min\left\{\tilde{\tau}_{f}(x,\xi),r_{0}\right\}}r^{m-1}drd\xi d\mathrm{vol}_{\Sigma}(x)\\
		&\ \ \ +\int_{\Sigma_{+}}\int_{E_{x}^{1}}\int_{0}^{\min\left\{\tilde{\tau}_{f}(x,\xi),r_{0}\right\}}\left( 1-r|\mathbf{H}(x)|\langle \mathbf{e}(x),\xi \rangle\right)^{n}r^{m-1}drd\xi d\mathrm{vol}_{\Sigma}(x)\\
		&\ \ \ +\int_{\Sigma_{+}}\int_{E_{x}^{2}}\int_{0}^{\min\left\{\tilde{\tau}_{f}(x,\xi),r_{0}\right\}}r^{m-1}drd\xi d\mathrm{vol}_{\Sigma}(x).
	\end{array}
\end{equation*}
Extending the upper limit of the $r$-integral to $r_{0}$ and extracting the terms of lower order in $r_{0}^{n+m}$ gives 
\begin{equation*}
	\begin{array}{lllll}
		|\Omega_{r_{0}}|&\leq\int_{\Sigma_{+}}   \int_{E_{x}^{1}}\int_{0}^{r_{0}}  \left|\mathbf{H}(x)\right|^{n}\langle -\mathbf{e}(x),\xi\rangle ^{n} r^{n+m-1}dr d\xi d\mathrm{vol}_{\Sigma}(x)\\
		&\ \ \ +\mathrm{lowerorder\ terms\ of}\ r_{0}^{n+m}\\
		&=\frac{r_{0}^{n+m}}{n+m}\int_{\Sigma_{+}}\left|\mathbf{H}(x)\right|^{n}\int_{E_{x}^{1}}\langle -\mathbf{e}(x),\xi\rangle ^{n}d\xi  d\mathrm{vol}_{\Sigma}(x)\\
		&\ \ \ +\mathrm{lowerorder\ terms\ of}\ r_{0}^{n+m}\\
		&=\frac{r_{0}^{n+m}\omega_{n+m}}{\left| \mathbb{S}^{n} \right|}\int_{\Sigma}\left|\mathbf{H}\right|^{n}d\mathrm{vol}_{\Sigma}+\mathrm{lowerorder\ terms\ of}\ r_{0}^{n+m},
	\end{array}
\end{equation*}
where the last equality follows from Lemma~\ref{91}.
Dividing by $r_{0}^{n+m}\omega_{n+m}$ and letting $r_{0}\rightarrow \infty$ can give us
\begin{equation*}
	\int_{\Sigma}\left|\mathbf{H}\right|^{n}d\mathrm{vol}_{\Sigma}\geq	\mathrm{AVR}(\bar{g})\left| \mathbb{S}^{n} \right|				,
\end{equation*}
which completes the proof.
\hfill$\Box$

\section{Proof of the sufficiency part of Theorem \ref{68}}\label{108}
	Let $(M^{n+m},\bar{g})$ $(n\geq2)$ be a complete noncompact Riemannian manifold with $\mathrm{Ric}_{n}^{M}\geq 0$ and Euclidean volume growth. Let $\Sigma$ be a closed $n$-dimensional  Riemannian manifold  and $f: \Sigma^{n} \rightarrow M$ be an isometric immersion satisfying
\begin{equation}\label{71}
	\begin{array}{lllll}
		\displaystyle\int_{\Sigma}\left|\mathbf{H}\right|^{n} d\mathrm{vol}_{\Sigma}= \mathrm{AVR}(\bar{g})\left| \mathbb{S}^{n} \right|.
	\end{array}
\end{equation}

 \begin{lem}\label{49}
 	 $\Sigma$ is connected.
 \end{lem}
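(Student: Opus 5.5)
We argue exactly as in Lemma \ref{51}, replacing the Chern--Lashof integrand by $|\mathbf{H}|^{n}$. Suppose $\Sigma$ has connected components $\Sigma^{(1)},\dots,\Sigma^{(N)}$. Because $\Sigma$ is closed, there are only finitely many components, and each $\Sigma^{(k)}$ is itself a closed $n$-dimensional manifold. The restriction $f|_{\Sigma^{(k)}}:\Sigma^{(k)}\rightarrow M$ is again an isometric immersion into the same ambient manifold $(M^{n+m},\bar g)$, which satisfies $\mathrm{Ric}^{M}_{n}\geq 0$ and has Euclidean volume growth. The mean curvature vector of $f|_{\Sigma^{(k)}}$ is the restriction of $\mathbf{H}$, since mean curvature is computed locally.

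Next we apply inequality \eqref{55}, proved in Section \ref{107}, to each restriction $f|_{\Sigma^{(k)}}$. This gives
\[
\int_{\Sigma^{(k)}}|\mathbf{H}|^{n}\,d\mathrm{vol}_{\Sigma}\geq \mathrm{AVR}(\bar g)\,|\mathbb{S}^{n}|
\]
for every $k$. Summing over $k$ and using the equality hypothesis \eqref{71}, we obtain
\[
\mathrm{AVR}(\bar g)|\mathbb{S}^{n}|=\int_{\Sigma}|\mathbf{H}|^{n}\,d\mathrm{vol}_{\Sigma}=\sum_{k=1}^{N}\int_{\Sigma^{(k)}}|\mathbf{H}|^{n}\,d\mathrm{vol}_{\Sigma}\geq N\,\mathrm{AVR}(\bar g)|\mathbb{S}^{n}|.
\]
Since $\mathrm{AVR}(\bar g)>0$, this forces $N=1$.

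The only point that needs checking is that the proof of \eqref{55} never uses connectedness of $\Sigma$. It does not. That argument applies the area formula to $\exp^{\perp}|_{\tilde U_{r_0}}$, covers $\Omega_{r_0}$ via Lemma \ref{56} (which uses only compactness of $\Sigma$), uses the pointwise bound from Theorem \ref{57}(ii), and evaluates the asymptotics with Lemma \ref{91}. All of these hold for any closed immersed submanifold, so they apply verbatim to each component $\Sigma^{(k)}$.
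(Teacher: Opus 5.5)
Your proof is correct and is essentially the paper's argument: apply the inequality \eqref{55} to each component, sum over the components, and compare with the equality hypothesis \eqref{71} to force $N=1$. Your extra check, that the proof of \eqref{55} uses only compactness of $\Sigma$, the pointwise bound of Theorem~\ref{57}(ii) and Lemma~\ref{91}, and never connectedness, is exactly the point the paper relies on without stating it.
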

 \begin{proof}
 Suppose otherwise that $\Sigma$ has $N$ connected components, denoted by $\Sigma^{(1)}, \cdots, \Sigma^{(N)}$. Then the assumption and the argument in Section \ref{107} yield
 		\begin{equation*}
 	\begin{array}{lllll}
 	\mathrm{AVR}(\bar{g})\left| \mathbb{S}^{n} \right|=\displaystyle\int_{\Sigma}\left|\mathbf{H}\right|^{n} d\mathrm{vol}_{\Sigma}=	\sum_{k=1}^{N}\displaystyle\int_{\Sigma^{(k)}}\left|\mathbf{H}\right|^{n} d\mathrm{vol}_{\Sigma}\geq N\cdot \mathrm{AVR}(\bar{g})\left| \mathbb{S}^{n} \right|,
 	\end{array}
 \end{equation*}
 which implies $N=1$, as claimed.
\end{proof}

\begin{lem}\label{48}
	For all $(x,\xi)\in U\Sigma$ satisfying $\mathbf{H}(x)\neq 0$ and $\langle \mathbf{H}(x),\xi \rangle< 0$, $\tilde{\tau}_{f}(x,\xi)=\infty$. 
\end{lem}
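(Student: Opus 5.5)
We argue by contradiction, following the proof of Lemma \ref{60} but replacing the Chern--Lashof integrand by the Willmore-type integrand of Section \ref{107}. Suppose there are $x_{0}\in\Sigma_{+}$ and $\xi_{0}\in E^{1}_{x_{0}}$ with $\tilde{\tau}_{f}(x_{0},\xi_{0})<\infty$.

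\emph{Step 1: a neighbourhood with uniformly bounded cut distance.} By Lemma \ref{34}(ii), $\tilde{\tau}_{f}$ is continuous where it is finite. Since $\Sigma_{+}$ is open and the condition $\langle\mathbf{e}(x),\xi\rangle<0$ is open in $U\Sigma$, we can choose $\varepsilon>0$ and an open neighbourhood $V\subset U\Sigma$ of $(x_{0},\xi_{0})$ with three properties. First, $x\in\Sigma_{+}$ and $\xi\in E^{1}_{x}$ for all $(x,\xi)\in V$. Second, $\tilde{\tau}_{f}(x,\xi)<T:=\tilde{\tau}_{f}(x_{0},\xi_{0})+\varepsilon$ on $V$. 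Third,
\[
\int_{V_{x}}|\mathbf{H}(x)|^{n}\langle-\mathbf{e}(x),\xi\rangle^{n}\,d\xi>\varepsilon
\quad\text{for all } x\in\pi(V),
\]
where $V_{x}:=\{\xi:(x,\xi)\in V\}$. This last property holds because the integrand is strictly positive and continuous near $(x_{0},\xi_{0})$.

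\emph{Step 2: redo the volume estimate of Section \ref{107}, keeping the loss on $V$.} For $r_{0}>T$ we apply the area formula to $\mathrm{exp}^{\perp}|_{\tilde U_{r_{0}}}$, using Lemma \ref{56} and the Heintze--Karcher bound \eqref{44} of Theorem \ref{57}(ii) with $\delta=0$. This gives
\[
|\Omega_{r_{0}}|\le\int_{T^{\perp}\Sigma}1_{\tilde U_{r_{0}}}(x,y)\bigl(1-\langle\mathbf{H}(x),y\rangle\bigr)^{n}\,d\mathrm{vol}_{T^{\perp}\Sigma},
\]
and we split it exactly as in Section \ref{107}. The contributions from $\Sigma_{0}$ and from the directions $E^{2}_{x}$ are $O(r_{0}^{m})$, hence of lower order. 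For directions in $E^{1}_{x}\setminus V_{x}$ we extend the radial integral to $r_{0}$. For $\xi\in V_{x}$ the radial integral stops at $T$, so its contribution is bounded independently of $r_{0}$. The leading term is therefore
\[
\frac{r_{0}^{n+m}}{n+m}\Bigl[\int_{\Sigma_{+}}|\mathbf{H}|^{n}\int_{E^{1}_{x}}\langle-\mathbf{e},\xi\rangle^{n}\,d\xi\,d\mathrm{vol}_{\Sigma}-\int_{\pi(V)}\int_{V_{x}}|\mathbf{H}|^{n}\langle-\mathbf{e},\xi\rangle^{n}\,d\xi\,d\mathrm{vol}_{\Sigma}\Bigr].
\]
By Lemma \ref{91} and Step 1, this is at most
\[
\frac{r_{0}^{n+m}\omega_{n+m}}{|\mathbb{S}^{n}|}\int_{\Sigma}|\mathbf{H}|^{n}-\frac{\varepsilon\,|\pi(V)|\,r_{0}^{n+m}}{n+m}.
\]

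\emph{Step 3: conclusion.} Divide by $\omega_{n+m}r_{0}^{n+m}$ and let $r_{0}\to\infty$. This yields
\[
\mathrm{AVR}(\bar g)\le\frac{1}{|\mathbb{S}^{n}|}\int_{\Sigma}|\mathbf{H}|^{n}-c\,\varepsilon\,|\pi(V)|
\]
with $c>0$. Since $|\pi(V)|>0$, because $\pi(V)$ is a nonempty open subset of $\Sigma$, this contradicts \eqref{71}.

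\emph{Main obstacle.} The delicate point is the bookkeeping of lower-order terms. One must check that the mixed terms in the expansion of $\bigl(1+r|\mathbf{H}|\langle-\mathbf{e},\xi\rangle\bigr)^{n}r^{m-1}$ integrate to $O(r_{0}^{n+m-1})$ uniformly over $\Sigma$. This follows because $\Sigma$ is compact and $|\mathbf{H}|$ is bounded. One must also check that the truncated region $\{\xi\in V_{x},\ r\le T\}$ contributes only $O(1)$. Both checks use only the boundedness of $|\mathbf{H}|$ and of $T$.
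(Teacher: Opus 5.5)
Your proof is correct and is essentially the paper's argument. Both argue by contradiction using continuity of $\tilde{\tau}_{f}$ (Lemma \ref{34}) to cap the radial integral at a finite $T$ on a neighbourhood of $(x_{0},\xi_{0})$ in the Section~\ref{107} volume estimate, which removes a positive multiple of $r_{0}^{n+m}$ from the leading term. The only cosmetic difference is that you put $|\mathbf{H}|^{n}$ into the lower bound for $\int_{V_{x}}$, so your deficit is $\varepsilon|\pi(V)|$ rather than the paper's $\varepsilon\int_{\pi(W)}|\mathbf{H}|^{n}$. The paper has the same bookkeeping slip, and it is cleaner to decouple the cut-off $T$ from $\varepsilon$ (any finite bound such as $\tilde{\tau}_{f}(x_{0},\xi_{0})+1$ works). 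You should also take $V$ of product form, so that $V_{x}$ has measure bounded below uniformly in $x\in\pi(V)$.
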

\begin{proof}
	Suppose that there exists a point $x_{0}\in\Sigma$ and a unit normal vector $\xi_{0}\in S_{x}^{m-1}$ satisfying $\mathbf{H}(x_{0})\neq 0$ and  $\langle \mathbf{H}(x_{0}),\xi_{0} \rangle< 0$ such that $\tilde{\tau}_{f}(x_{0},\xi_{0})<\infty$. By continuity, there exists a positive number $\varepsilon<(n+m)\omega_{n+m}/\left| \mathbb{S}^{n} \right|$ and an open neighborhood $W\subset U\Sigma$ of $(x_{0},\xi_{0})$ such that  $\mathbf{H}(x)\neq 0$, $\tilde{\tau}_{f}(x,\xi)<\tilde{\tau}_{f}(x_{0},\xi_{0})+\varepsilon<\infty$, $\langle \mathbf{H}(x),\xi \rangle< 0$ and
	\[
	\int_{W_{x}}\langle -\mathbf{e}(x),\xi\rangle ^{n}d\xi>\varepsilon,
	\]
	 for all $(x,\xi)\in W$, where $W_{x}=\{\xi\in S_{x}^{m-1}:(x,\xi)\in W \}$. Similar to Section \ref{107}, for each $r_{0}>\tilde{\tau}_{f}(x_{0},\xi_{0})+\varepsilon$,
\begin{align*}
			|\Omega_{r_{0}}|&\leq\int_{\Sigma_{0}}\int_{S_{x}^{m-1}}\int_{0}^{\min\left\{\tilde{\tau}_{f}(x,\xi),r_{0}\right\}}r^{m-1}drd\xi d\mathrm{vol}_{\Sigma}(x)\\
			&\ \ \ +\int_{\pi(W)}\int_{E_{x}^{1}\setminus W_{x}}\int_{0}^{\min\left\{\tilde{\tau}_{f}(x,\xi),r_{0}\right\}}\left( 1-r|\mathbf{H}(x)|\langle \mathbf{e}(x),\xi \rangle\right)^{n}r^{m-1}drd\xi d\mathrm{vol}_{\Sigma}(x)\\
		&\ \ \ +\int_{\pi(W)}\int_{ W_{x}}\int_{0}^{\min\left\{\tilde{\tau}_{f}(x,\xi),r_{0}\right\}}\left( 1-r|\mathbf{H}(x)|\langle \mathbf{e}(x),\xi \rangle\right)^{n}r^{m-1}drd\xi d\mathrm{vol}_{\Sigma}(x)\\
			&\ \ \ +\int_{\Sigma_{+}\setminus\pi(W)}\int_{ E_{x}^{1}}\int_{0}^{\min\left\{\tilde{\tau}_{f}(x,\xi),r_{0}\right\}}\left( 1-r|\mathbf{H}(x)|\langle \mathbf{e}(x),\xi \rangle\right)^{n}r^{m-1}drd\xi d\mathrm{vol}_{\Sigma}(x)\\
			&\ \ \ +\int_{\Sigma_{+}}\int_{E_{x}^{2}}\int_{0}^{\min\left\{\tilde{\tau}_{f}(x,\xi),r_{0}\right\}}r^{m-1}drd\xi d\mathrm{vol}_{\Sigma}(x)\\
			&\leq\int_{\Sigma_{0}}\int_{S_{x}^{m-1}}\int_{0}^{r_{0}}r^{m-1}drd\xi d\mathrm{vol}_{\Sigma}(x)\\
			&\ \ \ +\int_{\pi(W)}\int_{E_{x}^{1}\setminus W_{x}}\int_{0}^{r_{0}}\left( 1-r|\mathbf{H}(x)|\langle \mathbf{e}(x),\xi \rangle\right)^{n}r^{m-1}drd\xi d\mathrm{vol}_{\Sigma}(x)\\
			&\ \ \ +\int_{\pi(W)}\int_{ W_{x}}\int_{0}^{\tilde{\tau}_{f}(x_{0},\xi_{0})+\varepsilon}\left( 1-r|\mathbf{H}(x)|\langle \mathbf{e}(x),\xi \rangle\right)^{n}r^{m-1}drd\xi d\mathrm{vol}_{\Sigma}(x)\\
			&\ \ \ +\int_{\Sigma_{+}\setminus\pi(W)}\int_{ E_{x}^{1}}\int_{0}^{r_{0}}\left( 1-r|\mathbf{H}(x)|\langle \mathbf{e}(x),\xi \rangle\right)^{n}r^{m-1}drd\xi d\mathrm{vol}_{\Sigma}(x)\\
			&\ \ \ +\int_{\Sigma_{+}}\int_{E_{x}^{2}}\int_{0}^{r_{0}}r^{m-1}drd\xi d\mathrm{vol}_{\Sigma}(x)\\
			&=\frac{r_{0}^{n+m}}{n+m}\int_{\pi(W)}   \int_{E_{x}^{1}\setminus W_{x}}  \left|\mathbf{H}(x)\right|^{n}\langle -\mathbf{e}(x),\xi\rangle ^{n} d\xi d\mathrm{vol}_{\Sigma}(x)\\
			&\ \ \ +\frac{r_{0}^{n+m}}{n+m}\int_{\Sigma_{+}\setminus\pi(W)}   \int_{E_{x}^{1}}  \left|\mathbf{H}(x)\right|^{n}\langle -\mathbf{e}(x),\xi\rangle ^{n}  d\xi d\mathrm{vol}_{\Sigma}(x)\\
			&\ \ \ +\mathrm{lowerorder\ terms\ of}\ r_{0}^{n+m}\\
			&\leq \frac{r_{0}^{n+m}}{n+m}\left(\frac{(n+m)\omega_{n+m}}{\left| \mathbb{S}^{n} \right|}-\varepsilon\right)\int_{\pi(W)}\left|\mathbf{H}\right|^{n}d\mathrm{vol}_{\Sigma}+\frac{r_{0}^{n+m}\omega_{n+m}}{\left| \mathbb{S}^{n} \right|}\int_{\Sigma_{+}\setminus\pi(W)}\left|\mathbf{H}\right|^{n}d\mathrm{vol}_{\Sigma}\\
						&\ \ \ +\mathrm{lowerorder\ terms\ of}\ r_{0}^{n+m}.
\end{align*}
	Dividing by $r_{0}^{n+m}\omega_{n+m}$ and letting $r_{0}\rightarrow \infty$ can lead to
	\begin{equation*}
		\mathrm{AVR}(\bar{g})\left| \mathbb{S}^{n} \right|					\leq \int_{\Sigma}\left|\mathbf{H}\right|^{n}d\mathrm{vol}_{\Sigma}-\frac{\varepsilon\left| \mathbb{S}^{n} \right|	}{(n+m)\omega_{n+m}}\int_{\pi(W)}\left|\mathbf{H}\right|^{n}d\mathrm{vol}_{\Sigma}<\int_{\Sigma}\left|\mathbf{H}\right|^{n}d\mathrm{vol}_{\Sigma},
	\end{equation*}
	which is a contradiction. The lemma follows.
\end{proof}

\begin{lem}\label{62}
For all $(x,y)\in T^{\perp}\Sigma$ satisfying $x\in \Sigma_{+}$ and $y/|y|\in E_{x}^{1}$,
\begin{equation*}
|\mathrm{det}\ (\mathrm{exp}^{\perp})_{*(x,y)}|=\left(1-\langle \mathbf{H}(x),y \rangle \right)^{n}.
\end{equation*}
\end{lem}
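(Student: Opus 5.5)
We argue by contradiction, in the same way as Lemma \ref{59}, which is its analogue for the Chern--Lashof case. Lemma \ref{48} gives $\tilde{\tau}_{f}(x,\xi)=\infty$ for every $x\in\Sigma_{+}$ and $\xi\in E_{x}^{1}$. Hence Theorem \ref{57}(ii) applies along the whole ray $t\mapsto \mathrm{exp}^{\perp}_{x}(t\xi)$, with $\delta=0$ and the Euclidean comparison $\langle\underline{\mathbf H},\underline{\xi}\rangle=\langle \mathbf H(x),\xi\rangle$ (Remark (1) after Theorem \ref{57}). This gives
\[
|\mathrm{det}\,(\mathrm{exp}^{\perp})_{*(x,t\xi)}|\le (1-t\langle \mathbf H(x),\xi\rangle)^{n},
\]
and, more importantly, the ratio $|\mathrm{det}\,(\mathrm{exp}^{\perp})_{*(x,t\xi)}|/(1-t\langle \mathbf H(x),\xi\rangle)^{n}$ is non-increasing in $t$.

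Suppose strict inequality holds at some $(x_{0},y_{0})$ with $x_{0}\in\Sigma_{+}$ and $y_{0}/|y_{0}|\in E^{1}_{x_{0}}$. Both sides are continuous, and $\Sigma_{+}$ and $E^{1}_{x}$ are open conditions. So we can choose $0<\varepsilon<|y_{0}|$ and an open neighbourhood $Z\subset T^{\perp}\Sigma$ of $(x_{0},y_{0})$ on which the following hold:
\begin{itemize}
\item $x\in\Sigma_{+}$ and $y/|y|\in E^{1}_{x}$;
\item $\bigl||y|-|y_{0}|\bigr|<\varepsilon$;
\item $|\mathrm{det}\,(\mathrm{exp}^{\perp})_{*(x,y)}|<(1-\varepsilon)(1-\langle \mathbf H(x),y\rangle)^{n}$;
\item $\int_{Z'_{x}}\langle-\mathbf e(x),\xi\rangle^{n}d\xi>\varepsilon$, where $Z'_{x}=\{y/|y|:(x,y)\in Z\}$.
\end{itemize}
The monotonicity of the ratio then propagates the defect: for $(x,y)\in Z$ and $r>|y_{0}|+\varepsilon$,
\[
|\mathrm{det}\,(\mathrm{exp}^{\perp})_{*(x,ry/|y|)}|<(1-\varepsilon)\bigl(1-r|\mathbf H(x)|\langle\mathbf e(x),y/|y|\rangle\bigr)^{n}.
\]

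Next we repeat the volume estimate from the proof of \eqref{55}. By Lemma \ref{56}, $\mathrm{exp}^{\perp}(\tilde U_{r_{0}})=\Omega_{r_{0}}$, and we apply the area formula. We split the domain into five pieces:
\begin{itemize}
\item $\Sigma_{0}$;
\item $E^{2}_{x}$ directions over $\Sigma_{+}$;
\item $E^{1}_{x}$ directions over $\Sigma_{+}\setminus\pi(Z)$;
\item $E^{1}_{x}\setminus Z'_{x}$ directions over $\pi(Z)$;
\item $Z'_{x}$ directions over $\pi(Z)$, with $r\ge |y_{0}|+\varepsilon$.
\end{itemize}
The first two pieces contribute only lower-order terms in $r_{0}^{n+m}$. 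The third and fourth are bounded as in Section \ref{107}. On the fifth we use the improved bound carrying the factor $(1-\varepsilon)$; the bounded radial part $r<|y_{0}|+\varepsilon$ is again lower order. Collecting the leading terms with Lemma \ref{91}, we get
\[
|\Omega_{r_{0}}|\le \frac{r_{0}^{n+m}\omega_{n+m}}{|\mathbb S^{n}|}\int_{\Sigma}|\mathbf H|^{n}-\frac{\varepsilon^{2}r_{0}^{n+m}}{n+m}\int_{\pi(Z)}|\mathbf H|^{n}+o(r_{0}^{n+m}).
\]
Dividing by $\omega_{n+m}r_{0}^{n+m}$ and letting $r_{0}\to\infty$ yields
\[
\mathrm{AVR}(\bar g)|\mathbb S^{n}|<\int_{\Sigma}|\mathbf H|^{n},
\]
because $|\mathbf H|>0$ on $\pi(Z)\subset\Sigma_{+}$ and $\pi(Z)$ has positive measure. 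This contradicts \eqref{71}.

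The delicate step is the propagation of the defect from the single radius $|y_{0}|$ to all large radii. It requires both the monotone-ratio statement of Theorem \ref{57}(ii), not just the pointwise bound, and the fact that the monotonicity interval extends to $+\infty$, which is supplied by Lemma \ref{48}. We must also keep $1-t\langle\mathbf H(x),\xi\rangle>0$ on $E^{1}_{x}$ so that the comparison quantities never vanish; this holds because $\langle \mathbf H(x),\xi\rangle<0$ there. The remaining bookkeeping of lower-order terms is identical to Section \ref{107}.
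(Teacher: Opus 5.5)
Your proposal is correct and is essentially the paper's intended argument: the paper omits the proof of Lemma~\ref{62} as ``highly similar to that of Lemma~\ref{59},'' and your contradiction argument is exactly that proof. You use Theorem~\ref{57}(ii) (with $\delta=0$) in place of Theorem~\ref{57}(i), Lemma~\ref{48} in place of Lemma~\ref{60} to extend the monotone ratio along the whole ray, and Lemma~\ref{91} to identify the leading constant. This produces the same $-\varepsilon^{2}r_{0}^{n+m}(n+m)^{-1}\int_{\pi(Z)}|\mathbf{H}|^{n}$ deficit that contradicts \eqref{71}.
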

\begin{proof}
The proof is highly similar to that of Lemma \ref{59}. We omit it.
\end{proof}

\begin{lem}\label{81}
	Assume that $x\in \Sigma_{+}$ and $y/|y|\in E_{x}^{1}$. Let $\gamma(t):=\mathrm{exp}^{\perp}_{x}(ty)$ for all $t\in [0,\infty)$. Then the sectional curvature of $M$ is equal to $\delta$ for all tangent planes containing $\gamma'(t)$	and $x$ is an umbilical point, i.e., 	for all	$	X,Y\in T_{x}\Sigma, y \in T_{x}^{\perp}\Sigma$,
		\begin{equation*}
		\langle h(X,Y),y \rangle =\langle X,Y\rangle \langle \mathbf{H}(x),y \rangle  .
	\end{equation*}
\end{lem}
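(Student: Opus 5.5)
The plan is to deduce the lemma from the equality case of Theorem \ref{57}(ii), applied with $\delta=0$ and with the comparison submanifold taken from the Remark following Theorem \ref{57}. Fix $x\in\Sigma_{+}$ and $y$ with $\xi:=y/|y|\in E_{x}^{1}$. For the model I take $\underline{\Sigma}$ to be an $n$-sphere lying in an affine $(n+1)$-plane of $\R^{n+m}$. At $\underline{x}$ I choose $\underline{\xi}$ so that $\underline{\Sigma}$ is umbilical there and
\[
\langle \underline{\mathbf{H}}(\underline{x}),\underline{\xi}\rangle=\langle \mathbf{H}(x),\xi\rangle=-|\mathbf{H}(x)|\langle -\mathbf{e}(x),\xi\rangle<0 .
\]
This is possible because $\langle \mathbf{H}(x),\xi\rangle<0$ for $\xi\in E_{x}^{1}$, and it makes the hypothesis $\langle \mathbf{H}(x),\xi\rangle\geq\langle\underline{\mathbf{H}}(\underline{x}),\underline{\xi}\rangle$ of Theorem \ref{57}(ii) hold with equality. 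Since $\mathrm{Ric}^{M}_{n}\geq 0$, Theorem \ref{57}(ii) applies with $\delta=0$, where $\mathfrak{s}_{0}(t)=t$ and $\mathfrak{c}_{0}(t)=1$.

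Next I control the range of $t$. By Lemma \ref{48}, $\tilde{\tau}_{f}(x,\xi)=\infty$, so inequality \eqref{44} and its equality characterization are available for every $t_{0}>0$. The bound \eqref{44} then reads
\[
|\det(\exp^{\perp})_{*(x,t\xi)}|\leq \bigl(1-t\langle\mathbf{H}(x),\xi\rangle\bigr)^{n}.
\]
By Lemma \ref{62}, applied to the vector $t\xi$ (which still has direction in $E_{x}^{1}$), this holds with equality for every $t>0$.

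The equality clause of Theorem \ref{57}(ii) now gives two conclusions. First, the sectional curvature of $M$ equals $\delta=0$ on all planes containing $\sigma'(a)$ for every $a\geq 0$, which is the first claim since $\gamma$ is a reparametrization of $\sigma$. Second, $\kappa_{i}(x,\xi)=-\langle\mathbf{H}(x),\xi\rangle$ for every $i$. Thus $S_{\xi}=-\langle \mathbf{H}(x),\xi\rangle\,\mathrm{id}$, which means
\[
\langle h(X,Y),\xi\rangle=\langle X,Y\rangle\langle\mathbf{H}(x),\xi\rangle
\]
for all $\xi$ in the open hemisphere $E_{x}^{1}$.

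Finally I extend umbilicity from $E_{x}^{1}$ to all normals. The map $\zeta\mapsto \langle h(X,Y),\zeta\rangle-\langle X,Y\rangle\langle \mathbf{H}(x),\zeta\rangle$ is linear in $\zeta\in T_{x}^{\perp}\Sigma$. It vanishes on the open set of positive multiples of vectors in $E_{x}^{1}$, so it vanishes identically; in particular the identity holds for $-\xi$ and for every $y\in T_{x}^{\perp}\Sigma$.

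The step I expect to be the main obstacle is justifying that the equality characterization in Theorem \ref{57}(ii) may be used as stated. Its proof in the paper runs through the AM–GM step in Theorem \ref{38}, and it requires equality in Bishop's Lemma \ref{16} along the whole geodesic. If that clause needs extra care, I would argue directly. Equality in the AM–GM and trace inequality for $\det A_{t}$ forces $A_{t}$ to be a multiple of the identity. Letting $t\to 0$, the quantity $tA_{t}=T_{t}$ has the expansion $g-t\langle h,\xi\rangle+O(t^{2})$. Equality in the trace bound then gives $\langle h(\cdot,\cdot),\xi\rangle=\langle\mathbf{H},\xi\rangle g$, which is the umbilicity. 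Equality in Bishop's comparison gives the curvature statement.
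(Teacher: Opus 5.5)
Your proposal is correct and is essentially the paper's argument, which reads in full ``By Lemma~\ref{62} and Theorem~\ref{57}'': Lemma~\ref{48} gives $\tilde{\tau}_{f}(x,\xi)=\infty$, Lemma~\ref{62} gives equality in \eqref{44} for every $t>0$ (with $\delta=0$ and $\langle\underline{\mathbf{H}}(\underline{x}),\underline{\xi}\rangle=\langle\mathbf{H}(x),\xi\rangle$), and the equality clause of Theorem~\ref{57}(ii) then yields flatness along $\gamma$ and umbilicity in the direction $\xi$. Your explicit linearity step, extending umbilicity from the open hemisphere $E_{x}^{1}$ to all of $T_{x}^{\perp}\Sigma$, fills in a point the paper leaves implicit.
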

\begin{proof}
By Lemma \ref{62} and Theorem \ref{57}, the lemma follows.
\end{proof}

\begin{lem}\label{82}
  	We have $\Sigma_{+}=\Sigma$ and $D^{\perp}\mathbf{H}(x)=0$ for all $x\in \Sigma$. Moreover, $|\mathbf{H}|$ is a non-zero constant.
\end{lem}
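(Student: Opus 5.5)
We need to prove Lemma \ref{82}: $\Sigma_{+}=\Sigma$, $D^{\perp}\mathbf{H}=0$, and $|\mathbf{H}|$ is a nonzero constant. The plan is to work first on the open set $\Sigma_{+}$, where Lemma \ref{81} gives total umbilicity, to obtain $D^{\perp}\mathbf{H}=0$ there via the Codazzi equation \eqref{53}. Constancy of $|\mathbf{H}|$ on components of $\Sigma_{+}$ then follows, and a connectedness argument (Lemma \ref{49}) rules out $\Sigma_0\neq\emptyset$.

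\textbf{Step 1: umbilicity gives a Codazzi identity on $\Sigma_{+}$.} Lemma \ref{81} holds at every $x\in\Sigma_{+}$, since $E^1_x\neq\emptyset$ there. So on $\Sigma_{+}$ we have $h(X,Y)=\langle X,Y\rangle \mathbf{H}$. Differentiating with \eqref{52} gives $(D_Xh)(Y,V)=\langle Y,V\rangle D^{\perp}_X\mathbf{H}$. Inserting this into \eqref{53} yields
\[
\langle Y,V\rangle\langle D^{\perp}_X\mathbf{H},\xi\rangle-\langle X,V\rangle\langle D^{\perp}_Y\mathbf{H},\xi\rangle=\bar R(X,Y,V,\xi).
\]

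\textbf{Step 2: kill the curvature term with Lemma \ref{81}.} Lemma \ref{81} also says that every tangent plane containing $\gamma'(0)=y/|y|$ has sectional curvature $\delta=0$, for all $y/|y|\in E^1_x$. This is an open set of normal directions. Since $\mathrm{Ric}^M_n\ge0$, for each such $\xi$ the quadratic form $Z\mapsto \bar R(Z,\xi,\xi,Z)$ restricted to $\xi^{\perp}$ is zero. Hence $\bar R(\cdot,\xi,\xi,\cdot)=0$ as a symmetric form on $\xi^\perp$, for every $\xi$ in an open cone. Polarizing in $\xi$ over this open cone, I expect to conclude $\bar R(X,Y,V,\xi)=0$ for tangent $X,Y,V$ and normal $\xi$. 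The route is to expand $\bar R(X,\xi_1+s\xi_2,\xi_1+s\xi_2,Z)$, which is polynomial in $s$ and vanishes on an interval; combining this with the Bianchi identities should give the mixed components. This is the step I expect to be the main obstacle, since one must check that vanishing of the "Jacobi operator" on an open set of directions really forces the mixed tangent–tangent–tangent–normal components to vanish.

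\textbf{Step 3: conclude on $\Sigma_{+}$ and extend to $\Sigma$.} With the curvature term gone, take $Y=V$ a unit vector orthogonal to $X$; this is possible since $n\ge2$. The identity becomes $\langle D^{\perp}_X\mathbf{H},\xi\rangle=0$ for all $\xi$, so $D^{\perp}\mathbf{H}=0$ on $\Sigma_{+}$. In particular $X|\mathbf{H}|^2=2\langle D^\perp_X\mathbf{H},\mathbf{H}\rangle=0$, so $|\mathbf{H}|$ is locally constant on $\Sigma_{+}$. Now let $C$ be a component of $\Sigma_{+}$. By continuity of $\mathbf{H}$, $|\mathbf{H}|$ equals a positive constant on $\overline{C}$. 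But every point of $\partial C\subset\Sigma_0$ has $\mathbf{H}=0$, so $\partial C=\emptyset$ and $C$ is open and closed. Finally, $\Sigma_{+}\neq\emptyset$, since otherwise the left side of \eqref{71} would be $0<\mathrm{AVR}(\bar g)|\mathbb{S}^n|$. By Lemma \ref{49}, $\Sigma$ is connected, so $\Sigma_{+}=C=\Sigma$. Thus $D^\perp\mathbf{H}\equiv0$ on $\Sigma$ and $|\mathbf{H}|$ is a nonzero constant.
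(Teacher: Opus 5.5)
Your Step 1 is correct, and so is the topological part of Step 3. The gap is Step 2, and it is not just a missing detail: the implication you hope for is false. Lemma \ref{81} gives $\bar R(w,\eta,\eta,w)=0$ for all $w\in T_{f(x)}M$ and all $\eta$ in the open cone $E^1_x$. Since this is a quadratic form in $\eta$ vanishing on an open set, it holds for all normal $\eta$. Splitting $w$ into tangent and normal parts, these conditions only constrain components of $\bar R$ with at least two normal slots. But $\bar R(X,Y,V,\xi)$ has exactly one normal slot.

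Polarization and Bianchi cannot close this. Put $\phi_\lambda:=\mathrm{id}_{T_x\Sigma}\oplus\lambda\,\mathrm{id}_{T_x^\perp\Sigma}$. Then $\phi_\lambda^*\bar R=\sum_{k=0}^{4}\lambda^kR_k$, where $R_k$ collects the components with exactly $k$ normal slots. Each $\phi_\lambda^*\bar R$ is an algebraic curvature tensor, so by Vandermonde each $R_k$ is one too. The tensor $R_1$ satisfies every constraint you have, $R_1(w,\eta,\eta,w)=0$, yet $R_1(X,Y,V,\xi)=\bar R(X,Y,V,\xi)$ can be nonzero. So the curvature hypothesis has to do real work here. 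In your Step 2 it is only cited for a conclusion that Lemma \ref{81} already gives on its own.

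The missing idea, which is the paper's, is to read $\mathrm{Ric}^M_n\ge 0$ as positive semidefiniteness of a quadratic form with $y$ as a null vector.

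For $m\ge 2$:
\begin{enumerate}
\item Choose a unit normal $z\perp y$ and set $P=\mathrm{span}\{e_1,\dots,e_{n-1},z\}$.
\item On $P^\perp$ define $T(u,u):=\sum_{i=1}^{n-1}\bar R(u,e_i,e_i,u)+\bar R(u,z,z,u)$. Then $T\ge 0$ by $\mathrm{Ric}^M_n\ge 0$.
\item Lemma \ref{81} gives $T(y,y)=0$, so $y$ lies in the radical of $T$, and hence $T(e_n,y)=0$.
\item The extra term $\bar R(e_n,z,z,y)$ vanishes by exactly your polarization: the Jacobi operator of the normal vector $z$ is zero.
\end{enumerate}
For $m=1$ the same reasoning works with $\mathrm{Ric}^M\ge 0$ and $\mathrm{Ric}^M(y,y)=0$.

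This gives only the trace
\[
\sum_{i=1}^{n-1}\bar R(e_n,e_i,e_i,y)=0,
\]
not the individual components. So in Step 3 you should not take a single $Y=V\perp X$. Instead sum your Codazzi identity over $Y=V=e_i$ for $1\le i\le n-1$, with $X=e_n$. This gives $(n-1)\langle D^{\perp}_{e_n}\mathbf{H},y\rangle=0$, and linearity in $y$ extends it to all normals. From there your clopen-component argument finishes the proof, and it is more careful than the paper's one-line version.
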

\begin{proof}
	Fix a point $x\in\Sigma_{+}$. Let $\{E_{i}\}_{i=1}^{n}$ be a local orthonormal tangent frame and $\xi$ a local unit normal vector field around $x$ with $e_{i}:=E_{i}(x)$ and $y:=\xi(x)$.
	We claim that 
	\begin{align}\label{86}
		\sum_{i=1}^{n}\bar{R}(e_{n},e_{i},e_{i},y)=0.
	\end{align}
 When $m=1$, Lemma \ref{81} yields $\mathrm{Ric}^{M}(y,y)=0$. Together with this fact and the condition $\mathrm{Ric}^{M}\geq 0$, it follows immediately that \eqref{86} holds.
	When $m\geq 2$, there exists a unit normal vector $z\in S_{x}^{m-1}$ such that $\langle z,y\rangle=0$. For convenience,	set  $e_{0}:=z$ and $P:= \{e_{i}\}_{i=1}^{n-1}\cup\{ z\}=\{e_{i}\}_{i=0}^{n-1}$. Denote by $P^{\perp}$ the orthogonal complement of $P$ in $T_{f(x)}M$.
	Under the condition $\mathrm{Ric}^{M}_{n}\geq 0$, the tensor
	\[
	T:=\sum_{i=0}^{n-1}\bar{R}(\cdot,e_{i},e_{i},\cdot):P^{\perp}\times P^{\perp}\rightarrow\R
	\]
is symmetric and positive semi-definite. By Lemma \ref{81}, $T(y,y)=0$. Combining these two facts yields 
		\begin{equation}\label{87}
	T(u,y)=0,
	\end{equation}
for each $u\in P^{\perp}$. In particular, since $e_{n}\in P^{\perp}$, taking $u=e_{n}$ in \eqref{87} gives us 
	\begin{equation}\label{88}
	T(e_{n},y)=\sum_{i=0}^{n}\bar{R}(e_{n},e_{i},e_{i},y)=0.
	\end{equation}
	By Lemma \ref{81},   		\begin{equation*}
		2\bar{R}(e_{n},z,z,y)=\bar{R}(e_{n}+y,z,z,e_{n}+y)-\bar{R}(e_{n},z,z,e_{n})-\bar{R}(y,z,z,y)=0,
	\end{equation*} which together with \eqref{88} yields \eqref{86} immediately.

	By \eqref{52} and \eqref{53}, 
\begin{align*}
	\bar{R}(e_{n},e_{i},e_{i},y)=&\langle D_{e_{n}}^{\perp}(h(E_{i},E_{i})),y \rangle-\langle h(D_{e_{n}}^{\Sigma}E_{i},e_{i}),y\rangle-\langle h(e_{i},D_{E_{n}}^{\Sigma}E_{i}),y\rangle\\
	&-\langle D_{e_{i}}^{\perp}(h(E_{n},E_{i})),y \rangle+\langle h(D_{e_{i}}^{\Sigma}E_{n},e_{i}),y\rangle+\langle h(e_{n},D_{e_{i}}^{\Sigma}E_{i}),y\rangle.
\end{align*}
Summing the above identity over $i=1$ to $n$, together with \eqref{86}, yields 
\begin{align*}
	0=&n\langle D_{e_{n}}^{\perp}\mathbf{H},y \rangle-\sum_{i=1}^{n}\langle D_{e_{n}}^{\Sigma}E_{i},e_{i}\rangle\langle \mathbf{H},y\rangle-\sum_{i=1}^{n}\langle e_{i},D_{e_{n}}^{\Sigma}E_{i}\rangle\langle \mathbf{H},y\rangle\\
	&-\sum_{i=1}^{n}\langle D_{e_{i}}^{\perp}(\langle X,E_{i}\rangle\mathbf{H}),y\rangle
	+\sum_{i=1}^{n}\langle D_{e_{i}}^{\Sigma}X,e_{i}\rangle \langle \mathbf{H},y\rangle+\sum_{i=1}^{n}\langle e_{n},D_{e_{i}}^{\Sigma}E_{i}\rangle\langle \mathbf{H},y\rangle.
\end{align*}
Note that $\langle D_{e_{n}}^{\Sigma}E_{i},e_{i}\rangle=0$ and
\begin{align*}
	&\ \ \ \sum_{i=1}^{n}\langle D_{e_{i}}^{\perp}(\langle X,E_{i}\rangle\mathbf{H}),y\rangle\\
	&=\sum_{i=1}^{n}\langle e_{n},e_{i}\rangle\langle  D_{e_{i}}^{\perp}\mathbf{H},y\rangle
	+\sum_{i=1}^{n}(\langle D_{e_{i}}^{\Sigma}X,e_{i}\rangle+\langle e_{n},D_{e_{i}}^{\Sigma}E_{i}\rangle)\langle \mathbf{H},y\rangle\\
	&=\langle D_{e_{n}}^{\perp}\mathbf{H},y \rangle+\sum_{i=1}^{n}(\langle D_{e_{i}}^{\Sigma}X,e_{i}\rangle+\langle e_{n},D_{e_{i}}^{\Sigma}E_{i}\rangle)\langle \mathbf{H},y\rangle.
\end{align*}
Consequently, 
			\begin{equation*}
	(n-1)\langle D_{e_{n}}^{\perp}\mathbf{H},y \rangle=0.
\end{equation*}
Since $n\geq 2$,
\begin{equation*}
	\langle D_{e_{n}}^{\perp}\mathbf{H},y \rangle=0.
\end{equation*}
By the arbitrariness of $e_{n},y$ and $x$,  it follows that $	D^{\perp}\mathbf{H}(x) =0$ for all $x\in \Sigma_{+}$. Immediately, we obtain that $|\mathbf{H}|$ is a non-zero constant  on each connected component of $\Sigma_{+}$. Since $\Sigma$ is connected, $\Sigma_{0}$ is empty, i.e., $\Sigma_{+}=\Sigma$. Thus $|\mathbf{H}|$ is a non-zero constant on $\Sigma$. The lemma follows.
\end{proof}

\begin{lem}\label{50}
	Assume that $(x_{1},r_{1}\xi)\in \mathcal{W}$ with $x_{1}\in \Sigma,\, r_{1}>0$ and $\xi_{1} \in S_{x}^{m-1}$. Then for every $x_{2} \in \Sigma$ and every positive number $r_{2} < r_{1}$,  $\mathrm{exp}^{\perp}(x_{1}, r_{1}\xi_{1}) \notin B_{f(x_{2})}^{M}(r_{2})$.
\end{lem}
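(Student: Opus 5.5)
We argue by contradiction, mirroring the proof of Lemma \ref{63} with Lemma \ref{48} and the equality bookkeeping of Section \ref{107} playing the roles of Lemma \ref{60} and Section \ref{104}. Suppose $p:=\mathrm{exp}^{\perp}(x_{1},r_{1}\xi_{1})\in B^{M}_{f(x_{2})}(r_{2})$ with $r_{2}<r_{1}$.

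\emph{Step 1: a neighbourhood mapped diffeomorphically.} By Lemma \ref{82}, $\Sigma_{+}=\Sigma$, and $\langle \mathbf{H}(x_{1}),\xi_{1}\rangle<0$ because $(x_{1},r_{1}\xi_{1})\in\mathcal{W}$. Lemma \ref{48} gives $\tilde{\tau}_{f}(x_{1},\xi_{1})=\infty$, so by Lemma \ref{34}(ii) the point $(x_{1},r_{1}\xi_{1})$ is not a focal point. Moreover, Lemma \ref{62} gives $|\mathrm{det}(\mathrm{exp}^{\perp})_{*}|=(1-\langle\mathbf{H},y\rangle)^{n}>0$ there. Hence, by the inverse function theorem and continuity, there are $r_{3}\in(0,r_{1}-r_{2})$ and an open neighbourhood $W\subset\mathcal{W}$ of $(x_{1},r_{1}\xi_{1})$ such that:
\begin{itemize}
\item $\mathrm{exp}^{\perp}|_{W}$ is a diffeomorphism onto $B^{M}_{p}(r_{3})$;
\item $B^{M}_{p}(r_{3})\subset B^{M}_{f(x_{2})}(r_{2})$;
\item $\pi(W)$ has $f$-diameter less than $r_{1}-r_{2}-r_{3}$.
\end{itemize}

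\emph{Step 2: the cut distance is uniformly smaller than the radius.} For $q=\mathrm{exp}^{\perp}(x,r\xi)$ with $(x,r\xi)\in W$, the triangle inequality gives
\[
r\geq d(q,f(x))\geq r_{1}-r_{3}-d(f(x_{1}),f(x))>r_{2}>d(q,f(x_{2})).
\]
Thus every geodesic $t\mapsto\mathrm{exp}^{\perp}_{x}(t\xi)$ stops minimizing the distance to $f(\Sigma)$ before time $r$, so $\tau_{f}(x,\xi)<r$ on $W$. Consequently, the points of $\mathrm{exp}^{\perp}(W)$ are not reached through $U_{r_{0}}$ along the directions of $W$.

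\emph{Step 3: a volume deficit contradicting \eqref{71}.} Set
\[
\widehat{W}:=\{(x,t\xi):(x,r\xi)\in W,\ t\geq r\},
\]
the forward cone of directions over $W$; it has positive measure with angular weight $\int_{W_{x}}\langle-\mathbf{e},\xi\rangle^{n}d\xi>\varepsilon$ on a set $\pi(W)$ of positive measure. Redo the area-formula estimate of Section \ref{107} with $U_{r_{0}}$ in place of $\tilde{U}_{r_{0}}$; this is legitimate because $\mathrm{exp}^{\perp}(U_{r_{0}})=\Omega_{r_{0}}$ by Lemma \ref{56}. By Step 2, the $r$-integral over these directions stops below $\tau_{f}(x,\xi)<\tilde{\tau}_{f}$, and the stopping time stays bounded along the directions of $W$, since $\tau_{f}$ is monotone in the sense that beyond the cut point the geodesic never again minimizes. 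Therefore the directions in $W$ contribute only a bounded amount instead of order $r_{0}^{n+m}$. Exactly as in Lemma \ref{48}, this gives
\[
\mathrm{AVR}(\bar{g})|\mathbb{S}^{n}|\leq\int_{\Sigma}|\mathbf{H}|^{n}-c\,\varepsilon\int_{\pi(W)}|\mathbf{H}|^{n}<\int_{\Sigma}|\mathbf{H}|^{n},
\]
which contradicts \eqref{71}.

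\emph{Main obstacle.} The hard point is justifying the replacement of $\tilde{\tau}_{f}$ by $\tau_{f}$ in the estimate. Since $f$ is only an immersion, $\tau_{f}$ may be discontinuous, so we need $\tau_{f}(x,\xi)<r_{1}+r_{3}$ on a set of positive measure rather than pointwise. This follows from Step 2, which holds on the whole open set $W$, together with the fact that $\tau_{f}\leq\tilde{\tau}_{f}$ still bounds the integrand by Theorem \ref{57}(ii). The lemma follows.
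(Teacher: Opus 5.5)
Your proposal is correct and follows essentially the same route as the paper. The paper proves Lemma \ref{50} by transplanting the argument of Lemma \ref{63} into the Willmore--Chen setting: a small neighbourhood $W$ obtained from the analogue of Lemma \ref{60} (here Lemma \ref{48}), the triangle inequality forcing $\tau_{f}<r$ on the whole open set of directions of $W$, and then the volume-deficit argument of Section \ref{107}. Your explicit switch to $U_{r_{0}}$ via Lemma \ref{56} spells out what the paper leaves implicit in ``the same argument as in Lemma \ref{60}''; note also that your Step 2 chain tacitly uses $d(p,f(x_{1}))=r_{1}$, which holds because $\mu(f(x_{1}),\xi_{1})=\infty$ by Lemma \ref{48} together with Lemma \ref{34}(ii) (alternatively, just shrink $W$ so that $r>r_{2}$ on it).
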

\begin{proof}
	The proof is highly similar to that of Lemma \ref{63}. We omit it.
\end{proof}

\begin{lem}\label{75}
	Assume that $(x,y)\in \mathcal{W}$. Then 
	$\tau_{f}(x,y/|y|)=\infty$.
\end{lem}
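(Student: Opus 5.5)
The plan is to follow the proof of Lemma \ref{64} line by line, with $\mathcal{U}$ replaced by $\mathcal{W}$ and Lemma \ref{63} replaced by Lemma \ref{50}. By Lemma \ref{82} we know $\Sigma_{+}=\Sigma$. Hence $\mathcal{W}=\{(x,r\xi):x\in\Sigma,\ r>0,\ \xi\in E_{x}^{1}\}$. Lemma \ref{48} then gives $\tilde{\tau}_{f}(x,\xi)=\infty$ for every $\xi\in E^{1}_{x}$. Fix $(x_{0},y_{0})\in\mathcal{W}$, put $\xi_{0}:=y_{0}/|y_{0}|$, and suppose for contradiction that $\tau_{f}(x_{0},\xi_{0})<\infty$. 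We split into the cases $\tau_{f}(x_{0},\xi_{0})=0$ and $0<\tau_{f}(x_{0},\xi_{0})<\infty$.

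\textbf{Case 2} ($0<\tau_f(x_0,\xi_0)<\infty$). This case is immediate. Pick $r_{0}>\tau_{f}(x_{0},\xi_{0})$. Compactness of $\Sigma$ gives a closest point, so there are $\bar{x}\in\Sigma$, $\bar\xi\in S^{m-1}_{\bar x}$ and $r_{1}=d(\exp^{\perp}(x_0,r_0\xi_0),f(\Sigma))<r_{0}$ with $\exp^{\perp}(\bar x,r_1\bar\xi)=\exp^{\perp}(x_0,r_0\xi_0)$. Taking any $r_2$ with $r_1<r_2<r_0$, we get $\exp^{\perp}(x_0,r_0\xi_0)\in B^{M}_{f(\bar x)}(r_2)$. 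Since $(x_0,r_0\xi_0)\in\mathcal{W}$, this contradicts Lemma \ref{50}.

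\textbf{Case 1} ($\tau_f(x_0,\xi_0)=0$). This is the main obstacle.

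First, if $f$ were injective near the fibre over $f(x_{0})$, then $\tau_f(x_0,\xi_0)>0$. So $f$ must have a self-intersection at $x_{0}$.

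Second, we rule out transverse self-intersections. Suppose $f(\bar x)=f(x_0)$ with $T_{\bar x}\Sigma\neq T_{x_0}\Sigma$. Then we can choose $y\in T^{\perp}_{x_0}\Sigma$ with $\langle y,-\mathbf H(x_0)\rangle>0$ and $v\in T_{\bar x}\Sigma$ with $\langle y,v\rangle>0$; this is possible because $E^{1}_{x_0}$ is an open hemisphere. A first-variation argument then gives $d(\exp^{\perp}_{x_0}(ty),f(\Sigma))<t|y|-c t^{2}$ for small $t$. Concretely, moving along $\Sigma$ from $\bar x$ in direction $v$ shortens the distance at first order. This is the asymptotic behaviour used in Lemma \ref{64}, and it contradicts Lemma \ref{50}.

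Third, $f^{-1}(f(x_0))$ is finite. Otherwise it has an accumulation point, which contradicts that $f$ is an immersion.

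Fourth, choose $\bar r$ so small that $f^{-1}(B^M_{f(x_0)}(\bar r))=\bigcup_k V_k$ with each $f|_{V_k}$ an embedding. All sheets are tangent at $f(x_0)$, so $\xi_0$ is normal to each $V_k$ at $x_k$. Then
\[
C=\min\Bigl\{\bar r/2,\ \tau_{f|_{V_k}}(x_k,\xi_0)\Bigr\}>0.
\]
For $0<t<C$, points of $\Sigma$ outside $\bigcup_k V_k$ are at distance $\ge\bar r/2>t$ from $\exp^{\perp}_{x_0}(t\xi_0)$. Each sheet $V_k$ is at distance exactly $t$. Hence $\tau_f(x_0,\xi_0)\ge C>0$, a contradiction.

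The delicate point is this last step. One must check that the distance to each tangent sheet $V_k$ from $\exp^{\perp}_{x_0}(t\xi_0)=\exp^{\perp}_{x_k}(t\xi_0)$ is exactly $t$ for $t<\tau_{f|_{V_k}}(x_k,\xi_0)$, which holds by definition of $\tau_{f|_{V_k}}$. One must also verify that tangency was genuinely forced, which is exactly what the second step supplies. With both cases excluded, $\tau_f(x,y/|y|)=\infty$ for all $(x,y)\in\mathcal W$.
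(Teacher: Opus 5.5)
Your proposal is correct and takes the paper's route: the paper omits this proof as ``highly similar to that of Lemma~\ref{64}'', and you reproduce Lemma~\ref{64}'s two-case argument with Lemma~\ref{50} in place of Lemma~\ref{63}. Your Case~2 argument never uses $\tau_f(x_0,\xi_0)>0$, so it already covers Case~1; indeed Lemma~\ref{50} applied to $(x_0,r\xi_0)$ for every $r>0$ gives $d(\exp^{\perp}_{x_0}(r\xi_0),f(\Sigma))=r$ directly, so the self-intersection analysis can be dropped.
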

\begin{proof}
	The proof is highly similar to that of Lemma \ref{64}. We omit it.
\end{proof}

\begin{lem}\label{76}
	Suppose $V$ is a nonempty open set contained in $\Sigma$ such that $f|_{V}$ is an embedding. Define
	$\mathcal{V}:=\{(x,y)\in T^{\perp}\Sigma: x\in V,\, y\in T_{x}^{\perp}\Sigma,\, \langle y,-\mathbf{H}(x) \rangle > 0  \}$. Then $\mathrm{exp}^{\perp}|_{\mathcal{V}}:\mathcal{V}\rightarrow \mathrm{exp}^{\perp}(\mathcal{V})$  is a diffeomorphism.
\end{lem}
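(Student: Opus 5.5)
The proof follows the template of Lemma \ref{73}, with Lemmas \ref{63} and \ref{64} replaced by their analogues Lemmas \ref{50} and \ref{75}.

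\emph{Local diffeomorphism.} Every $(x,y)\in\mathcal{V}$ satisfies $\langle \mathbf{H}(x),y/|y|\rangle<0$ and $\mathbf{H}(x)\neq 0$, the latter because $\Sigma_{+}=\Sigma$ by Lemma \ref{82}. Lemma \ref{48} then gives $\tilde{\tau}_{f}(x,y/|y|)=\infty$. By Lemma \ref{34}(ii) this means $\rho(x,y/|y|)=\infty$, so $(\mathrm{exp}^{\perp})_{*(x,y)}$ is nonsingular. The same conclusion also follows from Lemma \ref{62}, since $(1-\langle\mathbf{H}(x),y\rangle)^{n}>0$. Because $\mathcal{V}$ is open in $T^{\perp}\Sigma$, $\mathrm{exp}^{\perp}|_{\mathcal{V}}$ is a local diffeomorphism onto the open set $\mathrm{exp}^{\perp}(\mathcal{V})$. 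It therefore suffices to prove that it is injective.

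\emph{Injectivity.} Suppose $(x_{1},r_{1}\xi_{1}),(x_{2},r_{2}\xi_{2})\in\mathcal{V}$ with $r_{k}>0$ and $\xi_{k}\in S^{m-1}_{x_{k}}$ have the same image $p$. Since $\mathcal{V}\subset\mathcal{W}$, Lemma \ref{50} applies.
\begin{enumerate}[(a)]
\item Applied with the roles $(x_{1},x_{2})$, it gives $p\notin B^{M}_{f(x_{2})}(r)$ for every $r<r_{1}$. As $d(p,f(x_{2}))\le r_{2}$, this forces $r_{2}\ge r_{1}$. Applied symmetrically, it gives $r_{1}\ge r_{2}$, so $r_{1}=r_{2}=:r$.
\item By Lemma \ref{75}, $\tau_{f}(x_{1},\xi_{1})=\infty$, so $d(p,f(\Sigma))=r$. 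Hence $t\mapsto\mathrm{exp}^{\perp}_{x_{1}}(t\xi_{1})$ and $t\mapsto\mathrm{exp}^{\perp}_{x_{2}}(t\xi_{2})$ are both minimizing from $f(\Sigma)$ to $p$ on $[0,r]$.
\item Extend the first geodesic slightly past $p$, to time $r+s$. It is still minimizing from $f(\Sigma)$, by Lemma \ref{75} again. Concatenating the second geodesic on $[0,r]$ with the first on $[r,r+s]$ gives a curve of length $r+s$ from $f(x_{2})$ to a point at distance $r+s$ from $f(\Sigma)$. This curve is therefore minimizing, hence a smooth geodesic.
\item Uniqueness of geodesics then gives $\xi_{1}=\xi_{2}$ at $p$, and following back from $p$ gives $f(x_{1})=f(x_{2})$ and equality of the initial normal vectors as vectors in $T_{f(x_{1})}M$.
\end{enumerate}

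\emph{Conclusion.} Since $f|_{V}$ is an embedding, hence injective, $f(x_{1})=f(x_{2})$ forces $x_{1}=x_{2}$, and then $(x_{1},r\xi_{1})=(x_{2},r\xi_{2})$. So $\mathrm{exp}^{\perp}|_{\mathcal{V}}$ is an injective local diffeomorphism onto its image, that is, a diffeomorphism.

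The main obstacle is step (b)--(d). One has to be careful that minimality from $f(\Sigma)$ refers to the immersed image, so the distance is attained at $f(x_{2})$ even if $x_{2}$ is not close to $x_{1}$. One must also check that $\tau_{f}=\infty$ really lets the geodesic be extended past $p$ while remaining minimizing, which is exactly what Lemma \ref{75} supplies.
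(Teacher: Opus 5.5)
Your proposal is correct and follows the paper's route. The paper proves this lemma by transplanting the argument of Lemma~\ref{73}: nonsingularity of $(\mathrm{exp}^{\perp})_{*}$ comes from $\tilde{\tau}_f=\infty$ (here Lemmas~\ref{48} and~\ref{75}), and injectivity comes from Lemma~\ref{50} (giving $r_1=r_2$) together with the definition of $\tau_f$ and Lemma~\ref{75} (giving $f(x_1)=f(x_2)$ and $\xi_1=\xi_2$), which your steps (a)--(d) spell out in more detail. One minor point: the appeal to Lemma~\ref{82} for $\mathbf{H}(x)\neq 0$ is unnecessary, since $\langle y,-\mathbf{H}(x)\rangle>0$ already forces it.
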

\begin{proof}
Since Lemma \ref{75} holds, we can prove this lemma by a similar argument as in Lemma \ref{73}. We omit it.
\end{proof}

\begin{lem}\label{80}
 The map $f$ is an embedding.
\end{lem}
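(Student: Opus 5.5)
We follow the proof of Lemma \ref{69} for $f|_{\Sigma^{+}}$, with $\mathcal{W}$ in place of $\mathcal{U}$ and $-\mathbf{H}(x)/|\mathbf{H}(x)|$ in place of $\xi_{x}$. By Lemma \ref{82} we have $\Sigma_{+}=\Sigma$, $D^{\perp}\mathbf{H}=0$ and $|\mathbf{H}|$ is a nonzero constant. By Lemma \ref{81} every point is umbilic. So for each $x$ the set $E_{x}^{1}$ is an open hemisphere centered at $-\mathbf{e}(x)$, and $\mathcal{W}=\{(x,r\xi):r>0,\ \xi\in E_{x}^{1}\}$. We argue by contradiction and assume there are distinct $x_{1},x_{2}\in\Sigma$ with $f(x_{1})=f(x_{2})=p$.

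First we exclude transverse self-intersections. If $T_{x_{1}}\Sigma\neq T_{x_{2}}\Sigma$ in $T_{p}M$, pick $y\in T^{\perp}_{x_{1}}\Sigma$ with $(x_{1},y)\in\mathcal{W}$ and $v\in T_{x_{2}}\Sigma$ with $\langle y,v\rangle>0$; this is possible since $E^{1}_{x_{1}}$ is open. Move along the sheet through $x_{2}$ in direction $v$ to a point $x_{2}(s)$. A first-variation estimate gives $d(\exp^{\perp}(x_{1},r_{1}y/|y|),f(x_{2}(s)))<r_{1}-cs$ for small $s>0$ and suitable $r_{1}$. This contradicts Lemma \ref{50}, or equivalently Lemma \ref{75}. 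Hence the intersection is tangential and $T^{\perp}_{x_{1}}\Sigma=T^{\perp}_{x_{2}}\Sigma$. We then split into two cases by comparing $\mathbf{e}(x_{1})$ with $-\mathbf{e}(x_{2})$.

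\textbf{Case 1: $\mathbf{e}(x_{1})\neq-\mathbf{e}(x_{2})$.} Then $E^{1}_{x_{1}}\cap E^{1}_{x_{2}}$ contains some $\xi_{0}$, and $(x_{k},r\xi_{0})\in\mathcal{W}$ for $k=1,2$ and all $r>0$. Lemmas \ref{48} and \ref{76} give neighborhoods $W_{1},W_{2}$ of $(x_{k},r_{0}\xi_{0})$ with disjoint projections, mapped diffeomorphically by $\exp^{\perp}$ onto a common open set. Lemma \ref{50} is applied to the level sets $\{r=r_{0}\}$ together with the minimal-geodesic concatenation argument. This produces a diffeomorphism $\varphi:\pi(V_{1})\to\pi(V_{2})$ with $f\circ\varphi=f$. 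Consequently the cone $X^{1}_{r_{0}}$ over $\pi(V_{1})$ is covered at least twice by $\exp^{\perp}|_{U_{r_{0}}}$.

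Next we count volume. By Lemma \ref{62}, $|\det(\exp^{\perp})_*|=(1-\langle\mathbf{H},y\rangle)^{n}$ on $\mathcal{W}$. Applying the area formula as in Section \ref{107} and using Lemma \ref{91}, we get
\begin{equation*}
\mathrm{AVR}(\bar g)+\lim_{r_{0}\to\infty}\frac{|\exp^{\perp}(X^{1}_{r_{0}})|}{\omega_{n+m}r_{0}^{n+m}}\leq\frac{1}{|\mathbb{S}^{n}|}\int_{\Sigma}|\mathbf{H}|^{n}d\mathrm{vol}_{\Sigma}=\mathrm{AVR}(\bar g).
\end{equation*}
The limit on the left is $|\mathbb{S}^{n}|^{-1}\int_{\pi(V_{1})}|\mathbf{H}|^{n}d\mathrm{vol}_{\Sigma}>0$, since $|\mathbf{H}|$ is a nonzero constant. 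This is a contradiction.

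\textbf{Case 2: $\mathbf{e}(x_{1})=-\mathbf{e}(x_{2})$.} The two sheets are tangent at $p$ and have opposite mean curvature vectors, so each lies locally on the side the other's $-\mathbf{H}$ points into. In local coordinates, write the sheet through $x_{2}$ as a graph over $T_{p}\Sigma$ near $p$, with the sheet through $x_{1}$ as a reference. Umbilicity with nonzero $|\mathbf{H}|$ makes the quadratic terms of the two graphs have opposite signs in the direction $\mathbf{e}(x_{1})$. Then for a small $r_{1}>0$, the point $\exp^{\perp}(x_{1},-r_{1}\mathbf{e}(x_{1}))$, which lies in $\exp^{\perp}(\mathcal{W})$, is at distance $<r_{1}$ from some point of the second sheet. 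This contradicts Lemma \ref{50}.

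The main obstacle is Case 2. It requires a careful second-order comparison between the two tangent sheets to obtain a strict distance inequality. I would first attempt it by comparing $\frac12 d^{2}_{q}\circ f$ on the second sheet, using the Hessian formula \eqref{37}, at $q=\exp^{\perp}(x_{1},-r_{1}\mathbf{e}(x_{1}))$. The goal is to show that this function has nonzero gradient or negative Hessian at $x_{2}$, which would force a nearby point strictly closer than $r_{1}$.
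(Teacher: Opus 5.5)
Your outline matches the paper. The paper proves Lemma~\ref{80} only by pointing to the argument of Lemma~\ref{69} together with Lemmas~\ref{48} and \ref{50}, and your handling of transverse intersections and of Case~1 transplants that argument faithfully. The gap is in Case~2: the mechanism you propose there fails.

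Let $q=\exp^{\perp}(x_1,-r_1\mathbf{e}(x_1))=\mathrm{Exp}_p(r_1\mathbf{e}(x_2))$ and $\psi=\frac12 d_q^2\circ f$ near $x_2$. Because the intersection is tangential, $\nabla^{\Sigma}\psi(x_2)=0$, so a nonzero gradient is never available. By \eqref{37}, the Hessian of $\psi$ at $x_2$ is the tensor $T_{r_1}$ attached to $(x_2,\mathbf{e}(x_2))$. The geodesic $t\mapsto\mathrm{Exp}_p(t\mathbf{e}(x_2))$ is a ray by Lemma~\ref{75}, so $\mu(p,\mathbf{e}(x_2))=\infty$. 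Lemma~\ref{34}(i) then makes $T_{r_1}$ positive definite for $0<r_1<\rho(x_2,\mathbf{e}(x_2))$; for small $r_1$ one even has $T_{r_1}\approx(1-r_1|\mathbf{H}|)g$. So for small $r_1$ the point $x_2$ is a strict local minimum of $\psi$, and no point of the second sheet near $p$ is closer to $q$ than $r_1$.

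Two externally tangent unit spheres in $\mathbb{R}^{n+1}$ show this concretely: the distance is exactly $r_1$ for all $r_1\le 1/|\mathbf{H}|$. The opposite-sign quadratic terms you found make the second sheet bend toward $q$. That is exactly why $p$ stays the nearest point up to the focal distance.

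Two repairs work.

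First, take $r_1>1/|\mathbf{H}|$ instead of small. On $T_{x_2}\Sigma$ one has $\frac12\bar D^2 d_q^2=r_1\bar D^2 d_q$, since $\bar\nabla d_q(p)=-\mathbf{e}(x_2)$ is normal there. Lemma~\ref{39}(i) with $l=n$, $\delta=0$ then gives $\mathrm{tr}\,T_{r_1}\le n(1-r_1|\mathbf{H}|)<0$. So $x_2$ is a critical point of $\psi$ that is not a local minimum. This yields $x'$ with $d(q,f(x'))<r_1$, contradicting Lemma~\ref{50}. Equivalently, Theorem~\ref{57}(ii) forces $\tilde\tau_f(x_2,\mathbf{e}(x_2))\le 1/|\mathbf{H}|$, whereas the same geodesic has $\tau_f(x_1,-\mathbf{e}(x_1))=\infty$.

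Second, use your graph comparison with a different base point. Write both sheets in normal coordinates at $p$ as $v\mapsto v+\frac12 h_k(v,v)+O(|v|^3)$ over the common tangent space, where $h_k$ is the second fundamental form at $x_k$. For $x'\neq x_2$ near $x_2$, write $f(x')=\exp^{\perp}(x,y)$ with $x$ near $x_1$. Then $y=|\mathbf{H}|\,|v|^2\mathbf{e}(x_2)+O(|v|^3)$, so $\langle y,-\mathbf{H}(x)\rangle>0$ and $y\neq0$. Thus $(x,y)\in\mathcal{W}$ although $\exp^{\perp}(x,y)\in f(\Sigma)$, which contradicts Lemma~\ref{75}. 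This second repair is presumably what the paper's one-line Case~2 in Lemma~\ref{69} intends.

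Either way, the small-$r_1$ point on the common normal through $p$ cannot give the contradiction.
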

\begin{proof}
Since Lemmas \ref{48} and \ref{50} hold, we can prove this lemma by a similar argument as in Lemma \ref{69}. We omit it.
\end{proof}

\emph{Proof of the sufficiency part of Theorem \ref{68}:} 
By Lemmas \ref{76} and \ref{80}, we conclude that $\mathrm{exp}^{\perp}|_{\mathcal{W}}:\mathcal{W}\rightarrow \mathrm{exp}^{\perp}(\mathcal{W})$ is a diffeomorphism. Combining this fact with Lemmas \ref{49}, \ref{82} and \ref{80}, it remains only to compute the pulled‑back metric on $\mathcal{W}$.

Fix a point $(\bar{x},\bar{y})\in\mathcal{W}$. We shall use the frame fields established prior to Lemma \ref{9}. By Lemmas \ref{9} and \ref{81}, 
\begin{align*}
	&\langle (\mathrm{exp}^{\perp})_{*(\bar{x},\bar{y})}\left(X_{i}\right),\eta_{j} \rangle=g_{ij}(\bar{x})\left(1-\langle \mathbf{H}(\bar{x}),\bar{y} \rangle \right),\\
	&\langle (\mathrm{exp}^{\perp})_{*(\bar{x},\bar{y})}\left(X_{i}\right),\eta_{\beta} \rangle=\bar{y}^{\sigma}\Gamma_{i \sigma}^{\beta}(\bar{x}),\\
	&\langle (\mathrm{exp}^{\perp})_{*(\bar{x},\bar{y})}\left(\partial/\partial y^{\alpha}\right),\eta_{j} \rangle=0,\\
			&\langle (\mathrm{exp}^{\perp})_{*(\bar{x},\bar{y})}\left(\partial/\partial y^{\alpha}\right),\eta_{\beta} \rangle=\delta_{\alpha}^{\beta}.
\end{align*}
For each $1\leq A\leq n+m$, let  $\bar{\omega}^{A}$ be  the dual $1$-form of $\eta_{A}$. Note that 
			\begin{equation*}
\bar{g}_{AB}(\mathrm{exp}^{\perp}_{\bar{x}}\bar{y})=\langle \eta_{A},\eta_{B} \rangle =\langle e_{A},e_{B} \rangle
\end{equation*}
and
			\begin{equation*}
	\bar{g}^{ij}(\mathrm{exp}^{\perp}_{\bar{x}}\bar{y})=g^{ij}(\bar{x}) ,\,	\bar{g}^{i\alpha}(\mathrm{exp}^{\perp}_{\bar{x}}\bar{y})=g^{i\alpha }(\bar{x})=0,\,\bar{g}^{\alpha\beta}(\mathrm{exp}^{\perp}_{\bar{x}}\bar{y})=\delta_{\alpha\beta},
\end{equation*}
for  $1\leq i,j\leq n$ and $n+1\leq \alpha,\beta\leq n+m$. 
These yield
\begin{align*}
	(\mathrm{exp}^{\perp})^{*}\bar{\omega}^{i}&= \bar{g}^{iA}(\mathrm{exp}^{\perp}_{\bar{x}}\bar{y}) \langle (\mathrm{exp}^{\perp})_{*(\bar{x},\bar{y})}\left(X_{j}\right),\eta_{A} \rangle  dx^{j}+ \bar{g}^{iA}(\mathrm{exp}^{\perp}_{\bar{x}}\bar{y}) \langle (\mathrm{exp}^{\perp})_{*(\bar{x},\bar{y})}\left(\partial/\partial y^{\beta}\right),\eta_{A} \rangle dy^{\beta}  \\
	&=\left(1-\langle \mathbf{H}(\bar{x}),\bar{y} \rangle \right)dx^{i},
\end{align*}
\begin{align*}
	(\mathrm{exp}^{\perp})^{*}\bar{\omega}^{\alpha}&= \bar{g}^{\alpha A}(\mathrm{exp}^{\perp}_{\bar{x}}\bar{y}) \langle (\mathrm{exp}^{\perp})_{*(\bar{x},\bar{y})}\left(X_{j}\right),\eta_{A} \rangle  dx^{j}+ \bar{g}^{\alpha A} (\mathrm{exp}^{\perp}_{\bar{x}}\bar{y})\langle (\mathrm{exp}^{\perp})_{*(\bar{x},\bar{y})}\left(\partial/\partial y^{\beta}\right),\eta_{A} \rangle dy^{\beta}  \\
	&=\bar{y}^{\beta}\Gamma_{i \beta}^{\alpha}(\bar{x})dx^{i}+dy^{\alpha}
\end{align*}
and
		\begin{align*}
\bar{g}(\mathrm{exp}^{\perp}_{\bar{x}}\bar{y})&=\langle \eta_{A},\eta_{B} \rangle \bar{\omega}^{A}\otimes\bar{\omega}^{B}=\langle e_{A},e_{B} \rangle \bar{\omega}^{A}\otimes\bar{\omega}^{B}\\
&=g_{ij}(\bar{x}) \bar{\omega}^{i}\otimes\bar{\omega}^{j}+\delta_{\alpha\beta}\bar{\omega}^{\alpha}\otimes\bar{\omega}^{\beta}.
\end{align*}
Thus the pulled back metric
\begin{align*}
[	(\mathrm{exp}^{\perp})^{*}\bar{g}](\bar{x},\bar{y})&=g_{ij}(\bar{x}) (\mathrm{exp}^{\perp})^{*}\bar{\omega}^{i}\otimes(\mathrm{exp}^{\perp})^{*}\bar{\omega}^{j}+\delta_{\alpha\beta}(\mathrm{exp}^{\perp})^{*}\bar{\omega}^{\alpha}\otimes(\mathrm{exp}^{\perp})^{*}\bar{\omega}^{\beta}\\
	&=\delta_{\alpha\beta}( \bar{y}^{\sigma}\Gamma_{i \sigma}^{\alpha}(\bar{x})dx^{i}+dy^{\alpha})\otimes( \bar{y}^{\tau}\Gamma_{j \tau}^{\beta}(\bar{x})\otimes dx^{j}  +dy^{\beta})\\
	&\ \ \ +	g_{ij}(\bar{x})\left(1-\langle \mathbf{H}(\bar{x}),\bar{y} \rangle \right)^{2}dx^{i}\otimes dx^{j}.
\end{align*}
Recalling the canonical metric (see \eqref{13}) on $\pi^{-1}(U)\subset T^{\perp}\Sigma$, we obtain
\begin{align*}
g_{T^{\perp}\Sigma}(\bar{x},\bar{y})=\delta_{\alpha\beta}( \bar{y}^{\sigma}\Gamma_{i \sigma}^{\alpha}(\bar{x})dx^{i}+dy^{\alpha})\otimes( \bar{y}^{\tau}\Gamma_{j \tau}^{\beta}(\bar{x})\otimes dx^{j}  +dy^{\beta})+g_{\Sigma}(\bar{x}).
\end{align*}
Therefore, the pulled back metric on $\mathcal{W}$ is given by
\begin{align*}
	[(\mathrm{exp}^{\perp})^{*}\bar{g}](x,y)=	\left[\left(1-\langle \mathbf{H}(x),y \rangle \right)^{2}-1\right]g(x)+g_{T^{\perp}\Sigma}(x,y),
\end{align*}
	for each $(x,y)\in \mathcal{W}$.

These complete the proof.
\hfill$\Box$\\

\end{document}